% last update by tan on 2015/12/02
%\documentclass{tran-l}
\documentclass[10pt]{amsart}
\usepackage{graphicx}
\usepackage[all]{xy}
%\usepackage{showkeys}

% dimensions
%\textwidth15.1cm
%\textheight22.7cm
%\headheight12pt
%\oddsidemargin.4cm
%\evensidemargin.4cm
%\topmargin.5cm

\numberwithin{equation}{section}

\newtheorem{thm}{Theorem}[section]
\newtheorem{prop}[thm]{Proposition}
\newtheorem{conj}[thm]{Conjecture}
\newtheorem{cor}[thm]{Corollary}
\newtheorem{lem}[thm]{Lemma}

\theoremstyle{definition}
\newtheorem{rem}[thm]{Remark}
\newtheorem{ex}[thm]{Example}
\newtheorem{defn}[thm]{Definition}

\begin{document}
\bibliographystyle{amsalpha}

\title[Wonder of Sine-Gordon  $Y$-systems]{
Wonder of Sine-Gordon  $Y$-systems}

\author{Tomoki Nakanishi}
\address{\noindent Graduate School of Mathematics, Nagoya University, 
Chikusa-ku, Nagoya,
464-8604, Japan}
\email{nakanisi@math.nagoya-u.ac.jp}

\author{Salvatore Stella} 
	\address{\noindent Department of Mathematics, North Carolina State University, Box 8205, Raleigh, NC 27695-8205, USA}
	\email{sstella@ncsu.edu}
%	\thanks{The second author is partially supported by A. Zelevinsky's NSF grants DMS-0801187 and DMS-1103813}

\begin{abstract}
The  sine-Gordon $Y$-systems and the reduced sine-Gordon $Y$-systems
were introduced by Tateo in the 90's in the study of
the integrable deformation of  conformal field theory
by the thermodynamic Bethe ansatz method.
The periodicity property and the dilogarithm identities
concerning these $Y$-systems were conjectured by Tateo,
and only  a part of them have been proved
so far.
In this paper we formulate these $Y$-systems by the
polygon realization of cluster algebras of types $A$ and $D$,
and prove the 
conjectured periodicity and dilogarithm identities
 in full generality.
 As it turns out,
there is  a wonderful interplay among continued fractions,
triangulations of polygons, cluster algebras, and $Y$-systems.
%{\bf (ver.2012/12/26)}
\end{abstract}

\subjclass[2010]{13F60, 17B37}

%\date{January 19, 2011; revised April 13, 2011 and May 12, 2011}

\maketitle

\begin{figure}
	\begin{center}
		\includegraphics[scale=.60]{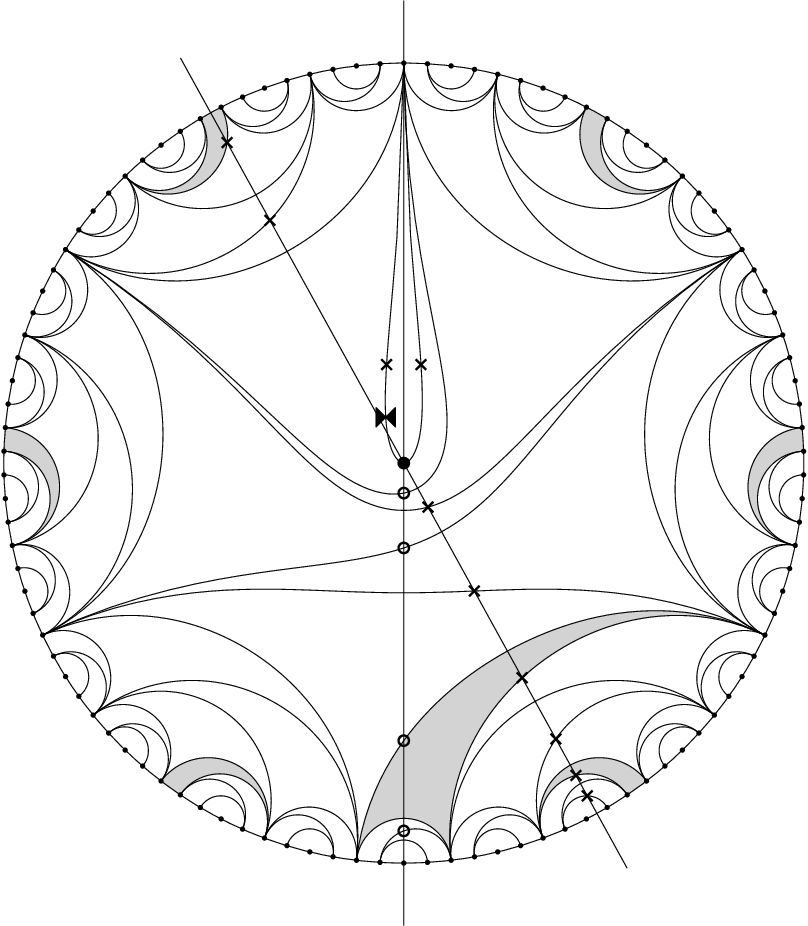}
	\end{center}
\caption{Initial triangulation of 106-gon for
SG $Y$-system $\mathbb{Y}_{\mathrm{SG}}(6,4,3)$.}
\label{fig:106gon}
\end{figure}

\section{Introduction}

In the 90's 
the integrable deformation of conformal field theory
was intensively studied by the thermodynamic Bethe ansatz method.
As a consequence, several
 periodicities of the so-called $Y$-systems
and the associated dilogarithm identities were conjectured.
However, at that time there was no systematic mathematical
framework to prove these conjectures.
As a result, most of  them were left open.
Since the pioneering work by Fomin and Zelevinsky
\cite{Fomin02,Fomin03b},
it has been gradually noticed
that these periodicities of $Y$-systems  in fact come from
 periodicities of cluster algebras.
In this way the  periodicities and 
dilogarithm identities
conjectured for the major family
 of $Y$-systems originated from  quantum affine algebras
were proved using the  cluster algebraic formulation
\cite{Chapoton05, Keller08, Keller10, Nakanishi09, Inoue10a,
Inoue10b}.
Simultaneously, an efficient method of proving periodicities in cluster algebras
has been also
developed by combining
 {\em tropicalization\/}  and {\em categorification\/} technique
 \cite{Inoue10a,Plamondon10b}.

Another interesting family
of $Y$-systems was introduced by Tateo \cite{Tateo95} 
in the same context.
It consists of two subfamilies
 called the {\em sine-Gordon (SG) $Y$-systems\/}
and the {\em reduced sine-Gordon (RSG) $Y$-systems},
respectively.
They are {\em exotic\/} in several  senses.
Firstly, the Lie theoretic interpretation is not entirely clear,
though they are certainly related to the Virasoro algebra,
 $U_q(\widehat{sl}_2)$ at a root of unity, etc.
Secondly,  they are associated with {\em continued fractions}.
Thirdly,  they are systems of functional equations,
some of which appear to be {\em very complicated}.
In the same paper
Tateo also conjectured the periodicity and the dilogarithm identities 
for these $Y$-systems.
Soon later, Gliozzi and Tateo \cite{Gliozzi96} ingeniously
found a general solution for
the RSG $Y$-systems  in terms of
{\em cross-ratios\/},
and thereby proved their periodicity.
However, other conjectures were left open.
The reason for the existence of such a simple solution
for a very complicated system also remained a mystery.

Recently, Tateo and the first author \cite{Nakanishi10b}
formulated a (small) part of the
SG/RSG $Y$-systems by cluster algebras,
then proved their periodicity and the associated
 dilogarithm identities by applying the aforementioned
 tropicalization-categorification method.
However, due to the complexity of these $Y$-systems,
working out the conjecture in full generality by this method
seemed not easy.

In this paper we  prove
Tateo's conjectures on periodicity and dilogarithm
identities of the SG/RSG $Y$-systems {\em in full generality}.
The basic strategy to overcome the above difficulty is the following.
As the referee of the paper  \cite{Nakanishi10b} pointed out,
the cluster algebras for the RSG and SG $Y$-systems
therein are nothing but cluster algebras of types $A$ and $D$, respectively.
 Suppose that it is true in general.
It is well known that the cluster algebras of types $A$ and $D$
admit a {\em surface realization\/} 
 developed by \cite{Gekhtman05,Fomin03a,Fock05,Fomin08,Fomin08b}.
To be more specific, they are realized by
{\em polygons} without puncture for type $A$
and with one puncture for type $D$.
So, we may try to realize these $Y$-systems by polygons.
It turns out that this is possible;
{\em moreover}, the nature of these
$Y$-systems becomes most apparent in the polygon realization.

Let us briefly explain how our method works.
For a given SG/RSG $Y$-system,
the construction of the {\em initial triangulation\/} of a polygon
is our first step,
and it is strongly tied with the theory of continued fractions,
as we should expect.
In fact, this triangulation  is directly associated with
the continued fraction which parametrizes the SG/RSG $Y$-system.
The next key observation is that
the above triangulation has a remarkable {\em quasi-symmetry\/} with
respect to {\em two\/}  axes of the polygon;
moreover,
the mutations realizing the $Y$-system are simply
the {\em reflections\/} with respect to these axes.
For example, Figure \ref{fig:106gon} 
represents the initial triangulation
 (a {\em tagged\/} triangulation in the sense of \cite{Fomin08})
 of
a 106-gon with a puncture corresponding to the SG $Y$-system
associated with the continued fraction
\begin{align}
[3,4,6]:=
\frac{1}{
\displaystyle
3+\frac{1}{
\displaystyle
4+\frac{1}{6}}}
= \frac{25}{81}.
\end{align}
The number 106 comes from $25+81=106$.
Observe that the triangulation is almost symmetric
with respect to the two axes therein;
moreover,
{\em flipping (mutating)\/} the  diagonals which
cross one of  the axes and are not symmetric with respect to it,
we get the reflection.
Composing these two reflections, we get a rotation.
This geometric realization of the SG/RSG $Y$-system
 enables us to prove the conjectured periodicities at once;
 they are nothing but  the {\em full rotations\/}  of the associated polygons.
 From these triangulations,
we can also   reconstruct the cross-ratio
 solution 
 of the RSG $Y$-systems 
 by \cite{Gliozzi96} naturally.
 Furthermore, the conjectured dilogarithm identities
 reduce to a simple geometrical counting problem for
 the triangulations.

%\begin{figure}
%	\begin{center}
%%		\includegraphics[scale=.60]{D-6_4_3.eps}
%		\includegraphics[scale=.43]{D-6_4_3.eps}
%	\end{center}
%\caption{Initial triangulation of 106-gon for
%SG $Y$-system $\mathbb{Y}_{\mathrm{SG}}(6,4,3)$.}
%\label{fig:106gon}
%\end{figure}

Throughout our work
we found a wonderful interplay among
continued fractions, triangulations of polygons,
cluster algebras, and $Y$-systems.
This is perhaps the most important message of the paper.
Also, we ask the reader not  to be discouraged
by the ``horrible appearance'' of the RSG/SG $Y$-systems
in Section \ref{sec:RSGY},
since they are beautiful in nature,
as you  see in Figure \ref{fig:106gon}.

The paper is organized as follows.
In Section 2 we recall some basic properties of continued fractions.
In Section 3 we introduce the RSG/SG $Y$-systems.
In Section 4, using some examples,
we explain in detail
 how to  realize
 the RSG $Y$-systems  by  polygons.
 In particular, we introduce the {\em snapshot method\/}
 to obtain the relations in these $Y$-systems.
Section 5 is the main part of the paper.
To any
 continued fraction
 we construct the associated  triangulation of a polygon,
 and we show that it provides a realization of the 
corresponding RSG $Y$-system.
As a result, we obtain the periodicity of the RSG $Y$-systems,
and also reproduce the solution by Gliozzi-Tateo.
In Section 6 we adapt our construction of the triangulation
for a polygon
with a puncture. Then, we show that
it provides a realization of the corresponding
SG $Y$-system.
As a result, we obtain the periodicity of the SG $Y$-systems.
In Section 7 we prove the  dilogarithm identities conjectured
by Tateo.
In Section 8 we introduce the RSG/SG $T$-systems
accompanying the RSG/SG $Y$-systems.
They share the same periodicity
with the RSG/SG $Y$-systems.

\bigskip
\noindent
{\em Acknowledgements.} 
We thank Hugh Thomas and Dylan Thurston for useful discussion.
We thank MSRI, Berkeley
for financial support and for providing the ideal environment
where this work was done.
%The first author is partially supported by JSPS KAKENHI 23540012.
%DMS-1103813.
The second author is partially supported by A. Zelevinsky's NSF grant
DMS-1103813.

\section{Continued fractions}

Before starting, let us summarize some useful properties
of continued fractions which will be used throughout the paper.
The results are standard in the literature
(e.g., \cite{Wall48}).

First, we fix an arbitrarily  positive integer  $F$.
Then, we fix a sequence of positive integers
$(n_1, \dots, n_F)$ with $n_1\geq 2$.
This is our input data.
The  sequence $(n_1, \dots, n_F)$ 
determines
a rational number $0<\xi <1$
by the continued fraction
\begin{align}
\xi&=[n_F,\dots,n_1]
:=
\frac
{ 1
}
{\displaystyle
n_F+
\frac{
1
}
{\displaystyle n_{F-1}+
\frac{ 1}{
{\displaystyle
\ddots
+
\frac{
1
}
{\displaystyle  n_{1}
}
}
}
}
}.
\end{align}
To make it clear, $[n_1]=1/n_1$.
Conversely, any rational number $0<\xi <1$
is uniquely expressed in this form.
Therefore, the correspondence is one-to-one.
The order of the subscript of $n_i$ is opposite to the standard one,
but this is convenient for our purpose.

For the sequence $(n_1,\dots,n_F)$,
we introduce a family of continued fractions
\begin{align}
\xi_a&=[n_a,\dots,n_1],
\quad
1\leq a\leq F.
\end{align}
We write $\xi_a$ with coprime integers $p_a$, $q_a$ as
\begin{align}
\label{eq:xia}
\xi_a =\frac{p_a}{q_a},
\quad
1\leq a\leq F.
\end{align}
In particular, $(p_1,q_1)=(1,n_1)$.
Since 
$\xi_a=1/(n_a + \xi_{a-1})$ for $2\leq a\leq F$,
we have
\begin{align}
\frac{p_a}
{q_a}
=
\frac{1}
{n_a + \displaystyle \frac{p_{a-1}}{q_{a-1}}
}
=
\frac{q_{a-1}}
{n_a q_{a-1}+p_{a-1}}.
\end{align}
Then, thanks to the coprime property, we have
the relations
\begin{align}
\label{eq:p-q1}
p_a&=q_{a-1},
\quad
2\leq a\leq F,\\
\label{eq:p-q2}
q_a&=
{n_a q_{a-1}+p_{a-1}},
\quad
2\leq a\leq F.
\end{align}
Thus, all $p_a$ and $q_a$ are determined from
the recursion relations,
\begin{align}
\label{eq:xi1}
q_a&=n_aq_{a-1}+q_{a-2},
\quad
2\leq a\leq F,\\
\label{eq:xi2}
p_a&=n_{a-1}p_{a-1}+p_{a-2},
\quad
3\leq a\leq F,
\end{align}
with the initial condition $q_0:=1$, $q_1=n_1$,
$p_1=1$, $p_2=n_1$.
%We define  integers
%$r_1$, \dots, $r_F$ by
%\begin{align}
%\label{eq:ra}
%r_a := p_{a}+q_{a},
%\quad
%1\leq a\leq F.
%\end{align}

More generally, for
 the same sequence $(n_1$, \dots, $n_F)$
 and for any $k=1,\dots, F$,
we introduce a  family of continued fractions
\begin{align}
\xi^{(k)}_a=[n_a,\dots,n_k],
\quad
 k\leq a\leq F.
\end{align}
Thus, $\xi_a=\xi^{(1)}_a$, though we mainly use
the former notation,
since they are the ``principals''.
We write $\xi^{(k)}_a$ with coprime integers $p^{(k)}_a$, $q^{(k)}_a$ as
\begin{align}
\xi^{(k)}_a =\frac{p^{(k)}_a}{q^{(k)}_a},
\quad
 k\leq a\leq F.
\end{align}
In particular, $(p^{(k)}_k,q^{(k)}_k)=(1,n_k)$.
Then, by the same argument as before,
we have $p^{(k)}_a=q^{(k)}_{a-1}$,
and
all $p^{(k)}_a$ and $q^{(k)}_a$ are determined from
the recursion relations,
\begin{align}
\label{eq:xid1}
q^{(k)}_a&=n_aq^{(k)}_{a-1}+q^{(k)}_{a-2},
\quad
 k+1\leq a\leq F,\\
\label{eq:xid2}
p^{(k)}_a&=n_{a-1}p^{(k)}_{a-1}+p^{(k)}_{a-2},
\quad
 k+2\leq a\leq F,
\end{align}
with the initial condition $q^{(k)}_{k-1}:=1$, $q^{(k)}_k=n_k$,
$p^{(k)}_k=1$, $p^{(k)}_{k+1}=n_k$.
We define  integers
%\begin{align}
%\label{eq:rad}
%r^{(k)}_a := p^{(k)}_{a}+q^{(k)}_{a},
%\quad
% k\leq a\leq F.
%\end{align}
\begin{align}
\label{eq:rad}
r^{(k)} := p^{(k)}_{F}+q^{(k)}_{F},
\quad
 1\leq k\leq F,
\end{align}
and we especially write $r^{(1)}$  as $r$
in accordance with the notation $\xi_F=\xi^{(1)}_F$.

\begin{ex}
\label{ex:743}
We use the following data as a running example throughout the paper:
For $F=3$, $(n_1,n_2,n_3)=(6,4,3)$,  we have 
\begin{alignat}{4}
\xi_1&=
\frac{1}{6},
&\quad
(p_1,q_1)&=(1,6),
&\quad
%r_1&=7,
\\
\label{eq:6-25}
\xi_2&=
\frac{1}{4+\frac{1}{6}}=\frac{6}{25},
&\quad
(p_2,q_2)&=(6,25),
&\quad
%r_2&=31,
\\
\xi_3&=
\frac{1}{3+\frac{6}{25}}=\frac{25}{81},
&\quad
(p_3,q_3)&=(25,81),
&\quad
r&=106,
\\
\xi^{(2)}_2&=
\frac{1}{4},
&\quad
(p^{(2)}_2,q^{(2)}_2)&=(1,4),
&\quad
%r^{(2)}_2&=5,
\\
\xi^{(2)}_3&=
\frac{1}{3+\frac{1}{4}}=\frac{4}{13},
&\quad
(p^{(2)}_3,q^{(2)}_3)&=(4,13),
&\quad
r^{(2)}&=17,\\
\xi^{(3)}_3&=
\frac{1}{3},
&\quad
(p^{(3)}_3,q^{(3)}_3)&=(1,3),
&\quad
r^{(3)}&=4.
\end{alignat}
\end{ex}

The following formulas are well known (e.g., \cite{Wall48}).

\begin{prop}
\label{prop:cf0}
(a). (Fundamental recurrence formulas)
For $k=1,\dots,F-2$,
we have
\begin{align}
\label{eq:rr2}
q^{(k)}_a&=n_kq^{(k+1)}_a+q^{(k+2)}_a,
\quad
 k+2\leq a\leq F,
\\
\label{eq:rr3}
p^{(k)}_a&=n_kp^{(k+1)}_a+p^{(k+2)}_a,
\quad
 k+2\leq a\leq F.
\end{align}
\par
(b). 
For $k=1,\dots,F-1$,
we have
\begin{align}
\label{eq:pq-qp}
q^{(k)}_a p^{(k+1)}_a
-
q^{(k+1)}_a p^{(k)}_a
=(-1)^{a-k+1},
\quad
 k+1\leq a\leq F.
\end{align}
\end{prop}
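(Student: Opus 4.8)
The plan is to encode the whole statement into a single $2\times 2$ matrix product. For $1\le j\le F$ set
\[
M_j:=\begin{pmatrix} n_j & 1 \\ 1 & 0 \end{pmatrix},
\qquad
P^{(k)}_a:=M_a M_{a-1}\cdots M_k \quad (k\le a\le F),
\]
noting $\det M_j=-1$. The heart of the argument is the claim that
\[
P^{(k)}_a=\begin{pmatrix} q^{(k)}_a & q^{(k+1)}_a \\ p^{(k)}_a & p^{(k+1)}_a \end{pmatrix},
\qquad k+1\le a\le F .
\]
Once this identity is in place, both parts fall out immediately.

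First I would prove, by induction on $a\ge k$ with $k$ fixed, that the \emph{first} column of $P^{(k)}_a$ is $(q^{(k)}_a,p^{(k)}_a)^{\mathrm t}$. The base case $a=k$ is read straight off $M_k$, matching $(p^{(k)}_k,q^{(k)}_k)=(1,n_k)$. For the step ($a\ge k+1$) I would write $P^{(k)}_a=M_a P^{(k)}_{a-1}$ and apply $M_a$ to the column $(q^{(k)}_{a-1},p^{(k)}_{a-1})^{\mathrm t}$ supplied by the inductive hypothesis; using $p^{(k)}_{a-1}=q^{(k)}_{a-2}$ together with \eqref{eq:xid1}, the top entry becomes $n_a q^{(k)}_{a-1}+q^{(k)}_{a-2}=q^{(k)}_a$ and the bottom entry is $q^{(k)}_{a-1}=p^{(k)}_a$, as needed. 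The \emph{second} column is then free of charge: since the second column of $M_k$ is $(1,0)^{\mathrm t}$, the second column of $P^{(k)}_a=\big(M_a\cdots M_{k+1}\big)M_k$ is precisely the first column of $M_a\cdots M_{k+1}=P^{(k+1)}_a$, namely $(q^{(k+1)}_a,p^{(k+1)}_a)^{\mathrm t}$ by the case just proved with $k$ replaced by $k+1$. This establishes the displayed matrix identity.

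With the identity established, part (a) follows from the factorization $P^{(k)}_a=P^{(k+1)}_a M_k$: comparing the first columns of the two sides gives $(q^{(k)}_a,p^{(k)}_a)^{\mathrm t}=n_k\,(q^{(k+1)}_a,p^{(k+1)}_a)^{\mathrm t}+(q^{(k+2)}_a,p^{(k+2)}_a)^{\mathrm t}$, which is exactly \eqref{eq:rr2}--\eqref{eq:rr3}, the constraint $k+2\le a$ being just what is needed for the $q^{(k+2)}_a,p^{(k+2)}_a$ entries to be defined. Part (b) is quicker still: taking determinants,
\[
q^{(k)}_a p^{(k+1)}_a-q^{(k+1)}_a p^{(k)}_a=\det P^{(k)}_a=\prod_{j=k}^{a}\det M_j=(-1)^{a-k+1}.
\]

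I do not expect a genuine obstacle, since the statement is classical; the real content is careful bookkeeping. The one point that demands attention is the asymmetry between the two recursions \eqref{eq:xid1} and \eqref{eq:xid2} (coefficient $n_a$ versus $n_{a-1}$), which is reconciled throughout by the identity $p^{(k)}_a=q^{(k)}_{a-1}$. It is exactly this identity that lets the single matrix $P^{(k)}_a$ carry the $k$-convergents in its first column and the $(k+1)$-convergents in its second, and verifying that this double bookkeeping is consistent is the step I would check most carefully.
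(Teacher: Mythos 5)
Your proof is correct, but it takes a genuinely different route from the paper's. The paper disposes of the proposition in two lines: part (a) is proved by induction on $a$ using the recursion \eqref{eq:xid2}, and part (b) by a separate induction on $k$ in decreasing order, using (a) at each step. You instead package all the convergents into the single product $P^{(k)}_a=M_a\cdots M_k$ of the matrices $M_j=\left(\begin{smallmatrix} n_j & 1\\ 1 & 0\end{smallmatrix}\right)$ and prove once, by induction on $a$, that $P^{(k)}_a$ has columns $(q^{(k)}_a,p^{(k)}_a)^{\mathrm t}$ and $(q^{(k+1)}_a,p^{(k+1)}_a)^{\mathrm t}$; then (a) becomes the column-wise reading of the factorization $P^{(k)}_a=P^{(k+1)}_a M_k$, and (b) is the determinant computation $\det P^{(k)}_a=\prod_{j=k}^a\det M_j=(-1)^{a-k+1}$. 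The details check out: the base case $a=k$ matches $(q^{(k)}_k,p^{(k)}_k)=(n_k,1)$, the inductive step correctly reconciles the asymmetric recursions via $p^{(k)}_{a-1}=q^{(k)}_{a-2}$ (which holds even at $a=k+1$ thanks to the convention $q^{(k)}_{k-1}=1$), and the range restrictions $k+2\le a$ in (a) and $k+1\le a$ in (b) are exactly where the relevant columns are defined. What your approach buys is a single organizing identity from which both parts fall out mechanically, with the sign $(-1)^{a-k+1}$ appearing transparently as a product of determinants rather than being tracked through a decreasing induction on $k$; the cost is introducing the (classical) continuant-matrix formalism, which the paper avoids by keeping everything at the level of the scalar recursions.
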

\begin{proof}
(a).
They are easily proved by
induction on $a$
using \eqref{eq:xid2}.
(b). For each $a$, this is proved  by 
induction on $k$ in the decreasing order
using (a).
\end{proof}

We will also use the following properties later.

\begin{prop}
\label{prop:cf1}
(a). For $k=1,\dots,F$, we have
\begin{align}
\label{eq:rr4}
r^{(k)}&=n_kr^{(k+1)}+r^{(k+2)},
\end{align}
where we set
$r^{(F+1)}=
r^{(F+2)}:=1$.
\par
(b). For $k=1,\dots,F$, we have
\begin{align}
\label{eq:rF3}
r^{(k)}&=q^{(k)}_{a} r^{(a+1)} + p^{(k)}_{a}r^{(a+2)},
\quad
 k\leq a\leq F.
\end{align}
For $a=k$, it reduces to (a).
\par
(c).
For each $a=2,\dots, F$,
the numbers $p_a$ and $p^{(2)}_a$ are coprime.
\par
(d).
The numbers $r$ and $r^{(2)}$ are coprime.
\par
(e). For $a=3,\dots,F$, we have
\begin{align}
\label{eq:pp0}
p_{a-1}p^{(2)}_a
-
p_{a}p^{(2)}_{a-1}
= (-1)^a.
\end{align}
\end{prop}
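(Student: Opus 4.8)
The plan is to derive all five parts from Proposition \ref{prop:cf0} together with the relation $p^{(k)}_a=q^{(k)}_{a-1}$ and the recursions \eqref{eq:xid1}--\eqref{eq:xid2}, without ever recomputing a continued fraction. For (a), when $k\le F-2$ I simply add the two fundamental recurrences \eqref{eq:rr2} and \eqref{eq:rr3} evaluated at $a=F$:
\[
r^{(k)}=p^{(k)}_F+q^{(k)}_F=n_k(p^{(k+1)}_F+q^{(k+1)}_F)+(p^{(k+2)}_F+q^{(k+2)}_F)=n_kr^{(k+1)}+r^{(k+2)}.
\]
The two values $k=F-1$ and $k=F$ fall outside the range of \eqref{eq:rr2}--\eqref{eq:rr3}, so I would check them directly from $(p^{(k)}_k,q^{(k)}_k)=(1,n_k)$ and the conventions $r^{(F+1)}=r^{(F+2)}=1$.

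Part (b) I prove by descending induction on $a$. The base case $a=F$ is immediate, since $r^{(F+1)}=r^{(F+2)}=1$ makes the right-hand side of \eqref{eq:rF3} equal to $q^{(k)}_F+p^{(k)}_F=r^{(k)}$. For the inductive step I substitute $r^{(a)}=n_ar^{(a+1)}+r^{(a+2)}$ from (a) into $q^{(k)}_{a-1}r^{(a)}+p^{(k)}_{a-1}r^{(a+1)}$ and regroup; using $p^{(k)}_{a-1}=q^{(k)}_{a-2}$, the recursion \eqref{eq:xid1} in the form $n_aq^{(k)}_{a-1}+q^{(k)}_{a-2}=q^{(k)}_a$, and $q^{(k)}_{a-1}=p^{(k)}_a$, the expression collapses to $q^{(k)}_ar^{(a+1)}+p^{(k)}_ar^{(a+2)}$, which equals $r^{(k)}$ by the inductive hypothesis. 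With the conventions $q^{(k)}_{k-1}=1=p^{(k)}_k$ the same step stays valid down to $a=k$, where \eqref{eq:rF3} reduces to (a).

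The last three parts all rest on Proposition \ref{prop:cf0}(b). Setting $k=1$ in \eqref{eq:pq-qp} and applying $p^{(1)}_a=q^{(1)}_{a-1}$, $p^{(2)}_a=q^{(2)}_{a-1}$ rewrites it as $q^{(1)}_aq^{(2)}_{a-1}-q^{(2)}_aq^{(1)}_{a-1}=(-1)^a$. Evaluating this at $a-1$ and translating back through $p_{a-1}=q^{(1)}_{a-2}$, $p_a=q^{(1)}_{a-1}$, $p^{(2)}_a=q^{(2)}_{a-1}$, $p^{(2)}_{a-1}=q^{(2)}_{a-2}$ yields (e), the sign $(-1)^a$ arising from the swap of the two summands. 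Part (c) then follows at once: for $a\ge 3$ the identity \eqref{eq:pp0} exhibits $\pm1$ as an integer combination of $p_a$ and $p^{(2)}_a$, so $\gcd(p_a,p^{(2)}_a)=1$; and $a=2$ is trivial since $p^{(2)}_2=1$.

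For (d) I would form the corresponding combination of $r=p_F+q_F$ and $r^{(2)}=p^{(2)}_F+q^{(2)}_F$: a direct expansion gives $r\,q^{(2)}_F-r^{(2)}q_F=p_Fq^{(2)}_F-p^{(2)}_Fq_F$, and the right-hand side equals $(-1)^{F+1}$ by \eqref{eq:pq-qp} at $a=F$, $k=1$; hence any common divisor of $r$ and $r^{(2)}$ divides $1$. I do not anticipate a genuine obstacle, since Proposition \ref{prop:cf0} already supplies the governing determinant identity. The only delicate point is the bookkeeping in the induction for (b), where the boundary conventions $q^{(k)}_{k-1}=1$ and $r^{(F+1)}=r^{(F+2)}=1$ must be kept consistent so that a single inductive step covers the whole range $k\le a\le F$, together with recognizing that (c) and (d) are Bézout relations and that (e) is \eqref{eq:pq-qp} in disguise.
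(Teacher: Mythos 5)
Your proposal is correct; every step checks out, including the sign bookkeeping in (e) and the boundary conventions in the induction for (b). The argument coincides with the paper's for (a), (b) and (e): the paper likewise obtains (a) by summing the fundamental recurrences of Proposition \ref{prop:cf0}(a) at $a=F$ and inspecting $k=F-1,F$ directly, proves (b) by induction on $a$ using \eqref{eq:xid2} and \eqref{eq:rr4} (you run the induction downward from $a=F$ rather than upward from $a=k$, which is immaterial), and reads (e) off from \eqref{eq:pq-qp} with $k=1$. The genuine divergence is in the coprimality parts. For (c) the paper's primary argument is a Euclidean descent, $\gcd(p_a,p^{(2)}_a)=\gcd(p^{(3)}_a,p^{(2)}_a)=\cdots=\gcd(p^{(a-1)}_a,p^{(a)}_a)=1$, obtained by applying \eqref{eq:rr3} repeatedly; it only remarks in passing that (e) gives an alternative proof, which is exactly the route you take. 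For (d) the paper again uses a gcd-chain, $\gcd(r^{(k)},r^{(k+1)})=\gcd(r^{(k+2)},r^{(k+1)})$ via \eqref{eq:rr4}, whereas you produce the explicit B\'ezout identity $r\,q^{(2)}_F-r^{(2)}q_F=(-1)^{F+1}$ from \eqref{eq:pq-qp} at $a=F$. Both are short; the paper's chains are uniform across all consecutive pairs $(r^{(k)},r^{(k+1)})$, while your determinant identities are more explicit and make the unit witnessing coprimality visible. The only point worth adding is the degenerate case $F=1$ in (d), where \eqref{eq:pq-qp} at $a=F$, $k=1$ is out of range; there $r^{(2)}=1$ by convention, so the claim is trivial.
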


\begin{proof}
(a). 
 For $k\leq F-2$,
 this is a corollary of 
 Proposition \ref{prop:cf0} (a).
 For $k=F-1, F$,
 this is checked by direct inspection;
 indeed
$r^{(F-1)}=n_{F-1}(n_F+1)+1$
and $r^{(F)}=n_F+1$.
(b).
This can be proved by  induction on $a$
using
\eqref{eq:xid2} and \eqref{eq:rr4}.
(c).
Using  \eqref{eq:rr3} repeatedly, we have
$\mathrm{gcd}(p_a,p^{(2)}_a)
=
\mathrm{gcd}(p^{(3)}_a,p^{(2)}_a)
=\cdots
=
\mathrm{gcd}(p^{(a-1)}_a,p^{(a)}_{a})
=1.
$
The claim also follows from (e) below.
(d).
Using  \eqref{eq:rr4} repeatedly, we have
$\mathrm{gcd}(r^{(k)},r^{(k+1)})
=
\mathrm{gcd}(r^{(k+2)},r^{(k+1)})
=\cdots
=
\mathrm{gcd}(r^{(F)},r^{(F+1)})
=1.
$
(e). This is a special case of 
Proposition \ref{prop:cf0} (b) with $k=1$.
\end{proof}

\section{RSG and SG $Y$-systems}
\label{sec:RSGY}

Here we introduce the RSG and SG $Y$-systems
following
\cite{Tateo95}.
For the background of these equations
 in conformal field theory,
consult \cite{Tateo94,Tateo95}
and \cite[Section 2.3]{Nakanishi10b}.
Then we state the periodicity property
of these $Y$-systems.

\subsection{RSG $Y$-systems}
\label{subsec:RSGY}

We continue to use the sequence
$(n_1,\dots, n_F)$ in the previous section
as input data.
We exclude the case $(n_1)=(2)$ with $F=1$,
because the RSG $Y$-system is empty.

We introduce the notation for the signs
\begin{align}
\label{eq:sign1}
\varepsilon_a:= (-1)^{a-1},
\quad
a=1,\dots, F.
\end{align}

We start with the case $n_1\neq 2$.
Let us introduce the {\em $Y$-variables\/} 
$ Y^{(a)}_m(u)$,
where $u\in \mathbb{Z}$,
$a=1\dots, F$, and
\begin{align}
\label{eq:RSGam}
m=
\begin{cases}
1,\dots, n_1-2&\mbox{if  $a=1$}\\
1,\dots, n_a & \mbox{if $a=2,\dots,F$}.
\end{cases}
\end{align}
Let $X_{\mathrm{RSG}}(n_1,\dots,n_F)$ be the Dynkin diagram
of type $A$ indexed by $(a,m)$ in the above range
as shown in Figure \ref{fig:XA}.
\begin{figure}
$$
\begin{xy}
(0,0)*\cir<2pt>{},
(10,0)*\cir<2pt>{},
(20,0)*\cir<2pt>{},
(30,0)*\cir<2pt>{},
(40,0)*\cir<2pt>{},
(50,0)*\cir<2pt>{},
%(60,0)*\cir<2pt>{},
(70,0)*\cir<2pt>{},
(80,0)*\cir<2pt>{},
(90,0)*\cir<2pt>{},
(-10,5)*{a},
(10,5)*{\overbrace{\phantom{xxxxxxxxx}}^{\displaystyle 1}},
(40,5)*{\overbrace{\phantom{xxxxxxxxx}}^{\displaystyle 2}},
(80,5)*{\overbrace{\phantom{xxxxxxxxx}}^{\displaystyle F}},
(-10,-5)*{m},
(0,-5)*{1},
(10,-5.5)*{\cdots},
(20,-5.5)*{n_1-2},
(30,-5)*{1},
(40,-5.5)*{\cdots},
(50,-5.5)*{n_{2}},
(70,-5)*{1},
(80,-5.5)*{\cdots},
(90,-5.5)*{n_{F}},
\ar@{-} (1,0);(9,0)
\ar@{-} (11,0);(19,0)
\ar@{-} (21,0);(29,0)
\ar@{-} (31,0);(39,0)
\ar@{-} (41,0);(49,0)
\ar@{--} (51,0);(69,0)
%\ar@{-} (61,0);(69,0)
\ar@{-} (71,0);(79,0)
\ar@{-} (81,0);(89,0)
\end{xy}
$$
\caption{The diagram $X_{\mathrm{RSG}}(n_1,\dots,n_F)$ for $n_1\neq 2$.}
\label{fig:XA}
\end{figure}
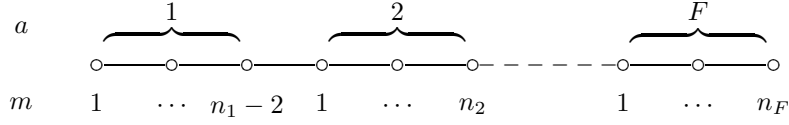

\begin{defn}
\label{defn:RSG}
For $n_1\neq 2$,
the {\em reduced sine-Gordon (RSG) $Y$-system\/}
$\mathbb{Y}_{\mathrm{RSG}}(n_1,\dots,n_F)$ is the following system of
relations:
For a general $(a,m)$ other than
$(2,1)$, $(3,1)$, \dots, $(F,1)$,
\begin{align}
\label{eq:RSG1}
Y^{(a)}_m(u-p_a)Y^{(a)}_m(u+p_a)
&=
\prod_{(b,k)\sim(a,m)}
(1+Y^{(b)}_{k}(u)^{\varepsilon_b})^{\varepsilon_b},
\end{align}
where $(b,k)\sim (a,m)$ means $(b,k)$ is adjacent to
$(a,m)$ in the diagram $X_{\mathrm{RSG}}(n_1,\dots,n_F)$,
and $p_a$ ($1\leq a \leq F$) are the numbers
defined in \eqref{eq:xia}.
Exceptional cases are as follows.
For $(a,m)=(2,1)$,
\begin{align}
\label{eq:RSG5}
\begin{split}
&Y^{(2)}_{1}(u-p_{2})Y^{(2)}_{1}(u+p_{2})\\
&\qquad
=
(1+Y^{(2)}_{2}(u)^{-1})^{-1}
(1+Y^{(1)}_{1}(u))\\
&
\qquad
\quad
\times
\prod_{m=1}^{n_{1}-2}
(
1+Y^{(1)}_m(u -1-m)^{-{1}}
)^{-{1}}
(
1+Y^{(1)}_m(u +1+m)^{-{1}}
)^{-{1}}.
\end{split}
\end{align}
For $(a,m)=(a,1)$ with $a=3,\dots,F$,
\begin{align}
\label{eq:RSG4}
\begin{split}
&Y^{(a)}_{1}(u-p_a)Y^{(a)}_{1}(u+p_a)\\
&\qquad
=
(1+Y^{(a)}_{2}(u)^{\varepsilon_a})^{\varepsilon_a}
(1+Y^{(a-2)}_{n_{a-2}-2\delta_{a3}}(u)^{\varepsilon_{a}})^{\varepsilon_{a}}\\
&
\qquad
\quad
\times
\prod_{m=1}^{n_{a-1}}
(
1+Y^{(a-1)}_m(u -p_a+(n_{a-1}+1-m)p_{a-1})^{\varepsilon_{a}}
)^{\varepsilon_{a}}\\
&
\qquad
\quad
\times
\prod_{m=1}^{n_{a-1}}
(
1+Y^{(a-1)}_m(u +p_a-(n_{a-1}+1-m)p_{a-1})^{\varepsilon_{a}}
)^{\varepsilon_{a}},
\end{split}
\end{align}
where $\delta_{a3}$ is the Kronecker delta.
\end{defn}

\begin{rem}
\label{rem:RSGY}
The variables $Y_1(u)$, $Y_2(u)$, \dots
in \cite{Tateo95} and \cite{Nakanishi10b}
are $Y^{(F)}_{n_F}(u)$, $Y^{(F)}_{n_F-1}(u)$, \dots, $Y^{(F-1)}_{n_{F-1}}(u)$,
\dots, $Y^{(1)}_{1}(u)$  here with a certain rescale of $u$,
and also up to the opposite convention of the
sign $\varepsilon_a$ for even $F$.
\end{rem}

When $F=1$, only the relation
\eqref{eq:RSG1} is involved. 
This is the well known {\em $Y$-system of type $A_{n_1-2}$\/} (with level 2,
in the terminology of \cite{Kuniba94a}).
So, the RSG $Y$-systems are generalizations of the $Y$-systems of type $A$.

When $n_1=2$ with $F\geq 2$,
 we need some modification.
We reset the   $Y$-variables
 $Y^{(a)}_m(u)
 $,
where $u\in \mathbb{Z}$,
$a=2\dots, F$, and
$m=
1,\dots, n_a$.
Let $X_{\mathrm{RSG}}(2,n_2,\dots,n_F)$ be the {\em tadpole\/} diagram
 indexed by $(a,m)$ in the above range
as shown in Figure \ref{fig:XA2}.

\begin{defn}
For $F\geq 2$,
the {\em RSG $Y$-system\/}
$\mathbb{Y}_{\mathrm{RSG}}(2,n_2,\dots,n_F)$ is the following system of
relations:
\begin{itemize}
\item[(i).]
 the relation \eqref{eq:RSG1} for $(a,m)$ other than
$(3,1),\dots, (F,1)$,
where the adjacency diagram
for \eqref{eq:RSG1} is  $X_{\mathrm{RSG}}(2, n_2,\dots,n_F)$,
\item[(ii).]
 the relation \eqref{eq:RSG4} for $(a,1)$
with $a=3,\dots, F$, 
where for $a=3$ the term $Y^{(1)}_0(u)$
in the right hand side of \eqref{eq:RSG4}   is discarded.
\end{itemize}
\end{defn}

\begin{figure}
$$
\begin{xy}
(30,0)*\cir<2pt>{},
(40,0)*\cir<2pt>{},
(50,0)*\cir<2pt>{},
%(60,0)*\cir<2pt>{},
(70,0)*\cir<2pt>{},
(80,0)*\cir<2pt>{},
(90,0)*\cir<2pt>{},
(10,5)*{a},
(40,5)*{\overbrace{\phantom{xxxxxxxxx}}^{\displaystyle 2}},
(80,5)*{\overbrace{\phantom{xxxxxxxxx}}^{\displaystyle F}},
(10,-5)*{m},
(30,-5)*{1},
(40,-5.5)*{\cdots},
(50,-5.5)*{n_{2}},
(70,-5)*{1},
(80,-5.5)*{\cdots},
(90,-5.5)*{n_{F}},
(29.5,1); (29.5,-1) **\crv{(25,7) & (12,0) & (25,-7)};
\ar@{-} (31,0);(39,0)
\ar@{-} (41,0);(49,0)
\ar@{--} (51,0);(69,0)
%\ar@{-} (61,0);(69,0)
\ar@{-} (71,0);(79,0)
\ar@{-} (81,0);(89,0)
\end{xy}
$$
\caption{The diagram $X_{\mathrm{RSG}}(2,n_2,\dots,n_F)$.}
\label{fig:XA2}
\end{figure}

When $F=2$, only the relation
\eqref{eq:RSG1} is involved. 
This is the well known {\em $Y$-system of tadpole type $T_{n_2}$} of \cite{Ravanini93}).
So, these RSG $Y$-systems
 are generalizations of the $Y$-systems of tadpole type.

All  main results in the paper are
applicable,
whether $n_1\neq 2$ or $n_1=2$.
To make the description simpler,
from now on we  do not pay a special attention to the 
exceptional case $n_1=2$ when claiming and proving general statements.
The reader can safely concentrate on the case $n_1\neq 2$.

Now let us return to Definition \ref{defn:RSG}.
In the right hand side of \eqref{eq:RSG5}, we have
\begin{align}
u-p_2+ 1\leq u-1-m \leq u-2
\end{align}
thanks to $p_2=n_1$ and $p_1=1$.
Also,
in the right hand side of \eqref{eq:RSG4}, we have
\begin{align}
u -p_a +p_{a-1}\leq u-p_a+(n_{a-1}+1-m)p_{a-1} \leq
 u-p_{a-2}
\end{align}
thanks to the relation $-p_a+n_{a-1}p_{a-1}=-p_{a-2}$ in \eqref{eq:xi2}.

Let $\mathcal{Y}=\mathcal{Y}_{\mathrm{RSG}}(n_1,\dots,n_F)$ be
 the set of all  $Y$-variables of 
 $\mathbb{Y}_{\mathrm{RSG}}(n_1,\dots,n_F)$.
Let $\mathcal{Y}_+$ (resp. $\mathcal{Y}_-$) be the subset of 
$\mathcal{Y}$ consisting of $Y^{(a)}_m(u)$ such that
the integer
\begin{align}
\label{eq:bi1}
\theta^{(a)}_m(u):=u+p_{a+1}-(n_a-m)p_a
\end{align}
is even (resp. odd).
It is easy to check the following  property.

\begin{prop}
\label{prop:bi1}
In each relation of
the $Y$-system $\mathbb{Y}_{\mathrm{RSG}}(n_1,\dots,n_F)$,
if the variables in the left hand side are in $\mathcal{Y}_{+}$
(resp. $\mathcal{Y}_{-}$),
then 
the variables in the right hand side are also in
 $\mathcal{Y}_{+}$
(resp. $\mathcal{Y}_{-}$).
\end{prop}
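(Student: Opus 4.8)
The plan is to read $\theta^{(a)}_m(u)$ as a two-coloring of the index set underlying $\mathcal{Y}$ and to verify directly that its parity is constant across every relation of $\mathbb{Y}_{\mathrm{RSG}}(n_1,\dots,n_F)$. The starting observation is that the two factors on the left hand side of any relation already lie in the same class: since
\begin{align}
\theta^{(a)}_m(u\pm p_a)=\theta^{(a)}_m(u)\pm p_a,
\end{align}
the values $\theta^{(a)}_m(u-p_a)$ and $\theta^{(a)}_m(u+p_a)$ differ by the even number $2p_a$ and so share a common parity, which I denote by $L:=\theta^{(a)}_m(u)+p_a \pmod 2$. It then suffices to check, relation by relation, that every $Y$-variable occurring on the right hand side has $\theta$-parity equal to $L$.

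For the generic relation \eqref{eq:RSG1} the within-block neighbors $(a,m\pm1)$ are immediate, because $\theta^{(a)}_{m\pm1}(u)=\theta^{(a)}_m(u)\pm p_a\equiv L$. The only nontrivial point is the across-block neighbor $(a+1,1)$ appearing in the relation at $(a,n_a)$: there $L\equiv u+p_{a+1}+p_a$, while $\theta^{(a+1)}_1(u)=u+p_{a+2}-(n_{a+1}-1)p_{a+1}$, and the two agree precisely because of the recursion $p_{a+2}=n_{a+1}p_{a+1}+p_a$ coming from \eqref{eq:xi2}. The junction between block $1$ and block $2$ (where the last vertex is $(1,n_1-2)$ rather than $(1,n_1)$) is handled the same way, using $p_1=1$ and $p_2=n_1$.

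The heart of the matter, and the step I expect to be most delicate, is the two exceptional relations \eqref{eq:RSG5} and \eqref{eq:RSG4}. The crucial feature of the $u$-shifts appearing in their products is that they exactly compensate the $m$-dependence of $\theta$: a direct substitution gives
\begin{align}
\theta^{(a-1)}_m\bigl(u-p_a+(n_{a-1}+1-m)p_{a-1}\bigr)=u+p_{a-1},
\end{align}
independently of $m$, and likewise the second product and the long-range block-$(a-2)$ term (with its $\delta_{a3}$ correction, which is absorbed using $p_1=1$) all reduce to $u+p_{a-1}\pmod 2$. Matching this value against $L$ again comes down to \eqref{eq:xi2} together with the initial data $p_1=1$, $p_2=n_1$; the parallel bookkeeping for \eqref{eq:RSG5} uses $p_2=n_1$ and $p_3=n_2n_1+1$. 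The computation is entirely elementary, but the interplay of the telescoping products, the long-range block-$(a-2)$ term, and the $a=3$ exceptional case is where the care is needed. The degenerate case $n_1=2$ runs in parallel and I would dispatch it by the same computation.
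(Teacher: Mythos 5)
Your proposal is correct, and it carries out exactly the direct parity verification that the paper leaves to the reader (the paper offers no proof beyond the remark that the property ``is easy to check''): the key identities $\theta^{(a)}_{m\pm1}(u)=\theta^{(a)}_m(u)\pm p_a$, the telescoping $\theta^{(a-1)}_m\bigl(u-p_a+(n_{a-1}+1-m)p_{a-1}\bigr)=u+p_{a-1}$ independent of $m$, and the matching of parities via $p_{a+1}=n_ap_a+p_{a-1}$ all check out, including the block junctions and the $\delta_{a3}$ and $n_1=2$ cases.
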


Therefore,
one can {\em bisect\/} the RSG $Y$-system 
into the one for  $\mathcal{Y}_+$
and  the one for $\mathcal{Y}_-$.
They are equivalent systems related by the shift of
the parameter $u\rightarrow u+1$.
So it is enough to concentrate on the $Y$-systems
for  $\mathcal{Y}_+$.

\subsection{SG $Y$-systems}

Again, we exclude the case $(n_1)=(2)$ with $F=1$,
because the SG $Y$-system is equivalent to
the RSG $Y$-system with $(n_1)=(5)$ with $F=1$.

Let us reset the  $Y$-variables 
$ Y^{(a)}_m(u) $,
where  $u\in \mathbb{Z}$,
$a=1\dots, F$, and
\begin{align}
\label{eq:SGam}
m=
\begin{cases}
\overline{1},\overline{2},0,1,\dots, n_1-2&\mbox{if  $a=1$}\\
1,\dots, n_a & \mbox{if $a=2,\dots,F$}.
\end{cases}
\end{align}
Note that the three indices $\overline{1}$, $\overline{2}$, $0$
are added for $a=1$.
Let $X_{\mathrm{SG}}(n_1,\dots,n_F)$ be the Dynkin diagram
of type $D$ indexed by $(a,m)$ in the above range
as shown in Figure \ref{fig:XD}.

\begin{figure}
$$
\begin{xy}
(-20,5)*\cir<2pt>{},
(-20,-5)*\cir<2pt>{},
(-10,0)*\cir<2pt>{},
(0,0)*\cir<2pt>{},
(10,0)*\cir<2pt>{},
(20,0)*\cir<2pt>{},
(30,0)*\cir<2pt>{},
(40,0)*\cir<2pt>{},
(50,0)*\cir<2pt>{},
%(60,0)*\cir<2pt>{},
(70,0)*\cir<2pt>{},
(80,0)*\cir<2pt>{},
(90,0)*\cir<2pt>{},
(-30,5)*{a},
(0,10)*{\overbrace{\phantom{xxxxxxxxxxxxxxxxxx}}^{\displaystyle 1}},
(40,5)*{\overbrace{\phantom{xxxxxxxxx}}^{\displaystyle 2}},
(80,5)*{\overbrace{\phantom{xxxxxxxxx}}^{\displaystyle F}},
(-30,-5)*{m},
(-10,-5)*{0},
(0,-5)*{1},
(10,-5.5)*{\cdots},
(20,-5.5)*{n_1-2},
(30,-5)*{1},
(40,-5.5)*{\cdots},
(50,-5.5)*{n_{2}},
(70,-5)*{1},
(80,-5.5)*{\cdots},
(90,-5.5)*{n_F},
(-20, 1)*{\overline{1}},
(-20,-9)*{\overline{2}},
\ar@{-} (-11,0.5);(-19,4.5)
\ar@{-} (-11,-0.5);(-19,-4.5)
\ar@{-} (-1,0);(-9,0)
\ar@{-} (1,0);(9,0)
\ar@{-} (11,0);(19,0)
\ar@{-} (21,0);(29,0)
\ar@{-} (31,0);(39,0)
\ar@{-} (41,0);(49,0)
\ar@{--} (51,0);(69,0)
%\ar@{-} (61,0);(69,0)
\ar@{-} (71,0);(79,0)
\ar@{-} (81,0);(89,0)
\end{xy}
$$
\caption{The diagram $X_{\mathrm{SG}}(n_1,\dots,n_F)$.}
\label{fig:XD}
\end{figure}

\begin{defn}
The {\em sine-Gordon (SG) $Y$-system\/}
$\mathbb{Y}_{\mathrm{SG}}(n_1,\dots,n_F)$ is the following system of
relations:
\begin{itemize}
\item[(i).] the relation  \eqref{eq:RSG1}
for $(a,m)$ other than
$(2,1)$, $(3,1)$, \dots, $(F,1)$,
where the adjacency diagram
for \eqref{eq:RSG1} is  replaced with
 $X_{\mathrm{SG}}(n_1,\dots,n_F)$,
\item[(ii).] the relation for $(a,m)=(2,1)$, 
\begin{align}
\label{eq:SG5}
\begin{split}
&Y^{(2)}_{1}(u-p_{2})Y^{(2)}_{1}(u+p_{2})\\
&\qquad
=
(1+Y^{(2)}_{2}(u)^{-1})^{-1}
%\\
%&
%\qquad
%\quad
%\times
(1+Y^{(1)}_{\overline{1}}(u)^{-1})^{-1}
(1+Y^{(1)}_{\overline{2}}(u)^{-1})^{-1}
\\
&
\qquad
\quad
\times
\prod_{m=0}^{n_{1}-2}
(
1+Y^{(1)}_m(u -1-m)^{-{1}}
)^{-{1}}
(
1+Y^{(1)}_m(u +1+m)^{-{1}}
)^{-{1}},
\end{split}
\end{align}
\item[(iii).] the relation \eqref{eq:RSG4} for $(a,m)=(a,1)$ with $a=3,\dots,F$.
\end{itemize}
 \end{defn}

When $F=1$, only the relation
\eqref{eq:RSG1} is involved.
This is the well known {\em $Y$-system of type $D_{n_1+1}$\/} (with level 2).
So, the SG $Y$-systems are generalizations of the $Y$-systems of type $D$.

As the name suggests,
the RSG $Y$-system
$\mathbb{Y}_{\mathrm{RSG}}(n_1,\dots,n_F)$
is obtained from $\mathbb{Y}_{\mathrm{SG}}(n_1,\dots,n_F)$
by the following reduction \cite{Tateo95}.
The relation \eqref{eq:RSG1} at $(a,m)=(1,0)$ reads
\begin{align}
Y^{(1)}_{0}(u-1)
Y^{(1)}_{0}(u+1)
=
(1+Y^{(1)}_{1}(u))
(1+Y^{(1)}_{\overline{1}}(u))
(1+Y^{(1)}_{\overline{2}}(u)).
\end{align}
Thus, \eqref{eq:SG5} is equivalent to
\begin{align}
\label{eq:SG6}
\begin{split}
&Y^{(2)}_{1}(u-p_{2})Y^{(2)}_{1}(u+p_{2})\\
&\qquad=
Y^{(1)}_{\overline{1}}(u)
Y^{(1)}_{\overline{2}}(u)
(1+Y^{(2)}_{2}(u)^{-1})^{-1}
(1+Y^{(1)}_{1}(u))
\\
&
\qquad
\quad
\times
(1+Y^{(1)}_{0}(u-1))^{-{1}}
(1+Y^{(1)}_{0}(u+1))^{-{1}}
\\
&\qquad
\quad
\times
\prod_{m=1}^{n_{1}-2}
(
1+Y^{(1)}_m(u -1-m)^{-{1}}
)^{-{1}}
(
1+Y^{(1)}_m(u +1+m)^{-{1}}
)^{-{1}}.
\end{split}
\end{align}
Then, under the specialization
\begin{align}
\label{eq:reduction}
Y^{(1)}_{0}(u)=0,
\quad
Y^{(1)}_{\overline{1}}(u)= Y^{(1)}_{\overline{2}}(u)=-1,
\end{align}
$\mathbb{Y}_{\mathrm{SG}}(n_1,\dots,n_F)$
reduces to
$\mathbb{Y}_{\mathrm{RSG}}(n_1,\dots,n_F)$.

Let $\mathcal{Y}=\mathcal{Y}_{\mathrm{SG}}(n_1,\dots,n_F)$ be
 the set of all $Y$-variables of 
 $\mathbb{Y}_{\mathrm{SG}}(n_1,\dots,n_F)$.
Let $\mathcal{Y}_+$ (resp. $\mathcal{Y}_-$) be the subset of 
$\mathcal{Y}$ consisting of $Y^{(a)}_m(u)$ such that
the integer
\begin{align}
\label{eq:bi2}
\theta^{(a)}_m(u):=
\begin{cases}
u+p_{a+1}-(n_a-m)p_a &
(a,m)\neq (1,\overline{1}), (1,\overline{2})\\
u +1 &
(a,m)= (1,\overline{1}), (1,\overline{2})\\
\end{cases}
\end{align}
is even (resp. odd).

Again, it is easy to check the following property.

\begin{prop}
In each relation of
the $Y$-system $\mathbb{Y}_{\mathrm{SG}}(n_1,\dots,n_F)$,
if the variables in the left hand side are in $\mathcal{Y}_{+}$
(resp. $\mathcal{Y}_{-}$),
then 
the variables in the right hand side are also in
 $\mathcal{Y}_{+}$
(resp. $\mathcal{Y}_{-}$).
\end{prop}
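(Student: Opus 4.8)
The plan is to verify the claimed invariance of the bisection $\mathcal{Y}=\mathcal{Y}_+\sqcup\mathcal{Y}_-$ directly, by computing the parity of $\theta^{(a)}_m(u)$ defined in \eqref{eq:bi2} for every variable appearing in each relation of $\mathbb{Y}_{\mathrm{SG}}(n_1,\dots,n_F)$. Since the $\mathbb{Z}/2$-valued function $Y^{(a)}_m(u)\mapsto \theta^{(a)}_m(u)\bmod 2$ assigns a parity to each $Y$-variable, the assertion is equivalent to the statement that in each defining relation all variables on both sides share a common parity. I would establish this relation-type by relation-type, following the three cases (i), (ii), (iii) of the definition, and reducing everything to elementary congruences modulo $2$ among the $p_a$ and $q_a$. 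The essential input is the parity behavior of the continued-fraction data: from $q_0=1$, $q_1=n_1$ and the recursion \eqref{eq:xi1} together with \eqref{eq:p-q1} one reads off the parities of all $p_a,q_a$, and this is the only arithmetic one needs.

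For the generic relation \eqref{eq:RSG1} at a vertex $(a,m)$, the left-hand side carries $u\mapsto u\pm p_a$, which shifts $\theta^{(a)}_m(u)$ by $\pm p_a$; so I would first check that both $\theta^{(a)}_m(u\pm p_a)$ have the same parity as $\theta^{(a)}_m(u)$, i.e.\ that $p_a$ shifts in \eqref{eq:RSG1} preserve the parity of the left-hand side as a pair, and then that each neighbor $(b,k)\sim(a,m)$ evaluated at argument $u$ satisfies $\theta^{(b)}_k(u)\equiv \theta^{(a)}_m(u)\pmod 2$. Adjacencies in $X_{\mathrm{SG}}(n_1,\dots,n_F)$ are of three kinds: along a single block $a$ (so $m\to m\pm 1$, changing $\theta^{(a)}_m$ by $\mp p_a$), across the junction between block $a-1$ and block $a$ (relating $(a-1,n_{a-1})$ or the exceptional $(a,1)$ to a neighbor), and the three $D$-type legs $\overline{1},\overline{2},0,1$ at the left end. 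For the in-block and $D$-leg cases the computation is immediate once the parities of $p_a$ are fixed; the definition \eqref{eq:bi2} was evidently arranged precisely so that the constant shift $+p_{a+1}-(n_a-m)p_a$ makes neighboring vertices congruent.

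The delicate relations are the exceptional ones, \eqref{eq:SG5} for $(2,1)$ and \eqref{eq:RSG4} for $(a,1)$ with $a\geq 3$, because there the arguments of the right-hand side variables are shifted by the nonconstant amounts $u\mp(1+m)$ and $u\pm p_a\mp(n_{a-1}+1-m)p_{a-1}$. Here I would use the identity $-p_a+n_{a-1}p_{a-1}=-p_{a-2}$ recorded just after Definition~\ref{defn:RSG} (equivalently \eqref{eq:xi2} via \eqref{eq:p-q1}) to rewrite the shift $+p_a-(n_{a-1}+1-m)p_{a-1}=p_{a-2}+(m-1)p_{a-1}$, and then verify that $\theta^{(a-1)}_m$ evaluated at this argument is congruent mod $2$ to $\theta^{(a)}_1(u)$ uniformly in $m$. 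The cleanest way is to observe that as $m$ ranges over $1,\dots,n_{a-1}$ the increment $(m-1)p_{a-1}$ together with the $+m p_{a-1}$-type term inside $\theta^{(a-1)}_m=u'+p_a-(n_{a-1}-m)p_{a-1}$ combine so that the $m$-dependence cancels modulo $2$, leaving a constant congruence; the analogous cancellation handles the product over $m$ in \eqref{eq:SG5} using $p_2=n_1$, $p_1=1$. This $m$-uniform cancellation is the one point that requires genuine care rather than bookkeeping, so I expect the main obstacle to be organizing the exceptional-vertex congruences cleanly enough that the single identity $-p_a+n_{a-1}p_{a-1}=-p_{a-2}$ visibly forces all terms into one parity class; once that is in place the remaining cases, including the $D$-type legs and the reduction-compatibility with \eqref{eq:reduction}, are routine and the proposition follows.
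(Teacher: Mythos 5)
Your proposal is correct and coincides with the paper's (unwritten) argument: the paper merely asserts that the statement is ``easy to check,'' meaning precisely the direct relation-by-relation parity verification of $\theta^{(a)}_m(u)$ from \eqref{eq:bi2} that you describe, with the $m$-uniform cancellation in \eqref{eq:SG5} and \eqref{eq:RSG4} (via $-p_a+n_{a-1}p_{a-1}=-p_{a-2}$ and $p_2=n_1$, $p_1=1$) being the only nontrivial point. One small correction to your bookkeeping: the common parity class of a relation is $\theta^{(a)}_m(u)+p_a \bmod 2$, not $\theta^{(a)}_m(u)\bmod 2$, since the left-hand-side variables are evaluated at $u\pm p_a$; this is consistent with your opening formulation that all variables actually occurring in a relation share one parity, and does not affect the argument.
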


So, it is enough to concentrate on
 the SG $Y$-systems for  $\mathcal{Y}_+$.

\subsection{Periodicity}
The first main result of the paper is to prove the following
remarkable periodicity of the RSG and SG $Y$-systems
conjectured by \cite{Tateo95}.

Recall that  $r=r^{(1)}$ is the number defined in \eqref{eq:rad}.

\begin{thm} [\cite{Gliozzi96}]
\label{thm:periodRSG}
The RSG $Y$-system 
$\mathbb{Y}_{\mathrm{RSG}}(n_1,\dots,n_F)$ has the following periodicity.\begin{align}
Y^{(a)}_m(u + 2 r) = Y^{(a)}_m(u).
\end{align}
Furthermore, the above period is the minimal one
except for the trivial case $(n_1)=(3)$ with $F=1$,
where, not $2r$, but $r=4$ is the minimal one.
\end{thm}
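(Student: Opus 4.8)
The plan is to realize $\mathbb{Y}_{\mathrm{RSG}}(n_1,\dots,n_F)$ by a triangulated convex polygon and to read off the periodicity from the fact that the $u$-evolution is a rotation of finite order. First I would construct, from the continued-fraction data, an explicit initial triangulation $T_0$ of a convex $r$-gon $P$ (with $r=p_F+q_F$ as in \eqref{eq:rad}), labelling the vertices cyclically by $\mathbb{Z}/r\mathbb{Z}$. The recursions \eqref{eq:xi1}--\eqref{eq:xi2} and the identities of Propositions \ref{prop:cf0} and \ref{prop:cf1} are exactly what dictates the nested fan pattern of $T_0$: the integer $p_a$ is the vertex-displacement of the diagonal indexed by a pair $(a,m)$ under an elementary move, and the relation $-p_a+n_{a-1}p_{a-1}=-p_{a-2}$ from \eqref{eq:xi2} is the geometric statement that successive fans close up consistently.

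Next I would install the cluster-algebraic dictionary via the surface realization of type $A$ by \cite{Fomin08}. The diagonals carry lambda lengths whose Ptolemy relations under a flip reproduce the exchange relations, and the associated $y$-variables obey precisely the mutation rule appearing on the right-hand sides of \eqref{eq:RSG1}, \eqref{eq:RSG5}, and \eqref{eq:RSG4}. The substantive work is to match each $Y$-system relation with the flip of one specific diagonal: the product $\prod_{(b,k)\sim(a,m)}(1+Y^{(b)}_k(u)^{\varepsilon_b})^{\varepsilon_b}$ must be identified with the quadrilateral surrounding that diagonal, and the staggered shifts such as $u-p_a+(n_{a-1}+1-m)p_{a-1}$ in \eqref{eq:RSG4} must be shown to be exactly the $u$-coordinates of the neighbouring diagonals in $P$. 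This is where the continued-fraction identities are consumed and where the exceptional relations \eqref{eq:RSG5}--\eqref{eq:RSG4} demand the most careful bookkeeping.

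The geometric heart is the quasi-symmetry of $T_0$. I would identify the two axes $\ell_1,\ell_2$ of $P$ and show that $T_0$ is symmetric about each axis apart from the diagonals crossing it; flipping exactly those non-symmetric diagonals defines two involutive flip-sequences $\sigma_1,\sigma_2$, and I would prove that $\sigma_i$ realizes the Euclidean reflection of $P$ in $\ell_i$. Being orientation-reversing, $\sigma_i$ implements a time reversal $u\mapsto 2c_i-u$ of the bisected system, so the composition $\rho=\sigma_2\sigma_1$ realizes the rotation of $P$ by one vertex and, under the dictionary, the minimal $\mathcal{Y}_+$-preserving shift $u\mapsto u+2$. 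Since $\rho$ has order $r$ on the $r$-gon, the full rotation $\rho^{\,r}=\mathrm{id}$ returns $T_0$ to itself and yields $Y^{(a)}_m(u+2r)=Y^{(a)}_m(u)$; restoring the bisection gives the stated periodicity for the full system.

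For minimality I would note that a smaller period forces some $\rho^{\,j}$ with $0<j<r$ to fix $T_0$, i.e.\ a genuine rotational symmetry of the triangulation. Tracking the $\rho$-orbit of a single diagonal, most cleanly through its tropical ($c$-vector) sign pattern so that the corresponding $Y$-variable is seen not to return early, shows the orbit has full length $r$ and hence no such symmetry exists; the sole exception is when $T_0$ is the diagonal of a square, which is exactly the degenerate case $(n_1)=(3)$, $F=1$, where $\rho^{\,2}$ already fixes $T_0$ and the minimal period drops to $r=4$. I expect the main obstacle to be the dictionary step: checking in full generality, and uniformly across the boundary cases $a=3$ and $n_1=2$, that the intricate shift-and-product structure of \eqref{eq:RSG4} coincides with the surrounding-quadrilateral data of the flips. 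Once that identification is secured, both periodicity and its minimality follow formally from the order of $\rho$.
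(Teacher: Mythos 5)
Your overall strategy is the one the paper follows (initial triangulation of an $r$-gon built from the continued fraction, quasi-symmetry about two axes, flips of the non-symmetric crossing diagonals realizing reflections, their composite realizing a rotation, and the full rotation giving the period $2r$), but two of your central quantitative claims are wrong, and fixing them requires ingredients you never invoke. First, the composite $\rho=\sigma_2\sigma_1$ of the two reflections is \emph{not} the rotation by one vertex: the two axes are separated by $r^{(2)}/2$ units, so $\rho=\Sigma^{r^{(2)}}$ (see \eqref{eq:periodRSG1}; already for $(n_1,n_2)=(6,4)$ one gets $\Sigma^5$ on a $31$-gon). Your assertion that ``$\rho$ has order $r$'' therefore does not follow from the geometry alone; it requires $\gcd(r,r^{(2)})=1$, which is Proposition \ref{prop:cf1}\,(d) and is exactly what the paper uses both to conclude $\rho^r=\mathrm{id}$ is the \emph{first} return and to get minimality of the period. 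Your minimality argument (``track the $\rho$-orbit of a single diagonal'') is thus incomplete as stated: with $\rho=\Sigma^{r^{(2)}}$ the orbit of, say, the diagonal $(1,1)$ has length $r$ precisely because of this coprimality, not because the rotation step is one vertex.

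Second, you treat each index $(a,m)$ as naming ``one specific diagonal'' and read $\rho^{\,r}=\mathrm{id}$ as an exact return of the labeled triangulation. For $a\geq 2$ the index $(a,m)$ is carried by $p_a$ distinct diagonals $(a,m)_s$, and two time steps return the triangulation only up to the cyclic relabeling $\nu:(a,m)_s\mapsto(a,m)_{s+p^{(2)}_a}$, so after $2r$ steps one obtains $\nu^{r}(\Gamma(0))$, not $\Gamma(0)$ as a labeled object. The realization of the $Y$-system (and hence the periodicity statement about the $Y$-variables) only works because one identifies $y^{(a)}_{m,s}(u)$ with $Y^{(a)}_m(u)$ \emph{regardless of $s$}, and only at forward mutation points; the periodicity of the $Y$-system is then deduced from this $\nu$-periodicity of seeds. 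Without this redundancy and the accompanying bookkeeping (which is also what makes the shifts $u\pm p_a$ in the left-hand sides and the staggered arguments in \eqref{eq:RSG4} come out right, via the relations $p_{a-1}p^{(2)}_a\equiv(-1)^a \bmod p_a$ of Proposition \ref{prop:cf1}\,(e)), neither your dictionary step nor your concluding periodicity argument goes through. The skeleton of your proposal is right, but these are not cosmetic omissions: they are where the continued-fraction arithmetic actually enters the proof.
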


\begin{thm} [{\cite{Nakanishi10b} for $F=2$}]
\label{thm:periodSG}The SG $Y$-system 
$\mathbb{Y}_{\mathrm{SG}}(n_1,\dots,n_F)$ has the following periodicity.
\par
(i). If $r$ is even, we have
\begin{align}
Y^{(a)}_m(u + 2r) = Y^{(a)}_m(u).
\end{align}
\par
(ii). If $r$ is odd, we have
\begin{align}
\mbox{(half periodicity)}\quad
Y^{(a)}_m(u + 2r) &= 
\begin{cases}
Y^{(1)}_{\overline{2}}(u) & (a,m)=(1,\overline{1})\\
Y^{(1)}_{\overline{1}}(u) & (a,m)=(1,\overline{2})\\
Y^{(a)}_m(u) & \mbox{otherwise},
\end{cases}\\
\mbox{(full periodicity)}\quad
Y^{(a)}_m(u +  4r) & = Y^{(a)}_m(u).
\end{align}
Furthermore, the above period is the minimal one.
\end{thm}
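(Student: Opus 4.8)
The plan is to treat the SG system exactly as the RSG system in Section 5, but on a once-punctured $r$-gon instead of an ordinary polygon, and to read off the periodicity from the geometry of the rotation together with the behaviour of the two \emph{tagged} arcs at the puncture. Concretely, I would first build the initial tagged triangulation of the once-punctured $r$-gon attached to the continued fraction $[n_F,\dots,n_1]$, upgrading the RSG triangulation by placing a puncture at the centre and adjoining the two tagged arcs (one plain, one notched) that produce the fork nodes $(1,\overline{1})$ and $(1,\overline{2})$ of the type $D$ diagram $X_{\mathrm{SG}}(n_1,\dots,n_F)$; this is the content of the adaptation carried out in Section 6. I would then verify, arc by arc, that the SG relations (i)--(iii) -- including the modified relation \eqref{eq:SG5} at $(2,1)$ in which the $\overline{1},\overline{2}$ factors appear -- are exactly the flips produced by reflecting across the two axes of quasi-symmetry of this triangulation, precisely as in the RSG case. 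This reduces the whole system to the elementary geometry of these reflections.

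Next I would analyse the composite of the two reflections. Each reflection is an orientation-reversing symmetry of the punctured polygon, and composing the two gives an orientation-preserving rotation $\rho$; iterating $\rho$ realises the shift $u\mapsto u+2$ on the bisected system, and after a full geometric turn one recovers the shift $u\mapsto u+2r$. At the level of the \emph{underlying} (untagged) triangulation this full turn is the identity, so every $Y$-variable other than the two forks already satisfies $Y^{(a)}_m(u+2r)=Y^{(a)}_m(u)$; the only thing left to decide is how the period transformation permutes the pair $\{(1,\overline{1}),(1,\overline{2})\}$ at the puncture.

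The crux, and the one genuinely new point compared with the RSG Theorem \ref{thm:periodRSG}, is this tagging analysis. Because the notch is defined using the orientation of the surface, each reflection interchanges plain and notched arcs at the puncture, whereas the orientation-preserving rotation $\rho$ does not; so the net effect of the period-$2r$ transformation on the two forks is governed by a parity. I would make this precise through the orientation double cover of the punctured $r$-gon, namely an ordinary $2r$-gon whose deck involution is the half-turn $\sigma$ (rotation by $r$ vertices): a tagged triangulation of the punctured polygon lifts to a $\sigma$-invariant triangulation upstairs, the two forks correspond to the two sheets lying over a fixed radius, and the action of the period transformation on the forks is measured by whether its lift preserves or exchanges these sheets. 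Carrying out this bookkeeping -- matching the number of puncture-incident tag changes over one period to the parity of $r$ -- one finds that the lift swaps the two sheets exactly when $r$ is odd. Thus for $r$ even the forks return to themselves and one obtains the full period $Y^{(a)}_m(u+2r)=Y^{(a)}_m(u)$, while for $r$ odd the forks are swapped, giving the half-periodicity $Y^{(1)}_{\overline{1}}(u+2r)=Y^{(1)}_{\overline{2}}(u)$ and $Y^{(1)}_{\overline{2}}(u+2r)=Y^{(1)}_{\overline{1}}(u)$ with all other variables fixed, and hence the full period $4r$ since the swap is an involution. I expect this parity computation to be the main obstacle, since it is exactly the delicate point that the unpunctured RSG argument never encounters.

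Finally, for minimality I would track the orbit of a single well-chosen arc under $\rho$: a boundary-to-boundary diagonal already shows that no nontrivial rotation smaller than the full turn can fix the triangulation, giving minimality $2r$ in the even case, while following one of the two fork arcs shows that the tags are genuinely exchanged at $2r$ and only restored at $4r$ in the odd case, so that $4r$ is minimal there as claimed.
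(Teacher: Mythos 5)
Your overall strategy coincides with the paper's (Section \ref{subsec:periodSG}): keep the RSG triangulation, puncture the centre, adjoin the plain and notched arcs for $(1,\overline{1})$, $(1,\overline{2})$, check that the only new relation to verify is \eqref{eq:SG5} via the snapshot method, and read the periodicity off the reflection--reflection--rotation mechanism. Up to that point you are on the same track as the paper.

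The gap is exactly at the point you yourself flag as the main obstacle: the behaviour of the two fork arcs over one period. Your stated mechanism --- ``each reflection interchanges plain and notched arcs at the puncture, whereas the orientation-preserving rotation $\rho$ does not'' --- would force the composite of the two reflections (one step $u\mapsto u+2$) to act trivially on the pair $\{(1,\overline{1}),(1,\overline{2})\}$, and hence $\rho^{r}$ would never swap them; this contradicts the half-periodicity you must prove for odd $r$. The correct mechanism, which the paper establishes by directly inspecting the mutations (both fork arcs lie in $S(-1)$ and are flipped only at odd $u$; see Figure \ref{fig:d7seq} and the discussion in Section \ref{subsec:SGex}), is that the labels $(1,\overline{1})$ and $(1,\overline{2})$ are interchanged \emph{once every two time units}, so that over $0\le u<2r$ they are interchanged $r$ times, yielding the swap precisely when $r$ is odd. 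Your double-cover bookkeeping is left unexecuted, and as sketched it is not clear it lands on the parity of $r$: the rotation per two time units is by $r^{(2)}$ vertices, so a naive ``odd rotation swaps the sheets'' count in the $2r$-gon would produce the parity of $r\cdot r^{(2)}$ modulo $2r$ rather than of $r$, and one still has to relate the sheet-exchange upstairs to the tag-exchange downstairs. Two smaller points: for the non-fork variables you cannot quite say the full turn is the identity on labelled triangulations --- as in the RSG case one gets $\Gamma(u+2r)=\nu^{r}(\Gamma(u))$ and must invoke the fact that the identification of $y$-variables with $Y$-variables ignores the relabelling $\nu$; and for minimality the paper's argument rests on the coprimality of $r$ and $r^{(2)}$ (Proposition \ref{prop:cf1}\,(d)) together with the seed--triangulation correspondence, not merely on the observation that a nontrivial rotation cannot fix a single diagonal.
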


Theorem \ref{thm:periodRSG} was proved in \cite{Gliozzi96} by solving the
$Y$-system in terms of cross-ratios of $r$ points.
Also, Theorems \ref{thm:periodRSG} and \ref{thm:periodSG} were   proved
by \cite{Nakanishi10b} for $F=2$ by the cluster algebraic 
formulation of the RSG and SG $Y$-systems.

Proofs of Theorems  \ref{thm:periodRSG}
and  \ref{thm:periodSG}
will be given in 
Sections \ref{subsec:periodRSG} and \ref{subsec:periodSG},
respectively.

\begin{rem}
\label{rem:period1}
(a). The statement of periodicity in Theorem \ref{thm:periodSG}
was not correctly stated in \cite{Tateo95} in the case $r$ is even,
 and corrected in the above form in \cite{Nakanishi10b} in the case of $F=2$.
 (For $F=1$, it is well known \cite{Fomin07}.)
\par
(b). 
The following (half) periodicity property of the RSG $Y$-systems for $F=1$
  is well known \cite{Gliozzi96,Frenkel95}):
For $\mathbb{Y}_{\mathrm{RSG}}(n_1)$,
\begin{align}
Y^{(1)}_m(u + r) = Y^{(1)}_{n_1-1-m}(u).
\end{align}
For $n_1=3$, it degenerates to the full periodicity,
so that we have the exception in Theorem \ref{thm:periodRSG}.
\par
(c). Theorem \ref{thm:periodRSG} also follows from 
Theorem \ref{thm:periodSG}
by the reduction \eqref{eq:reduction}.
\end{rem}

\section{Example: RSG $Y$-system $\mathbb{Y}_{\mathrm{RSG}}(6,4,3)$
}
\label{sec:643}

We start from the formulation of the RSG $Y$-systems
by polygons.
It is simpler than the one for the SG $Y$-systems,
because  we do not have to consider punctures.

In this section
we concentrate on  our running example $F=3$, $(n_1,n_2,n_3)=(6,4,3)$
set in Example \ref{ex:743}.
We explain in detail  how to formulate the
RSG $Y$-system 
$\mathbb{Y}_{\mathrm{RSG}}(6,4,3)$
by a $106$-gon, where $106=r$ for $(6,4,3)$,
and also how to prove the periodicity.
We heuristically derive the construction of its polygon realization
in three steps  along the {\em generations}.
Once we understand the essence of this example,
the generalization is not so difficult.

\subsection{Mutation in cluster algebras}
\label{subsec:exchange}

Here we recall
(a part of) the mutation rule  of a cluster algebra
with coefficients.
See \cite{Fomin07} for detail.
Let $(B,y)$ be a labeled $Y$-seed, consisting of 
an exchange matrix $B=(b_{ij})_{i,j\in I}$,
 and a coefficient tuple $y=(y_i)_{i\in I}$,
 where
 coefficients $y_i$ are in the universal semifield
 generated by the initial coefficients.
The mutation $(B',y')=\mu_k(B,y)$ at $k\in I$ is defined by
 the following rule.
  \begin{align}
 b'_{ij}&=
 \begin{cases}
 -b_{ij} & \mbox{$i=k$ or $j=k$}\\
 b_{ij}
 + b_{ik}[b_{kj}]_+
 +[-b_{ik}]_+  b_{kj}  & i,j\neq k,
 \end{cases}\\
   \label{eq:yrel}
 y'_{i}&=
 \begin{cases}
 y_{i}^{-1} &i=k\\
 \displaystyle
 y_{i}
 \frac{(1+ y_k)^{[-b_{ki}]_+ }}
  {(1+ y_k^{-1})^{[b_{ki}]_+}}
 & i \neq k,
 \end{cases}
 \end{align}
where $[a]_+=a$ for $a>0$ and $0$ otherwise.
The relation \eqref{eq:yrel} is called the {\em exchange relation\/}
for coefficients ($y$-variables).

Following the convention of \cite{Fomin08,Fomin08b},
we identify a triangulation of a polygon with
a skew-symmetric matrix $B$ in the following way:
each diagonal is identified with a label $i$ of $B$,
and if two diagonals $i$ and $j$ share
a common triangle,
and $j$ follows $i$ anticlockwise (resp. clockwise),
then $b_{ij}=1$ (resp. $b_{ij}=-1$).
Otherwise, $b_{ij}=0$.
 For example,
for the following triangle
\medskip
\begin{align}
\label{eq:tri1}
\raisebox{-20pt}
{\begin{xy}
(15,-3)*{k},
(5.5,9.3)*{i},
(26,9)*{j},
(6,0); (4,4) **\crv{(5.3,2.3)} ?>*\dir{>};
(26,4); (24,0) **\crv{(24.7,2.3)} ?>*\dir{>}
%(6,0); (3,3) **\crv{(4.2,4.2)} \dir{>}
%\ar @{u,u} @{->}  (6,0);(4,4)
\ar@{-} (0,0);(30,0)
\ar@{-} (30,0);(15,15)
\ar@{-} (15,15);(0,0)
\end{xy}
}
\end{align}
we have $b_{ki}=1$ and $b_{kj}=-1$.

By the mutation at $k$,
the diagonal $k$ {\em flips} as follows.
\begin{align}
\label{eq:tri2}
\raisebox{0pt}
{
\begin{xy}
(10,-3)*{k},
(47.5,0)*{k},
%(6,9)*{i},
%(26,9)*{j},
%
\ar@{-} (0,0);(20,0)
\ar@{-} (20,0);(10,10)
\ar@{-} (10,10);(0,0)
\ar@{-} (10,-10);(0,0)
\ar@{-} (20,0);(10,-10)
\ar@{<->} (25,0);(35,0)
\ar@{-} (50,10);(50,-10)
\ar@{-} (60,0);(50,10)
\ar@{-} (50,10);(40,0)
\ar@{-} (50,-10);(40,0)
\ar@{-} (60,0);(50,-10)
\end{xy}
}
\end{align}
Meanwhile,
the $y$-variables
with the indices $k$, $i$, $j$ for the triangle
  \eqref{eq:tri1} mutate as
\begin{align}
\label{eq:yex}
y'_k=y_k^{-1},
\quad
y'_i=y_i (1+y^{-1}_k)^{-1},
\quad
y'_j = y_j (1+y_k).
\end{align}
We only need the rules \eqref{eq:tri2} and
\eqref{eq:yex} to formulate the RSG $Y$-systems.

\subsection{First generation: $F=1$, $n_1=6$}
\label{subsec:RSG6}
We start from the RSG $Y$-system $\mathbb{Y}_{\mathrm{RSG}}(6)$
corresponding to the input data
 $F=1$, $(n_1)=(6)$,
which is the ``first generation'' of $\mathbb{Y}_{\mathrm{RSG}}(6,4,3)$.
The only relations in $\mathbb{Y}_{\mathrm{RSG}}(6)$
are those in
\eqref{eq:RSG1},
which
are explicitly written as follows:
\begin{align}
\label{eq:Y7}
\begin{split}
Y^{(1)}_1(u-1)Y^{(1)}_1(u+1)
&=1+Y^{(1)}_2(u),\\
Y^{(1)}_2(u-1)Y^{(1)}_2(u+1)
&=(1+Y^{(1)}_1(u))(1+Y^{(1)}_3(u)),\\
Y^{(1)}_3(u-1)Y^{(1)}_3(u+1)
&=(1+Y^{(1)}_2(u))(1+Y^{(1)}_4(u)),\\
Y^{(1)}_4(u-1)Y^{(1)}_4(u+1)
&=1+Y^{(1)}_3(u).
\end{split}
\end{align}
As already mentioned in Section \ref{subsec:RSGY},
this is the well known $Y$-system of type $A_4$.
The underlying cluster algebra is of type $A_4$,
and it is realized by a $7$-gon.
Note that $7=r$ for $(n_1)=6$.

\begin{figure}
	\begin{center}
		\includegraphics[scale=.5]{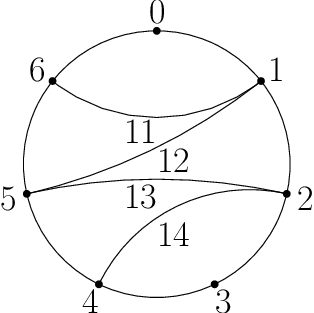}
	\end{center}
\caption{The initial triangulation $\Gamma_{\mathrm{RSG}}(6)$ of a 7-gon.}
\label{fig:7gon}
\end{figure}

Let us explain more explicitly how our $Y$-system
$\mathbb{Y}_{\mathrm{RSG}}(6)$ is realized by
a {\em sequence of triangulations\/} of a 7-gon.
To start, we take a specific triangulation of a 7-gon
$\Gamma_{\mathrm{RSG}}(6)$
as in Figure \ref{fig:7gon}.
Furthermore, we label diagonals with indices $(1,1)$, $(1,2)$, $(1,3)$, $(1,4)$
($11$, $12$, $13$, $14$ for short in Figure \ref{fig:7gon}), which
naturally 
correspond to the indices $(a,m)$ of $Y^{(a)}_m(u)$.
For the reason that will be apparent soon,
we also attach the sign $+$ to $(1,1)$ and $(1,3)$,
 and $-$ to $(1,2)$ and $(1,4)$,
 though they are not part of the labels.
The labeled triangulation $\Gamma_{\mathrm{RSG}}(6)$
serves as
 the {\em initial (labeled) seed\/} of a cluster algebra 
 of type $A_4$ with coefficients.
 Under the convention in Section \ref{subsec:exchange},
 the triangulation corresponds to the following alternating quiver,
 where we write an arrow form $i$ to $j$ if $b_{ij}=1$.
\medskip
$$
\begin{xy}
(0,0)*\cir<2pt>{},
(15,0)*\cir<2pt>{},
(30,0)*\cir<2pt>{},
(45,0)*\cir<2pt>{},
(0,-5)*{(1,1)^+},
(15,-5)*{(1,2)^-},
(30,-5)*{(1,3)^+},
(45,-5)*{(1,4)^-},
\ar@{->} (1,0);(14,0)
\ar@{<-} (16,0);(29,0)
\ar@{->} (31,0);(44,0)
\end{xy}
$$
\medskip
Starting from the {\em initial labeled
 triangulation\/} $\Gamma(0):=\Gamma_{\mathrm{RSG}}(6)$ at ``time'' $u=0$,
flip the diagonals with sign $-$ to obtain a new labeled triangulation
$\Gamma(1)$ at time $u=1$, then
flip the diagonals with sign $+$ to obtain a new labeled triangulation
$\Gamma(2)$ at time $u=2$.
By repeating this procedure, and also by doing it backward,
 we obtain a  sequence of labeled triangulations,
\begin{align}
\label{eq:7seq}
\cdots
\buildrel - \over{\longleftrightarrow}
\Gamma(-1)
\buildrel + \over{\longleftrightarrow}
\Gamma(0)
\buildrel - \over{\longleftrightarrow}
\Gamma(1)
\buildrel + \over{\longleftrightarrow}
\Gamma(2)
\buildrel - \over{\longleftrightarrow}
\cdots,
\end{align}
which is illustrated in Figure \ref{fig:7seq}.
The labels mutated at time $u$ 
in the forward direction $u+1$ 
in \eqref{eq:7seq}
are called the
{\em forward mutation points at $u$}, and
the corresponding diagonals are  marked by a circle
in  Figure \ref{fig:7seq}.

\begin{figure}
	\begin{center}
		\includegraphics[scale=.47]{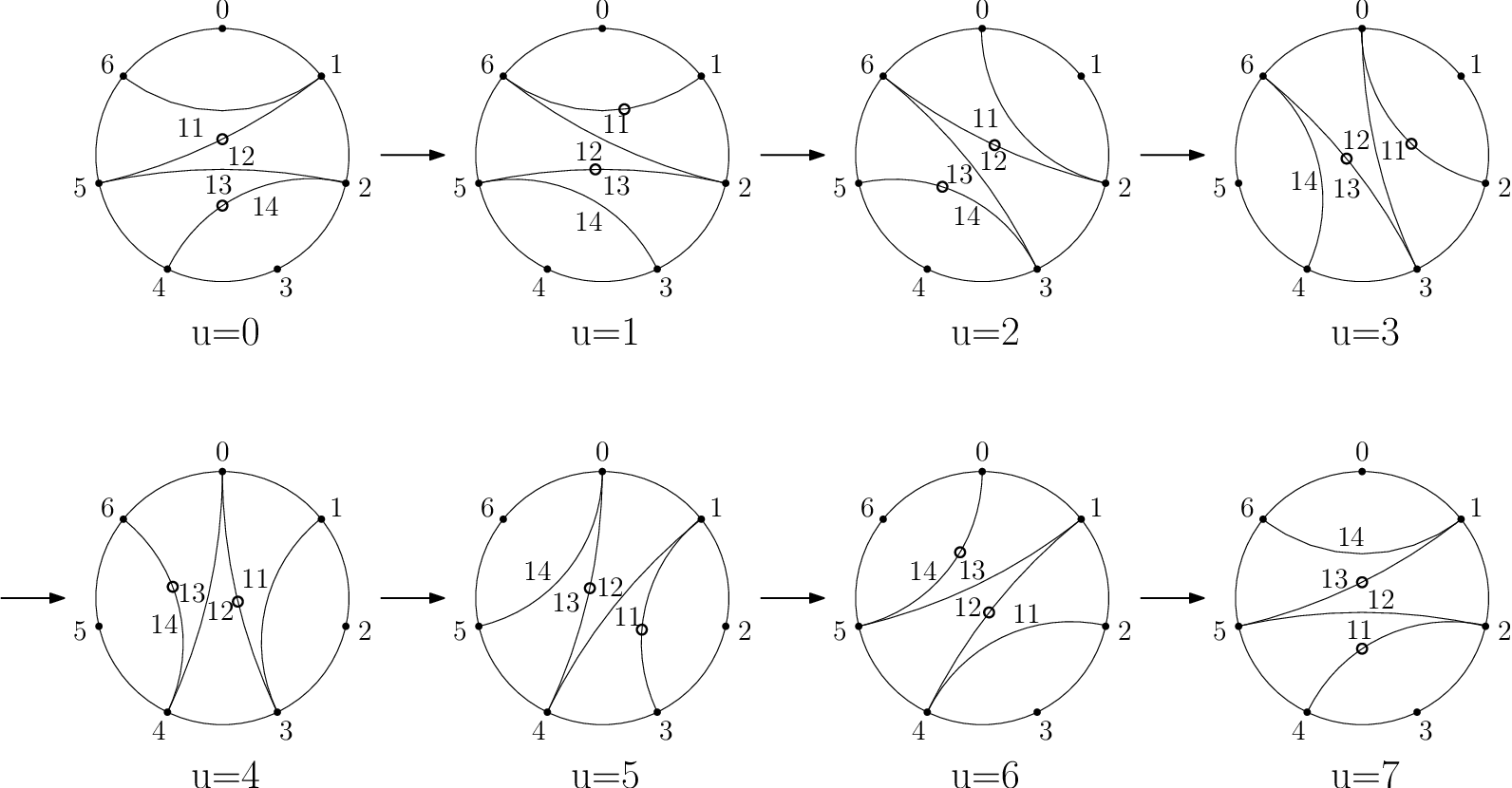}
	\end{center}
\caption{The mutation sequence 
\eqref{eq:7seq} at $u=0,\dots,7$.}
\label{fig:7seq}
\end{figure}

We can easily observe the following  facts in
Figure \ref{fig:7seq}.

{\bf Fact 1. Rotation of triangulations.}
As a labeled triangulation, we have
\begin{align}
\label{eq:period7}
\Gamma(u+2)
=
\Sigma(\Gamma(u)),
\end{align}
where $\Sigma$ denotes the   clockwise rotation of a labeled triangulation
of a polygon by one unit.

{\bf Fact 2. Realization of $Y$-system.}
The property \eqref{eq:period7},
in particular,  implies the periodicity
$B(u+2)=B(u)$ of the corresponding
exchange matrices.
One can generally associate a $Y$-system
to such a  periodicity of exchange matrices.
See \cite{Nakanishi10c} for a general procedure.
Here we explain it along the current example.
To each diagonal with index $(1,m)$ of a labeled triangulation $\Gamma(u)$,
we attach a coefficient ($y$-variable) of the cluster algebra in the universal
semifield,
which is naturally denoted by $y^{(1)}_m(u)$.
They mutate by the rule 
\eqref{eq:yrel}.
For example, we have
\begin{align}
\label{eq:y13}
y^{(1)}_1(2)&=y^{(1)}_1(1)^{-1},\\
\label{eq:y14}
y^{(1)}_1(3)&=y^{(1)}_1(2)(1+y^{(1)}_2(2)),\\
\label{eq:y11}
y^{(1)}_2(1)&=y^{(1)}_2(0)^{-1},\\
\label{eq:y12}
y^{(1)}_2(2)&=y^{(1)}_2(1)(1+y^{(1)}_1(1))(1+y^{(1)}_3(1)),
\end{align}
and so on.
Here is an important point:
at each time $u$,
we identify these $y$-variables $y^{(1)}_m(u)$ 
with the $Y$-variables $Y^{(1)}_m(u)$ of our $Y$-system
{\em only at the forward mutation points.}
For example,
at $u=0$ we have $Y^{(1)}_2(0)=y^{(1)}_2(0)$, $Y^{(1)}_4(0)=y^{(1)}_4(0)$,
and at $u=1$  we have $Y^{(1)}_1(1)=y^{(1)}_1(1)$,
$Y^{(1)}_3(1)=y^{(1)}_3(1)$, and so forth.
Then, the product of  \eqref{eq:y11} and \eqref{eq:y12},
and the product of  \eqref{eq:y13} and \eqref{eq:y14}, respectively,
yield the relations
\begin{align}
Y^{(1)}_1(1)Y^{(1)}_1(3)&=(1+Y^{(1)}_2(2)),\\
Y^{(1)}_2(0)Y^{(1)}_2(2)&=(1+Y^{(1)}_1(1))(1+Y^{(1)}_3(1)),
\end{align}
which agree with  our $Y$-system  \eqref{eq:Y7}.

To be more precise,
in the above procedure,
only the variables $Y^{(1)}_m(u)$
in $\mathcal{Y}_+$ appear.
Therefore, we realize the $Y$-system {\em for\/}
$\mathcal{Y}_+$ in the sense of Proposition
\ref{prop:bi1}.
Later we will see that this is a general phenomenon.

{\bf Fact 3. Periodicity of  $Y$-system.}
This is an immediate corollary of Facts 1 and 2.
Since $\Gamma(0)$ is a 7-gon, it follows from \eqref{eq:period7} that,
as a labeled triangulation,
\begin{align}
\label{eq:tp1}
\Gamma(14)=\Sigma^7(\Gamma(0))=\Gamma(0).
\end{align}

According to  \cite{Fomin08},
the labeled triangulations of an $n$-gon
bijectively parametrize
the (labeled) seeds of the cluster algebra of type $A_{n-3}$
 with any coefficients.
Thus,  
the periodicity  \eqref{eq:tp1} directly implies the 
periodicity of {\em seeds\/}
and, in particular,
the  periodicity of {\em $y$-variables}.
Thus, we have the  periodicity of {\em $Y$-variables}.

Alternatively, due to the results of \cite{Fock05,Fomin08b},
the labeled triangulation
completely determines
the attached {\em principal coefficients} 
(equivalently, the $c$-vectors, or the tropical $y$-variables)
of \cite{Fomin07}.
Thus,  
the periodicity  \eqref{eq:tp1}
implies the periodicity for {\em principal coefficients}.
Then, according to the tropicalization/categorification method developed
by \cite{Inoue10a,Plamondon10b},
the latter implies the  periodicity of {\em seeds}.
Therefore,
the periodicity of {\em $Y$-variables\/} follows again.

In either way, this construction proves the periodicity of the $Y$-system
with the desired period $14=2r$
in Theorem \ref{thm:periodRSG}.
(By the same reason, the half periodicity $7=r$
mentioned in Remark \ref{rem:period1} (b)
also follows from the half periodicity of the labeled triangulations
observed in Figure \ref{fig:7seq}.)

In summary,
to show periodicity for this $Y$-system, 
it is enough to realize it by a polygon,
and the rest is {\em automatic}.
We will apply this strategy
 to prove Theorem \ref{thm:periodRSG} in full generality.
 We remark that in the case $F=1$
 the connection between the $Y$-systems (of type $A$)
 and triangulations of polygons already appeared in
 \cite{Fomin03b}.

\subsection{Second generation: $F=2$, $(n_1,n_2)=(6,4)$}
\label{subsec:RSG64}
We turn to the RSG $Y$-system $\mathbb{Y}_{\mathrm{RSG}}(6,4)$
corresponding to the input data
 $F=2$, $(n_1,n_2)=(6,4)$.

The relations in $\mathbb{Y}_{\mathrm{RSG}}(6,4)$
are explicitly written as follows.
The relations  in \eqref{eq:Y7} hold except for the
last one, which is now
replaced with
\begin{align}
Y^{(1)}_4(u-1)Y^{(1)}_4(u+1)
&=(1+Y^{(1)}_3(u))(1+Y^{(2)}_1(u)^{-1})^{-1}.
\end{align}
Besides, the following four relations are added.
\begin{align}
\label{eq:Y74}
\begin{split}
Y^{(2)}_1(u-6)&Y^{(2)}_1(u+6)
=(1+Y^{(2)}_2(u)^{-1})^{-1}(1+Y^{(1)}_1(u))\\
&\quad
\times
(1+Y^{(1)}_4(u-5)^{-1})^{-1}
(1+Y^{(1)}_3(u-4)^{-1})^{-1}\\
&\quad
\times
(1+Y^{(1)}_2(u-3)^{-1})^{-1}
(1+Y^{(1)}_1(u-2)^{-1})^{-1}\\
&\quad
\times
(1+Y^{(1)}_1(u+2)^{-1})^{-1}
(1+Y^{(1)}_2(u+3)^{-1})^{-1}\\
&\quad
\times
(1+Y^{(1)}_3(u+4)^{-1})^{-1}
(1+Y^{(1)}_4(u+5)^{-1})^{-1},
\end{split}
\end{align}
and
\begin{align}
\label{eq:Y74b}
\begin{split}
Y^{(2)}_2(u-6)Y^{(2)}_2(u+6)
&=(1+Y^{(2)}_1(u)^{-1})^{-1}(1+Y^{(2)}_3(u)^{-1})^{-1},\\
Y^{(2)}_3(u-6)Y^{(2)}_3(u+6)
&=(1+Y^{(2)}_2(u)^{-1})^{-1}(1+Y^{(2)}_4(u)^{-1})^{-1},\\
Y^{(2)}_4(u-6)Y^{(2)}_4(u+6)
&=(1+Y^{(2)}_3(u)^{-1})^{-1}.
\end{split}
\end{align}
The shift of the parameter
$u$ by `6' in their left hand sides
and the complicated structure of the right hand side of
\eqref{eq:Y74} are the  mystery of this $Y$-system.

We formulate it
by using the
realization of a cluster algebra of type $A$ by a 31-gon,
where $31=r$ for $(n_1,n_2)=(6,4)$. See \eqref{eq:6-25}.
Actually, this  is the example whose cluster algebraic
formulation (without polygon realization) was presented in detail in
\cite[Section 7]{Nakanishi10b}
(the case $(n_1,n_2)=(4,7)$ therein).
So, we only need to translate it into the polygon language.
Like the previous example,
the formulation consists of two ingredients,
(i)  the initial labeled triangulation of 31-gon,
and
(ii) the mutation sequence of labeled triangulations.

\begin{figure}
%	\begin{center}
%		\includegraphics[scale=.42]{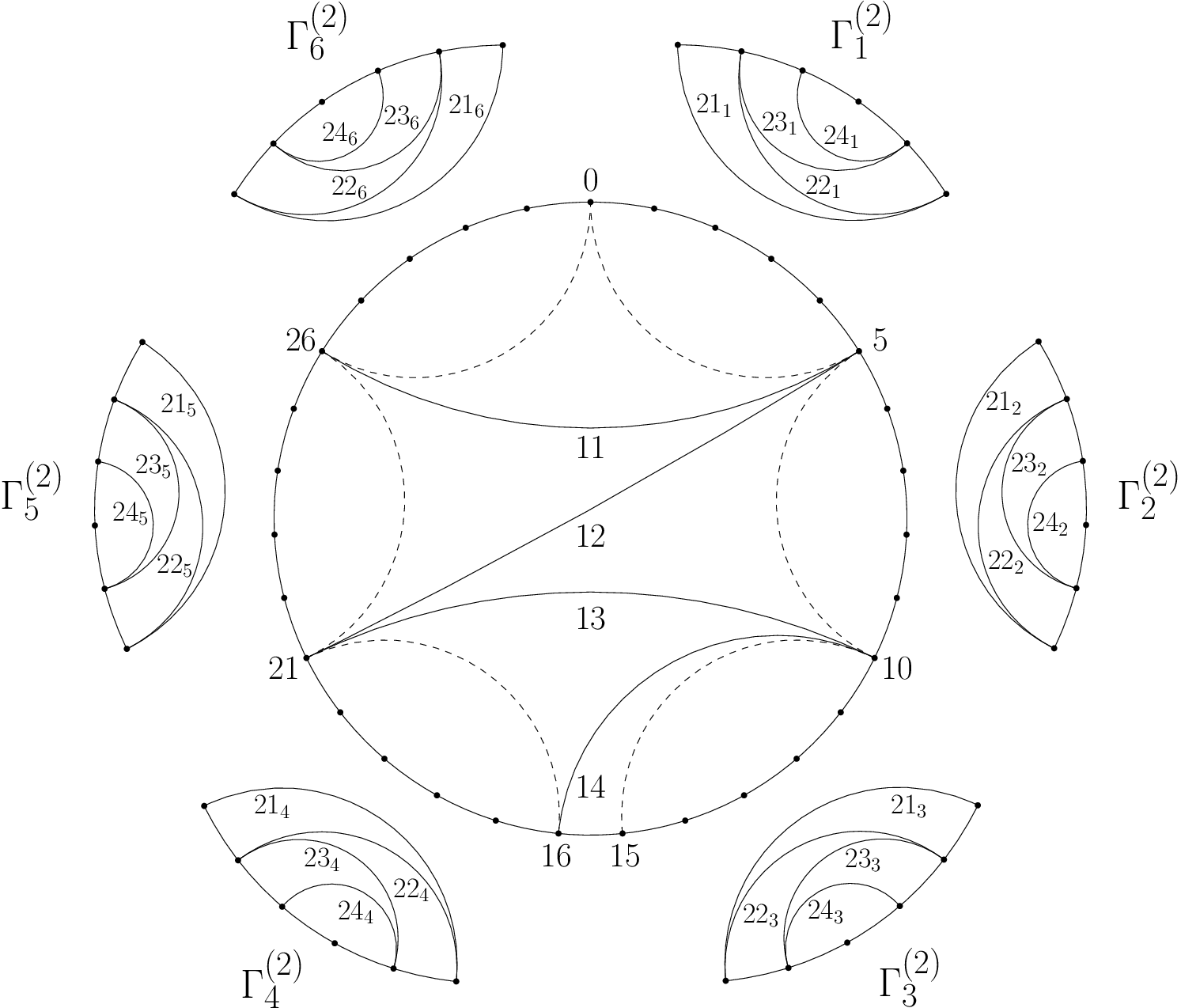}
		\includegraphics[scale=.39]{A-6_4-build.eps}
%	\end{center}
\caption{Construction 
of the initial  triangulation $\Gamma_{\mathrm{RSG}}(6,4)$
of a 31-gon.}
\label{fig:31gon}
\end{figure}

{\bf (i).  Initial labeled triangulation
$\Gamma_{\mathrm{RSG}}(6,4)$ of 31-gon.}
We construct the labeled triangulation 
$\Gamma_{\mathrm{RSG}}(6,4)$ 
 from
 $\Gamma_{\mathrm{RSG}}(6)$.
To start, to each edge of  $\Gamma_{\mathrm{RSG}}(6)$
in Figure \ref{fig:7gon},
except for the edge 3-4,
we add 4 vertices,
and make $\Gamma_{\mathrm{RSG}}(6)$
into 31-gon as in
Figure \ref{fig:31gon}.
Then, we ``paste''
triangulated 6-gons
 $\Gamma^{(2)}_{s}$ ($s=1,\dots,6$)
as in
Figure \ref{fig:31gon}
and obtain a triangulated  31-gon.
Note that $\Gamma^{(2)}_{1}$,
$\Gamma^{(2)}_{2}$,  $\Gamma^{(2)}_{3}$
have  the same shape,
while $\Gamma^{(2)}_{4}$, $\Gamma^{(2)}_{5}$,  $\Gamma^{(2)}_{6}$
are their mirror images.

The labels $(1,m)$  of the diagonals
of the first generation are carried over
to $\Gamma_{\mathrm{RSG}}(6,4)$.
The  diagonals
of the second generation
coming from $\Gamma^{(2)}_{s}$
are labeled
 (with extra signs) as
 $(2,1)_s^+$, $(2,2)_s^-$, $(2,3)_s^+$,
$(2,4)_s^-$
($21_s$, $22_s$, $23_s$,
$24_s$ for short in Figure \ref{fig:31gon}),
starting from the inside of the 31-gon.
The result is the  
initial  labeled triangulation $\Gamma_{\mathrm{RSG}}(6,4)$
presented as the first diagram in Figure \ref{fig:31gon-seq},
where a part is shaded to help recognizing
the irregularity of the pattern of the triangulation.

 {\bf (ii). Mutation sequence of labeled triangulations.}
 \begin{figure}
	\begin{center}
		\includegraphics[scale=.42]{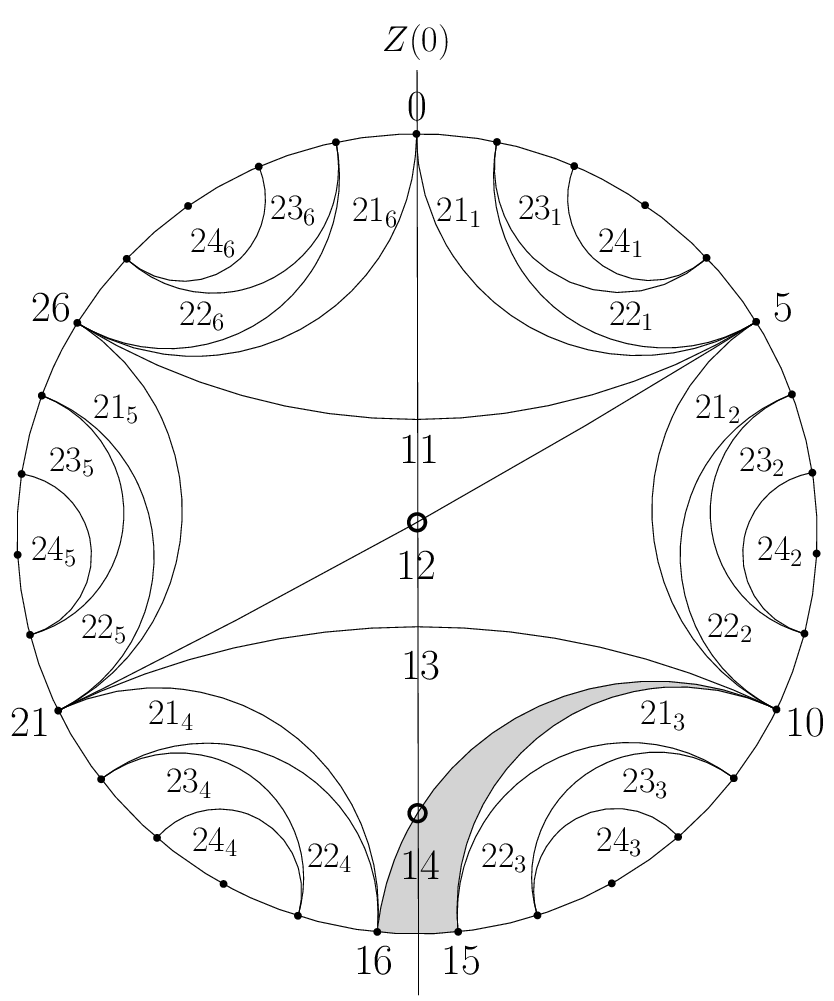}
		\hskip15pt
		\includegraphics[scale=.42]{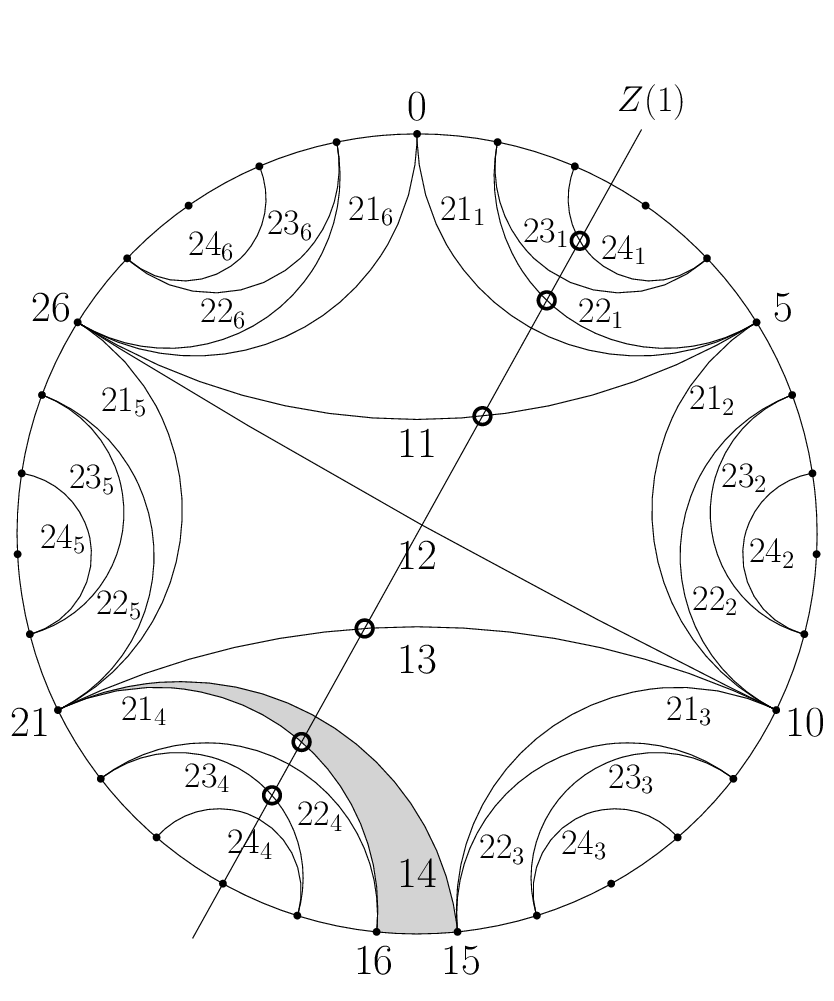}
	\end{center}
\centerline{$ u=0$ \hskip160pt $u=1$}
\caption{The mutation sequence  \eqref{eq:74mseq}
at $u=0,1$.}
\label{fig:31gon-seq}
\end{figure}
As  before,
$\Gamma(0):=\Gamma_{\mathrm{RSG}}(6,4)$
serves as
 the {\em initial (labeled) seed\/} of the cluster algebra 
 of type $A_{28}$ with coefficients 
 in the universal semifield.
Consider the sequence of mutations,
\begin{align}
\label{eq:74mseq}
\cdots
\buildrel {1^+ 2_6^{-} 2_3^{+}}  \over{\longleftrightarrow}
\Gamma(0)
\buildrel {1^{-}} \over{\longleftrightarrow}
\Gamma(1)
\buildrel {1^+ 2_1^{-} 2_4^{+}} \over{\longleftrightarrow}
\Gamma(2)
\buildrel {1^-} \over{\longleftrightarrow}
\Gamma(3)
\buildrel  {1^+ 2_2^{-} 2_5^{+}} \over{\longleftrightarrow}
\cdots,
\end{align}
where the mutation points are repeating  modulo $u=12=2p_2$,
and $ {1^+ 2_1^{-} 2_4^{+}}$, for example, stands for the composite
mutation at
\begin{align}
 (1,1)^{+}, (1,3)^{+}; (2,2)_1^{-}, (2,4)_1^{-};
(2,1)_4^{+},
(2,3)_4^{+}.
\end{align}
At any time $u$,
the composite mutation is well defined, i.e., it does not depend on
the order, since any pair of diagonals to be mutated
does not share a common triangle.
See Figure \ref{fig:31gon-seq} for the illustration
of the sequence \eqref{eq:74mseq} at $u=0$ and $1$,
where the diagonals corresponding to the forward mutation
points are marked by a circle.

{}From Figure \ref{fig:31gon-seq} we observe
 all   desired properties as follows.

{\bf Fact 1. Reflection/rotation of triangulations.}
As {\em unlabeled\/} triangulations,
$ \Gamma(2)$ and  $\Sigma^5(\Gamma(0))$ coincide,
which we write as
\begin{align}
\label{eq:period47un}
 \Gamma(2)\sim\Sigma^5(\Gamma(0)).
 \end{align}
Just observing this fact is  the  first
and probably the most important 
step
 of the whole analysis
  in the paper.
So, let us examine more closely in Figure \ref{fig:31gon-seq}
how this rotation  happens.
Let $Z(0)$ and $Z(1)$ be the axes 
in Figure \ref{fig:31gon-seq}.
Then, it is easy to recognize that
the forward mutations at $u=0$ and $1$ are
nothing but the {\em reflections\/} of diagonals with respect to the axes
$Z(0)$ and $Z(1)$, respectively.
Thus, 
the composition of two reflections results into the
rotation $\Sigma^5$ of \eqref{eq:period47un}.
Later we will see that the number $5$ 
here is $5=r^{(2)}$ for $(n_1,n_2)=(6,4)$.

On the other hand, 
as  {\em labeled\/} triangulations,
$\Gamma(2)$ and  $\Sigma^5(\Gamma(0))$ do not coincide,
since the mutations at $u=0$ and $1$ do
{\em not\/} act as reflections on labels.
However, they coincide
{\em up to the relabeling of diagonals of the second generation}.
 Let  $\nu$ be the permutation
 of the labels of
 the triangulations $\Gamma(u)$ defined by
 \begin{align}
 \label{eq:nu1}
 \begin{split}
 \nu:\ &(1,m) \mapsto (1,m),
 \quad
 (2,m)_s\mapsto (2,m)_{s+1},
 \end{split}
 \end{align}
 where the subscript $s$ is regarded modulo $6$.
 Let $\nu$ also denote  the {\em relabeling\/} of $\Gamma(u)$ by $\nu$,
 namely,
it  replaces the label $(a,m)$ attached to each diagonal with
 $\nu((a,m))$.
Then, as labeled triangulations, we have 
 \begin{align}
 \label{eq:period74}
 \Gamma(2)=\Sigma^5(\nu(\Gamma(0))).
 \end{align}
 
Similarly, the mutation at $u=2$ is the reflection of $\Gamma(2)$
with respect to the axis $Z(2)=\Sigma^5(Z(0))$,
where  $\Sigma$ also denotes the rotation of an axis by one unit
around the center of the polygon.
Then, by the same argument, we have
% \begin{align}
 %\label{eq:period74-2}
 $\Gamma(3)=\Sigma^5(\nu(\Gamma(1)))$,
 %\end{align}
and, more generally, for any $u\in \mathbb{Z}$.
 \begin{align}
 \label{eq:period74-3}
 \Gamma(u+2)=\Sigma^5(\nu(\Gamma(u))).
 \end{align}

{\bf Fact 2. Realization of $Y$-system.}
The property \eqref{eq:period74-3},
in particular,  implies the {\em partial\/} periodicity of the corresponding
exchange matrices $B(u+2)=\nu(B(u))$,
up to the  relabeling of $B(u)$ by \eqref{eq:nu1}
 ($\nu$-periodicity in \cite{Nakanishi10c}).
One can still associate a $Y$-system
to such a partial periodicity 
of exchange matrices.
As before,
the coefficient attached to the
diagonal with the label $(1,m)$ of the first generation
at time $u$
is denoted by $y^{(1)}_m(u)$.
Similarly, the coefficient attached to the
diagonal with the label $(2,m)_s$ of the second generation
at time $u$
is denoted by $y^{(2)}_{m,s}(u)$.
Note that the
variables $y^{(2)}_{m,s}(u)$ have some redundancy (by $s=1,\dots,6$)
compared with our target $Y$-variables $Y^{(2)}_{m}(u)$.

Like $\mathbb{Y}_{\mathrm{RSG}}(6)$,
at each time $u$
we identify  the $y$-variables of the {\em first\/} generation
 $y^{(1)}_m(u)$ 
with the $Y$-variables $Y^{(1)}_m(u)$
only at forward mutation points.
Here is another important point:
at each time $u$
we identify the $y$-variables of the {\em second\/} generation
 $y^{(2)}_{m,s}(u)$
with the $Y$-variables $Y^{(2)}_m(u)$
only at forward mutation points,
{\em regardless of $s$}.
Note that, in the mutation sequence  \eqref{eq:74mseq},
there is no simultaneous mutation
at $(2,m)_s$ and $(2,m)_{s'}$ with $s\neq s'$,
so that
the above identification does not create any conflict.

 \begin{figure}
	\begin{center}
		\includegraphics[scale=.42]{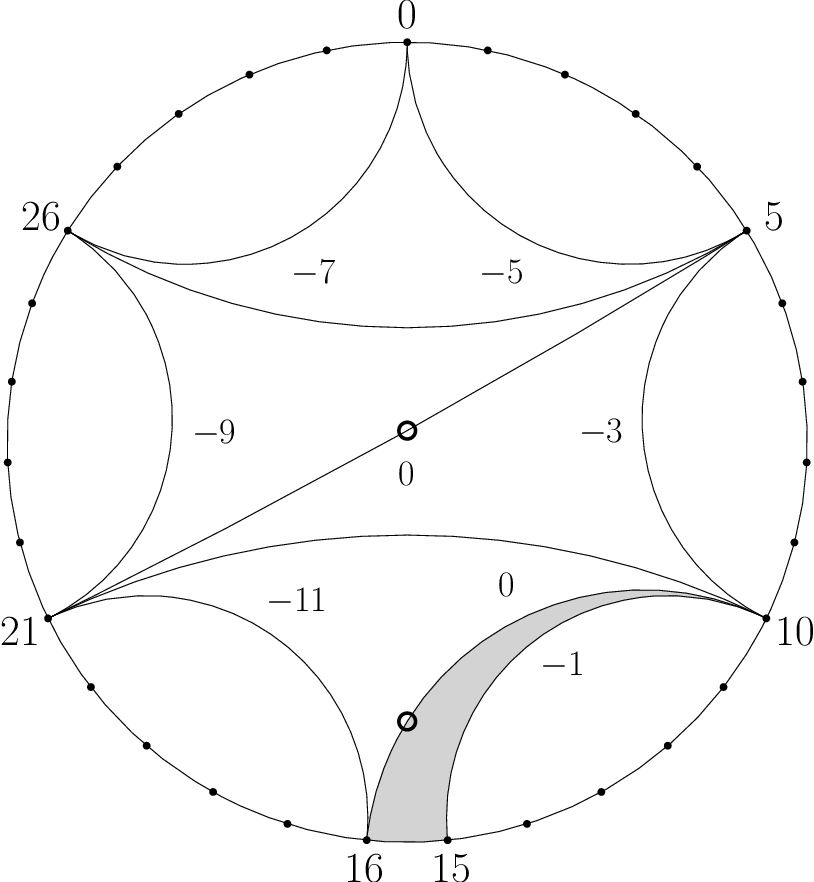}
		\hskip20pt
		\includegraphics[scale=.42]{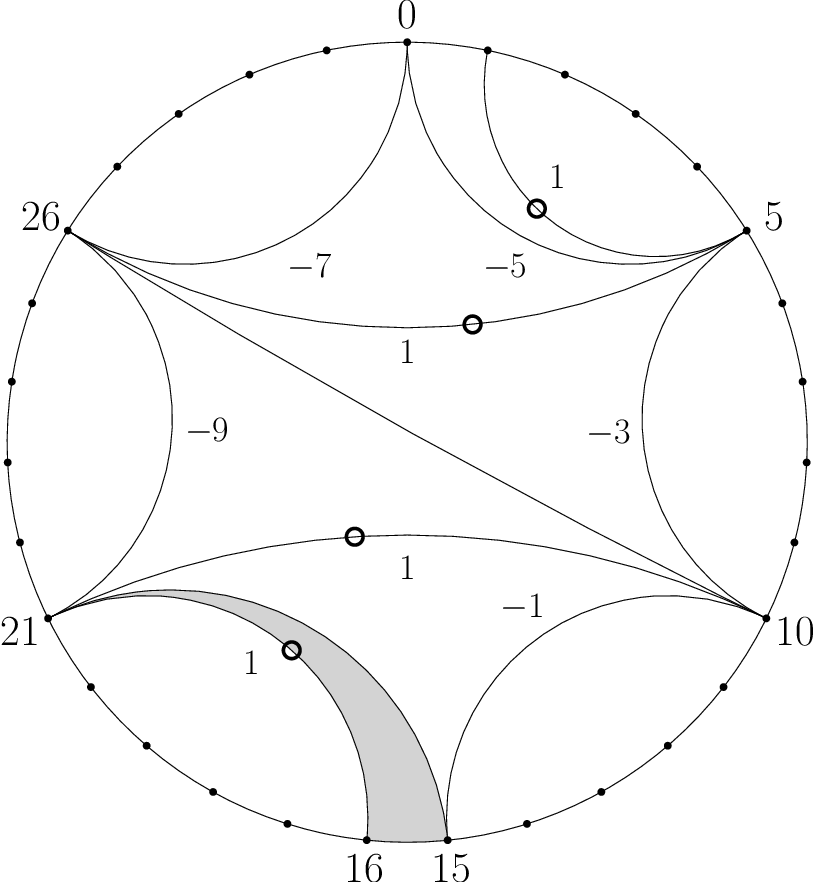}
	\end{center}
\centerline{$ u=0$ \hskip160pt $u=1$}
\caption{Snapshots at $u=0,1$ for the relation
\eqref{eq:Y74}.}
\label{fig:31gon-snap}
\end{figure}

With this identification, we claim
that the mutation sequence \eqref{eq:74mseq}
realizes the $Y$-system $\mathbb{Y}_{\mathrm{RSG}}(6,4)$.
Let us concentrate on the  relation \eqref{eq:Y74},
which is the most mysterious one.
To understand it,
it is useful to take
 a {\em snapshot\/} of the mutation sequence
 \eqref{eq:74mseq}
 at time
 $u$,
 that is,
the list of the {\em time of the most recent mutation\/}
 of  each diagonal as of $u$.
To be more precise, to each diagonal we attach the
integer  $u'$ which is the maximal one such that
the diagonal was mutated at time $u'\leq u$.
For example,
the snapshots at $u=0$ and $1$ are presented in Figure \ref{fig:31gon-snap}.
(For simplicity, we write only  the data  relevant 
to the  relation  \eqref{eq:Y74}.)
In the second diagram in Figure \ref{fig:31gon-snap},
for example, we have four 1's,
which are the forward mutation points at time $u=1$
and  correspond to
 $Y^{(1)}_1(1)$,
$Y^{(1)}_3(1)$,
$Y^{(2)}_1(1)$,
$Y^{(2)}_2(1)$.
Also, we have
$-1$, $-3$, $-5$, $-7$, 
$-9$,
which indicate that
$Y^{(2)}_1(-1)$,
$Y^{(2)}_1(-3)$,
$Y^{(2)}_1(-5)$,
$Y^{(2)}_1(-7)$,
$Y^{(2)}_1(-9)$
were attached there ``in the past''
in the sequence \eqref{eq:74mseq}.
We know this because of the rotation property
\eqref{eq:period47un}.
Similarly,
in the first diagram, we have two $0$'s
at 
 the forward mutation points at time $u=0$
 corresponding to $Y^{(1)}_2(0)$,
$Y^{(1)}_4(0)$.
The rest of data, except for $-11=1-12$, are transcribed from the one at $u=1$.
Note that the snapshot at a given time $u$ is also obtained from 
the one at $u=0$ or $u=1$ (depending on the parity of $u$)
by a total shift  of data and a rotation.

Now it is a pleasant exercise to confirm that these data precisely
produce the relation
 \eqref{eq:Y74}
 by using the exchange relation \eqref{eq:yex}.
For example,
in the snapshot of $u=1$
we see that
$Y^{(1)}_1(1)$ contributes
to the mutations of $Y^{(2)}_1(-7)$,
$Y^{(2)}_1(-5)$,
$Y^{(2)}_1(-3)$
as the multiplicative factors
$
(1+Y^{(1)}_1(1)^{-1})^{-1}
$,
$
1+Y^{(1)}_1(1)
$,
$
(1+Y^{(1)}_1(1)^{-1})^{-1}
$,
respectively.
This means that
during the mutation of $Y^{(2)}_1(u-6)$
to $Y^{(2)}_1(u+6)$, 
the factors
$
(1+Y^{(1)}_1(u-2)^{-1})^{-1}
$,
$
1+Y^{(1)}_1(u)
$,
$
(1+Y^{(1)}_1(u+2)^{-1})^{-1}
$
contribute, matching   \eqref{eq:Y74}.

In a similar and easier way,
using the full snapshots at $u=0$ and $1$,
one can verify the remaining relations of the $Y$-system.
Thus, the snapshots at $u=0$ and $1$ 
``visualize'' the whole $Y$-system.

\par
{\bf Fact 3. Periodicity of  $Y$-system}.
Again this is an immediate corollary of Facts 1 and 2.
Since $\Gamma(0)$ is a 31-gon,
using \eqref{eq:period74} ,
 $\Sigma \nu= \nu \Sigma$, and $\nu^6=\mathrm{id}$,
 we have
%\begin{align}
$\Gamma(62)=\Sigma^{155}(\nu^{31}(\Gamma(0)))=\nu(\Gamma(0))$.
%\end{align}
Therefore, $62=2r$  is a (full) period
 of the {\em unlabeled\/} seeds of our cluster algebra.
It is  only a {\em partial\/} period of the {\em labeled\/} seed
  up to  the relabeling by $\nu$
  ({\em $\nu$-period\/} in the sense of \cite{Nakanishi10b}).
 However, since our identification of $y$-variables with
 $Y$-variables
 ignores  the relabeling by $\nu$,
 it gives a {\em full\/} period of the $Y$-system.
Thus, it proves the desired periodicity of $\mathbb{Y}_{\mathrm{RSG}}(6,4)$
in Theorem \ref{thm:periodRSG}.
Furthermore, this period is minimal, because
 $5=r^{(2)}$ and $31=r$ are coprime (see Proposition
 \ref{prop:cf1} (e)).

The moral of this example is that all  necessary information
for Facts 1--3
is encoded in the two diagrams $\Gamma(0)$ and $\Gamma(1)$
with the marking of forward mutation points.

\subsection{Third generation: $F=3$, $(n_1,n_2,n_3)=(6,4,3)$}
Let us proceed to the full 
 RSG $Y$-system $\mathbb{Y}_{\mathrm{RSG}}(6,4,3)$.

The relations in $\mathbb{Y}_{\mathrm{RSG}}(6,4,3)$
are explicitly written as follows.
The last relation in \eqref{eq:Y74b} is now replaced with
\begin{align}
Y^{(2)}_4(u-6)Y^{(2)}_4(u+6)
&=(1+Y^{(2)}_3(u)^{-1})^{-1}(1+Y^{(3)}_2(u)).
\end{align}
Besides, the following three relations are added.
\begin{align}
\label{eq:Y743}
\begin{split}
Y^{(3)}_1(u-25)Y^{(3)}_1(u+25)
&=(1+Y^{(3)}_2(u))(1+Y^{(1)}_4(u))\\
&\quad
\times
(1+Y^{(2)}_4(u-19))
(1+Y^{(2)}_3(u-13))\\
&\quad
\times
(1+Y^{(2)}_2(u-7))
(1+Y^{(2)}_1(u-1))\\
&\quad
\times
(1+Y^{(2)}_1(u+1))
(1+Y^{(2)}_2(u+7))\\
&\quad
\times
(1+Y^{(2)}_3(u+13))
(1+Y^{(2)}_4(u+19)),
\end{split}
\end{align}
and
\begin{align}
\begin{split}
Y^{(3)}_2(u-25)Y^{(3)}_2(u+25)
&=(1+Y^{(3)}_1(u))1+Y^{(3)}_3(u)),\\
Y^{(3)}_3(u-25)Y^{(3)}_3(u+25)
&=1+Y^{(3)}_2(u).\\
\end{split}
\end{align}

We formulate it
using
the realization of a cluster algebra of type $A$
by a 106-gon,
where $106=r$ in Example \ref{ex:743}.
Since we will give a full account for a general case
in Section \ref{sec:RSG},
here we limit ourselves to exhibit some of new feature.

% \begin{figure}
%	\begin{center}
%%		\includegraphics[scale=.6]{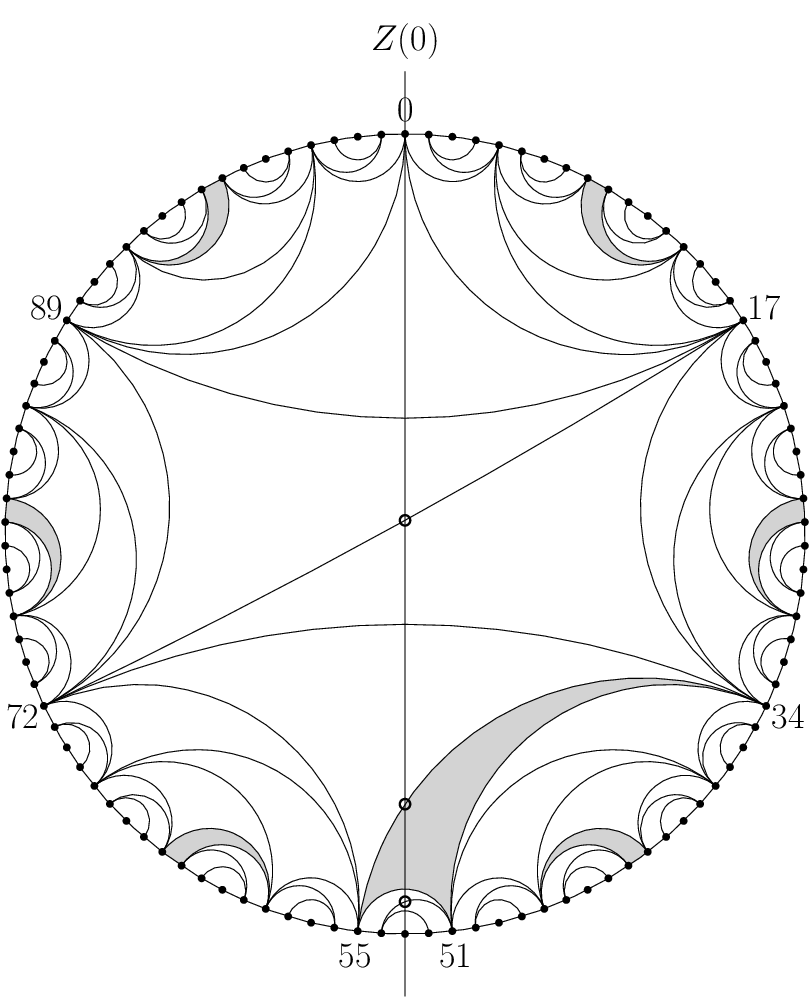}
%		\includegraphics[scale=.4]{A-6_4_3-seq-u0.eps}
%	\end{center}
%\begin{center}	$ u=0$ \end{center} 
%%\vskip10pt
%	\begin{center}
%%		\includegraphics[scale=.6]{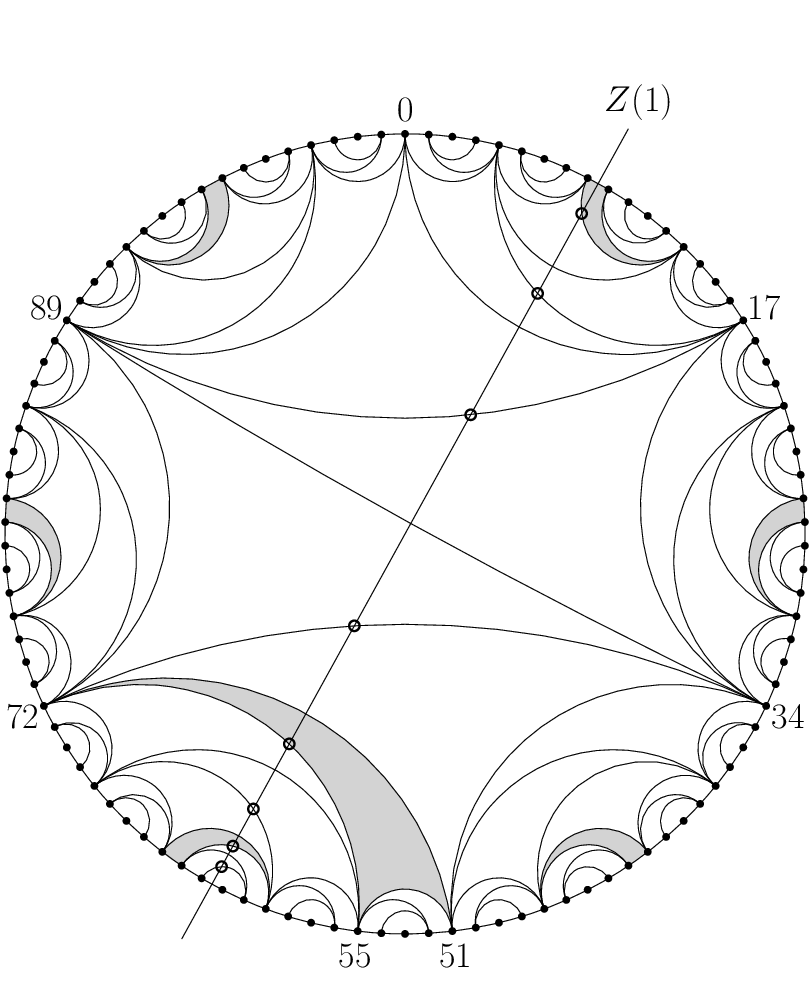}
%		\includegraphics[scale=.4]{A-6_4_3-seq-u1.eps}
%	\end{center}
%		$$ u=1$$
%\caption{The mutation sequence
%  \eqref{eq:743mseq}  at $u=0,1$.
% The first diagram is $\Gamma_{\mathrm{RSG}}(6,4,3)$.}
%\label{fig:106gon-seq}
%\end{figure}

 \begin{figure}


		\begin{center}
		\includegraphics[scale=.42]{A-6_4_3-seq-u0.eps}
		\hskip20pt
		\includegraphics[scale=.42]{A-6_4_3-seq-u1.eps}
	\end{center}
\centerline{$ u=0$ \hskip160pt $u=1$}
\caption{The mutation sequence
  \eqref{eq:743mseq}  at $u=0,1$.
 The first diagram is $\Gamma_{\mathrm{RSG}}(6,4,3)$.}
\label{fig:106gon-seq}
\end{figure}

{\bf (i).  Initial labeled triangulation
$\Gamma_{\mathrm{RSG}}(6,4,3)$ of 106-gon.}
The initial labeled triangulation
$\Gamma(0)=\Gamma_{\mathrm{RSG}}(6,4,3)$ of a 106-gon
is given in the top diagram
in Figure \ref{fig:106gon-seq},
where we omit the labels of the diagonals.
The triangulation $\Gamma_{\mathrm{RSG}}(6,4,3)$ is constructed
from $\Gamma_{\mathrm{RSG}}(6,4)$ by adding vertices
and pasting 25 triangulated 5-gons
 $\Gamma^{(3)}_s$ ($s=1,\dots,25$), according to some rule,
beginning from the top-right of the 106-gon and
proceeding clockwise.
The labels  of the earlier generations are carried over
to $\Gamma_{\mathrm{RSG}}(6,4,3)$.
The diagonals of the third generation 
coming from  $\Gamma^{(3)}_s$
are labeled (with extra signs)
as $(3,1)_s^+$, $(3,2)_s^-$, $(3,3)_s^+$ ($s=1,\dots,25$), starting from the inside
of the 106 gon.

 {\bf (ii). Mutation sequence of labeled triangulations.}
The first two steps of
the mutation sequence are given in Figure \ref{fig:106gon-seq},
where the forward mutation points are marked.
To be precise, we consider the sequence of mutations,
\begin{align}
\label{eq:743mseq}
\cdots
\buildrel {1^+ 2_6^{-} 2_3^{+} 3_{11}^{+}}  \over{\longleftrightarrow}
\Gamma(0)
\buildrel {1^{-}3_{13}^-} \over{\longleftrightarrow}
\Gamma(1)
\buildrel {1^+ 2_1^{-} 2_4^{+}3_{15}^{+}} \over{\longleftrightarrow}
\Gamma(2)
\buildrel {1^- 3_{17}^{-}} \over{\longleftrightarrow}
\Gamma(3)
\buildrel  {1^+ 2_2^{-} 2_5^{+}3_{19}^{+}} \over{\longleftrightarrow}
\cdots,
\end{align}
where
the mutation points are
repeating  modulo $u=2\times 6\times 25=2p_2p_3=300$,
Again, $ {3_{15}^{+}}$, for example,  stands for the composite
mutations at
$(3,1)_{15}^{+}$ and  $(3,3)_{15}^{+}$.
Once again, from Figure \ref{fig:106gon-seq} we can extract
 all  necessary information as follows.

{\bf Fact 1. Reflection/rotation of triangulations.}
We have an equality of unlabeled triangulations,
%\begin{align}
%\label{eq:rotation743}
$\Gamma(2)\sim\Sigma^{17}(\Gamma(0))$.
%\end{align}
Again, this happens because
the forward mutations at $u=0$ and $1$ are
 the {\em reflections\/} of diagonals with respect to the axes
$Z(0)$ and $Z(1)$ in Figure \ref{fig:106gon-seq}, respectively.
Later we will see that $17=r^{(2)}$ in Example \ref{ex:743}.
 Let us extend $\nu$ in \eqref{eq:nu1}
 to  a permutation of the labels of $\Gamma(u)$ here
 by
 %\begin{align}
 %\label{eq:nu2}
 %\begin{split}
 $\nu:\ (3,m)_s \mapsto (3,m)_{s+4}$
 %\end{split}
 %\end{align}
 where the subscript $s$ for the third generation is regarded modulo $25$.
As labeled triangulations, we have
%\begin{align}
%\label{eq:period743}
 $\Gamma(2)=\Sigma^{17}(\nu(\Gamma(0)))$,
 %\end{align}
and, more generally, for $u\in \mathbb{Z}$,
%\begin{align}
%\label{eq:period743-2}
 $\Gamma(u+2)=\Sigma^{17}(\nu(\Gamma(u)))$.
 %\end{align}

 \begin{figure}
%	\begin{center}
%		\includegraphics[scale=.6]{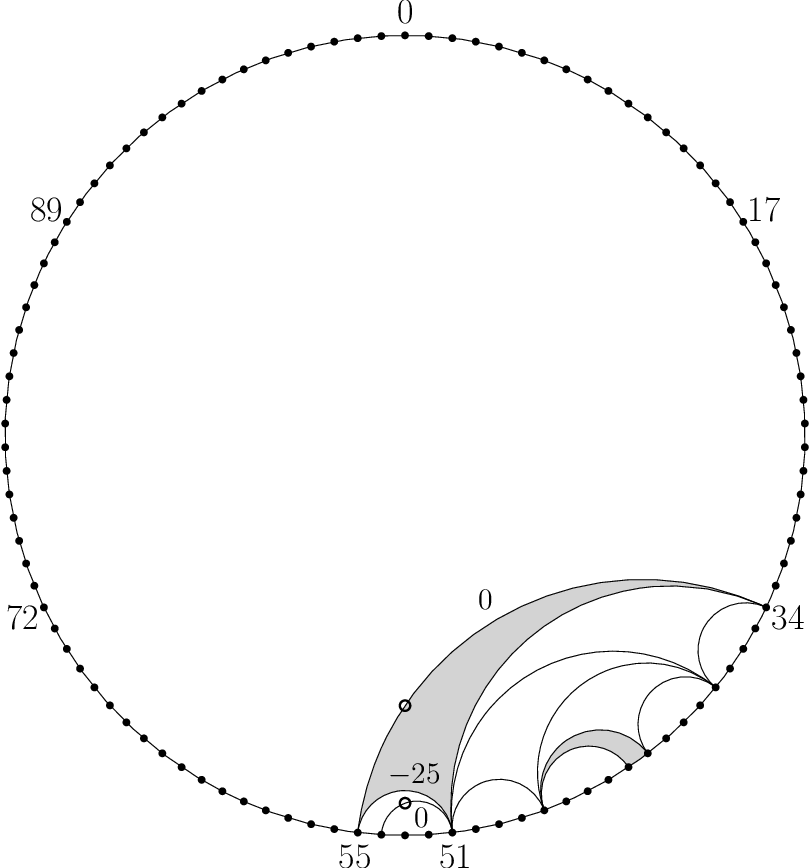}
%	\end{center}
%	$$ u=0$$
%	\vskip15pt
%	\begin{center}
%		\includegraphics[scale=.6]{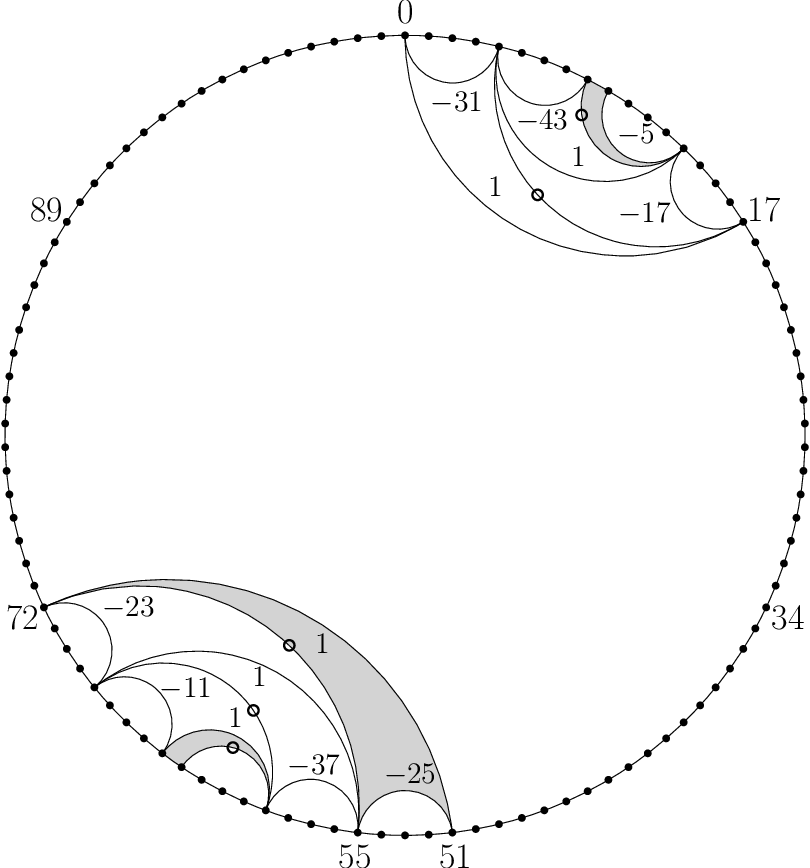}
%	\end{center}
%		$$ u=1$$
		
		\begin{center}
		\includegraphics[scale=.42]{A-6_4_3-snap-u0.eps}
		\hskip20pt
		\includegraphics[scale=.42]{A-6_4_3-snap-u1.eps}
	\end{center}
\centerline{$ u=0$ \hskip160pt $u=1$}
\caption{Snapshots  at $u=0,1$ for the relation
 \eqref{eq:Y743}.}
\label{fig:106gon-snap}
\end{figure}

{\bf Fact 2. Realization of $Y$-system.}
As before,
the $y$-variable attached to the
diagonal with the label $(3,m)_s$ 
at time $u$
is denoted by $y^{(3)}_{m,s}(u)$.
Again,
for each $u$
we identify 
variables $y^{(3)}_{m,s}(u)$
with the $Y$-variables $Y^{(3)}_m(u)$
 only at forward mutation points,
{\em regardless of $s$}.
With this identification, we claim that
the mutation sequence \eqref{eq:743mseq}
realizes  $\mathbb{Y}_{\mathrm{RSG}}(6,4,3)$.
Again, this can be checked using the snapshot method.
Let us concentrate on the  relation
 \eqref{eq:Y743},
which is the most mysterious one.
The snapshots 
of the mutation sequence \eqref{eq:743mseq}
at $u=0$ and $1$ are presented in Figure \ref{fig:106gon-snap},
where, for simplicity,  we write only the data relevant 
to the  relation  \eqref{eq:Y743}.
Again, it is a pleasant exercise
 to confirm that these data precisely produce the relation \eqref{eq:Y743}
 using the exchange relation \eqref{eq:yex}.

{\bf Fact 3. Periodicity of  $Y$-system.}
Using Facts 1 and 2 and repeating the  same argument as before,
we obtain  the desired period $212=2r$
of  $\mathbb{Y}_{\mathrm{RSG}}(6,4,3)$
in Theorem \ref{thm:periodRSG}.
Furthermore, 
it is minimal  because $17=r^{(2)}$ and $106=r$ are coprime.

\subsection{Quasi-reflection symmetry}
\label{subsec:QS1}
We observed that all information is encoded in
the two diagrams 
$\Gamma(0)$ and $\Gamma(1)$
with marking of the forward mutation points
as in Figure \ref{fig:106gon-seq}.
To work in full generality, however, it is not convenient to deal with
{\em two\/} diagrams.
Fortunately, one can unify them into {\em one\/} diagram
by introducing {\em backward mutation points}.

Let us concentrate on the case $\Gamma_{\mathrm{RSG}}(6,4,3)$.
By definition, the {\em backward mutation points at time $u$} in the sequence of mutations
 \eqref{eq:743mseq} are the forward mutation points
at time $u-1$.
In particular,
 the forward mutation points at $u=1$ are
 the backward mutation points at $u=2$.
Moreover,  
the latter are obtained from
the backward mutation points at $u=0$
by the relabeling $\nu$.
Therefore, the information of the two diagrams in
Figure \ref{fig:106gon-seq} can be packed
into a single	diagram as in Figure \ref{fig:643},
where the diagonals
for the forward (resp. backward) mutation points at $u=0$
are  marked by circles (resp. crosses).

 \begin{figure}
			\begin{center}
		\includegraphics[scale=.42]{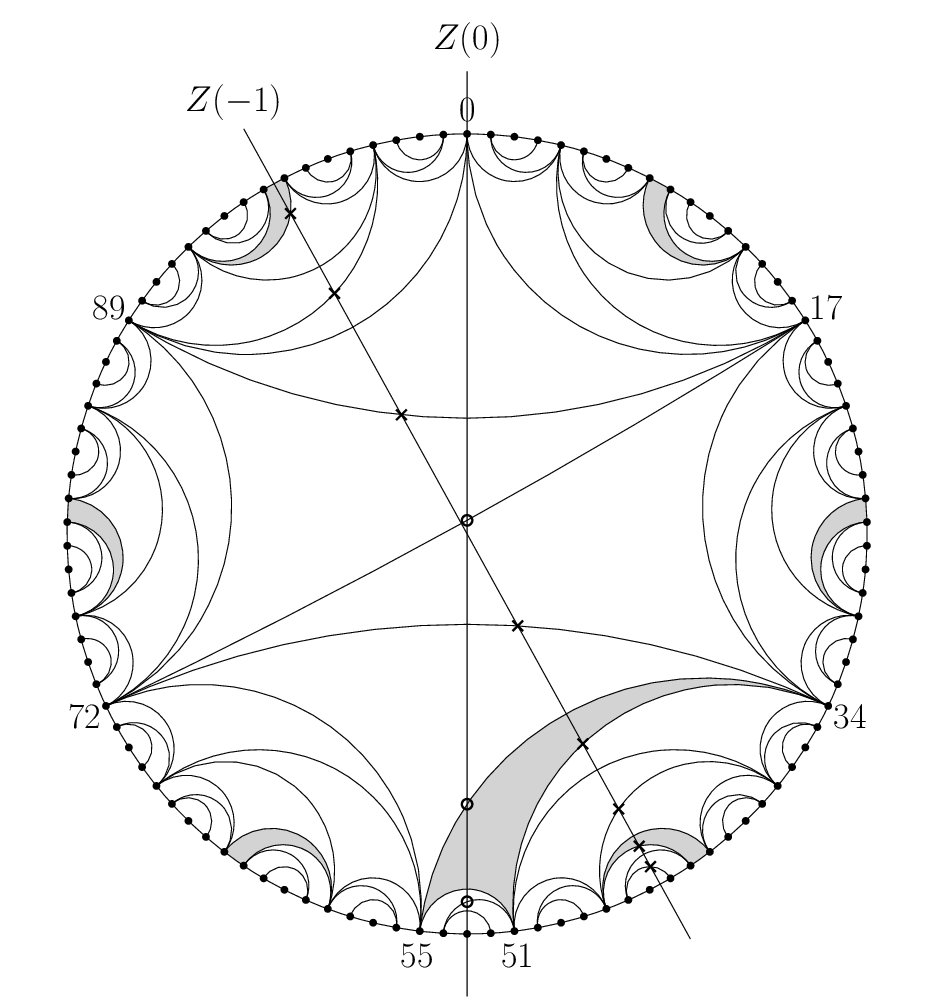}
	\end{center}
\caption{Forward and Backward mutation points
of $\Gamma(6,4,3)$.
The diagonals  
for the forward (resp. backward) mutation points at $u=0$
are marked with circles (resp. crosses).
}
\label{fig:643}
\end{figure}

We conclude this section
by introducing the notion of {\em quasi-reflection symmetry\/}
for our example $\Gamma(6,4,3)$.
Let $Z(-1)$ and $Z(0)$ be the axes in Figure \ref{fig:643},
so that the backward and forward mutations
at $u=0$ are the reflections with respect to them.
We say that  $\Gamma(6,4,3)$
is {\em quasi-symmetric with respect to the
axis $Z(u)$} ($u=-1,0$) in the following sense:
it is symmetric with respect to $Z(u)$
{\em except for\/} the diagonals which  intersect
$Z(u)$  in the interior of the 106-gon.
Moreover,
observe that
that a label is a forward (resp. backward) mutation point at $u=0$ if and only if
 the corresponding diagonal intersects  $Z(0)$ (resp. $Z(-1)$)
in the interior of the 106-gon and  it is {\em not} symmetric
with respect to it.
Thus, the quasi-symmetry of $\Gamma(6,4,3)$ is the source
of the mutations.

%\begin{rem}
%\label{rem:sym}
%Here we assume that  the diagonals are  drawn by geodesics
%of the  Poincar\'e  metric on a disk (as in our figures),
%\begin{align}
%\label{eq:poin}
%ds^2 = 
%\frac{dz d\bar{z}}
%{(1-|z|^2)^2}.
%\end{align}
%One can formulate the quasi-symmetry also in the topological
%(isotopic) sense,
%though we do not do so here.
%\end{rem}

\section{Realization of RSG $Y$-systems by polygons}
\label{sec:RSG}

Now we will construct triangulations
of polygons realizing the RSG $Y$-system, in full generality,
and prove the periodicity of Theorem \ref{thm:periodRSG}.

First, we introduce  a triangulation of a polygon associated to an
arbitrary continued fraction.
Next, we show that such a triangulation has a nice
{\em quasi-reflection symmetry},
which naturally defines a sequence of mutations.
 Then, we show that this sequence of mutations realizes the 
corresponding RSG $Y$-system.
This gives the foundation of the entire method.
As a result, we obtain the periodicity of the RSG $Y$-systems.

\subsection{Construction of initial labeled triangulations}
\label{subsec:RSG}

Let $(n_1, \dots, n_F)$ be any sequence of positive integers
with $n_1\geq 2$, other than $(n_1)=(2)$ with $F=1$.
Recall that the numbers
 $r=r^{(1)}$, $r^{(2)}$,
\dots, $r^{(F)}$ are defined in
\eqref{eq:rad},
and we set $r^{(F+1)}=r^{(F+2)}=1$ (see Proposition \ref{prop:cf1} (b))
throughout the rest of the paper.

In this subsection we define the initial triangulation
$\Gamma(n_1,\dots,n_F)$
of an $r$-gon which realizes the
RSG $Y$-system
$\mathbb{Y}_{\mathrm{RSG}}(n_1,\dots,n_F)$.
We construct it by an iterative procedure
on generations
as suggested in the examples in Section \ref{sec:643}.
The vertices of an $r$-gon are counted as
$0$, \dots, $r-1$ modulo $r$ from the top and clockwise.

 \begin{figure}
 	\begin{center}
		\includegraphics[scale=.43]{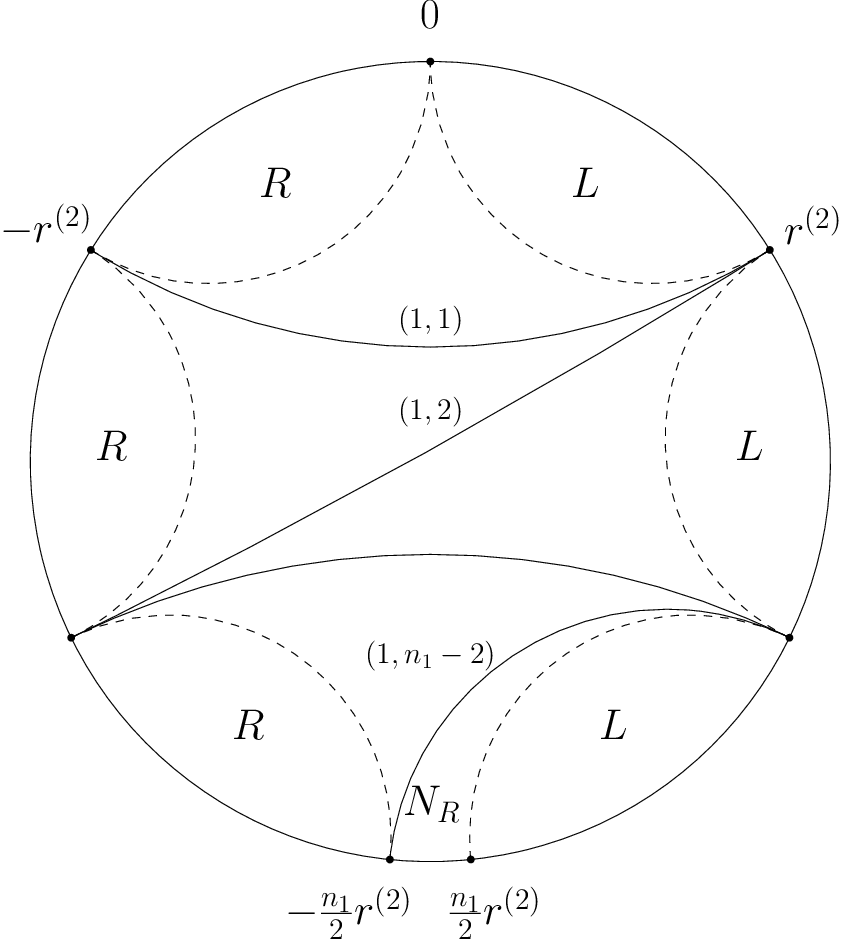}
\hskip7pt
		\includegraphics[scale=.43]{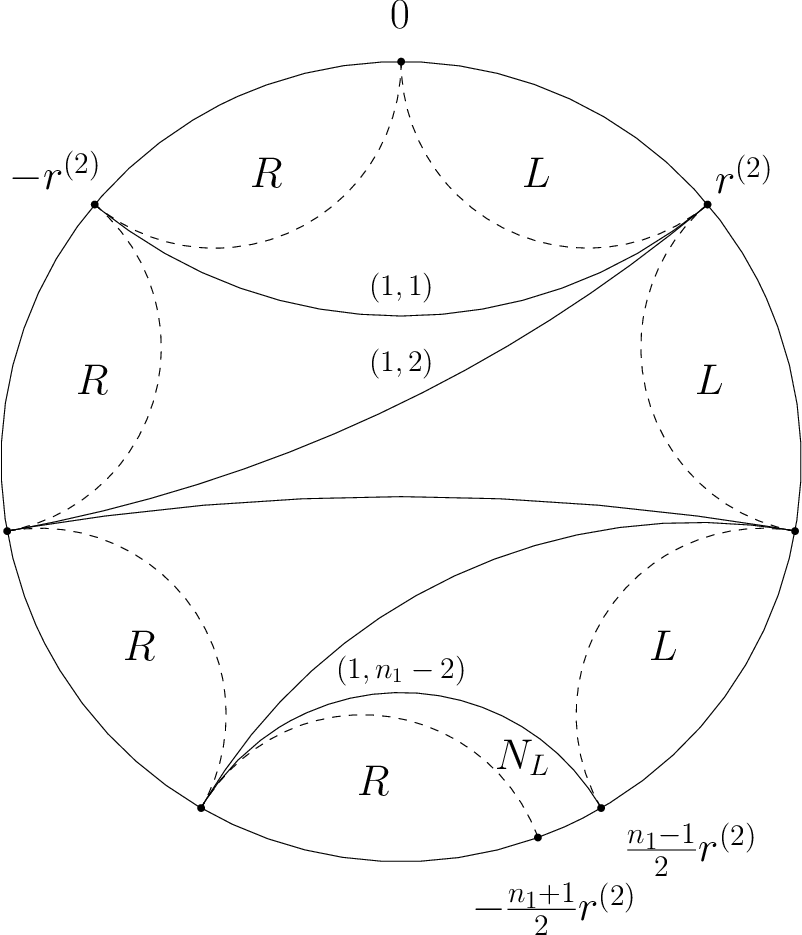}
	\end{center}
\centerline{
(a) even $n_1$
\hskip130pt
(b) odd $n_1$
}
\caption{Diagonals of the first generation
and  intervals of the second
generation. The doted lines indicate the outline of  diagonals of the second generation.
}
\label{fig:firstgen}
\end{figure}

The iterative procedure is divided into three steps as follows.

\medskip
 {\em Step 1. The diagonals of the first generation
 and the intervals of the second generation.} 
 
 This initial step  depends on the parity of $n_1$.

{\em (a). The case $n_1$  even.}
 We first mark $n_1+1$ vertices
\begin{align}
0,r^{(2)},
2r^{(2)},
\dots,
(n_1/2)r^{(2)};
-r^{(2)},
-2r^{(2)},
\dots,
-(n_1/2)r^{(2)},
\end{align}
then
 draw the $n_1-2$ diagonals of the first generation
as in Figure \ref{fig:firstgen} (a).
Namely, we draw a zigzag starting from $-r^{(2)}$,
$r^{(2)}$, 
$-2r^{(2)}$,
\dots, and ending at $-(n_1/2)r^{(2)}$.
No diagonal is drawn from vertices 0 and $(n_1/2)r^{(2)}$.
We label the diagonals $(1,1),\dots, (1,n_1-2)$ from the
top to the bottom.

The above vertices splits the boundary
of the $r$-gon
into $n_1+1$ intervals.
To each interval we assign its {\em type\/},
 $L$, $R$, or $N_R$ as in Figure \ref{fig:firstgen} (a).
Here, the symbols $L$, $R$, $N_R$, and forthcoming $N_L$
stand for  {\em left-twisted\/}, {\em right-twisted\/},
{\em right-twisted-neutral\/}, 
{\em left-twisted-neutral\/}, respectively.
We call them the {\em intervals 
of the second generation}.
We do so because later we  will fit  the {\em diagonals
of the second generation\/} inside these intervals.
Equivalently, we cut out the boundary of the $r$-gon 
at the top vertex $0$,
and present them in the following diagram.
\medskip
$$
\begin{xy}
(7.5,-10)*{\underbrace{\phantom{xxxxxx}}_{\displaystyle r^{(2)}}},
%(22.5,-13)*{\cdots},
%(22.5,-10)*{\underbrace{\phantom{xxxxxx}}_{\phantom{\displaystyle r^{(2)}}}},
%(37.5,-10)*{\underbrace{\phantom{xxxxxx}}_{\phantom{\displaystyle r^{(2)}}}},
(50.5,-10)*{\underbrace{\phantom{xxxx}}_{\displaystyle r^{(3)}}},
%(62.5,-10)*{\underbrace{\phantom{xxxxxx}}_{\phantom{\displaystyle r^{(2)}}}},
%(77.5,-10)*{\underbrace{\phantom{xxxxxx}}_{\phantom{\displaystyle r^{(2)}}}},
%(77.5,-13)*{\cdots},
(92.5,-10)*{\underbrace{\phantom{xxxxxx}}_{\displaystyle r^{(2)}}},
(22.5,-20)*{\underbrace{\phantom{xxxxxxxxxxxxxxxxxxx}}_{%
\displaystyle (n_1/2) r^{(2)}}},
(77.5,-20)*{\underbrace{\phantom{xxxxxxxxxxxxxxxxxxx}}_{%
\displaystyle (n_1/2) r^{(2)}}},
(0,4)*{0},
(3,4)*{1},
(8,4)*{\cdots},
(93,4)*{r-1},
(100,4)*{0},
(7.5,-3)*{L},
(22.5,-3)*{L},
(37.5,-3)*{L},
(50.5,-3.3)*{N_R},
(62.5,-3)*{R},
(77.5,-3)*{R},
(92.5,-3)*{R},
%(10,-5.5)*{\cdots},
%
\ar@{-} (0,0);(100,0)
\ar@{-} (0,1);(0,-1)
\ar@{-} (3,0);(3,1)
\ar@{-} (15,0);(15,-1)
\ar@{-} (30,0);(30,-1)
\ar@{-} (45,0);(45,-1)
\ar@{-} (55,0);(55,-1)
\ar@{-} (70,0);(70,-1)
\ar@{-} (85,0);(85,-1)
\ar@{-} (97,0);(97,1)
\ar@{-} (100,1);(100,-1)
\end{xy}
$$
\medskip
By \eqref{eq:rr4},
we have
$r= n_1 r^{(2)} + r^{(3)}$
so that the diagram makes sense.

{\em (b). The case $n_1$  odd.}
 We first mark $n_1+1$ vertices
\begin{align}
0,r^{(2)},
2r^{(2)},
\dots,
((n_1-1)/2)r^{(2)};
-r^{(2)},
-2r^{(2)},
\dots,
-((n_1+1)/2)r^{(2)},
\end{align}
then
 draw the $n_1-2$ diagonals of the first generation
as in Figure \ref{fig:firstgen} (b).
Namely, we draw a zigzag starting from $-r^{(2)}$,
$r^{(2)}$, 
$-2r^{(2)}$,
\dots, and ending at $((n_1-1)/2)r^{(2)}$.
No diagonal is drawn from vertices 0 and $-((n_1+1)/2)r^{(2)}$.
We label the diagonals $(1,1),\dots, (1,n_1-2)$ from the
top to the bottom.

Again,
we also assign
 the types to intervals of the second generation
 as in Figure \ref{fig:firstgen} (b);
 or equivalently,
as presented in the following diagram.
\medskip
$$
\begin{xy}
(-7.5,-10)*{\underbrace{\phantom{xxxxxx}}_{\displaystyle r^{(2)}}},
%(7.5,-13)*{\cdots},
%(7.5,-10)*{\underbrace{\phantom{xxxxxx}}_{\phantom{\displaystyle r^{(2)}}}},
%(22.5,-10)*{\underbrace{\phantom{xxxxxx}}_{\phantom{\displaystyle r^{(2)}}}},
%(37.5,-10)*{\underbrace{\phantom{xxxxxx}}_{\phantom{\displaystyle r^{(2)}}}},
(35,-10)*{\underbrace{\phantom{xxxx}}_{\displaystyle r^{(3)}}},
%(62.5,-10)*{\underbrace{\phantom{xxxxxx}}_{\phantom{\displaystyle r^{(2)}}}},
%(77.5,-10)*{\underbrace{\phantom{xxxxxx}}_{\phantom{\displaystyle r^{(2)}}}},
%(77.5,-13)*{\cdots},
(92.5,-10)*{\underbrace{\phantom{xxxxxx}}_{\displaystyle r^{(2)}}},
(7.5,-20)*{\underbrace{\phantom{xxxxxxxxxxxxxxxxxxxxx}}_{%
\displaystyle ((n_1-1)/2) r^{(2)}}},
(70,-20)*{\underbrace{\phantom{xxxxxxxxxxxxxxxxxxxxxxxxxxx}}_{%
\displaystyle ((n_1+1)/2) r^{(2)}}},
(-15,4)*{0},
(-12,4)*{1},
(-7,4)*{\cdots},
(93,4)*{r-1},
(100,4)*{0},
(-7.5,-3)*{L},
(7.5,-3)*{L},
(22.5,-3)*{L},
(35,-3)*{N_L},
(47.5,-3.3)*{R},
(62.5,-3)*{R},
(77.5,-3)*{R},
(92.5,-3)*{R},
%(10,-5.5)*{\cdots},
%
\ar@{-} (-15,0);(100,0)
\ar@{-} (-15,1);(-15,-1)
\ar@{-} (-12,0);(-12,1)
\ar@{-} (0,0);(0,-1)
\ar@{-} (15,0);(15,-1)
\ar@{-} (30,0);(30,-1)
\ar@{-} (40,0);(40,-1)
\ar@{-} (55,0);(55,-1)
\ar@{-} (70,0);(70,-1)
\ar@{-} (85,0);(85,-1)
\ar@{-} (97,0);(97,1)
\ar@{-} (100,1);(100,-1)
\end{xy}
$$

\medskip
{\em Step 2. The intervals of the generation $a=3,\dots,F$.} 
We will construct the
intervals of  newer generations by induction.
Suppose that we have the intervals of the $a$-th generation
such that the widths of the intervals of types $L$ and $R$
are all $r^{(a)}$, while the widths of the intervals of types
$N_L$ and $N_R$ are all $r^{(a+1)}$.
(This was done for $a=2$ in Step 1.)
Then, the intervals of the $(a+1)$-th generation are
defined by the subdivision of  the intervals of the $a$-th
generation as follows.

For even $n_{a}$,
\medskip
$$
\begin{xy}
(10,-10)*{\underbrace{\phantom{xxxx}}_{\displaystyle r^{(a+1)}}},
%(20,-10)*{\underbrace{\phantom{xxxx}}_{{\displaystyle r^{(a)}}}},
(27.5,-10)*{\underbrace{\phantom{x}}_{\displaystyle r^{(a+2)}}},
%(35,-10)*{\underbrace{\phantom{xxxx}}_{{\displaystyle r^{(a+1)}}}},
(45,-10)*{\underbrace{\phantom{xxxx}}_{\displaystyle r^{(a+1)}}},
(60,-10)*{\underbrace{\phantom{xxxx}}_{\displaystyle r^{(a+1)}}},
(75,-10)*{\underbrace{\phantom{xxxx}}_{\displaystyle r^{(a+1)}}},
%(85,-10)*{\underbrace{\phantom{xxxx}}_{{\displaystyle r^{(a)}}}},
(92.5,-10)*{\underbrace{\phantom{x}}_{\displaystyle r^{(a+2)}}},
%(100,-10)*{\underbrace{\phantom{xxxx}}_{{\displaystyle r^{(a)}}}},
(110,-10)*{\underbrace{\phantom{xxxx}}_{\displaystyle r^{(a+1)}}},
(15,-20)*{\underbrace{\phantom{xxxxxxxx}}_{%
\displaystyle (n_{a}/2) r^{(a+1)}}},
(40,-20)*{\underbrace{\phantom{xxxxxxxx}}_{%
\displaystyle (n_{a}/2) r^{(a+1)}}},
(80,-20)*{\underbrace{\phantom{xxxxxxxx}}_{%
\displaystyle (n_{a}/2) r^{(a+1)}}},
(105,-20)*{\underbrace{\phantom{xxxxxxxx}}_{%
\displaystyle (n_{a}/2) r^{(a+1)}}},
(27.5,10)*{L},
(60,9.7)*{N_T},
(92.5,10)*{R},
(10,-3)*{L},
(20,-3)*{L},
(27.5,-3.3)*{N_L},
(35,-3)*{R},
(45,-3)*{R},
(60,-3)*{T},
(75,-3)*{L},
(85,-3)*{L},
(92.5,-3.3)*{N_R},
(100,-3)*{R},
(110,-3)*{R},
%(10,-5.5)*{\cdots},
%
\ar@{-} (5,13);(50,13)
\ar@{-} (55,13);(65,13)
\ar@{-} (70,13);(115,13)
\ar@{-} (5,12);(5,13)
\ar@{-} (50,12);(50,13)
\ar@{-} (55,12);(55,13)
\ar@{-} (65,12);(65,13)
\ar@{-} (70,12);(70,13)
\ar@{-} (115,12);(115,13)
\ar@{-} (5,0);(50,0)
\ar@{-} (55,0);(65,0)
\ar@{-} (70,0);(115,0)
\ar@{-} (5,0);(5,-1)
\ar@{-} (15,0);(15,-1)
\ar@{-} (25,0);(25,-1)
\ar@{-} (30,0);(30,-1)
\ar@{-} (40,0);(40,-1)
\ar@{-} (50,0);(50,-1)
\ar@{-} (55,0);(55,-1)
\ar@{-} (65,0);(65,-1)
\ar@{-} (70,0);(70,-1)
\ar@{-} (80,0);(80,-1)
\ar@{-} (90,0);(90,-1)
\ar@{-} (95,0);(95,-1)
\ar@{-} (105,0);(105,-1)
\ar@{-} (115,0);(115,-1)
\end{xy}
$$
\medskip
and, for odd $n_{a}$,
\medskip
$$
\begin{xy}
(0,-10)*{\underbrace{\phantom{xxxx}}_{\displaystyle r^{(a+1)}}},
%(10,-10)*{\underbrace{\phantom{xxxx}}_{{\displaystyle r^{(a)}}}},
%(20,-10)*{\underbrace{\phantom{xxxx}}_{{\displaystyle r^{(a)}}}},
(27.5,-10)*{\underbrace{\phantom{x}}_{\displaystyle r^{(a+2)}}},
%(35,-10)*{\underbrace{\phantom{xxxx}}_{{\displaystyle r^{(a)}}}},
(45,-10)*{\underbrace{\phantom{xxxx}}_{\displaystyle r^{(a+1)}}},
(57.5,-10)*{\underbrace{\phantom{xxxx}}_{\displaystyle r^{(a+1)}}},
(70,-10)*{\underbrace{\phantom{xxxx}}_{\displaystyle r^{(a+1)}}},
%(80,-10)*{\underbrace{\phantom{xxxx}}_{{\displaystyle r^{(a)}}}},
(87.5,-10)*{\underbrace{\phantom{x}}_{\displaystyle r^{(a+2)}}},
%(95,-10)*{\underbrace{\phantom{xxxx}}_{{\displaystyle r^{(a)}}}},
%(105,-10)*{\underbrace{\phantom{xxxx}}_{{\displaystyle r^{(a)}}}},
(115,-10)*{\underbrace{\phantom{xxxx}}_{\displaystyle r^{(a+1)}}},
(10,-20)*{\underbrace{\phantom{xxxxxxxxxxxxx}}_{%
\displaystyle ((n_{a}+1)/2) r^{(a+1)}}},
(40,-20)*{\underbrace{\phantom{xxxxxxxx}}_{%
\displaystyle ((n_{a}-1)/2) r^{(a+1)}}},
(75,-20)*{\underbrace{\phantom{xxxxxxxx}}_{%
\displaystyle ((n_{a}-1)/2) r^{(a+1)}}},
(105,-20)*{\underbrace{\phantom{xxxxxxxxxxxxx}}_{%
\displaystyle ((n_{a}+1)/2) r^{(a+1)}}},
(22.5,10)*{L},
(57.5,9.7)*{N_T},
(92.5,10)*{R},
(0,-3)*{L},
(10,-3)*{L},
(20,-3)*{L},
(27.5,-3.3)*{N_R},
(35,-3)*{R},
(45,-3)*{R},
(57.5,-3)*{T},
(70,-3)*{L},
(80,-3)*{L},
(87.5,-3.3)*{N_L},
(95,-3)*{R},
(105,-3)*{R},
(115,-3)*{R},
%(10,-5.5)*{\cdots},
%
\ar@{-} (-5,13);(50,13)
\ar@{-} (52.5,13);(62.5,13)
\ar@{-} (65,13);(120,13)
\ar@{-} (-5,12);(-5,13)
\ar@{-} (50,12);(50,13)
\ar@{-} (52.5,12);(52.5,13)
\ar@{-} (62.5,12);(62.5,13)
\ar@{-} (65,12);(65,13)
\ar@{-} (120,12);(120,13)
\ar@{-} (-5,0);(50,0)
\ar@{-} (52.5,0);(62.5,0)
\ar@{-} (65,0);(120,0)
\ar@{-} (-5,0);(-5,-1)
\ar@{-} (5,0);(5,-1)
\ar@{-} (15,0);(15,-1)
\ar@{-} (25,0);(25,-1)
\ar@{-} (30,0);(30,-1)
\ar@{-} (40,0);(40,-1)
\ar@{-} (50,0);(50,-1)
\ar@{-} (52.5,0);(52.5,-1)
\ar@{-} (62.5,0);(62.5,-1)
\ar@{-} (65,0);(65,-1)
\ar@{-} (75,0);(75,-1)
\ar@{-} (85,0);(85,-1)
\ar@{-} (90,0);(90,-1)
\ar@{-} (100,0);(100,-1)
\ar@{-} (110,0);(110,-1)
\ar@{-} (120,0);(120,-1)
\end{xy}
$$
where $T=L,R$ and the upper and the lower intervals
are the ones of the $a$-th and $(a+1)$-th generations,
respectively. 
Note that the intervals of type $N_T$ only change their type to $T$
and do not change their widths.
Again, by
\eqref{eq:rr4},
we have
$r^{(a)}= n_{a} r^{(a+1)} + r^{(a+2)}$
so that the subdivision makes sense.

\begin{rem}
One can  formally 
regard the whole interval with width $r$
as the  interval of type $R$ of the {\em first generation}.
Then,
 the intervals of the second
generation are obtained from 
the above iteration rule.
\end{rem}

\begin{prop}
\label{prop:int1}
For $a=2,\dots, F$, the total number of intervals of types 
$L$ and $R$ (resp.  $N_L$ and $N_R$)
of the $a$-th generation
is $q_{a-1}=p_{a}$ (resp. $p_{a-1}$).
\end{prop}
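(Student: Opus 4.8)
The plan is to turn the subdivision rules of Steps 1 and 2 into a recursion for the number of intervals of each kind, and then to match that recursion with the continued-fraction recursion \eqref{eq:xi1} for $q_a$. For $a=2,\dots,F$, write $\ell_a$ for the number of intervals of type $L$ or $R$ of the $a$-th generation, and $m_a$ for the number of intervals of type $N_L$ or $N_R$. The assertion to be proved is $\ell_a=q_{a-1}=p_a$ and $m_a=p_{a-1}$, and I would establish it by induction on $a$.

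First I would settle the base case $a=2$ directly from Step 1. Reading off the diagrams there, for even $n_1$ there are $n_1/2$ intervals of type $L$, $n_1/2$ of type $R$, and a single interval of type $N_R$, while for odd $n_1$ there are $(n_1-1)/2$ of type $L$, $(n_1+1)/2$ of type $R$, and a single interval of type $N_L$. In either case
\begin{align}
\ell_2 = n_1 = q_1 = p_2, \qquad m_2 = 1 = q_0 = p_1,
\end{align}
using $q_0=1$, $q_1=n_1$, $p_1=1$, together with $p_2=q_1$ from \eqref{eq:p-q1}.

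Next I would extract the recursion from the subdivision rule of Step 2 governing the passage from the $a$-th to the $(a+1)$-th generation. The essential point, which I would verify to hold \emph{irrespective of the parity} of $n_a$, is that a single interval of type $L$ or $R$ is subdivided into exactly $n_a$ intervals of type $L$ or $R$ together with exactly one interval of type $N_L$ or $N_R$ (for even $n_a$ this is $n_a/2+n_a/2$; for odd $n_a$ it is $(n_a+1)/2+(n_a-1)/2$), while an interval of type $N_T$ merely changes its type to $T$ and so yields exactly one interval of type $L$ or $R$ and no new $N$-interval. The width bookkeeping $r^{(a)}=n_a r^{(a+1)}+r^{(a+2)}$ from \eqref{eq:rr4} confirms that these counts account for the whole interval. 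Summing over the $a$-th generation gives
\begin{align}
\ell_{a+1} = n_a \ell_a + m_a, \qquad m_{a+1} = \ell_a.
\end{align}

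Finally I would close the induction. Assuming $\ell_a=q_{a-1}$ and $m_a=q_{a-2}$, the recursion above together with \eqref{eq:xi1} gives
\begin{align}
\ell_{a+1} = n_a q_{a-1} + q_{a-2} = q_a, \qquad m_{a+1} = \ell_a = q_{a-1},
\end{align}
which advances both statements; the identifications $q_{a-1}=p_a$ and $q_{a-2}=p_{a-1}$ then follow from \eqref{eq:p-q1}. (Equivalently, eliminating $m_a=\ell_{a-1}$ produces the three-term recursion $\ell_{a+1}=n_a\ell_a+\ell_{a-1}$, which is literally \eqref{eq:xi1}.) I do not expect a genuine obstacle here: the only point requiring care is confirming that the two subdivision diagrams of Step 2 yield the \emph{same} counts $(n_a,1)$ across both parities, after which the result is pure bookkeeping matched against \eqref{eq:xi1}.
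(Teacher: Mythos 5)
Your proof is correct and follows essentially the same route as the paper's: the paper also checks the base case $a=2$ directly and then inducts using the recursions \eqref{eq:p-q1} and \eqref{eq:p-q2}, which is exactly your recursion $\ell_{a+1}=n_a\ell_a+m_a$, $m_{a+1}=\ell_a$ matched against $q_a=n_aq_{a-1}+p_{a-1}$. You have merely made explicit the counting step (each $L$/$R$ interval producing $n_a$ new $L$/$R$ intervals and one $N$-interval, each $N$-interval producing one $L$/$R$ interval) that the paper leaves implicit.
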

\begin{proof}
This is true for $a=2$,
since $q_1=n_1$ and  $p_1=1$.
For $a\geq 3$, it is shown by
using the recursion relations
\eqref{eq:p-q1} and \eqref{eq:p-q2}.
\end{proof}

The proposition implies the equality
$r=q_{a-1} r^{(a)} + p_{a-1} r^{(a+1)}$.
This  certainly agrees with the formula
\eqref{eq:rF3} when $k=1$.

According to Proposition \ref{prop:int1},
we  name the intervals of types $L$ and $R$
of the $a$-th generation  as 
$I^{(a)}_s$  $(s=1,\dots, p_{a})$ from the top-right
of the $r$-gon and clockwise.

\medskip
{\em Step 3. The diagonals of the generation $a\geq 2$.} 
Next we draw the diagonals of the $a$-th generation
for $a=2,\dots, F$ as follows.
For each interval $I^{(a)}_s$ ($s=1,\dots, p_{a}$)
of type $L$ or $R$ of the $a$-th generation,
we draw the diagonals
and attach the labels $(a,1)_{s},\dots,
(a,n_a)_{s}$ 
 as specified in Figure
\ref{fig:diagonal1}.
This finishes the construction.

Note  that the vertices of the diagonals
in Figure
\ref{fig:diagonal1} match the boundaries of 
the intervals of the $(a+1)$-th generation,
so that the diagonals never intersect each other,
regardless of their generations.

 \begin{figure}
 	\begin{center}
		\includegraphics[scale=.5]{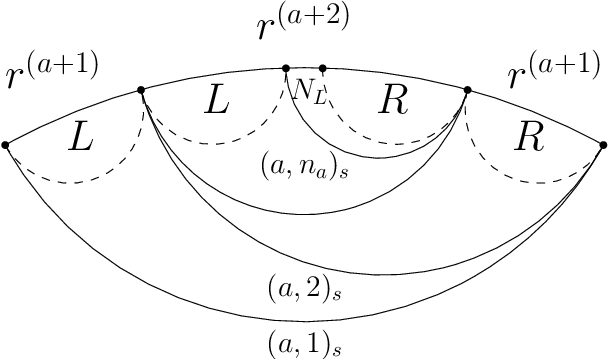}
		\hskip20pt
		\includegraphics[scale=.5]{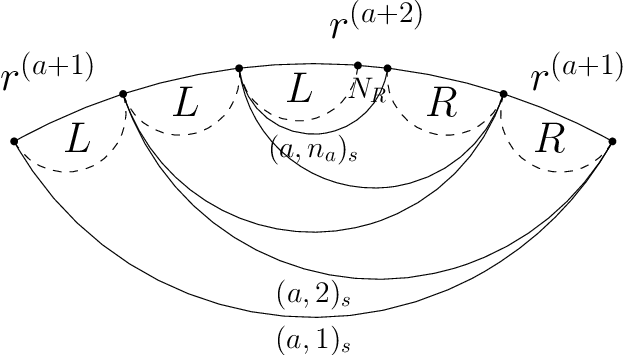}
	\end{center}
 type $L$, even $n_a$
\hskip100pt
 type $L$, odd $n_a$
\vskip10pt
	\begin{center}
		\includegraphics[scale=.5]{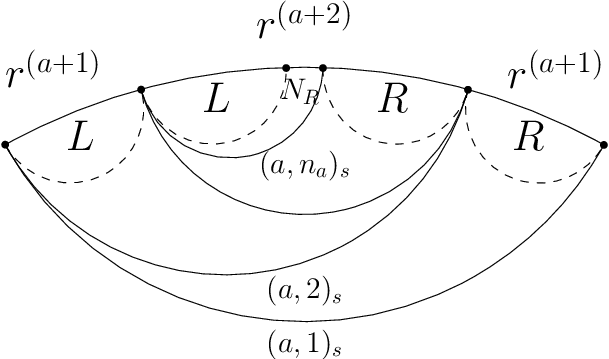}
		\hskip20pt
		\includegraphics[scale=.5]{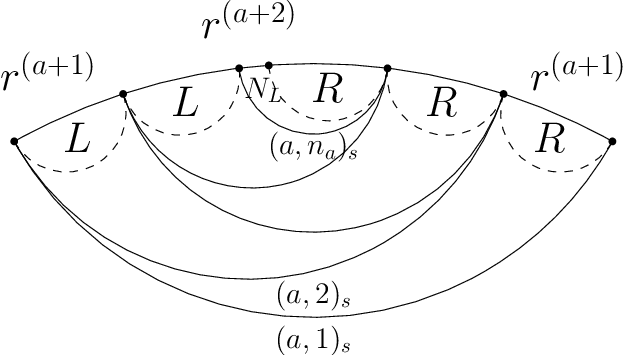}
	\end{center}
type $R$, even $n_a$
\hskip100pt
type $R$, odd $n_a$
\caption{Diagonals of the $a$-th generation
and  intervals of the $(a+1)$-th generation.
}
\label{fig:diagonal1}
\end{figure}

\begin{prop}
The diagonals drawn above give a triangulation
of an $r$-gon.
\end{prop}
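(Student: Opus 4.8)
The plan is to reduce the claim to the standard characterization of triangulations: a family of pairwise distinct, pairwise non-crossing diagonals of a convex $r$-gon is a triangulation if and only if it is maximal, which in turn happens if and only if the family has exactly $r-3$ members. I would therefore split the proof into two independent checks — that the diagonals drawn in Steps~1--3 are distinct and non-crossing, and that their total number is exactly $r-3$ — and conclude by maximality.

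For the non-crossing property I would invoke the structural feature already recorded just before the statement: the endpoints of the diagonals of the $a$-th generation lie precisely at the boundary points of the intervals of the $(a+1)$-th generation. Hence a diagonal of generation $a$ cannot cross a diagonal of any later generation $a'>a$, since the latter is confined to the interior of a single generation-$(a+1)$ interval on whose boundary the former terminates; and within a fixed generation the explicit zigzag pattern (Figure~\ref{fig:firstgen}) and the fan/zigzag patterns (Figure~\ref{fig:diagonal1}) are manifestly non-crossing and pairwise distinct. This presents the drawn segments as an honest family of pairwise non-crossing diagonals.

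The heart of the proof is then the count. The first generation contributes $n_1-2$ diagonals. For $2\leq a\leq F$, Proposition~\ref{prop:int1} gives $p_a=q_{a-1}$ intervals of types $L$ and $R$ in the $a$-th generation, and each such interval carries exactly $n_a$ diagonals (the labels $(a,1)_s,\dots,(a,n_a)_s$ of Figure~\ref{fig:diagonal1}); so generation $a$ contributes $n_a p_a=n_a q_{a-1}$ diagonals. Using the recursion \eqref{eq:xi1} in the form $n_a q_{a-1}=q_a-q_{a-2}$, the sum telescopes:
\begin{align*}
\sum_{a=2}^{F} n_a p_a
=\sum_{a=2}^{F}(q_a-q_{a-2})
=q_{F-1}+q_F-q_0-q_1
=p_F+q_F-1-n_1
=r-n_1-1,
\end{align*}
where I used $p_F=q_{F-1}$ from \eqref{eq:p-q1}, the initial values $q_0=1$, $q_1=n_1$, and $r=p_F+q_F$ from \eqref{eq:rad}. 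Adding the first-generation diagonals yields the total $(n_1-2)+(r-n_1-1)=r-3$, as required.

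I expect the only genuinely delicate point to be the non-crossing verification, specifically checking that within each interval the prescribed diagonals nest correctly against the subdivision into the next generation's intervals. This is exactly what the width bookkeeping of Step~2 guarantees, since each interval of width $r^{(a)}$ splits as $n_a r^{(a+1)}+r^{(a+2)}$ by \eqref{eq:rr4}, matching the endpoints dictated by the generation-$a$ diagonals; granting this, the counting identity is purely formal and the statement follows immediately from maximality. A fully rigorous alternative would replace the maximality argument by an induction on $F$, showing that the diagonals of generations $\leq a$ already triangulate the complement of the generation-$(a+1)$ intervals, so that the last generation completes the triangulation; I would keep this as a backup but expect the count-plus-non-crossing route to be the shortest.
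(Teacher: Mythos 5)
Your argument is correct, but it reaches the conclusion by a genuinely different mechanism than the paper. The paper's proof is a downward induction on the generation: it reduces the claim to showing that every sub-polygon appearing in Figure \ref{fig:diagonal1} eventually gets triangulated, notes that this is automatic for $a=F$ because $r^{(F+1)}=r^{(F+2)}=1$ (those sub-polygons are already triangles), and then descends through the generations. You instead check non-crossing and distinctness (which both arguments need, and which the paper records in the sentence immediately preceding the proposition about endpoints of generation-$a$ diagonals matching the boundaries of generation-$(a+1)$ intervals) and then count: the first generation gives $n_1-2$ diagonals, generation $a\geq 2$ gives $n_ap_a=n_aq_{a-1}$ by Proposition \ref{prop:int1}, and the telescoping identity $n_aq_{a-1}=q_a-q_{a-2}$ from \eqref{eq:xi1} together with $p_F=q_{F-1}$ and $r=p_F+q_F$ yields a total of exactly $r-3$, whence maximality. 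Your telescoping computation is correct (it checks out on the running example: $4+24+75=103=106-3$), and the convexity argument for cross-generation non-crossing — a later-generation diagonal has both endpoints inside a single generation-$(a+1)$ interval on whose boundary the earlier diagonal terminates — is sound. What each approach buys: the paper's induction is more structural, showing \emph{which} regions remain untriangulated after each generation (namely the generation-$(a+1)$ intervals), information that is implicitly reused in the later quasi-symmetry and snapshot analyses; your count is arithmetically self-contained, avoids tracking regions altogether, and produces the identity $(n_1-2)+\sum_{a=2}^{F}n_ap_a=r-3$, which is not stated explicitly in the paper and serves as a useful consistency check on the whole construction.
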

\begin{proof}
From Figure \ref{fig:firstgen}
and the construction of the diagonals,
it is enough to show that
all  polygons in 
Figure
\ref{fig:diagonal1} will be triangulated
for any generation $a=2,\dots, F$
after drawing  all diagonals.
The claim is true for $a=F$
because $r^{(F+1)}=r^{(F+2)}=1$. 
Then, the claim  follows
by induction on $a$ in the decreasing order.
\end{proof}

The labeled triangulation obtained above
is denoted by
$\Gamma_{\mathrm{RSG}}(n_1,\dots,n_F)$.
Some examples are given
in Figure \ref{fig:examples1} (for even $n_1$)
and Figure \ref{fig:examples2} (for odd $n_1$).

\begin{rem}
\label{rem:mirror}
By construction, all  diagonals in an interval of type $L$
and the ones in an interval of type $R$ of a given generation $a\geq 2$
are mirror images of each other.
\end{rem}

 \begin{figure}
			\begin{center}
		\includegraphics[scale=.63]{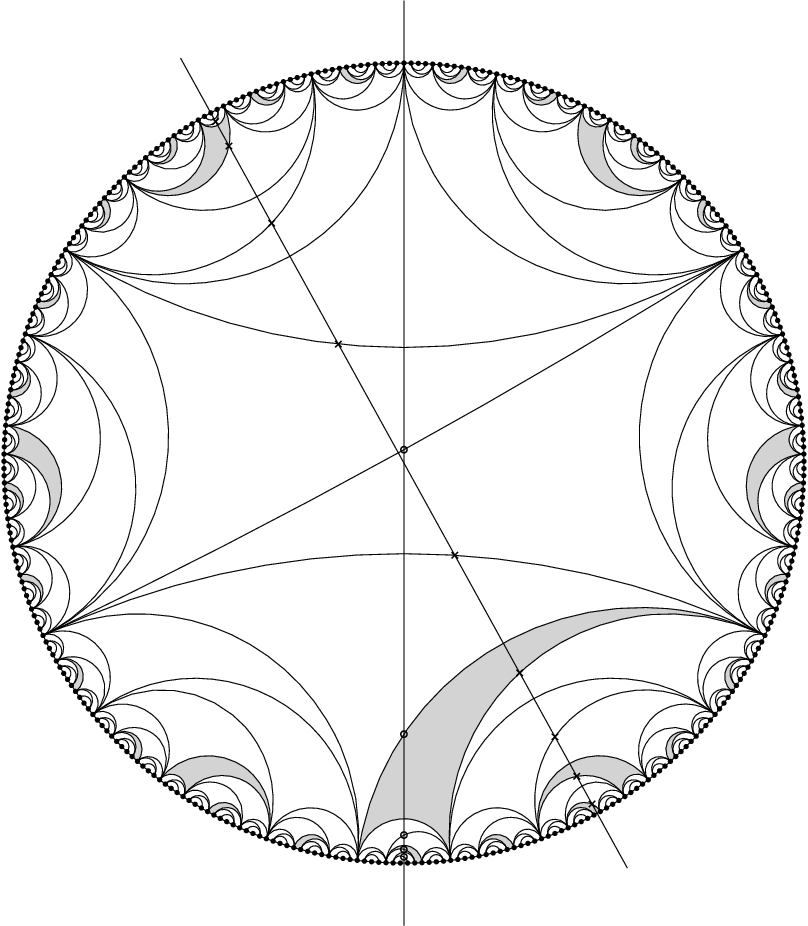}
		\includegraphics[scale=.63]{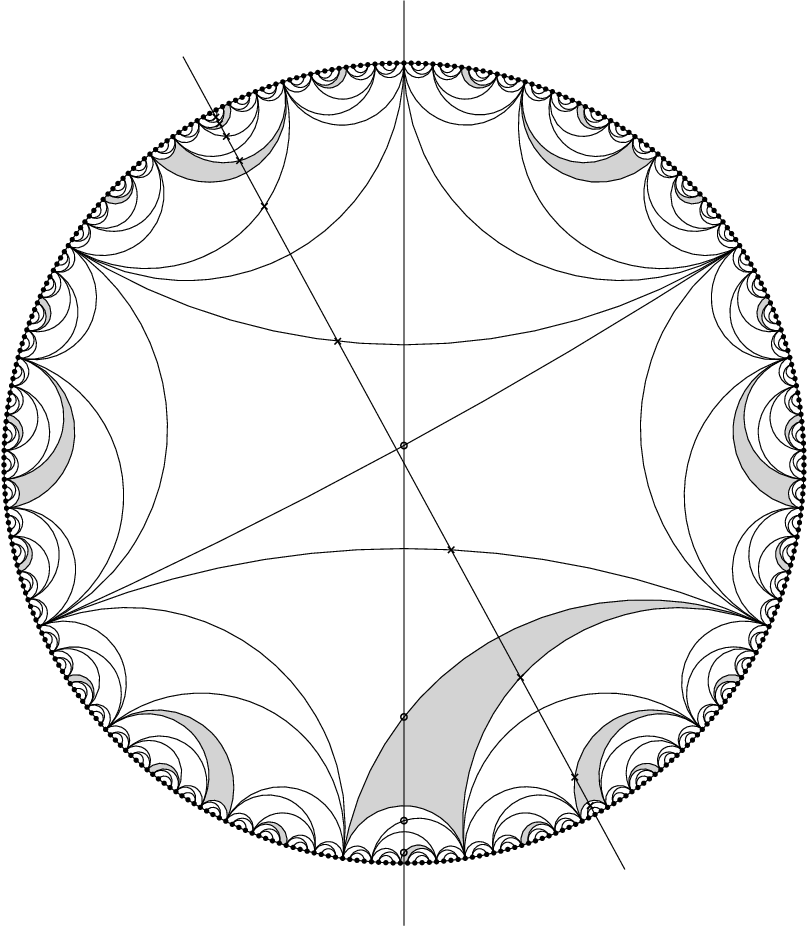}
	\end{center}
%		\begin{center}
%		\includegraphics[scale=.45]{A-6_4_3_3.eps}
%		\hskip5pt
%		\includegraphics[scale=.45]{A-6_3_4_3.eps}
%	\end{center}
\caption{The triangulations
$\Gamma_{\mathrm{RSG}}(6,4,3,3)$ (left) and 
$\Gamma_{\mathrm{RSG}}(6,3,4,3)$ (right).
}
\label{fig:examples1}
\end{figure}
\begin{figure}
			\begin{center}
		\includegraphics[scale=.63]{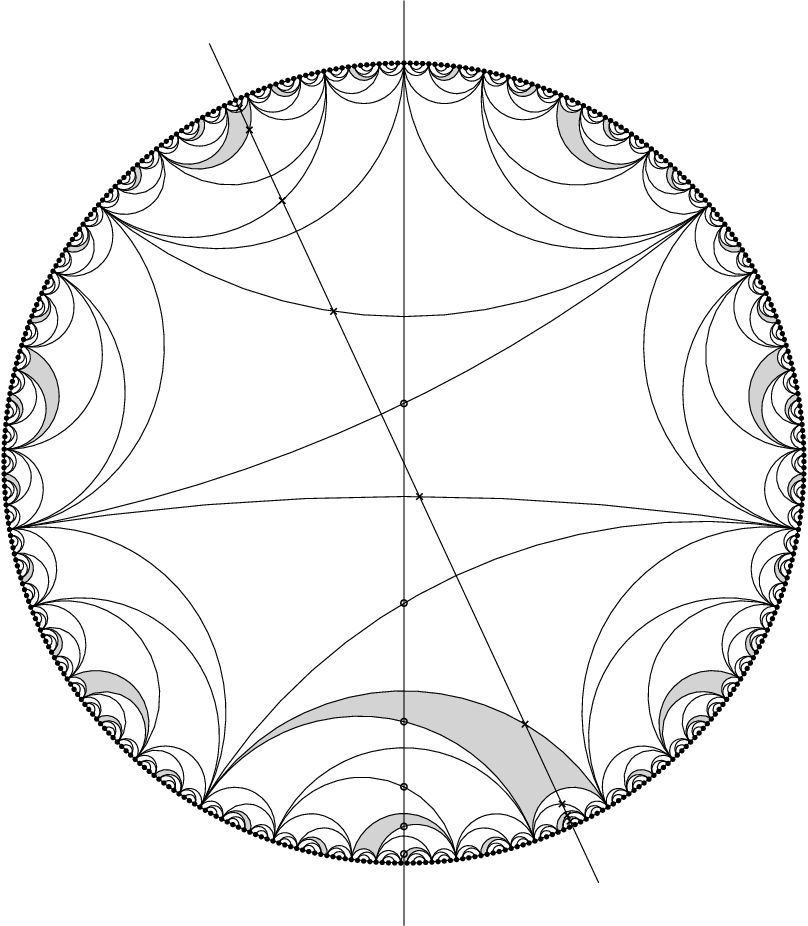}
		\includegraphics[scale=.63]{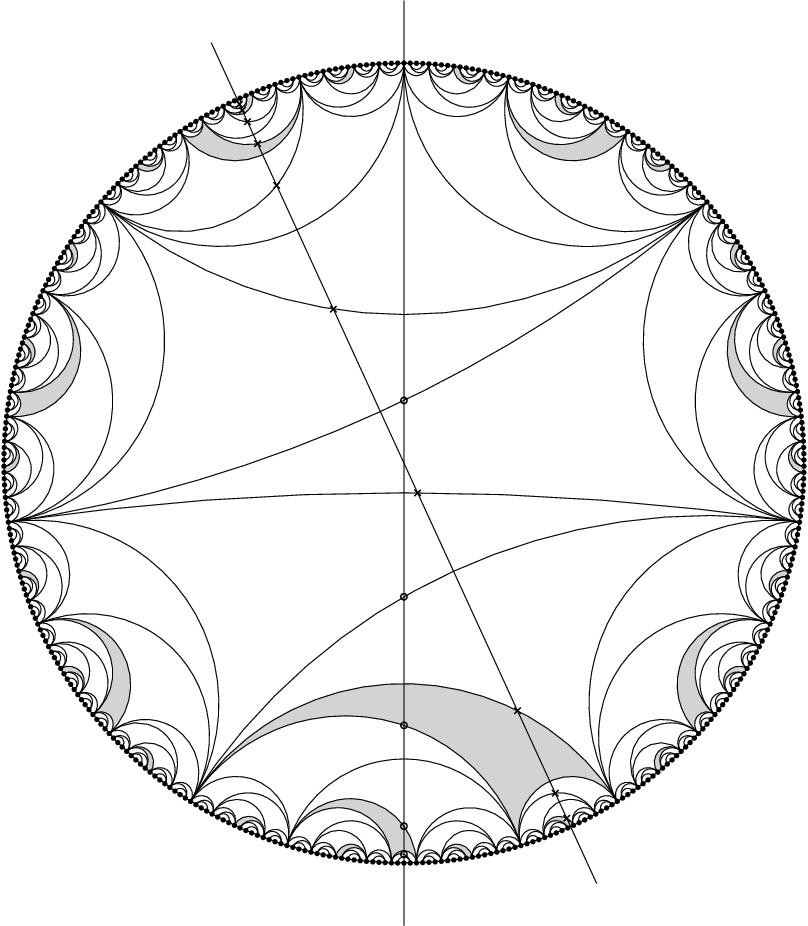}
	\end{center}
%		\begin{center}
%		\includegraphics[scale=.45]{A-7_4_3_3.eps}
%		\hskip5pt
%		\includegraphics[scale=.45]{A-7_3_4_3.eps}
%	\end{center}
\caption{The triangulations
$\Gamma_{\mathrm{RSG}}(7,4,3,3)$ (left) and 
$\Gamma_{\mathrm{RSG}}(7,3,4,3)$ (right).
}
\label{fig:examples2}
\end{figure}

\subsection{Quasi-reflection symmetry}
\label{subsec:QS2}

To define the forward and backward mutation points of
$\Gamma_{\mathrm{RSG}}(n_1,\dots,n_F)$,
we use the quasi-reflection symmetry observed
in the example $\Gamma_{\mathrm{RSG}}(6,4,3)$ in Section \ref{subsec:QS1}.
Let us first establish the quasi-reflection symmetry
of $\Gamma_{\mathrm{RSG}}(n_1,\dots,n_F)$.
Let $Z(0)$ (resp. $Z(-1)$) be the axis of the $r$-gon 
passing through the points
$P(0)=0$ and $Q(0)=r/2$ (resp. $P(-1)=-r^{(2)}/2$ and $Q(-1)=-(r+r^{(2)})/2$)
on the boundary of the $r$-gon,
where we continue to use our clockwise coordinates mod $r$.
Since $r$ and $r^{(2)}$ are coprime by Proposition 
\ref{prop:cf1} (d), they are not even numbers simultaneously.
Thus, only one of $Q(0)$, $P(-1)$, $Q(-1)$ is a vertex of the $r$-gon,
and the other two are midpoints of two adjacent vertices.
%We continue to assume that  all  diagonals are
%drawn by geodesics of the Poincar\'e  metric \eqref{eq:poin}.

\begin{defn} We say that an (unlabeled) triangulation $\Gamma$
of the $r$-gon
is {\em quasi-symmetric with respect to the axis $Z(u)$} ($u=-1,0$)
if it is symmetric {\em except for\/} the diagonals which intersect 
$Z(u)$ in the interior of the $r$-gon.
\end{defn}

\begin{defn}
For any  $a=2,\dots,F$,
the union $I\cup I'$ of 
an adjacent pair of intervals $I$ and $I'$
of the $a$-th generation
 of type $L$ and $N_R$
(resp. $N_L$ and $R$),
exactly in this order in our clockwise coordinate system,
is called a {\em joint-interval of type $(L,N_R)$} (resp. 
{\em of type $(N_L,R)$}).
\end{defn}

We first prove the following key lemma,
which can be observed  in the examples
in Figures \ref{fig:examples1} and \ref{fig:examples2}.

\begin{lem}[Trinity of intersections]
\label{lem:mid1}
For any $a=2,\dots, F$,
the axis $Z(u)$ $(u=-1,0)$
intersects  the intervals of the $a$-generation in the following way.

(a). The point $P(0)$ is the boundary of two adjacent intervals.

 (b). Let $P$ be any of $Q(0)$, $P(-1)$, $Q(-1)$.
Then, only one of the following three cases occurs.

\begin{itemize}
\item[(i).]  The point $P$ is exactly the midpoint of
an interval $J^{(a)}_1$ of type $N_L$ or $N_R$.

\item[(ii).] The point $P$ is exactly the midpoint of
an interval $J^{(a)}_2$ of type $L$ or $R$.

\item[(iii).] The point $P$ is
exactly the midpoint of a joint-interval $J^{(a)}_3$ of
type $(L,N_R)$ or $(N_L,R)$.
\end{itemize}
Furthermore, each case among (i), (ii), (iii) occurs for exactly  one of three points
$Q(0)$, $P(-1)$, $Q(-1)$.
\end{lem}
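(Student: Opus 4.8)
The plan is to dispose of (a) at once and to establish (b) by induction on the generation $a$, following the three points $Q(0)$, $P(-1)$, $Q(-1)$ as the intervals are successively refined. For (a), note that $P(0)=0$ is a marked vertex already at the first generation in Step~1, hence an endpoint of the intervals of every generation $a\ge 2$; concretely it separates the terminal type-$R$ interval from the initial type-$L$ one, so it is the common boundary of two adjacent intervals and no induction is needed.

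For the base case $a=2$ of (b) I would simply read off the explicit layout produced in Step~1, using the coordinates $Q(0)=r/2$, $P(-1)=-r^{(2)}/2$ and $Q(-1)=(r-r^{(2)})/2 \pmod r$ together with $r=n_1r^{(2)}+r^{(3)}$ and the monotonicity $r^{(2)}>r^{(3)}$. For even $n_1$ one finds that $Q(0)$ is the midpoint of the central $N_R$-interval (case (i)), that $P(-1)$ is the midpoint of the terminal type-$R$ interval (case (ii)), and that $Q(-1)$ is the midpoint of the joint-interval $(L,N_R)$ formed by the last $L$-interval and the $N_R$-interval (case (iii)); for odd $n_1$ the same computation gives $Q(-1)$ on $N_L$ (case (i)), $P(-1)$ on the terminal $R$ (case (ii)), and $Q(0)$ on the joint-interval $(N_L,R)$ (case (iii)). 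In each parity the three cases occur exactly once. The coprimality $\gcd(r,r^{(2)})=1$ from Proposition~\ref{prop:cf1}(d) is what guarantees that these points are genuine midpoints of the asserted structures rather than accidental interval endpoints.

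For the inductive step, assume the three points realize the three cases, one apiece, at generation $a$; I would then compute, structure by structure, where the midpoint of each lands under the Step~2 subdivision, using $r^{(a)}=n_ar^{(a+1)}+r^{(a+2)}$ (see \eqref{eq:rr4}) and $r^{(a+1)}>r^{(a+2)}\ge 1$. An $N$-interval merely retypes with the same span, so a case-(i) midpoint becomes a case-(ii) midpoint. For even $n_a$ a type-$L$ or $R$ interval subdivides symmetrically, so its midpoint becomes the midpoint of the new central $N$-interval, sending case (ii) to (i); and a joint-interval $(L,N_R)$ subdivides into $L^{n_a/2}N_LR^{\,n_a/2+1}$, whose midpoint is exactly the midpoint of the new joint $(N_L,R)$, fixing case (iii). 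For odd $n_a$ the type-$L/R$ subdivision is skew, so the midpoint of an $L$- or $R$-interval becomes the midpoint of the joint formed by the central $L$ (resp.\ $R$) and the adjacent $N_R$ (resp.\ $N_L$), sending case (ii) to (iii); while a joint-interval $(L,N_R)$ becomes the symmetric $L^{(n_a+1)/2}N_RR^{(n_a+1)/2}$, whose midpoint is precisely the midpoint of the central $N_R$, sending case (iii) to (i). The two mirror types behave identically by Remark~\ref{rem:mirror}.

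Consequently the induced map on the set of cases $\{(\mathrm{i}),(\mathrm{ii}),(\mathrm{iii})\}$ is the transposition exchanging (i) and (ii) (fixing (iii)) when $n_a$ is even, and the $3$-cycle $(\mathrm{i})\to(\mathrm{ii})\to(\mathrm{iii})\to(\mathrm{i})$ when $n_a$ is odd; in both cases it is a bijection, so the ``one each'' distribution is inherited at generation $a+1$, closing the induction. I expect the main obstacle to be the bookkeeping in the case-(iii) transitions, where one must check that the midpoint of a joint-interval lands exactly on a new midpoint---a central $N$-interval for odd $n_a$, another joint-interval for even $n_a$---rather than somewhere in the interior of an ordinary sub-interval; this is exactly where the strict inequality $r^{(a+1)}>r^{(a+2)}$ and the differing symmetry properties of the two subdivision patterns do the real work.
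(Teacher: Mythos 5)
Your proof is correct and follows essentially the same route as the paper: induction on the generation $a$, with the base case $a=2$ read off from the explicit first-generation layout (the same type assignments to $Q(0)$, $P(-1)$, $Q(-1)$ for each parity of $n_1$) and the inductive step given by tracking how each of the three midpoint configurations transforms under the Step~2 subdivision. The case transitions you derive --- the transposition of (i) and (ii) fixing (iii) for even $n_a$, and the $3$-cycle for odd $n_a$ --- are exactly the content of the paper's Rule of type-change (Lemma~\ref{lem:change1}), which the paper factors out as a separate lemma with a one-line proof; you simply carry out its midpoint arithmetic inline.
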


\begin{proof} (a). This is true for $a=2$,
and also true for $a\geq 3$
by the definition
of  subdivision of  intervals.
\par
(b). For $a=2$, by the definition of the intervals
in Figure \ref{fig:firstgen},
we know that the types of  $Q(0)$, $P(-1)$, $Q(-1)$
are (i) $N_R$, (ii) $R$, (iii) $(L,N_R)$ for even $n_1$,
and (iii) $(N_L,R)$, (ii) $R$, (i) $N_L$ for odd $n_1$, respectively.
Therefore, the claim is true for $a=2$.
Suppose that the claim is true for  $a$.
Then, the claim is true for  $a+1$
by the next lemma.
\end{proof}

The explicit rule by which  $P$ changes its type
is useful later, so we put it  here separately.  

\begin{lem}[Rule of type-change]
\label{lem:change1}
For any $P=Q(0),P(-1),Q(-1)$,
 the type of $P$ from Lemma \ref{lem:mid1} (b) 
changes  in the following way as $a$ increases:
for even $n_{a}$,
\begin{align}
\label{eq:type1}
\begin{tabular}{c||ccc|ccc|ccc}
$a$ & (i)&$N_L$ & $N_R$ &
(ii)  & $L$ & $R$ &
(iii)& $(L,N_R)$ & $(N_L,R)$\\
\hline
$a+1$ & (ii) &$L$&$R$& (i) &$N_L$&$N_R$& (iii)&
$(N_L,R)$&$(L,N_R)$ \\
\end{tabular}
\end{align}
and, for odd $n_{a}$,
\begin{align}
\label{eq:type2}
\begin{tabular}{c||ccc|ccc|ccc}
$a$ & (i)&$N_L$ & $N_R$ &
(ii)  & $L$ & $R$ &
(iii)& $(L,N_R)$ & $(N_L,R)$\\
\hline
$a+1$ & (ii) &$L$&$R$& (iii) &$(L,N_R)$&$(N_L,R)$& (i)&
$N_R$&$N_L$ \\
\end{tabular}
\end{align}
\end{lem}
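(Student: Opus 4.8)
The plan is to prove the type-change rule by a direct case analysis based on the subdivision rules of Step 2, tracking the single geometric datum that matters: the position of the point $P$ along the boundary of the $r$-gon. This position is fixed, so it does not move when we refine the $a$-th generation into the $(a+1)$-th; what changes is only which interval (or joint-interval) of the finer generation has $P$ as its midpoint. Recalling from Step 2 that, at the $a$-th generation, intervals of type $L, R$ have width $r^{(a)}$ while those of type $N_L, N_R$ have width $r^{(a+1)}$, I will in each case write down the midpoint of the relevant object and locate it among the refined intervals, using throughout the width relation $r^{(a)} = n_a r^{(a+1)} + r^{(a+2)}$ from \eqref{eq:rr4}. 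By Remark \ref{rem:mirror} it suffices to treat the ``left'' member of each pair ($N_L$, $L$, $(L,N_R)$) and obtain the ``right'' member by the mirror image, so there are effectively three computations for each parity.

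Case (i) is immediate and parity-independent: an interval of type $N_T$ only changes its type to $T$ and keeps its width under subdivision, so its midpoint is unchanged and $N_L \mapsto L$, which is case (ii). For case (ii) the midpoint of an interval of type $L$ sits at $r^{(a)}/2 = (n_a r^{(a+1)} + r^{(a+2)})/2$. When $n_a$ is even the subdivision $\underbrace{L\cdots L}_{n_a/2}\,N_L\,\underbrace{R\cdots R}_{n_a/2}$ is symmetric and a one-line computation shows the midpoint coincides with that of the central $N_L$, giving case (i). When $n_a$ is odd the subdivision $\underbrace{L\cdots L}_{(n_a+1)/2}\,N_R\,\underbrace{R\cdots R}_{(n_a-1)/2}$ is asymmetric, and the midpoint instead falls at the center of the joint-interval formed by the last $L$ and the adjacent $N_R$, giving a joint-interval of type $(L,N_R)$, which is case (iii).

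Case (iii) needs the most care, and I expect it to be the main obstacle --- not because it is deep, but because the bookkeeping is delicate. The midpoint of a joint-interval of type $(L,N_R)$ sits at $(r^{(a)} + r^{(a+1)})/2 = ((n_a+1) r^{(a+1)} + r^{(a+2)})/2$. The key subtlety is that under subdivision the trailing $N_R$ becomes a single $R$ (by the $N_T \mapsto T$ rule of Step 2), so the refinement of the joint-interval is that of the $L$ part with one extra $R$ appended; this appended $R$ flips the effective parity relative to case (ii). For even $n_a$ the midpoint then lands exactly at the center of the joint-interval formed by the central $N_L$ and the $R$ immediately following it, a joint-interval of type $(N_L,R)$, which is again case (iii). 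For odd $n_a$ the extra $R$ makes the refinement $\underbrace{L\cdots L}_{(n_a+1)/2}\,N_R\,\underbrace{R\cdots R}_{(n_a+1)/2}$ symmetric about its central $N_R$, so the midpoint coincides with that of the $N_R$, which is case (i). The right-hand entries of both tables then follow from Remark \ref{rem:mirror}.

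To keep the six computations honest and avoid off-by-one errors, the concrete step I would carry out is to express the left endpoint of each generation-$(a+1)$ piece explicitly in the coordinates ``integer multiple of $r^{(a+1)}$ plus integer multiple of $r^{(a+2)}$'', write the midpoint of $P$'s object in the same coordinates, and compare; the identity $r^{(a)} = n_a r^{(a+1)} + r^{(a+2)}$ then makes each match transparent. No step is genuinely hard once $P$'s position is pinned down, so the whole argument reduces to this elementary but careful midpoint arithmetic.
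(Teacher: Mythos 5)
Your proof is correct and is essentially the paper's own argument spelled out in full: the paper disposes of this lemma with the single sentence ``This follows from the definition of subdivision of intervals,'' and your midpoint arithmetic (using $r^{(a)}=n_a r^{(a+1)}+r^{(a+2)}$ and the mirror symmetry of the $L$/$R$ subdivisions) is precisely the verification that sentence is implicitly invoking. I checked the six cases, including the delicate case (iii) where the trailing $N_R\mapsto R$ shifts the effective parity, and they all match the tables.
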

\begin{proof}
This follows from  the definition of  subdivision of  intervals.
\end{proof}

Note that the rule preserves the {\em mirror images of types}
\begin{align}
\label{eq:mirror1}
N_L\leftrightarrow N_R,
\quad
 L\leftrightarrow R,
 \quad
(L,N_R) \leftrightarrow (N_L,R).
 \end{align}

\begin{defn}
For $a=2,\dots,F$,
let $J^{(a)}_1$, $J^{(a)}_2$, $J^{(a)}_3$ be the (joint-)intervals
 in Lemma \ref{lem:mid1} (b).
 We say that a diagonal (of any generation)  {\em belongs\/} to
 $J^{(a)}_i$ ($i=1,2,3$)  if  both its end points
 are in  $J^{(a)}_i$.
\end{defn}

\begin{figure}
%		\includegraphics[scale=.36]{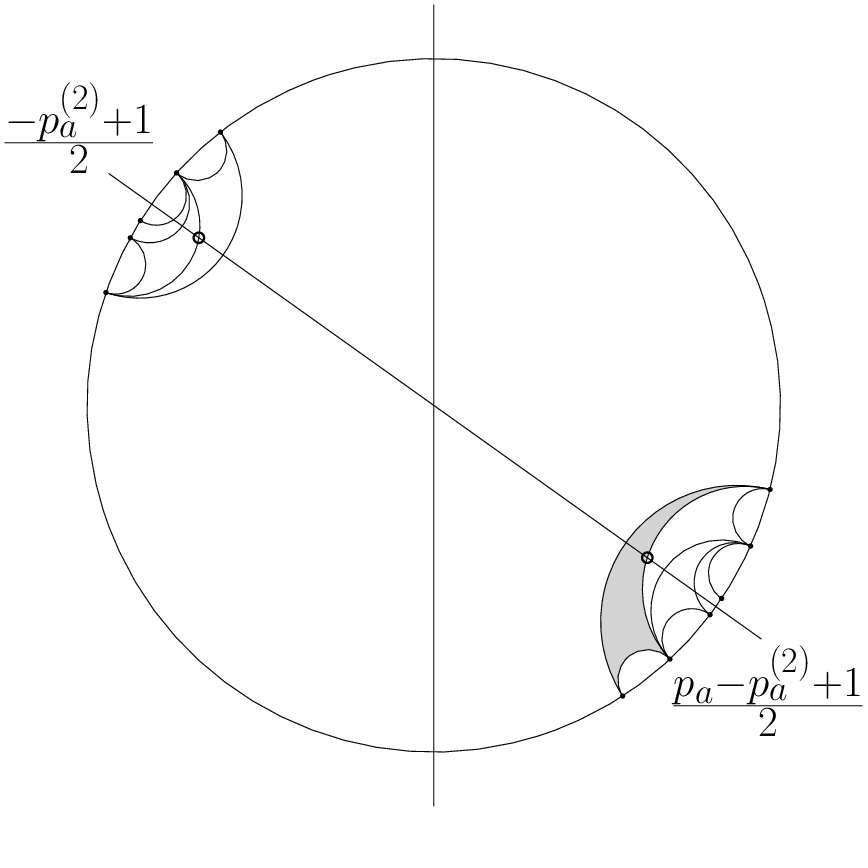}
%
%		\centerline{(a)\hskip150pt (b)}
		\includegraphics[scale=.45]{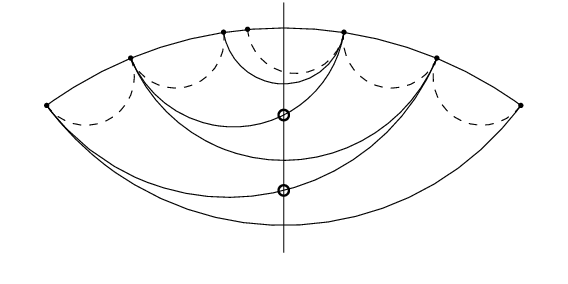}
		\hskip20pt
		\includegraphics[scale=.45]{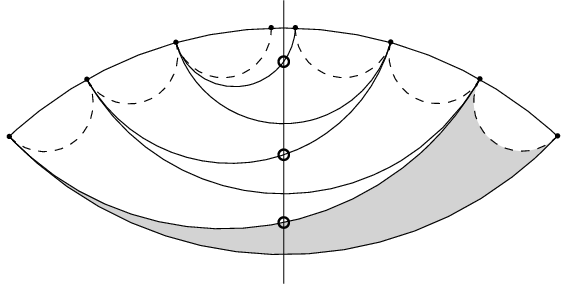}
		\centerline{\hskip16pt $R$ \hskip125pt $(L,N_R)$}
		\vskip10pt
		\includegraphics[scale=.45]{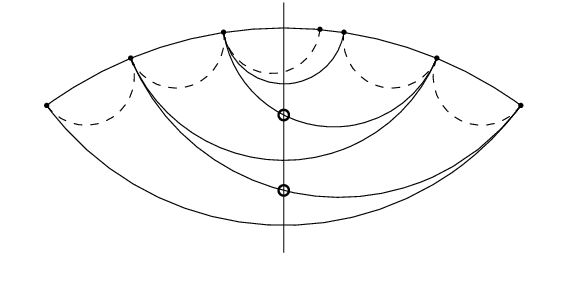}
		\hskip20pt
		\includegraphics[scale=.45]{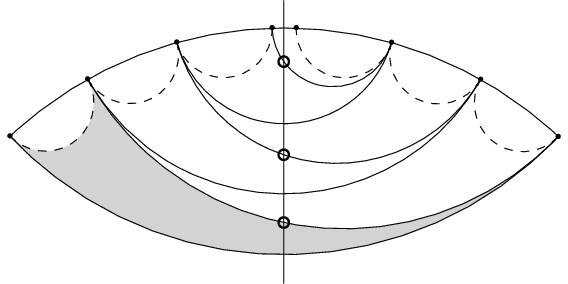}
		\centerline{\hskip16pt $L$ \hskip125pt $(N_L,R)$}
\caption{Intersection of diagonals and the axis $Z(u)$ ($u=-1,0$).
The marked diagonals are not symmetric with respect to
$Z(u)$.}
\label{fig:fmp1}
\end{figure}

Lemma  \ref{lem:mid1} has the following important consequence.
See Figure \ref{fig:fmp1} for the case $n_a=5$.
\begin{lem}
\label{lem:int}
Let $a=2,\dots,F$.
\par
(a). A diagonal of the $a$-th generation
intersects  one of the axes $Z(0)$ and $Z(-1)$
in the interior of the $r$-gon
if and only if it belongs to $J^{(a)}_2$ or $J^{(a)}_3$.
\par
(b).
Suppose that a diagonal with label $(a,m)_s$ belongs to
the interval $J^{(a)}_2$ (resp. $J^{(a)}_3$)
and intersects  $Z(u)$ ($u=-1$ or $0$).
Then, 
it is not symmetric with respect to $Z(u)$
if and only if $m$ is even (resp. $m$ is odd.).
\end{lem}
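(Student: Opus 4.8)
The statement is local, so the plan is to reduce it to a computation inside a single $L$- or $R$-interval. A diagonal $(a,m)_s$ has both its endpoints in one interval $I^{(a)}_s$, so it meets a diameter $Z(u)$ only if $I^{(a)}_s$ contains one of the two boundary points of $Z(u)$ in its interior and the two endpoints lie on opposite sides of that point. By Lemma \ref{lem:mid1} the boundary points to consider are $Q(0),P(-1),Q(-1)$; the remaining boundary point $P(0)$ of $Z(0)$ is, by part (a) of that lemma, a common endpoint of two adjacent intervals, so no $a$-th generation diagonal can straddle it. Of the three points, the case-(i) one lies at the midpoint of an $N_L$- or $N_R$-interval $J^{(a)}_1$, which carries no $a$-th generation diagonal; hence any crossing must come through the case-(ii) interval $J^{(a)}_2$ or the case-(iii) joint-interval $J^{(a)}_3$. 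This already yields the ``only if'' direction of (a) and localizes both parts to these two (joint-)intervals.

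For the ``if'' direction of (a) I would read the endpoints of the zigzag off Figure \ref{fig:diagonal1}. Writing $W=r^{(a)}$ and $d=r^{(a+1)}$, in the local coordinate of a type-$L$ interval the diagonals are $(a,2k-1)_s=\{(k-1)d,\,W-(k-1)d\}$ and $(a,2k)_s=\{kd,\,W-(k-1)d\}$ for even $n_a$, with the mirror-image description for type $R$ (Remark \ref{rem:mirror}) and the analogous description for odd $n_a$. The relevant axis point sits at the center $W/2$ of the interval in case $J^{(a)}_2$ and at distance $d/2$ from the center in case $J^{(a)}_3$, the displacement being exactly the offset of the joint-midpoint from the $L/R$-center. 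Using $W=n_a d+r^{(a+2)}$ from \eqref{eq:rr4}, one checks by a one-line inequality (which reduces to $r^{(a+2)}>0$) that in both cases the two endpoints of every diagonal lie strictly on opposite sides of the axis point. Thus belonging to $J^{(a)}_2$ or $J^{(a)}_3$ forces a crossing, completing (a).

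For (b) I would use that a chord is symmetric with respect to $Z(u)$ exactly when the reflection interchanges its two endpoints, i.e.\ when their coordinate-sum equals twice the axis point. From the endpoint formulas the odd-indexed diagonals have coordinate-sum $W$, while the even-indexed ones have coordinate-sum $W+d$ (for type $L$; $W-d$ for type $R$). Since the axis point is $W/2$ for $J^{(a)}_2$ and $W/2\pm d/2$ for $J^{(a)}_3$, comparing $2\cdot(W/2)=W$ and $W\pm d$ with these sums shows that the symmetric diagonals are the odd-indexed ones in the $J^{(a)}_2$ case and the even-indexed ones in the $J^{(a)}_3$ case; equivalently the non-symmetric ones are those with $m$ even, respectively $m$ odd, which is the assertion of (b).

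The position and type of the axis point inside the interval are supplied by Lemma \ref{lem:mid1} together with the rule of type-change, Lemma \ref{lem:change1}, so no further induction is needed beyond those lemmas and a direct inspection of Figure \ref{fig:diagonal1} (the base generation being read from Figure \ref{fig:firstgen}). The main obstacle is the joint-interval case $J^{(a)}_3$ in both parts: because the axis point is not the center of the $L/R$-interval but is displaced by the half-subinterval $d/2=r^{(a+1)}/2$, one must verify separately that every diagonal still straddles it, and, more delicately, that this half-step displacement reverses precisely the parity of the symmetric diagonals relative to $J^{(a)}_2$. Keeping this honest requires treating even and odd $n_a$ separately, since the neutral subinterval lies at the center of the interval for even $n_a$ but is itself displaced for odd $n_a$; it is the rule of type-change that guarantees the two patterns match up consistently as $a$ grows.
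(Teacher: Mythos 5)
Your argument is correct and follows the same route as the paper: the paper's proof of this lemma simply declares both parts to be immediate consequences of Lemma \ref{lem:mid1} together with an inspection of Figure \ref{fig:fmp1}, which is exactly the localization-plus-case-check you carry out. The only difference is that you replace the figure inspection by explicit endpoint coordinates (the sums $W$ versus $W\pm d$ compared with twice the axis point), which makes the parity claim in (b) and the strict-crossing claim in (a) verifiable without the picture.
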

\begin{proof}
Both properties are immediate consequences of Lemma  \ref{lem:mid1}
and Figure \ref{fig:fmp1}.
\end{proof}

Now we prove the quasi-reflection symmetry
of $\Gamma_{\mathrm{RSG}}(n_1,\dots,n_F)$.

\begin{thm}
\label{thm:qs}
 The triangulation $\Gamma_{\mathrm{RSG}}(n_1,\dots,n_F)$
is quasi-symmetric with respect to both the axes $Z(-1)$ and $Z(0)$.
\end{thm}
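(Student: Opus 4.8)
The plan is to prove quasi-symmetry generation by generation, reducing the global statement about $\Gamma_{\mathrm{RSG}}(n_1,\dots,n_F)$ to the local picture captured in Lemma~\ref{lem:int} and Figure~\ref{fig:fmp1}. Fix one of the two axes $Z(u)$ with $u\in\{-1,0\}$. By definition, quasi-symmetry with respect to $Z(u)$ means that the triangulation is symmetric under reflection across $Z(u)$ except possibly for those diagonals that cross $Z(u)$ in the interior of the $r$-gon. So the task splits into two checks: first, that every diagonal which does \emph{not} cross $Z(u)$ in the interior has its mirror image also present as a diagonal; and second, that the diagonals crossing $Z(u)$ are accounted for (they are exactly the ones we are permitted to ignore). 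The second check is essentially immediate from Lemma~\ref{lem:int}(a), which tells us precisely which diagonals cross an axis in the interior: they are the ones belonging to $J^{(a)}_2$ or $J^{(a)}_3$. So the real content is the first check.

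The strategy for the first check is an induction on the generation $a$, using the structure of the interval subdivision and the mirror-image symmetry recorded in Remark~\ref{rem:mirror} and in \eqref{eq:mirror1}. The key observation is that the reflection across $Z(u)$ acts on the intervals of each generation, and Lemma~\ref{lem:mid1}(b) together with the type-change rule of Lemma~\ref{lem:change1} guarantees that this reflection sends intervals of type $L$ to intervals of type $R$ and vice versa, while fixing (setwise) the special interval $J^{(a)}_1$ of type $N_L$ or $N_R$. First I would argue that the reflection across $Z(u)$ induces a bijection on the intervals of the $a$-th generation that is compatible with the mirror-image correspondence $L\leftrightarrow R$, $N_L\leftrightarrow N_R$ of \eqref{eq:mirror1}; this is where Lemma~\ref{lem:mid1} does the heavy lifting, since it pins down exactly how the axis meets the interval decomposition. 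Then, because by Remark~\ref{rem:mirror} the diagonals drawn inside an interval of type $L$ are the mirror images of those drawn inside an interval of type $R$ (and the rule in Step~3 depends only on the type), the diagonals inside any interval not meeting the axis get carried to the diagonals inside its mirror-image interval. This yields symmetry for all diagonals belonging to intervals disjoint from the axis.

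It remains to handle the diagonals that lie inside $J^{(a)}_1$, $J^{(a)}_2$, or $J^{(a)}_3$ but do \emph{not} themselves cross $Z(u)$ in the interior. For $J^{(a)}_1$ (a single interval of type $N_L$ or $N_R$ bisected by the axis), I would use the fact that such a neutral interval is internally symmetric, so its diagonals come in mirror pairs across the axis, with no diagonal actually crossing. For $J^{(a)}_2$ and $J^{(a)}_3$, Lemma~\ref{lem:int}(b) is exactly the tool: among the diagonals meeting the axis, precisely those with $m$ even (in the $J_2$ case) or $m$ odd (in the $J_3$ case) fail to be symmetric, and these are the ones permitted to be asymmetric; the remaining diagonals in these (joint-)intervals are either symmetric or occur in mirror pairs. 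The inductive step propagates the hypothesis from generation $a$ to generation $a+1$ via the subdivision rule, exactly as in the proof of Lemma~\ref{lem:mid1}.

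The main obstacle I anticipate is bookkeeping rather than conceptual: one must verify carefully that the reflection across $Z(u)$ really does interchange the labeled pieces of the subdivision in the manner asserted, in all the parity cases (even versus odd $n_a$) and for both axes simultaneously. The trinity statement of Lemma~\ref{lem:mid1}(b), namely that each of the three geometric possibilities (i),(ii),(iii) is realized by exactly one of the points $Q(0)$, $P(-1)$, $Q(-1)$, is what keeps this bookkeeping finite and consistent; without it one would have to re-examine the axis placement at every generation. Since Lemmas~\ref{lem:mid1}, \ref{lem:change1}, and \ref{lem:int} have already been established, the proof of the theorem should reduce to assembling these facts: the non-crossing diagonals are symmetric by the interval-reflection argument, and the crossing diagonals are precisely the exceptions allowed by the definition of quasi-symmetry.
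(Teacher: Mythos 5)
Your overall strategy coincides with the paper's proof: reduce quasi-symmetry to the (joint-)intervals that contain the axis endpoints, using the mirror pairing of $L$ and $R$ intervals (Remark~\ref{rem:mirror}) for everything away from the axis, and then invoke Lemma~\ref{lem:int}(a) to exempt the diagonals inside those (joint-)intervals, inducting on the generation. Two points, however, need repair. First, your argument never touches the diagonals of the \emph{first} generation: they are not contained in any interval of the second generation (each one spans several of them), so the interval-pairing machinery says nothing about them. The paper dispatches them in one line by observing that every first-generation diagonal intersects \emph{both} $Z(-1)$ and $Z(0)$ in the interior, hence is exempt from the symmetry requirement for either axis; without this remark your induction has no base covering these diagonals.

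Second, your treatment of $J^{(a)}_1$ is wrong as stated. You claim the neutral interval is ``internally symmetric, so its diagonals come in mirror pairs across the axis, with no diagonal actually crossing.'' The correct fact, and the one the paper uses, is that $J^{(a)}_1$ contains \emph{no} diagonal of generation $a$ at all (neutral intervals receive no diagonals of their own generation), so the step is vacuous at stage $a$. The contents of $J^{(a)}_1$ at later generations behave in exactly the opposite way from what you describe: by the type-change rule \eqref{eq:type1}--\eqref{eq:type2}, $J^{(a)}_1$ becomes an interval of type $L$ or $R$ bisected by the axis at generation $a+1$, i.e.\ it becomes $J^{(a+1)}_2$, and then \emph{every} generation-$(a+1)$ diagonal inside it crosses the axis, half of them asymmetrically (Figure~\ref{fig:fmp1}, Lemma~\ref{lem:int}(b)). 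Your induction would still catch these at stage $a+1$, so the conclusion survives, but the stated justification is false and should be replaced by the emptiness statement. Relatedly, the assertion that the reflection across $Z(u)$ ``fixes (setwise) the special interval $J^{(a)}_1$'' is imprecise: which of $J^{(a)}_1$, $J^{(a)}_2$, $J^{(a)}_3$ are setwise fixed depends on which of $Q(0)$, $P(-1)$, $Q(-1)$ are the endpoints of the chosen axis, and in the joint-interval case the reflection does not even preserve the individual intervals of the subdivision, only their union; your later discussion of $J^{(a)}_2$ and $J^{(a)}_3$ implicitly corrects this, but the statement as written is not what Lemma~\ref{lem:mid1} gives you.
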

\begin{proof}
Since all  diagonals of the first generation intersect  both
$Z(-1)$ and $Z(0)$, it is enough to prove the quasi-symmetry
for the diagonals of the generation $a\geq 2$.

Let us consider the diagonals of the second generation.
By 
 Figure \ref{fig:firstgen} and Remark \ref{rem:mirror},
the quasi-symmetry 
reduces to  the quasi-symmetry 
of the diagonals belonging to the intervals $J^{(2)}_1$, $J^{(2)}_2$,  $J^{(2)}_3$.
By Lemma \ref{lem:int} (a),
there is no diagonal of the second generation belonging to $J^{(2)}_1$,
and 
any diagonal of the second generation belonging to $J^{(2)}_2$ or  $J^{(2)}_3$
intersects  $Z(-1)$ or $Z(0)$.
Thus, the quasi-symmetry holds up to the second generation.
 
 Subdivide the intervals $J^{(2)}_1$, $J^{(2)}_2$, $J^{(2)}_3$
to get intervals of the third generation.
 Again,
by
 Figure \ref{fig:diagonal1} and Remark \ref{rem:mirror},
 quasi-symmetry 
reduces to  the quasi-symmetry 
of the diagonals belonging to $J^{(3)}_1$, $J^{(3)}_2$ or $J^{(3)}_3$.
By Lemma \ref{lem:int} (a),
there is no diagonal of the third generation belonging to $J^{(3)}_1$,
and
any diagonal of the third generation belonging to
 $J^{(3)}_2$ or  $J^{(3)}_3$
intersects  $Z(-1)$ or $Z(0)$.
Thus,  quasi-symmetry holds up to the third generation.

We repeat this argument by induction
on the generation, and  quasi-symmetry 
 reduces to  the quasi-symmetry 
of the diagonals of the $F$-th generation belonging to
$J^{(F)}_2$ or $J^{(F)}_3$.
Again, this follows from  Lemma \ref{lem:int} (a).
\end{proof}

\subsection{Mutation sequence}

We set $\Gamma(0):=\Gamma_{\mathrm{RSG}}(n_1,\dots,n_F)$ to be the
initial labeled triangulation.
Following the example in Section \ref{subsec:QS1},
we introduce the forward and backward mutation points
at time $u=0$ based on its quasi-reflection symmetry
of $\Gamma(0)$.
Let $S(u)$  ($u=-1,0$) be the
set of labels of $\Gamma(0)$ 
such that the corresponding diagonals of $\Gamma(0)$ 
intersects  $Z(u)$ in the interior of the $r$-gon
and are {\em not\/} symmetric with respect to the axis $Z(u)$.
We employ the labels in the set $S(0)$ (resp. $S(-1)$) as
the {\em forward (resp. backward) mutation points\/}
 at time $u=0$.
 The labels in $S(-1)$ are also the
 {\em forward\/} mutation points at $u=-1$.

\begin{lem}
\label{lem:mset}
(a). $S(-1)\cap S(0) = \emptyset$.
\par
(b).
The label $(1,m)$ for the first generation  belongs to $S(-1)$ if $m$ is odd
and belongs to $S(0)$ if $m$ is even.
\par
(c). For any  $a=2,\dots, F$ and $m=1,\dots, n_a$,
a label $(a,m)_s$ 
belongs to $S(-1)\sqcup S(0)$
if and only if the corresponding diagonal belongs to $J^{(a)}_2$ and
$m$ is even,
or the corresponding diagonal belongs to $J^{(a)}_3$ and
$m$ is odd.
In particular, for each $(a,m)$ there is the unique $s$
such that $(a,m)_s\in S(-1)\sqcup S(0)$.

(d). $|S(-1)\sqcup S(0)|=\sum_{a=1}^F n_a - 2.$
\end{lem}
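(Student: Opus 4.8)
The plan is to assemble the statement from the three preceding lemmas, handling the first generation separately from the generations $a\geq 2$, and then to read off (a) and (d) as bookkeeping consequences of (b) and (c). I would prove (b) and (c) first, since they characterize membership, and then derive disjointness (a) and the count (d).

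First I would prove (b). As noted in the proof of Theorem \ref{thm:qs}, every diagonal of the first generation meets both $Z(-1)$ and $Z(0)$ in the interior, so the only issue is with respect to which axis such a diagonal fails to be symmetric. Reading off the zigzag vertices $\pm k r^{(2)}$ in Figure \ref{fig:firstgen}, reflection through $Z(0)$ (the axis through $P(0)=0$) sends a vertex at position $x$ to $-x$, while reflection through $Z(-1)$ (through $P(-1)=-r^{(2)}/2$) sends $x$ to $-r^{(2)}-x$. A direct check of the endpoint pairs then shows that $(1,m)$ is invariant under the first reflection exactly when $m$ is odd, and invariant under the second exactly when $m$ is even; hence $(1,m)\in S(-1)$ for odd $m$ and $(1,m)\in S(0)$ for even $m$, lying in exactly one set in either case. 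Since the zigzag begins identically in the even and odd $n_1$ cases, this single computation covers both.

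Next I would prove (c). By Lemma \ref{lem:int}(a) a diagonal of the $a$-th generation ($a\geq 2$) meets one of the axes in the interior precisely when it belongs to $J^{(a)}_2$ or $J^{(a)}_3$, and by Lemma \ref{lem:int}(b) it is non-symmetric with respect to that axis exactly when $m$ is even (if it belongs to $J^{(a)}_2$) or $m$ is odd (if it belongs to $J^{(a)}_3$); this is exactly the membership criterion of (c). For the uniqueness of $s$ I would use the trinity structure of Lemma \ref{lem:mid1}(b): $J^{(a)}_2$ is a single interval of type $L$ or $R$, while $J^{(a)}_3$ contains one further interval of type $L$ or $R$, and these two intervals are centered at two \emph{distinct} points among $Q(0),P(-1),Q(-1)$. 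They therefore carry two distinct indices $s_2\neq s_3$ in the enumeration $I^{(a)}_1,\dots,I^{(a)}_{p_a}$ following Proposition \ref{prop:int1}. For fixed $(a,m)$ the diagonal $(a,m)_{s_2}$ qualifies iff $m$ is even and $(a,m)_{s_3}$ qualifies iff $m$ is odd, so exactly one value of $s$ works. Statements (a) and (d) then follow: for the first generation, (b) places each $(1,m)$ in exactly one of $S(-1),S(0)$; for $a\geq 2$, each qualifying diagonal belongs to $J^{(a)}_2$ or $J^{(a)}_3$, each centered at a single one of $Q(0),P(-1),Q(-1)$, hence meets exactly one axis and cannot lie in both sets, giving (a). Counting, (b) contributes the $n_1-2$ first-generation labels and (c) contributes exactly one label for each $a=2,\dots,F$ and each $m=1,\dots,n_a$, i.e.\ $\sum_{a=2}^F n_a$, for a total of $\sum_{a=1}^F n_a-2$, which is (d).

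The main obstacle I anticipate is the uniqueness claim in (c): one must be certain that the two distinguished $L/R$ intervals coming from $J^{(a)}_2$ and $J^{(a)}_3$ are genuinely distinct (so that $s_2\neq s_3$) and that the even/odd dichotomy of Lemma \ref{lem:int}(b) partitions the range $m=1,\dots,n_a$ between them with neither overlap nor omission. Both points rest on the fact, from Lemma \ref{lem:mid1}(b), that cases (i), (ii), (iii) are realized by three \emph{distinct} points; keeping this allocation consistent through the inductive type-changes of Lemma \ref{lem:change1} is where the care is needed, though the hard analytic content has already been absorbed into those lemmas.
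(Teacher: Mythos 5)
Your proposal is correct and follows essentially the same route as the paper, which disposes of (a) and (b) by reading the zigzag off Figure \ref{fig:firstgen}, obtains (c) as an immediate consequence of Lemma \ref{lem:int} (with uniqueness of $s$ coming from the fact that $J^{(a)}_2$ and $J^{(a)}_3$ each contribute a single $L/R$ interval), and gets (d) by counting one label per $(a,m)$. You in fact supply more detail than the paper does, in particular the explicit reflections $x\mapsto -x$ and $x\mapsto -r^{(2)}-x$ for (b) and the observation that each diagonal of generation $a\geq 2$ in $J^{(a)}_2$ or $J^{(a)}_3$ is tied to a single one of the three points and hence to a single axis, which is the point the paper leaves implicit in (a).
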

\begin{proof}
(a) and (b) are clear from Figure \ref{fig:firstgen}.
(c) is an immediate consequence of 
Lemma \ref{lem:int}.
(d)  follows from (b) and (c).
\end{proof}

\begin{defn}
For a given  labeled triangulation $\Gamma$,
we say that a set $S$ of  labels for $\Gamma$
 is {\em mutation-compatible in $\Gamma$}
if, for any pair of labels in $S$, the corresponding diagonals
do not belong to a common triangle.
\end{defn}
A mutation-compatible set of labels can be mutated ``simultaneously'',
without caring about the order of mutations.

\begin{prop}
\label{prop:compatible}
For each $u=-1,0$,
the set $S(u)$ is mutation-compatible in $\Gamma(0)$.
\end{prop}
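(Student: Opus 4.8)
The plan is to translate mutation-compatibility into a statement about how the axis $Z(u)$ meets the triangulation, and then read off the conclusion from the parity patterns already recorded in Lemmas \ref{lem:int} and \ref{lem:mset}. First I would record the elementary geometric fact that a chord of a convex polygon meets each triangle of a triangulation in a single segment, entering and leaving through exactly two of its three edges, and that these two crossing points lie in the interiors of those edges (indeed $Z(u)$ is a chord joining two boundary points, so it meets the boundary, and in particular every vertex, only at its two endpoints $P(u),Q(u)$). List the diagonals that $Z(u)$ crosses in their interior in the order in which the segment $Z(u)$ meets them; call this the \emph{crossing sequence}. If two diagonals of $S(u)$ shared a common triangle $T$, then, both being interior crossings of $Z(u)$, they would have to be precisely the two edges of $T$ through which $Z(u)$ enters and leaves $T$, i.e.\ they would be consecutive in the crossing sequence. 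Thus it suffices to prove that no two \emph{consecutive} members of the crossing sequence both lie in $S(u)$.

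Next I would describe the crossing sequence explicitly. By the construction of Figure \ref{fig:firstgen} and Lemma \ref{lem:int}(a), the diagonals crossing $Z(u)$ are those of the first generation (all of which cross both axes, as used in the proof of Theorem \ref{thm:qs}) together with, for each $a=2,\dots,F$, the diagonals of the $a$-th generation belonging to the (joint-)intervals $J^{(a)}_2$ and $J^{(a)}_3$ attached to the special points of Lemma \ref{lem:mid1}. These organize into \emph{blocks}: one for the first generation, and one for each generation $a$ whose interval $J^{(a)}_2$ or $J^{(a)}_3$ actually meets $Z(u)$. Inside a single block the crossing diagonals are linearly ordered by their index $m$ and occur consecutively in the crossing sequence, exactly as displayed in Figures \ref{fig:firstgen} and \ref{fig:fmp1}; and by Lemma \ref{lem:mset}(b) for the first generation and Lemma \ref{lem:int}(b) for $a\geq 2$, the members of $S(u)$ inside one block are exactly those of a \emph{fixed} parity of $m$. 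Two indices of the same fixed parity are never consecutive, so within a single block no two crossing diagonals lie in $S(u)$ simultaneously; this disposes of every pair coming from the same block.

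The remaining, and genuinely delicate, point is the behaviour at the \emph{interfaces} between two adjacent blocks of the crossing sequence: the border between the first-generation block and the second-generation block, and, near each special point, the border between the $a$-th and the $(a+1)$-th generation blocks. Here I must show that the two crossing diagonals bordering such an interface are not both in $S(u)$, and this is where the bulk of the work lies. I would resolve it using the type-change rule of Lemma \ref{lem:change1}, which governs how the interval at a special point subdivides as $a$ increases and hence how the generation-$(a+1)$ crossing diagonals nest against the innermost generation-$a$ one; combined with the explicit local configurations of Figure \ref{fig:fmp1} (and Figure \ref{fig:firstgen} at the first/second-generation interface), this shows case by case that the extremal bordering diagonal of each block is either symmetric with respect to $Z(u)$, hence not in $S(u)$ at all, or else carries the non-marked parity. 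Once this interface check is completed against the two lines of the type-change table, consecutive members of the crossing sequence are never both in $S(u)$, and the proposition follows.
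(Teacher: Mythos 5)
Your proposal is correct and takes essentially the same route as the paper's proof: both reduce mutation-compatibility to showing that no two members of $S(u)$ are adjacent in the linear order of diagonals crossed by $Z(u)$, dispose of same-generation pairs by the parity of $m$ from Lemmas \ref{lem:int} and \ref{lem:mset}, and settle the cross-generation (interface) case via the type-change rule of Lemma \ref{lem:change1}. The only difference is cosmetic: you organize the crossing sequence into generation blocks and justify the "shared triangle $\Rightarrow$ consecutive crossings" step explicitly, whereas the paper phrases the same analysis as a case split on $(a,a')$ with $a'=a$ or $a'\in\{a+1,a+2\}$.
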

\begin{proof}
First we give a general remark.
All  diagonals in $\Gamma(0)$
which intersect $Z(u)$ are linearly ordered
 along $Z(u)$.
  By the construction of $\Gamma(0)$,
 a pair of labels in $S(u)$   belong to a common triangle
 in $\Gamma(0)$
 if and only if the corresponding diagonals 
  are {\em adjacent\/} in this order.

Let us take any pair of labels in $S(u)$.
We claim that  the corresponding diagonals
are not adjacent in the above order.
If one of the labels belongs to the first generation,
then we can check the claim
using Figure \ref{fig:firstgen}.

Let
 $(a,m)_s, (a',m')_{s'}\in S(u)$ ($a,a'\geq 2$),
 and suppose that the corresponding
 diagonals are $d$ and $d'$.
 The following two cases should be examined.
\par
(i). {\em The case $a=a'$.}
If $d$ and $d'$ are adjacent,
then we should have $s=s'$, $m= m'\pm 1$.
But, this never occurs
thanks to Lemma \ref{lem:mset} (c).

(ii). {\em The case $a'=a+1,a+2$.}
If $d$ and $d'$ are adjacent,
then we should have $m=n_a$ and $m'=1$.
There are two subcases to consider. 
When $n_a$ is even,
$d$ is in $J^{(a)}_2$ by Lemma \ref{lem:mset} (c).
Then by Lemma \ref{lem:change1}, $a'=a+2$ and
$d'$ is in $J_2^{(a+2)}$.
When $n_a$ is odd,
$d$ is in $J^{(a)}_3$ by Lemma \ref{lem:mset} (c).
Again, by Lemma \ref{lem:change1}, $a'=a+2$ and 
$d'$ is in $J_2^{(a+2)}$.
In either case, $m'$ is even by Lemma \ref{lem:mset} (c),
so that $d$ and $d'$ are not adjacent.
\end{proof}

The following is an analogue of
Proposition \ref{prop:int1},
and it clarifies the meaning of the numbers $q^{(k)}_a$ and $p^{(k)}_a$
in our triangulation.

\begin{prop}
\label{prop:int2}
Let $k=2,\dots, F-1$.
For $a=k+1, \dots, F$,
the total number of intervals of types 
$L$ and $R$ (resp.  $N_L$ and $N_R$)
of the $a$-th generation
inside an interval
of type $L$ or $R$ of the $k$-th generation
is $q^{(k)}_{a-1}=p^{(k)}_{a}$ (resp. $p^{(k)}_{a-1}$).
\end{prop}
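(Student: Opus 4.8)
The plan is to mimic the proof of Proposition \ref{prop:int1}, which is precisely the case $k=1$ (with the whole $r$-gon regarded as a single interval of type $R$ of the first generation, as in the Remark following Step~2), but to run the induction on $a$ \emph{locally}, inside a fixed interval of the $k$-th generation. First I would record the two counting rules forced by the subdivision procedure of Step~2. Fix an interval $I$ of type $L$ or $R$ of the $k$-th generation, and for $a\geq k$ let $X_a$ (resp.\ $Y_a$) be the number of intervals of type $L$ or $R$ (resp.\ of type $N_L$ or $N_R$) of the $a$-th generation contained in $I$. The subdivision diagrams show that an interval of type $L$ or $R$ of the $a$-th generation splits into exactly $n_a$ intervals of type $L$ or $R$ together with a single interval of type $N_L$ or $N_R$ of the $(a+1)$-th generation, whereas an interval of type $N_L$ or $N_R$ merely relabels its type to $L$ or $R$ without subdividing. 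Consequently
\[
X_{a+1}=n_a X_a + Y_a,\qquad Y_{a+1}=X_a,
\]
so that $X_{a+1}=n_a X_a + X_{a-1}$. By Remark \ref{rem:mirror} the subdivision of an interval of type $R$ is the mirror image of that of type $L$, and since the mirror exchanges $L\leftrightarrow R$ and $N_L\leftrightarrow N_R$, the totals $X_a$ and $Y_a$ do not depend on whether $I$ is of type $L$ or $R$; this is exactly what makes the lumped count well defined.

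Next I would match this recursion with the continued fraction. The base case is $a=k$, where $I$ is the only interval in question, so $X_k=1$ and $Y_k=0$; feeding this into the recursion gives $X_{k+1}=n_k$ and $Y_{k+1}=1$, which agree with $q^{(k)}_k=p^{(k)}_{k+1}=n_k$ and $p^{(k)}_k=1$. For the inductive step I would assume $X_a=q^{(k)}_{a-1}$ and $Y_a=p^{(k)}_{a-1}$; then, using the relation $p^{(k)}_{a-1}=q^{(k)}_{a-2}$ together with \eqref{eq:xid1}, one obtains
\[
X_{a+1}=n_a q^{(k)}_{a-1}+p^{(k)}_{a-1}=n_a q^{(k)}_{a-1}+q^{(k)}_{a-2}=q^{(k)}_a,
\]
and $Y_{a+1}=X_a=q^{(k)}_{a-1}=p^{(k)}_a$. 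Since $q^{(k)}_a=p^{(k)}_{a+1}$, this is precisely the claimed count for the $(a+1)$-th generation, and the induction closes for $k+1\leq a\leq F$.

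The argument is essentially bookkeeping, and the only point genuinely requiring care is the correct reading of the subdivision rules — in particular the dual role of the $N_L/N_R$ intervals. These contribute nothing at their own generation to the $L/R$ tally, but upon relabeling they become the $L/R$ intervals of the next generation, thereby injecting the term $Y_a$ into the recursion for $X_{a+1}$; it is this feedback that produces the three-term recurrence $X_{a+1}=n_aX_a+X_{a-1}$ matching \eqref{eq:xid1}. Once that is pinned down, the identification with the continued-fraction recursions and with the relation $p^{(k)}_a=q^{(k)}_{a-1}$ is immediate, exactly as in the proof of Proposition \ref{prop:int1}.
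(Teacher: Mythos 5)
Your proof is correct and follows essentially the same route as the paper: the paper disposes of this proposition by saying it is proved in the same way as Proposition \ref{prop:int1}, i.e.\ by induction on $a$ using the recursion relations for $p^{(k)}_a$ and $q^{(k)}_a$, which is exactly the induction you carry out. You have merely made explicit the bookkeeping (the recursion $X_{a+1}=n_aX_a+Y_a$, $Y_{a+1}=X_a$ extracted from the subdivision diagrams, and the well-definedness of the lumped $L/R$ count via the mirror symmetry of Remark \ref{rem:mirror}) that the paper leaves implicit.
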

\begin{proof}
This is proved in the same way as Proposition \ref{prop:int1}.
\end{proof}

The proposition implies the equality
 $r^{(k)}=q^{(k)}_{a-1} r^{(a)} + p^{(k)}_{a-1} r^{(a+1)}$.
This certainly agrees with the formula
\eqref{eq:rF3}.

In particular, 
for $a=2,\dots,F$,
there are $p^{(2)}_a$ intervals of types $L$ and $R$
of the $a$-th generation between
0 and $r^{(2)}$.
In view of this,
we define the permutation $\nu$  of the labels of $\Gamma(0)$ by
\begin{align}
%\begin{split}
\nu: (1,m) \mapsto (1,m),\quad
	 (a,m)_s \mapsto (a,m)_{s+p^{(2)}_a},
	\quad a=2, \dots, F,	
%\end{split}
\end{align}
where the subscript $s$ for the $a$-th generation is defined modulo $p_a$.
We define the subsets $S(u)$ ($u\in \mathbb{Z}$) of the labels of $\Gamma(0)$
by
\begin{align}
S(u)=
\begin{cases}
\nu^{u/2}(S(0))& \mbox{$u$ is even}\\
\nu^{(u+1)/2}(S(-1))& \mbox{$u$ is odd}.
\end{cases}
\end{align}
Also, we
 define the axes $Z(u)$  ($u\in \mathbb{Z}$)
by
\begin{align}
\label{eq:Zu1}
Z(u)=
\begin{cases}
\Sigma^{(u/2)r^{(2)}}(Z(0))& \mbox{$u$ is even}\\
\Sigma^{((u+1)/2)r^{(2)}}(Z(-1))& \mbox{$u$ is odd}.
\end{cases}
\end{align}
Thus, we have
\begin{align}
\label{eq:SZ1}
S(u+2)=\nu(S(u)),
\quad
Z(u+2)=\Sigma^{r^{(2)}}(Z(u)),
\end{align}

Now we  define a mutation sequence
for  $\mathbb{Y}_{\mathrm{RSG}}(n_1,\dots,n_F)$,
\begin{align}
\label{eq:RSGmseq}
\cdots
\buildrel {S(-2)}  \over{\longleftrightarrow}
\Gamma(-1)
\buildrel {S(-1)} \over{\longleftrightarrow}
\Gamma(0)
\buildrel {S(0)} \over{\longleftrightarrow}
\Gamma(1)
\buildrel {S(1)} \over{\longleftrightarrow}
\Gamma(2)
\buildrel  {S(2)} \over{\longleftrightarrow}
\cdots.
\end{align}

We have the following desired properties of
the mutation sequence \eqref{eq:RSGmseq}.

\begin{prop}[Reflection/rotation of triangulations]
\label{prop:ref1}
 For any $u\in \mathbb{Z}$,
 the following holds:
 \par
 (a). The set $S(u)$ is mutation-compatible in $\Gamma(u)$.
 \par
(b). The forward mutation at $u$ 
in \eqref{eq:RSGmseq} is the
reflection of $\Gamma(u)$ with respect to the
axis $Z(u)$, without moving the labels.
\par
(c).
As labeled triangulations, we have
\begin{align}
\label{eq:periodRSG1}
 \Gamma(u+2)=\Sigma^{r^{(2)}}(\nu(\Gamma(u))).
 \end{align}
\end{prop}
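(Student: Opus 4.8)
The plan is to prove all three parts at once by a two-step induction on $u$, taking $u=-1,0$ as the base cases and propagating everything by the relabeled rotation $\Phi:=\Sigma^{r^{(2)}}\circ\nu$. The engine of the whole argument is a single geometric fact: in a triangulation that is quasi-symmetric with respect to an axis $Z$, simultaneously flipping the diagonals that cross $Z$ asymmetrically realizes the reflection $\sigma_{Z}$ across $Z$, at the level of \emph{unlabeled} triangulations. Here I write $d^{(u)}_{\ell}$ for the diagonal carrying label $\ell$ in $\Gamma(u)$.

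For the base cases of (a) and (b) I would argue as follows. At $u=0$, and identically with $Z(-1)$ in place of $Z(0)$, the set $S(0)$ is by definition the set of labels whose diagonals cross $Z(0)$ in the interior asymmetrically, and it is mutation-compatible in $\Gamma(0)$ by Proposition \ref{prop:compatible}; this is (a). For (b), fix $\ell\in S(0)$ with diagonal $d$. By compatibility none of the four sides of the quadrilateral formed by the two triangles adjacent to $d$ lies in $S(0)$, so by Theorem \ref{thm:qs} these sides belong to the $Z(0)$-symmetric part of $\Gamma(0)$ and the quadrilateral around $d$ is symmetric with respect to $Z(0)$; hence the flip of $d$ is exactly its mirror image $\sigma_{Z(0)}(d)$. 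The local pictures of Lemma \ref{lem:int} and Figure \ref{fig:fmp1} make this explicit for each interval type. Since every diagonal outside $S(0)$ lies in the symmetric part, the simultaneous flip of $S(0)$ carries $\Gamma(0)$ to $\sigma_{Z(0)}(\Gamma(0))$ as unlabeled triangulations, which is (b) at $u=0$.

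For the base case of (c) I would compose the two reflections. By (b) we have $\Gamma(1)=\sigma_{Z(0)}(\Gamma(0))$ and $\Gamma(2)=\sigma_{Z(1)}(\Gamma(1))$ as unlabeled triangulations, and a direct computation with the defining positions of $Z(0)$ and $Z(1)$ gives $\sigma_{Z(1)}\sigma_{Z(0)}=\Sigma^{r^{(2)}}$, whence $\Gamma(2)=\Sigma^{r^{(2)}}(\Gamma(0))$ unlabeled. The delicate point is the labeling. A single reflection carries a diagonal in an interval of type $L$ to its mirror in an interval of type $R$ and \emph{reverses} the within-interval index $m$ (Remark \ref{rem:mirror}); composing two reflections cancels this reversal and leaves only a shift of the interval index $s$. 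By Proposition \ref{prop:int2} an arc of width $r^{(2)}$ contains exactly $p^{(2)}_a$ intervals of the $a$-th generation, so the shift is precisely $s\mapsto s+p^{(2)}_a$, i.e. the relabeling $\nu$, the first-generation labels being fixed. This yields $\Gamma(2)=\Sigma^{r^{(2)}}(\nu(\Gamma(0)))$, which is \eqref{eq:periodRSG1} at $u=0$; the case $u=-1$ is identical.

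Finally I would run the induction using that $\Phi=\Sigma^{r^{(2)}}\circ\nu$ is an automorphism of the labeled data preserving incidence of diagonals, and that by \eqref{eq:SZ1} it sends $(\Gamma(u),Z(u),S(u))$ to $(\Gamma(u+2),Z(u+2),S(u+2))$. Unpacking (c) as $d^{(u+2)}_{\ell'}=\Sigma^{r^{(2)}}(d^{(u)}_{\nu^{-1}(\ell')})$ and assuming (b) at $u$, the diagonals crossing $Z(u+2)$ asymmetrically in $\Gamma(u+2)$ are exactly those labeled by $\nu(S(u))=S(u+2)$, giving (b) at $u+2$; part (a) transports because $\Phi$ preserves incidence, and (c) is the assertion that $\Phi$ intertwines the two-step evolution. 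I expect the main obstacle to be precisely the labeled bookkeeping in (c): one must check that the net effect of the two reflections on the higher-generation labels is the index shift $\nu$ and nothing more, which is where the mirror structure of the $L$/$R$ intervals (Remark \ref{rem:mirror}, Lemma \ref{lem:change1}) and the interval count $p^{(2)}_a$ of Proposition \ref{prop:int2} must be used in tandem.
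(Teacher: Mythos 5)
Your proof is correct and follows essentially the same route as the paper, whose own argument is simply to verify the base cases $u=-1,0$ via Theorem \ref{thm:qs} and Proposition \ref{prop:compatible} and then induct on $u$ in both directions. You have merely made explicit the geometric details (the symmetry of each surrounding quadrilateral, the composition of the two reflections into $\Sigma^{r^{(2)}}$, and the $\nu$-bookkeeping of labels) that the paper leaves to the reader.
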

\begin{proof}
This is true for $u=-1,0$
by Theorem \ref{thm:qs} and Proposition \ref{prop:compatible}.
Then, one can prove it by  induction on $u$
in  both directions.
\end{proof}

\subsection{Patterns of forward mutation points}
\label{subsec:patterns}
Before working on the $Y$-system,
we establish some {\em patterns\/}  of
the forward mutations.

Let $T^{(a)}_1$ be the type of the (joint-)interval
in Lemma \ref{lem:mid1}
having
 $Q(0)$ as   midpoint at $u=0$.
Similarly,
let $T^{(a)}_2$ and  $T^{(a)}_3$  be the  mirror images
of the types of the (joint-)intervals
in Lemma \ref{lem:mid1}
having
 $P(-1)$  and $Q(-1)$ as  midpoints, respectively,
 where the mirror images of types
 are defined by
\eqref{eq:mirror1}.
We take the mirror images
for $P(-1)$  and $Q(-1)$,
 because we are interested 
in the {\em forward\/} mutation at $u=-1$,
instead of the {\em backward\/} mutation at $u=0$.

\begin{defn}
For the triangulation $\Gamma_{\mathrm{RSG}}(n_1,\dots,n_F)$
and $a=2,\dots,F$,
we call the triplet $\mathcal{X}_a=(T^{(a)}_1,T^{(a)}_2,T^{(a)}_3)$
the {\em pattern of  forward mutation of the $a$-th
generation\/}.
\end{defn}

For example, we have $\mathcal{X}_2=(N_R, L, (N_L,R))$
for even $n_1$ and $\mathcal{X}_2=((N_L,R), L, N_R)$
for odd $n_1$ as in the proof of 
Lemma \ref{lem:mid1}.
The pattern $\mathcal{X}_{a+1}$ can be
computed  by applying the  rule in
Lemma \ref{lem:change1} to  $\mathcal{X}_{a}$ termwise;
we show the possible patterns explicitly
 up to the fourth generation
in Figure \ref{fig:type1}.

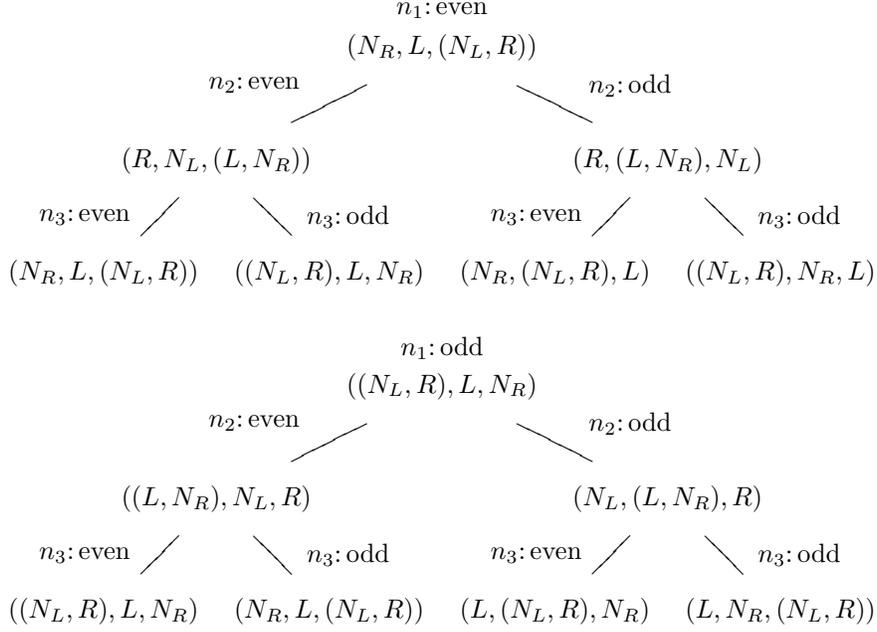
\begin{figure}
$$
\begin{xy}
(0,0)*{(N_R,L,(N_L,R))},
(-30,-15)*{(R,N_L,(L,N_R))},
(30,-15)*{(R,(L,N_R),N_L)},
(-45,-30)*{(N_R,L,(N_L,R))},
(-15,-30)*{((N_L,R),L,N_R)},
(15,-30)*{(N_R,(N_L,R),L)},
(45,-30)*{((N_L,R),N_R,L)},
(0,-45)*{((N_L,R),L,N_R)},
(-30,-60)*{((L,N_R),N_L,R)},
(30,-60)*{(N_L,(L,N_R),R)},
(-45,-75)*{((N_L,R),L,N_R)},
(-15,-75)*{(N_R,L,(N_L,R))},
(15,-75)*{(L,(N_L,R),N_R)},
(45,-75)*{(L,N_R,(N_L,R))},
(0,5)*{\mbox{$n_1$:\,even}},
(-25,-5)*{\mbox{$n_2$:\,even}},
(25,-5)*{\mbox{$n_2$:\,odd}},
(-47.5,-22.5)*{\mbox{$n_3$:\,even}},
(-12.5,-22.5)*{\mbox{$n_3$:\,odd}},
(12.5,-22.5)*{\mbox{$n_3$:\,even}},
(47.5,-22.5)*{\mbox{$n_3$:\,odd}},
(0,-40)*{\mbox{$n_1$:\,odd}},
(-25,-50)*{\mbox{$n_2$:\,even}},
(25,-50)*{\mbox{$n_2$:\,odd}},
(-47.5,-67.5)*{\mbox{$n_3$:\,even}},
(-12.5,-67.5)*{\mbox{$n_3$:\,odd}},
(12.5,-67.5)*{\mbox{$n_3$:\,even}},
(47.5,-67.5)*{\mbox{$n_3$:\,odd}},
\ar@{-} (-10,-5);(-20,-10)
\ar@{-} (10,-5);(20,-10)
\ar@{-} (-35,-20);(-40,-25)
\ar@{-} (-25,-20);(-20,-25)
\ar@{-} (25,-20);(20,-25)
\ar@{-} (35,-20);(40,-25)
\ar@{-} (-10,-50);(-20,-55)
\ar@{-} (10,-50);(20,-55)
\ar@{-} (-35,-65);(-40,-70)
\ar@{-} (-25,-65);(-20,-70)
\ar@{-} (25,-65);(20,-70)
\ar@{-} (35,-65);(40,-70)
% second diagram
%
%
\end{xy}
$$
\caption{Patterns of forward mutation up to the fourth generation.}
\label{fig:type1}
\end{figure}

Looking at Figure \ref{fig:type1},
we have the following observation.

\begin{lem}
\label{lem:parity}
Let $a=2,\dots, F$. For even $a$, the pattern $\mathcal{X}_a$
is a permutation of
$N_R$, $L$, $(N_L,R)$,
while
for odd $a$, the pattern $\mathcal{X}_a$
is a permutation of
$N_L$, $R$, $(L,N_R)$.
\end{lem}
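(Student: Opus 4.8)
The plan is to prove the statement by induction on $a$, using the recursion for the patterns encoded in Lemma \ref{lem:change1}. First I would introduce the two sets of types $A=\{N_R,\,L,\,(N_L,R)\}$ and $B=\{N_L,\,R,\,(L,N_R)\}$, and note that, under the mirror rule \eqref{eq:mirror1}, $B$ is exactly the mirror image of $A$. The claim to be proved is then precisely that $\mathcal{X}_a$ is an ordering (permutation) of $A$ for even $a$ and of $B$ for odd $a$.

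For the base case $a=2$, I would simply invoke the explicit computation already recorded before the lemma: $\mathcal{X}_2=(N_R,L,(N_L,R))$ for even $n_1$ and $\mathcal{X}_2=((N_L,R),L,N_R)$ for odd $n_1$. In either case $\mathcal{X}_2$ is an ordering of $A$, which settles the even case $a=2$.

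The heart of the argument is the inductive step. Recall that $\mathcal{X}_{a+1}$ is obtained from $\mathcal{X}_a$ by applying the type-change rule of Lemma \ref{lem:change1} to each of its three entries. The key observation I would establish is that this termwise rule carries the set $A$ onto the set $B$ and the set $B$ onto the set $A$, \emph{regardless of the parity of $n_a$}. This is a finite check reading off the two tables \eqref{eq:type1} and \eqref{eq:type2}: for even $n_a$ one has $N_R\mapsto R$, $L\mapsto N_L$, $(N_L,R)\mapsto (L,N_R)$, so $A\mapsto B$, and symmetrically $B\mapsto A$; for odd $n_a$ one has $N_R\mapsto R$, $L\mapsto (L,N_R)$, $(N_L,R)\mapsto N_L$, so again $A\mapsto B$, and symmetrically $B\mapsto A$. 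Thus in all cases the rule interchanges $A$ and $B$ as sets, while of course permuting the three positions.

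Granting this, the induction closes immediately: if $\mathcal{X}_a$ is an ordering of $A$ then $\mathcal{X}_{a+1}$ is an ordering of $B$, and vice versa. Since $\mathcal{X}_2$ orders $A$, we conclude that $\mathcal{X}_a$ orders $A$ for even $a$ and $B$ for odd $a$, as claimed. The only point requiring any care---and the closest thing to an obstacle---is the verification that the set-level swap $A\leftrightarrow B$ holds for \emph{both} parities of $n_a$; once that is observed, the parity of $n_a$ drops out entirely and only the parity of $a$ survives, which is exactly what the statement asserts.
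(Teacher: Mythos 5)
Your argument is correct and is essentially the paper's own proof: the paper disposes of the lemma in one sentence as ``an immediate consequence of the pattern of the second generation and the rule in Lemma \ref{lem:change1},'' which is exactly the induction you carry out, with the base case $\mathcal{X}_2$ and the verification from the tables \eqref{eq:type1} and \eqref{eq:type2} that the termwise type-change swaps the two sets $\{N_R,L,(N_L,R)\}$ and $\{N_L,R,(L,N_R)\}$ for either parity of $n_a$. Your write-up simply makes explicit the finite check that the paper leaves to the reader.
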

\begin{proof}
This is an immediate consequence  
of the pattern of the second generation and
the rule in Lemma \ref{lem:change1}.
\end{proof}

We say that the pattern $\mathcal{X}_a$
is {\em of type I, II, III\/} if $N_L$ or $N_R$ is in the
first, the second, the third position in $\mathcal{X}_a$, respectively.
For example, $(N_R,L,(N_L,R))$ is of type I
and $((N_L,R),L,N_R)$ is of type III.

The following condition is useful in analyzing the $Y$-system.

\begin{lem}
\label{lem:pp1}
(a). The pattern $\mathcal{X}_a$ is of type I
if and only if $p_a$ is even.
(In this case, $p^{(2)}_a$ is odd,
because $p_a$ and $p^{(2)}_a$ are coprime due to
Proposition \ref{prop:cf1}.)
\par
(b). The pattern $\mathcal{X}_a$ is of type II
if and only if $p_a$ is odd and $p^{(2)}_a$ is even.
\par
(c). The pattern $\mathcal{X}_a$ is of type III
if and only if $p_a$ is odd and $p^{(2)}_a$ is odd.
\end{lem}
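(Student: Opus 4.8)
The plan is to recast both the pattern recursion and the parity recursion as first-order dynamical systems on six states and to exhibit an isomorphism between them. First I would encode each pattern $\mathcal{X}_a$ as the bijection $\pi_a$ that assigns to each of the three reference points $Q(0)$, $P(-1)$, $Q(-1)$ its case $(\mathrm{i})$, $(\mathrm{ii})$ or $(\mathrm{iii})$ of Lemma \ref{lem:mid1}(b); this is genuinely a bijection onto $\{(\mathrm{i}),(\mathrm{ii}),(\mathrm{iii})\}$ by the ``trinity'' clause of that lemma, and by definition the type (I, II or III) of $\mathcal{X}_a$ records which position carries case $(\mathrm{i})$. Since the mirror involution \eqref{eq:mirror1} preserves cases, the mirroring built into $T^{(a)}_2$ and $T^{(a)}_3$ has no effect on $\pi_a$, and Lemma \ref{lem:change1} says that passing from $a$ to $a+1$ applies to every position one and the same permutation $\sigma_{n_a}$ of the case labels: the transposition $(\mathrm{i}\,\mathrm{ii})$ when $n_a$ is even, and the $3$-cycle $(\mathrm{i}\,\mathrm{ii}\,\mathrm{iii})$ when $n_a$ is odd. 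Hence $\pi_{a+1}=\sigma_{n_a}\circ\pi_a$, a first-order recursion on the six elements of the symmetric group on $\{(\mathrm{i}),(\mathrm{ii}),(\mathrm{iii})\}$ driven by left multiplication.

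On the arithmetic side, reducing \eqref{eq:xi2} and \eqref{eq:xid2} modulo $2$ (with the conventions $p_0=p^{(2)}_1=0$) gives the common second-order recursion $(p_{a+1},p^{(2)}_{a+1})\equiv n_a(p_a,p^{(2)}_a)+(p_{a-1},p^{(2)}_{a-1})\pmod 2$, so I would track the consecutive pair $\Pi_a:=\bigl((p_{a-1},p^{(2)}_{a-1}),(p_a,p^{(2)}_a)\bigr)\bmod 2$, regarded as a pair of vectors in $\mathbb{F}_2^2$. By Proposition \ref{prop:cf1}(c) the integers $p_a,p^{(2)}_a$ are coprime, so $(p_a,p^{(2)}_a)\not\equiv(0,0)$, while \eqref{eq:pp0} states exactly that $p_{a-1}p^{(2)}_a-p_ap^{(2)}_{a-1}=\pm1$, i.e.\ that the two vectors of $\Pi_a$ are linearly independent over $\mathbb{F}_2$. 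Thus the admissible states $\Pi_a$ are precisely the ordered bases of $\mathbb{F}_2^2$, of which there are exactly six, and on these the recursion becomes first order: for even $n_a$ it is the swap $s\colon(X,Y)\mapsto(Y,X)$, and for odd $n_a$ it is $c\colon(X,Y)\mapsto(Y,X+Y)$.

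Finally I would match the two systems. Writing an admissible state as the matrix with columns $X,Y$, the maps $s$ and $c$ are right multiplication by $\bigl(\begin{smallmatrix}0&1\\1&0\end{smallmatrix}\bigr)$ and $\bigl(\begin{smallmatrix}0&1\\1&1\end{smallmatrix}\bigr)$, which generate $GL_2(\mathbb{F}_2)\cong S_3$ and act simply transitively on ordered bases; the case permutations $\pi_a$ under left multiplication by $(\mathrm{i}\,\mathrm{ii})$ and $(\mathrm{i}\,\mathrm{ii}\,\mathrm{iii})$ form the regular $S_3$-set, also simply transitive. Matching a transposition with $s$ and a $3$-cycle with $c$ therefore determines a unique equivariant bijection $\Phi$ once a single base value is fixed, and I would pin it down at $a=2$ using the explicit patterns $\mathcal{X}_2$ from the proof of Lemma \ref{lem:mid1} together with $\Pi_2\equiv\bigl((1,0),(n_1,1)\bigr)\pmod 2$, treating the cases $n_1$ even and $n_1$ odd. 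Equivariance then yields $\Phi(\pi_a)=\Pi_a$ for all $a$ by induction, and reading off the position of case $(\mathrm{i})$ shows that it is $Q(0)$, $P(-1)$ or $Q(-1)$ exactly when $(p_a,p^{(2)}_a)\equiv(0,1)$, $(1,0)$ or $(1,1)$; since coprimality excludes $(0,0)$, these three mutually exclusive alternatives are the three cases of the lemma, and the ``if and only if'' follows at once. The main obstacle is the bookkeeping that makes this identification rigorous: verifying that the parity dynamics really is the simply transitive $S_3$-action matching the case dynamics (equivalently, checking directly that $\Phi$ intertwines $s,c$ with $(\mathrm{i}\,\mathrm{ii}),(\mathrm{i}\,\mathrm{ii}\,\mathrm{iii})$ across all six states and both parities of $n_a$), and aligning the two base patterns $\mathcal{X}_2$ with the initial parities so that the equivariant bijection is the correct one.
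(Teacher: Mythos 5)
Your proposal is correct (the finite verifications you flag at the end do all check out), but it organizes the argument quite differently from the paper. The paper proves only the three only-if directions, by a direct induction with two-step memory: assuming the claim up to $a$, it branches on the four parities of $(n_a,n_{a-1})$, applies Lemma \ref{lem:change1} twice to determine the types of $\mathcal{X}_a$ and $\mathcal{X}_{a-1}$, feeds the result into $p_{a+1}=n_ap_a+p_{a-1}$, and then obtains the if-directions for free because the parity conditions in (a)--(c) are mutually exclusive and exhaustive. You instead make both dynamics first order by enlarging the state --- the full assignment $\pi_a$ of cases to the three reference points on one side, the pair of consecutive parity vectors $\Pi_a$ on the other --- identify each state space as a simply transitive $S_3$-set (using \eqref{eq:pp0} to see that $\Pi_a$ is an ordered basis of $\mathbb{F}_2^2$, which is the one genuinely new observation relative to the paper), and transport the base case at $a=2$ by equivariance. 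This buys a uniform argument with no per-type parity case analysis, makes the biconditional automatic from bijectivity rather than from exclusivity, and explains structurally why the three types correspond to the three nonzero parity classes of $(p_a,p^{(2)}_a)$. The price is the setup plus the residual table check you describe; note in particular that matching a left action against a right action forces the identification $S_3\to GL_2(\mathbb{F}_2)$ to be an anti-isomorphism, which does exist and is compatible with your two generators (their orders and the braid-type relation match), so the check succeeds. The paper's route is shorter to write down; yours is more conceptual and self-correcting.
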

\begin{proof}
One can directly check the claim for $a=2$ and 3.
Assume that the claim is true up to $a$.
Suppose that $\mathcal{X}_{a+1}$ is of type I.
We have four possibilities for the parity of $n_{a}$ and $n_{a-1}$.
For example, suppose that $n_a$ and $n_{a-1}$ are both even.
Due to Lemma \ref{lem:change1}.
this implies that $\mathcal{X}_{a}$ is not of type I,
and $\mathcal{X}_{a-1}$ is of type I.
Then, $p_{a+1}=n_a p_{a} + p_{a-1}$ is even,
thanks to the induction hypothesis.
The other three cases can be checked in a similar way.
This proves  the only-if-part of (a).
Similarly, one can prove the only-if-parts of (b) and (c).
Since the conditions of (a), (b), (c) are mutually exclusive,
the if-parts of (a), (b), (c) also follow.
\end{proof}

\subsection{Anatomy of RSG $Y$-systems}
\label{subsec:anatomy}

As we did in Section \ref{sec:643},
for each time $u$,
we identify the $y$-variables $y^{(1)}_m(u)$ with
$Y^{(1)}_m(u)$,
and $y^{(a)}_{m,s}(u)$ with
$Y^{(a)}_m(u)$ for $a\geq 2$ regardless of $s$,
only at  forward mutation points.
We say that a $Y$-variable 
$Y^{(a)}_m(u)$  {\em occurs\/} 
in the mutation sequence \eqref{eq:RSGmseq}
if it is one of such identified variables.

\begin{prop}
\label{prop:cont1}
 Let $a=2,\dots, F-1$.
In the mutation sequence
\eqref{eq:RSGmseq}, the following holds.
\par
(a). 
$Y^{(a+1)}_1(u)$ occurs  if and only if $Y^{(a)}_{n_a-1}(u)$ occurs.
\par
(b). $Y^{(a+1)}_2(u)$ occurs  if and only if $Y^{(a-1)}_{n_{a-1}-
2\delta_{a-1,1}}(u)$ occurs.
\end{prop}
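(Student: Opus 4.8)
The plan is to reformulate ``$Y^{(a)}_m(u)$ occurs'' as a condition on the parity of $u$ alone, and then to reduce both (a) and (b) to the statement that the two indices appearing in each share the same parity class. First I would observe that, since $S(u)=\nu^{u/2}(S(0))$ for even $u$ and $S(u)=\nu^{(u+1)/2}(S(-1))$ for odd $u$, while $\nu$ fixes the $(a,m)$-part of every label $(a,m)_s$, the set of pairs $(a,m)$ occurring at time $u$ depends only on the parity of $u$: it consists of those $(a,m)$ admitting a copy in $S(0)$ when $u$ is even, and a copy in $S(-1)$ when $u$ is odd. By Lemma \ref{lem:mset} each pair $(a,m)$ has exactly one copy in $S(-1)\sqcup S(0)$, so it carries a well-defined \emph{phase} (even if that copy lies in $S(0)$, odd if it lies in $S(-1)$), and $Y^{(a)}_m(u)$ occurs precisely when the parity of $u$ equals this phase. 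Thus (a) and (b) become the assertions that the two pairs appearing in each statement have equal phase.

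Next I would read off the phase from the pattern of forward mutation. For the first generation Lemma \ref{lem:mset} (b) gives phase even iff $m$ is even. For $a\geq 2$, Lemma \ref{lem:mset} (c) says the unique mutating copy $(a,m)_{s_0}$ lies in $J^{(a)}_2$ if $m$ is even and in $J^{(a)}_3$ if $m$ is odd; by Lemma \ref{lem:int} this asymmetric diagonal crosses precisely the axis through the midpoint of that (joint-)interval. Since $Q(0)$ is the only distinguished non-vertex point of $Z(0)$ in generations $a\geq 2$ (the other point $P(0)$ being a vertex), whereas $P(-1),Q(-1)$ lie on $Z(-1)$, the phase of $(a,m)$ is even iff the relevant interval is the one centered at $Q(0)$, that is, iff the type $T^{(a)}_1$ at $Q(0)$ is of class $L/R$ (when $m$ is even) or of joint class (when $m$ is odd). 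Each phase is therefore encoded in the single datum $T^{(a)}_1$.

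It then remains to compare the class of $T^{(a)}_1$ across generations using the rule of type-change (Lemma \ref{lem:change1}). From its tables I would extract three facts: $T^{(a+1)}_1$ is of class $L/R$ iff $T^{(a)}_1$ is of class $N$ (for either parity of $n_a$); $T^{(a+1)}_1$ is of joint class iff $T^{(a)}_1$ is of joint class for even $n_a$, resp.\ of class $L/R$ for odd $n_a$; and $T^{(a)}_1$ is of class $N$ iff $T^{(a-1)}_1$ is of class $L/R$ for even $n_{a-1}$, resp.\ of joint class for odd $n_{a-1}$. For (a), $Y^{(a+1)}_1$ (the case $m=1$, joint) has even phase iff $T^{(a+1)}_1$ is joint, which by the first two facts matches the even phase of $Y^{(a)}_{n_a-1}$, checking $n_a$ even ($n_a-1$ odd, joint) and $n_a$ odd ($n_a-1$ even, $L/R$) separately. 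For (b), $Y^{(a+1)}_2$ (the case $m=2$, $L/R$) has even phase iff $T^{(a+1)}_1$ is $L/R$ iff $T^{(a)}_1$ is $N$, which by the third fact equals the even phase of $Y^{(a-1)}_{n_{a-1}}$, again splitting on the parity of $n_{a-1}$. The boundary case $a=2$ I would treat with the first-generation rule together with the explicit value of $T^{(2)}_1$ ($N$ for even $n_1$, joint for odd $n_1$), giving even phase of $Y^{(3)}_2$ iff $n_1$ is even iff $Y^{(1)}_{n_1-2}$ has even phase.

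The main obstacle I anticipate is bookkeeping rather than substance: one must keep the mirror-image conventions of $T^{(a)}_2,T^{(a)}_3$ (recording forward rather than backward mutation) separate from the unmirrored $T^{(a)}_1$, and apply the dictionary ``centered at $Q(0)$ $\leftrightarrow$ $Z(0)$ $\leftrightarrow$ even phase'' consistently. As an independent numerical check I would confirm, using the recursion $p_{a+2}=n_{a+1}p_{a+1}+p_a$ of \eqref{eq:xi2}, that $\theta^{(a+1)}_1(u)-\theta^{(a)}_{n_a-1}(u)=2p_a$ and $\theta^{(a+1)}_2(u)-\theta^{(a-1)}_{n_{a-1}}(u)=2p_{a+1}$ are both even; these are exactly the equalities of phase expressed through the bisection function \eqref{eq:bi1}.
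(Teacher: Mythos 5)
Your argument is correct and is essentially the paper's own proof in different clothing: the reduction to a parity ("phase") statement is the paper's reduction to $u\in\{-1,0\}$ via the rotation property, and your tracking of the class of $T^{(a)}_1$ through Lemma \ref{lem:change1} is equivalent to the paper's observation that $J^{(a+1)}_3$ (resp.\ $J^{(a+1)}_2$) is nested in $J^{(a)}_3$ or $J^{(a)}_2$ according to the parity of $n_a$, combined with Lemma \ref{lem:mset}\,(b),(c). Your closing check via $\theta^{(a)}_m(u)$ is consistent but should remain only a check, since the paper derives the evenness of $\theta$ (Proposition \ref{prop:bisect3}) \emph{from} this proposition.
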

\begin{proof}
It is enough to prove it for $u=-1,0$ by the rotation property.
\par
(a). 
Suppose that $Y^{(a+1)}_1(u)$ occurs, for example,
for $u=-1$.
By Lemma \ref{lem:mset} (c),
there is some $s$ such that
the diagonal with  label $(a+1,1)_s$ belongs to $J^{(a+1)}_3$.
By Lemma \ref{lem:change1},
$J^{(a+1)}_3$ is in $J^{(a)}_3$ (resp. $J^{(a)}_2$)
if $n_a$ is even (resp. odd).
Therefore,
there is some $s'$ such that
the diagonal with  label $(a,n_a-1)_{s'}$ belongs to  $J^{(a)}_3$
(resp. $J^{(a)}_2$)
if $n_a$ is even (reap. odd).
Then, again by Lemma \ref{lem:mset} (c),
we conclude that $Y^{(a)}_{n_a-1}(u)$ occurs for $u=-1$.
The converse is also shown by reversing the argument.
\par
(b). This is shown similarly   using
 Lemma \ref{lem:change1}
and Lemma \ref{lem:mset} (b) and (c).
\end{proof}

\begin{prop}
\label{prop:bisect3}
 In the mutation sequence \eqref{eq:RSGmseq}, 
$Y^{(a)}_m(u)$ occurs if and only if
$Y^{(a)}_m(u)\in \mathcal{Y}_+$.
\end{prop}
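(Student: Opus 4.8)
The plan is to convert the geometric notion of ``occurring at a forward mutation point'' into an explicit congruence on $u$ and then match it with the parity of $\theta^{(a)}_m(u)$. First I would note that the relabeling $\nu$ fixes the pair $(a,m)$ and only shifts the subscript $s$, so by the definitions $S(u)=\nu^{u/2}(S(0))$ (even $u$) and $S(u)=\nu^{(u+1)/2}(S(-1))$ (odd $u$) together with the rotation property, the variable $Y^{(a)}_m(u)$ occurs in \eqref{eq:RSGmseq} if and only if $(a,m)_s\in S(0)$ for some $s$ (when $u$ is even) or $(a,m)_s\in S(-1)$ for some $s$ (when $u$ is odd). By Lemma \ref{lem:mset}(c) each $(a,m)$ with $a\geq 2$ sits in exactly one of $S(0),S(-1)$, so the whole question reduces to: for which parity of $u$ does $Y^{(a)}_m(u)$ occur? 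For the first generation this is immediate from Lemma \ref{lem:mset}(b): $Y^{(1)}_m(u)$ occurs exactly when $u\equiv m\pmod 2$, and since $p_1=1$, $p_2=n_1$ one computes $\theta^{(1)}_m(u)=u+m$, so the two conditions coincide verbatim.

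For $a\geq 2$ I would exploit the geometry of the two axes: $Q(0)$ lies on $Z(0)$, while both $P(-1)$ and $Q(-1)$ lie on $Z(-1)$. By Lemma \ref{lem:int} the relevant non-symmetric diagonal belongs to $J^{(a)}_2$ when $m$ is even and to $J^{(a)}_3$ when $m$ is odd, so whether it feeds $S(0)$ or $S(-1)$ is governed entirely by the type $T^{(a)}_1$ of the (joint-)interval centered at $Q(0)$: if $T^{(a)}_1$ is type (i) both $J^{(a)}_2$ and $J^{(a)}_3$ sit on $Z(-1)$; if $T^{(a)}_1$ is type (ii) then $m$ even feeds $S(0)$ and $m$ odd feeds $S(-1)$; if $T^{(a)}_1$ is type (iii) the roles of the parities are exchanged.

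The key lemma I would then prove is that $T^{(a)}_1$ is of type (i), (ii), (iii) (in the sense of Lemma \ref{lem:mid1}) according as $(p_a,p_{a-1})\bmod 2$ equals $(0,1)$, $(1,0)$, $(1,1)$; these three cases are exhaustive because $p_a=q_{a-1}$ and $p_{a-1}=q_{a-2}$ are coprime. This goes by induction on $a$: the base case $a=2$ is read off from Figure \ref{fig:firstgen}, and the inductive step matches the type-change rule of Lemma \ref{lem:change1} against the recursion $p_{a+1}\equiv n_a p_a+p_{a-1}\pmod 2$ coming from \eqref{eq:xi2}. A short check confirms that even $n_a$ realizes the swap (i)$\leftrightarrow$(ii) with (iii)$\to$(iii), while odd $n_a$ realizes the $3$-cycle (i)$\to$(ii)$\to$(iii)$\to$(i), precisely as the parity pair $(p_{a+1},p_a)$ evolves.

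Finally I would close the loop arithmetically: using $p_{a+1}\equiv n_a p_a+p_{a-1}$ one simplifies $\theta^{(a)}_m(u)\equiv u+p_{a-1}+m\,p_a\pmod 2$, so $Y^{(a)}_m(u)\in\mathcal{Y}_+$ iff $u\equiv p_{a-1}+m\,p_a$. Comparing with the occurrence rule in the three cases---$p_a$ even forces all $(a,m)$ into $S(-1)$ (occurrence for odd $u$, matching $p_{a-1}$ odd); $p_a$ odd with $p_{a-1}$ even gives $S(0)$-membership iff $m$ is even; $p_a$ odd with $p_{a-1}$ odd gives $S(0)$-membership iff $m$ is odd---shows term by term that ``$Y^{(a)}_m(u)$ occurs'' is equivalent to ``$\theta^{(a)}_m(u)$ is even,'' which is the claim. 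I expect the main obstacle to be the bookkeeping in the key lemma: one must keep straight that $T^{(a)}_1$ records the type at $Q(0)$ itself rather than a mirror image, track which of the three trinity points lies on which axis, and verify that it is the \emph{position-$1$} type, not the global pattern type of Lemma \ref{lem:pp1} (which records where the neutral interval lies and involves $p^{(2)}_a$), that controls $S(0)$ versus $S(-1)$ membership.
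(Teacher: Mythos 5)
Your proof is correct, but it follows a genuinely different route from the paper's. The paper splits the statement into two halves: the ``only if'' direction is an induction on the generation $a$ that piggybacks on Proposition \ref{prop:cont1} (the occurrence linkages $Y^{(a+1)}_1(u)\leftrightarrow Y^{(a)}_{n_a-1}(u)$ and $Y^{(a+1)}_2(u)\leftrightarrow Y^{(a-1)}_{n_{a-1}-2\delta_{a-1,1}}(u)$), computing $\theta^{(a+1)}_1(u)=\theta^{(a)}_{n_a-1}(u)+2p_a$ and $\theta^{(a+1)}_2(u)=\theta^{(a-1)}_{n_{a-1}-2\delta_{a-1,1}}(u)+2p_{a+1}$ and then spreading the conclusion over all $m$ via Lemma \ref{lem:mset}(c); the ``if'' direction is then obtained for free by the cardinality count of Lemma \ref{lem:mset}(d). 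You instead prove a self-contained congruence characterization: a new lemma (parallel to, but distinct from, Lemma \ref{lem:pp1}) identifying the type of the interval at $Q(0)$ with the parity pair $(p_a,p_{a-1})\bmod 2$, proved by matching Lemma \ref{lem:change1} against $p_{a+1}\equiv n_ap_a+p_{a-1}$, and then the reduction $\theta^{(a)}_m(u)\equiv u+p_{a-1}+mp_a\pmod 2$. I checked your type-change bookkeeping in all six cases (even/odd $n_a$ times the three parity classes, with $(0,0)$ excluded by coprimality of $p_a=q_{a-1}$ and $p_{a-1}=q_{a-2}$) and the base case $a=2$, and it is consistent with the proof of Lemma \ref{lem:mid1}; the final three-case comparison also matches. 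What your approach buys is an explicit occurrence criterion ($Y^{(a)}_m(u)$ occurs iff $u\equiv p_{a-1}+mp_a\bmod 2$) and a single argument giving both implications at once, at the cost of an extra parity-tracking lemma; the paper's version avoids new machinery by reusing Proposition \ref{prop:cont1}, which it needs anyway for the $Y$-system relations, but must close the converse with a counting argument.
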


\begin{proof} It is enough to show it for $u=-1$ and $0$
by the rotation property.
 
 {\em The only-if part.} Suppose that $Y^{(a)}_m(u)$ occurs at $u=-1$ or $0$.
 Then, we need to show that $\theta^{(a)}_m(u)$ in
 \eqref{eq:bi1} is even.
  Let us prove it by induction on $a$.
 This is true for $a=1$ by Lemma \ref{lem:mset} (b).
 It is also true for $a=2$ by the case check of 
 Figure \ref{fig:firstgen}.
 Suppose that it is true for $a$.
 Then, by Proposition \ref{prop:cont1},
 $\theta^{(a+1)}_1(u)=u+p_a+p_{a+1}=\theta^{(a)}_{n_a-1}(u)+2p_a$ is even.
 Similarly,
 $\theta^{(a+1)}_2(u)=u+p_a+2p_{a+1}=\theta^{(a-1)}_{n_{a-1}-2\delta_{a-1,1}}(u)
 + 2p_{a+1}$ is even.
 So, $Y^{(a+1)}_1(u), Y^{(a+1)}_2(u)\in \mathcal{Y}_+$.
 Then, by Lemma \ref{lem:mset} (c), 
 we have $Y^{(a+1)}_m(u)\in \mathcal{Y}_+$ for any $m$.
 \par
  {\em The if part.} By Lemma  \ref{lem:mset} (d),
  the number of $Y^{(a)}_m(u)$'s occurring at $u=-1,0$
  is equal to the number of the elements in $\mathcal{Y}_+$
   with $u=-1,0$.
   Thus,  $Y^{(a)}_m(u)$'s occurring at $u=-1$ or $0$
    exhaust the elements of $\mathcal{Y}_+$ with $u=-1$ or $0$. 
 \end{proof}

Now we are ready to state the fundamental theorem of the paper.

\begin{thm}
\label{thm:RSGY}
The mutation sequence \eqref{eq:RSGmseq} realizes the
RSG $Y$-system $\mathbb{Y}_{\mathrm{RSG}}(n_1,\dots,n_F)$
for $\mathcal{Y}_+$.
\end{thm}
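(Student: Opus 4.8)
The plan is to verify, relation by relation, that following the coefficient attached to a single labelled diagonal between two consecutive forward mutations of that label reproduces each equation of $\mathbb{Y}_{\mathrm{RSG}}(n_1,\dots,n_F)$; this is the rigorous form of the \emph{snapshot method} of Section \ref{sec:643}. By the rotation property (Proposition \ref{prop:ref1}(c)) and Proposition \ref{prop:bisect3} it suffices to check each relation once, for the variables of $\mathcal{Y}_+$ that actually occur.

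First I would pin down the timing. Fix $(a,m)$ and let $(a,m)_{s_0}$ be the unique label lying in $S(-1)\sqcup S(0)$ (Lemma \ref{lem:mset}(c)). Since $\nu$ shifts the subscript of an $a$-th generation label by $p^{(2)}_a$ modulo $p_a$ and $\gcd(p_a,p^{(2)}_a)=1$ (Proposition \ref{prop:cf1}(c)), the label $(a,m)_{s_0}$ is a forward mutation point exactly at the times forming an arithmetic progression of step $2p_a$, and at no time strictly between two consecutive such. Thus for the appropriate $u$ it is flipped at $u-p_a$ and at $u+p_a$ and nowhere in between. The flip at $u-p_a$ gives $y^{(a)}_{m,s_0}(u-p_a+1)=Y^{(a)}_m(u-p_a)^{-1}$ by \eqref{eq:yex}, while the value at $u+p_a$, before that flip, is $Y^{(a)}_m(u+p_a)$; hence $Y^{(a)}_m(u-p_a)Y^{(a)}_m(u+p_a)$ equals the product of all factors $(1+y_e^{\pm1})^{\pm1}$ accumulated from the forward mutations of the diagonals $e$ sharing a triangle with our diagonal at the intermediate times $u-p_a+1,\dots,u+p_a-1$.

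The core of the argument is the geometric identification of these accumulated factors with the right-hand sides of \eqref{eq:RSG1}, \eqref{eq:RSG5}, and \eqref{eq:RSG4}. For a generic $(a,m)$ the diagonals abutting $d=(a,m)_{s_0}$ during the window are precisely the Dynkin-diagram neighbours $(b,k)\sim(a,m)$, each contributing once at the midpoint time $u$ (for a same-generation neighbour this is forced by the opposite sign, which places its flip at the centre of the window); the exponent, the sign $\varepsilon_b$, and the resulting factor $(1+Y^{(b)}_k(u)^{\varepsilon_b})^{\varepsilon_b}$ are read off from the clockwise/anticlockwise orientation of the shared triangle, which is controlled by the mirror-image structure (Remark \ref{rem:mirror}) and the type I/II/III data of Lemmas \ref{lem:parity} and \ref{lem:pp1}. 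The connecting labels $(a,1)$ with $a\geq 3$ (relation \eqref{eq:RSG4}) and $(2,1)$ (relation \eqref{eq:RSG5}) are the delicate cases: there $d$ is the innermost diagonal of the interval $I^{(a)}_{s_0}$ and abuts the single neighbour $(a,2)$, the diagonal $(a-2,n_{a-2}-2\delta_{a3})$ two generations back (whose simultaneous occurrence is guaranteed by Proposition \ref{prop:cont1}), and the entire fan $(a-1,1),\dots,(a-1,n_{a-1})$ of the previous generation. As the axis $Z(v)$ sweeps, rotating by $r^{(2)}$ every two steps, it crosses this fan on one side while approaching $u$ and on the mirror side while leaving $u$, which is exactly why the fan contributes the two products in \eqref{eq:RSG4}.

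The hard part will be the precise computation of the time at which each fan diagonal $(a-1,m')$ is flipped, i.e.\ showing that the successive crossings occur at $u\mp p_a\pm(n_{a-1}+1-m')p_{a-1}$. This reduces to locating $(a-1,m')$ along $Z(v)$ relative to the moving axis, and the spacing $p_{a-1}$ between consecutive crossings must be matched to the interval widths $r^{(a)}$ and $r^{(a+1)}$ recorded in Propositions \ref{prop:int1} and \ref{prop:int2}; the identity $-p_a+n_{a-1}p_{a-1}=-p_{a-2}$ from \eqref{eq:xi2} then furnishes the stated range of arguments. Once this bookkeeping is in place one verifies that no further adjacent diagonal is flipped in the window, so that the accumulated product is exactly the right-hand side and nothing more, completing the check for each of the three types of relation.
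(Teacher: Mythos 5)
Your proposal is correct and follows essentially the same route as the paper: establish the timing of forward mutation points (your arithmetic-progression claim is Proposition \ref{prop:pp1}, the half-period offset between parities is Proposition \ref{prop:pp2}, and the $2p_{a-1}$ spacing across consecutive intervals is Proposition \ref{prop:pp3}), then verify each relation by accumulating the exchange factors between two consecutive flips of a fixed diagonal, i.e.\ the snapshot method, with the even-$a$ case handled by mirror symmetry. The paper packages the "hard part" you identify --- the fan timings in \eqref{eq:RSG4} --- into exactly these three propositions together with the explicit snapshots of Figures \ref{fig:snap1} and \ref{fig:snap2}, so no genuinely different ideas are involved.
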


The rest of this subsection will be devoted to prove
Theorem \ref{thm:RSGY}.
We start by establishing the three basic properties
on the mutation sequence.

The first property explains the left hand sides of the equations
of the $Y$-systems.

\begin{prop}
\label{prop:pp1}
Any label for any generation $a$ is
a forward mutation point at some $u\in \mathbb{Z}$.
Furthermore, such $u$'s occur exactly with period $2p_a$.
\end{prop}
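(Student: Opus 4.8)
The plan is to read off everything from the translation law $S(u+2)=\nu(S(u))$ recorded in \eqref{eq:SZ1}, together with the uniqueness statement of Lemma \ref{lem:mset} (c). First I would dispose of the first generation, where $\nu$ fixes every label $(1,m)$: by Lemma \ref{lem:mset} (b) such a label lies in exactly one of $S(-1)$, $S(0)$ (disjoint by Lemma \ref{lem:mset} (a)), hence, being $\nu$-fixed, it lies in $S(u)$ for every $u$ of the matching parity and in no other $S(u)$. Its forward mutation times thus form a single parity class, an arithmetic progression of common difference $2=2p_1$, which settles the case $a=1$.

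For a label $(a,m)_s$ with $a\ge 2$ the key point is that $\nu$ shifts the subscript by $s\mapsto s+p^{(2)}_a$ modulo $p_a$, and since $p_a$ and $p^{(2)}_a$ are coprime by Proposition \ref{prop:cf1} (c), this is a single $p_a$-cycle on the $p_a$ subscripts $s=1,\dots,p_a$. By Lemma \ref{lem:mset} (c) there is a unique subscript $s^{\ast}$ and a unique $u_0\in\{-1,0\}$ with $(a,m)_{s^{\ast}}\in S(u_0)$. Iterating \eqref{eq:SZ1} gives $S(u_0+2k)=\nu^{k}(S(u_0))$ for all $k\in\mathbb{Z}$, so I would compute that $(a,m)_s\in S(u_0+2k)$ exactly when $(a,m)_{s-kp^{(2)}_a}\in S(u_0)$, which by the uniqueness in Lemma \ref{lem:mset} (c) holds precisely for $kp^{(2)}_a\equiv s-s^{\ast}\pmod{p_a}$. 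Coprimality turns this into a single residue class $k\equiv k_0\pmod{p_a}$, so $(a,m)_s$ occurs as a forward mutation point exactly at the times $u_0+2k_0+2p_a\mathbb{Z}$. Taking $k=k_0$ proves that the label does occur, and the description as one progression with gap $2p_a$ gives the asserted periodicity.

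The step needing the most care is confirming that the period is exactly $2p_a$, i.e. that the label does not also occur at the opposite parity and does not occur twice within a window of length $2p_a$. Both follow from Lemma \ref{lem:mset} (c): the entire $\nu$-orbit $\{(a,m)_s:s=1,\dots,p_a\}$ meets $S(-1)\sqcup S(0)$ in the single element $(a,m)_{s^{\ast}}\in S(u_0)$, so no element of the orbit lies in any $\nu^{k}(S(u_0'))$ for the opposite parity $u_0'$, and within the parity of $u_0$ the residue $k_0$ modulo $p_a$ is unique. Here the coprimality $\gcd(p_a,p^{(2)}_a)=1$ does double duty, guaranteeing both that $\nu$ reaches every subscript (existence) and that the period does not collapse to a proper divisor (minimality). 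No separate treatment of the boundary generation $a=F$ is required, since the argument uses only \eqref{eq:SZ1}, Lemma \ref{lem:mset}, and Proposition \ref{prop:cf1} (c).
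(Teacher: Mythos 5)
Your proof is correct and follows essentially the same route as the paper: both arguments rest on the uniqueness statement of Lemma \ref{lem:mset} (c), the fact that the relabeling $\nu$ shifts the subscript by $p^{(2)}_a$ modulo $p_a$ between consecutive pairs of time steps, and the coprimality of $p_a$ and $p^{(2)}_a$ from Proposition \ref{prop:cf1} (c). Your version merely makes the paper's ``repeating this'' step explicit as a residue-class computation, and handles $a=1$ via Lemma \ref{lem:mset} (b) rather than by inspection of Figure \ref{fig:firstgen}; the substance is identical.
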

\begin{proof}
This is true for $a=1$ by Figure \ref{fig:firstgen}.
Let $a\geq 2$.
It follows from Lemma \ref{lem:mset} (c) that,
for a given $m=1,\dots, n_a$,
there exists the unique $s\in \{1,\dots, p_a\}$
such that $(a,m)_s$ is a forward mutation point at $u=-1$ or $0$.
Then, by Proposition \ref{prop:int2},
$s'=s+p_a^{(2)}$ is the unique $s'\in \{1,\dots, p_a\}$
such that
$(a,m)_{s'}$ is a forward mutation point at $u=1$ or $2$.
Repeating this,
we see that
 every label
$(a,m)_{s''}$ ($s''=1,\dots,p_a$) 
appears as a forward mutation point exactly
with period $2p_a$,
 since $p_a$ and $p^{(2)}_a$ are coprime by Proposition
\ref{prop:cf1} (c),
\end{proof}

Thanks to Lemma \ref{lem:pp1},
we have the second basic property.

\begin{prop}
\label{prop:pp2}
Let $a=2,\dots,F$ and $s=1,\dots,p_a$.
Suppose that the labels $(a,m)_s$ for even $m$ (resp. odd $m$)
are
the forward mutation points at $u$.
Then, the labels $(a,m)_s$ for odd $m$ (resp. even $m$)
are the forward mutation points at $u+p_a$.
\end{prop}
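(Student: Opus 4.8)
The plan is to reduce the statement to a single numerical fact about the gap between the two forward‑events of a fixed interval, and then to evaluate that gap using the continued‑fraction identities of Section 2. Fix $a$ and $s$ and consider the interval $I^{(a)}_s$ of type $L$ or $R$. By Lemma \ref{lem:int}, among the diagonals $(a,1)_s,\dots,(a,n_a)_s$ the even ones become forward mutation points exactly when an axis passes through the midpoint of $I^{(a)}_s$ (case (ii) of Lemma \ref{lem:mid1}), while the odd ones become forward exactly when an axis passes through the midpoint of the joint‑interval built on $I^{(a)}_s$ (case (iii)). By Proposition \ref{prop:pp1} each $(a,m)_s$ is forward with period $2p_a$, and Lemma \ref{lem:int}(a) shows these are the only two configurations producing a forward event for $I^{(a)}_s$; hence there is a unique even‑event time $u_e(s)$ and a unique odd‑event time $u_o(s)$ in each period, and it suffices to prove $u_o(s)-u_e(s)\equiv p_a \pmod{2p_a}$ (this single statement yields both directions of the ``resp.'', which are equivalent by the period $2p_a$).

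First I would show that the gap $\Delta:=u_o(s)-u_e(s)$ does not depend on $s$. This is immediate from \eqref{eq:SZ1}: since $S(u+2)=\nu(S(u))$ and $\nu$ sends $(a,m)_s$ to $(a,m)_{s+p^{(2)}_a}$, both $u_e$ and $u_o$ satisfy $u_\bullet(s+p^{(2)}_a)=u_\bullet(s)+2$, so they are affine in $s$ with the same slope and their difference is constant. Because $p_a$ and $p^{(2)}_a$ are coprime (Proposition \ref{prop:cf1}(c)), this single constant $\Delta$ controls every $s$ and may be computed at any convenient interval.

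Next I would compute $\Delta$ from the motion of the axes. By \eqref{eq:Zu1} the relevant endpoint of $Z(u)$ advances by $r^{(2)}/2$ at each step, hence by $p_a r^{(2)}/2$ over $p_a$ steps, while the midpoint of $I^{(a)}_s$ and that of its joint‑interval differ by $\pm r^{(a+1)}/2$, the sign recording the type $L$ or $R$ (cf.\ Figure \ref{fig:fmp1}). Thus $\Delta$ is governed by the congruence $p_a r^{(2)}\equiv \pm r^{(a+1)} \pmod{r}$, whose exact form is
\begin{align*}
p_a r^{(2)} \equiv (-1)^{a+1} r^{(a+1)} \pmod{r}.
\end{align*}
I would derive this by multiplying \eqref{eq:rF3} for $k=2$ by $p_a$, eliminating $r^{(a+2)}$ via \eqref{eq:rF3} for $k=1$, and applying the determinant relation of Proposition \ref{prop:cf0}(b) with $k=1$, namely $q_a p^{(2)}_a-q^{(2)}_a p_a=(-1)^a$. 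Combined with $\gcd(r^{(2)},r)=1$ (Proposition \ref{prop:cf1}(d)), this forces $\Delta\equiv \pm p_a \pmod{r}$.

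The main obstacle is the final bookkeeping: the geometric computation determines $\Delta$ only modulo $r$ and up to a sign, whereas the statement lives modulo the period $2p_a$. I would close this gap in two steps. The sign is harmless, since $-p_a\equiv p_a \pmod{2p_a}$, so knowing $\Delta\equiv \pm p_a$ is enough. To pass from modulus $r$ to modulus $2p_a$ I would exploit the $s$‑independence of $\Delta$ by evaluating it in the explicit base configuration $u\in\{-1,0\}$, where Lemmas \ref{lem:mid1}, \ref{lem:change1} and \ref{lem:mset} locate the case‑(ii) interval and the case‑(iii) joint‑interval exactly and thereby pin $\Delta$ to a representative in $(0,2p_a)$; the parity constraint $\Delta\equiv p_a \pmod 2$, coming from the bisection \eqref{eq:bi1} together with Proposition \ref{prop:bisect3}, then selects $\Delta=p_a$. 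A clean model for this endgame is $(n_1,n_2)=(6,4)$, where $p_2=6$, $r=31$, $r^{(2)}=5$, $r^{(3)}=1$, and one reads off from \eqref{eq:74mseq} that $u_e(s)=2s$ and $u_o(s)=2s-6$, so that $\Delta=-6\equiv 6=p_2 \pmod{12}$.
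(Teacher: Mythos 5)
Your reduction is sound and agrees with the skeleton of the paper's argument: by Lemma \ref{lem:mset} (c) the even-$m$ and the odd-$m$ labels of a fixed $s$ are each forward exactly once per period $2p_a$ (Proposition \ref{prop:pp1}), the gap $\Delta=u_o(s)-u_e(s)$ is independent of $s$ because $u_\bullet(s+p^{(2)}_a)=u_\bullet(s)+2$ by \eqref{eq:SZ1} and $\gcd(p_a,p^{(2)}_a)=1$, and it suffices to prove $\Delta\equiv p_a\pmod{2p_a}$. The gap is in your computation of $\Delta$, in two places. First, the governing congruence is incomplete: the even event is anchored at the case-(ii) axis point and the odd event at the case-(iii) axis point, and these are two \emph{different} points among $Q(0),P(-1),Q(-1)$; their mutual offset ($\pm r^{(2)}/2$, $\pm r/2$ or $\pm(r+r^{(2)})/2$, depending on whether $\mathcal{X}_a$ is of type I, II or III) enters the balance, as does the drift of the interval \emph{labeled} $s$ between the two events (the composition of $\nu$ with $\Sigma^{r^{(2)}}$ moves a labeled interval by a non-uniform multiple of $r^{(a+1)}$, depending on how many neutral intervals are crossed). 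Neither term appears in $p_ar^{(2)}\equiv(-1)^{a+1}r^{(a+1)}\pmod r$; in your $(6,4)$ check they happen to vanish modulo $r$, which is why the example works. Second, even granting $\Delta\equiv\pm p_a\pmod r$ together with $\Delta\equiv p_a\pmod 2$, this does not determine $\Delta$ modulo $2p_a$: for $(n_1,n_2,n_3)=(4,1,1)$ one has $r=14$, $p_3=5$, and $9=r-p_3\equiv-p_3\pmod r$ is odd and smaller than $2p_3=10$, yet $9\not\equiv 5\pmod{10}$. (One can show the bad alternative $r-p_a$ is excluded unless $n_F=1$ and $r$ is even, but you have not done that analysis, and it still leaves the first problem.)

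Your proposed rescue --- ``evaluate in the explicit base configuration $u\in\{-1,0\}$'' --- is exactly what the paper does, and once you do it the mod-$r$ geometry plays no role: the paper splits into the three types of $\mathcal{X}_a$ via Lemma \ref{lem:pp1}, reads off from Figure \ref{fig:fm1} the explicit indices $s$ at which the even-$m$ and the odd-$m$ labels are forward at $u=-1$ or $0$, and converts the index difference into a time difference through congruences modulo $p_a$ such as $\frac{p_a}{2}p^{(2)}_a\equiv\frac{p_a}{2}$ and $\frac{p_a\pm1}{2}p^{(2)}_a\equiv\pm\frac{p^{(2)}_a}{2}$. The natural modulus for this statement is $p_a$ (the index set of the labels, shifted by $p^{(2)}_a$ every two time units), not $r$ (the vertex set of the polygon); your argument should be carried out there from the start.
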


\begin{figure}
%		\includegraphics[scale=.36]{forward_mutations-I.eps}
%		\includegraphics[scale=.36]{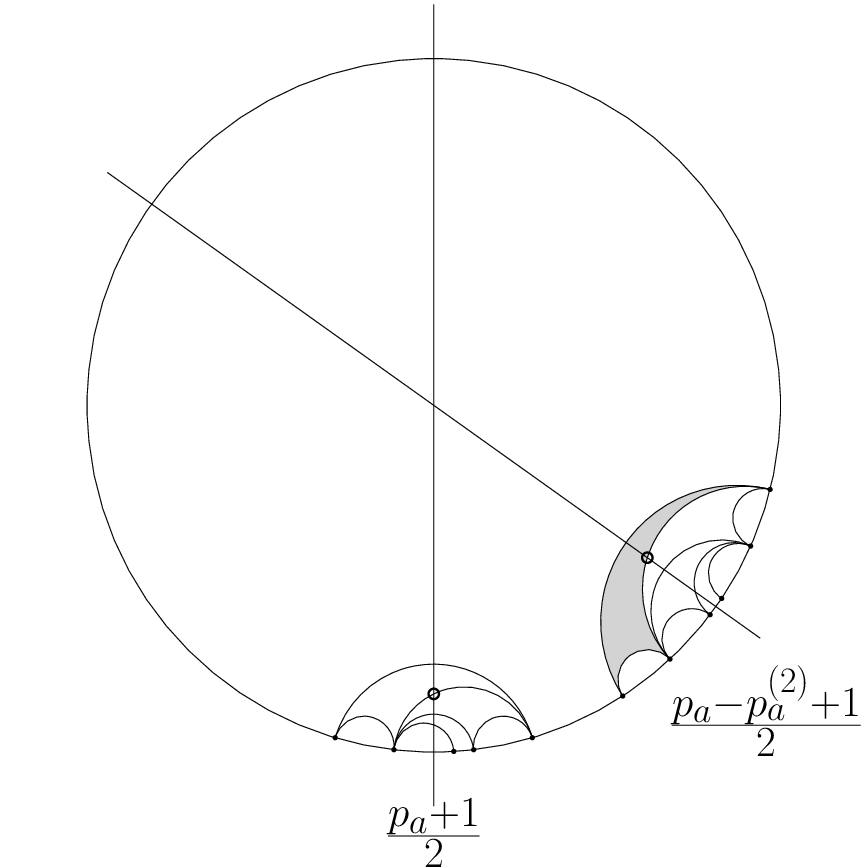}
%		\centerline{\hskip0pt(a) type I \hskip125pt (b) type II\hskip5pt}
%		\includegraphics[scale=.36]{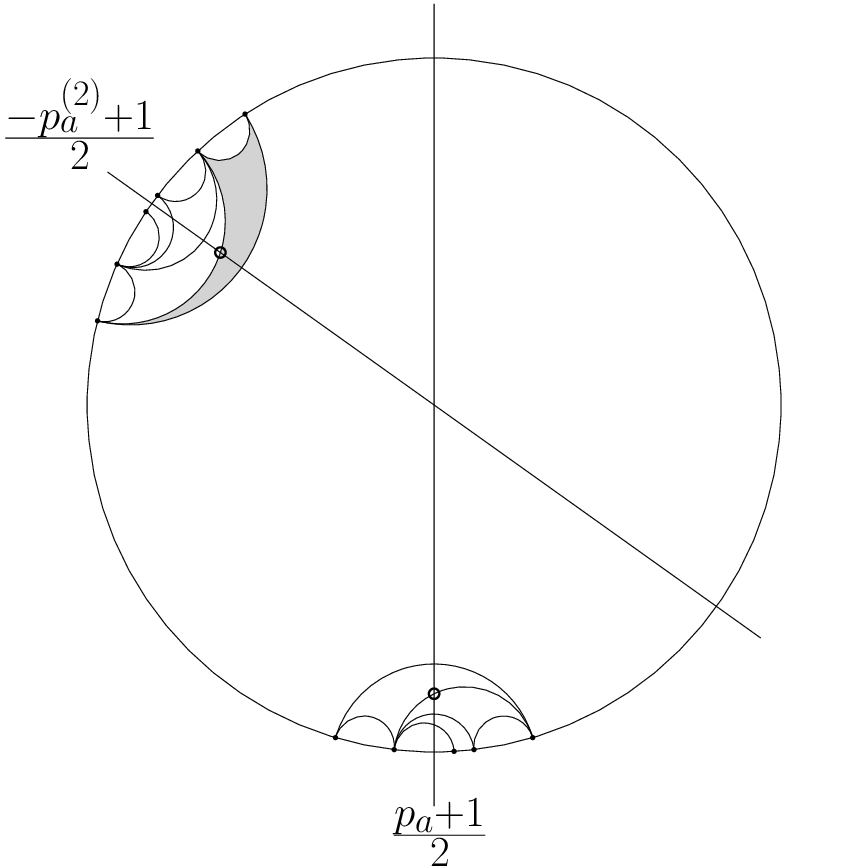}
%		\centerline{(c) type III}
		\includegraphics[scale=.3]{forward_mutations-I.eps}
		\hskip-12pt
		\includegraphics[scale=.3]{forward_mutations-II.eps}
		\hskip-12pt
		\includegraphics[scale=.3]{forward_mutations-III.eps}
\centerline{
(a) type I
\hskip70pt
(b) type II
\hskip70pt
(c) type III
}
\caption{Patterns of forward mutation.
}
\label{fig:fm1}
\end{figure}

\begin{proof}
By the rotation property, it is enough to prove it
for $u=-1$ or $0$.
Assume that $a$ is odd. The other case is similar.
We prove the claim depending on the type of the pattern $\mathcal{X}_a$
in Section \ref{subsec:patterns}.
\par

(a). {\em Type I}.
By Lemma \ref{lem:pp1},
$p_a$ is even and $p^{(2)}_a$ is odd.
Assume that $\mathcal{X}_a=(N_L, R, (L,N_R))$.
(The other case is similar.)
For odd $m$,
the label $(a,m)_{(p_a-p^{(2)}_a+1)/2}$ is a forward mutation point at $u=-1$.
For even $m$,
the label $(a,m)_{(-p^{(2)}_a+1)/2}$ is a forward mutation point at $u=-1$.
See Figure \ref{fig:fm1} (a).
Since $p_a(p^{(2)}_a-1)/2\equiv 0$ mod $p_a$,
we have
\begin{align}
 \frac{p_a}{2}  p^{(2)}_a \equiv \frac{p_a}{2}
 \mod p_a.
\end{align}
It follows that,
for odd $m$,
the label $(a,m)_{(-p^{(2)}_a+1)/2}$ is a forward mutation point at $u=p_a-1$,
and that for even $m$,
the label $(a,m)_{(p_a-p^{(2)}_a+1)/2}$ is a forward mutation point at $u=p_a-1$.
\par

(b). {\em Type II}.
By Lemma \ref{lem:pp1},
 $p_a$ is odd and $p^{(2)}_a$ is even.
Assume that $\mathcal{X}_a=(R,N_L,  (L,N_R))$.
(The other case is similar.)
For odd $m$,
the label $(a,m)_{(p_a-p^{(2)}_a+1)/2}$ is a forward mutation point at $u=-1$.
For even $m$,
the label $(a,m)_{(p_a+1)/2}$ is a forward mutation point at $u=0$.
See Figure \ref{fig:fm1} (b).
Since $(p_a\pm1)p^{(2)}_a/2\equiv \pm p^{(2)}_a/2$
mod $p_a$,
we have
\begin{align}
 \frac{p_a\pm1}{2}  p^{(2)}_a \equiv \pm\frac{p^{(2)}_a}{2}\quad
  \mod p_a.
\end{align}
It follows that,
for odd $m$,
the label $(a,m)_{(p_a+1)/2}$ is a forward mutation point at $u=p_a$,
and that
for even $m$,
the label $(a,m)_{(p_a-p^{(2)}_a+1)/2}$ is a forward mutation point at $u=p_a-1$.

\par
(c). {\em Type III}.
By Lemma \ref{lem:pp1},
 $p_a$ is odd and $p^{(2)}_a$ is odd.
Assume that $\mathcal{X}_a=(R, (L,N_R),N_L)$.
(The other case is similar.)
For odd $m$,
the label $(a,m)_{(-p^{(2)}_a+1)/2}$ is a forward mutation point at $u=-1$.
For even $m$,
the label $(a,m)_{(p_a+1)/2}$ is a forward mutation point at $u=0$.
See Figure \ref{fig:fm1} (c).
Since $(p_a\pm1)(p^{(2)}_a-1)/2\equiv \pm (p^{(2)}_a-1)/2$ mod $p_a$,
we have
\begin{align}
 \frac{p_a\pm 1}{2}  p^{(2)}_a \equiv  \frac{p_a\pm p^{(2)}_a}{2}
   \mod p_a.
\end{align}
It follows that,
for odd $m$,
the label $(a,m)_{(p_a+1)/2}$ is a forward mutation point at $u=p_a$,
and that
for even $m$,
the label $(a,m)_{(-p^{(2)}_a+1)/2}$ is a forward mutation point at $u=p_a-1$.
\end{proof}

The third property is as follows.

\begin{prop}
\label{prop:pp3}
 Let $a=2,\dots,F$.
 Suppose that a label $(a,m)_s$ is a forward mutation
 point at $u$. Then, the label $(a,m)_{s+1}$
 is a forward mutation point at $u+(-1)^a 2p_{a-1}$.
 \end{prop}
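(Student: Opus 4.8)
The plan is to reduce the statement to the single number-theoretic identity in Proposition \ref{prop:cf1}(e), using the rotational structure of the mutation sequence already established in \eqref{eq:SZ1}. The starting point is the relation $S(u+2)=\nu(S(u))$ together with the explicit form of $\nu$, which sends $(a,m)_s$ to $(a,m)_{s+p^{(2)}_a}$ with the subscript read modulo $p_a$. Since $\nu$ is a bijection and $S(u)$ is defined for every $u\in\mathbb{Z}$, iterating this relation in both directions gives, for every integer $k$, the implication that if $(a,m)_s\in S(u)$ then $(a,m)_{s+kp^{(2)}_a}\in S(u+2k)$. In words: as time advances by $2$, the subscript of the forward mutation point carrying the label $(a,m)$ advances by $p^{(2)}_a$ modulo $p_a$.

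First I would fix $a$ and $m$ and regard the forward mutations of the labels $(a,m)_s$ ($s=1,\dots,p_a$) as a single orbit under this shift. Because $p_a$ and $p^{(2)}_a$ are coprime by Proposition \ref{prop:cf1}(c), the residues $s+kp^{(2)}_a$ run through all of $\mathbb{Z}/p_a\mathbb{Z}$ as $k$ varies, which is exactly consistent with the period $2p_a$ supplied by Proposition \ref{prop:pp1}. Thus to reach subscript $s+1$ it suffices to produce an integer $k$ with $k\,p^{(2)}_a\equiv 1\pmod{p_a}$; the asserted time shift is then $2k$, and I must check that $2k=(-1)^a 2p_{a-1}$.

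The key step is to identify this modular inverse explicitly. For $a\ge 3$, Proposition \ref{prop:cf1}(e) gives $p_{a-1}p^{(2)}_a-p_a p^{(2)}_{a-1}=(-1)^a$, hence $p_{a-1}p^{(2)}_a\equiv(-1)^a\pmod{p_a}$ and therefore $\bigl((-1)^a p_{a-1}\bigr)p^{(2)}_a\equiv 1\pmod{p_a}$. Taking $k=(-1)^a p_{a-1}$ then places subscript $s+1$ at time $u+2k=u+(-1)^a 2p_{a-1}$, which is precisely the claim (note that for odd $a$ this is a backward shift, which is legitimate since the sets $S(u)$ are defined for all $u$). The value $a=2$ is not covered by Proposition \ref{prop:cf1}(e), but there $p^{(2)}_2=1$ and $p_1=1$, so $k=1=(-1)^2 p_1$ works by direct substitution and the time shift is $u+2=u+(-1)^2 2p_1$.

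I expect the only genuine content to be the invocation of Proposition \ref{prop:cf1}(e); everything else is the bookkeeping that converts the $\nu$-periodicity of the sets $S(u)$ into a statement about how the subscript of a fixed label advances in time. The one point deserving a line of care is that subscript $s+1$ occurs at a \emph{well-defined} time modulo the period $2p_a$: this follows because the coprimality in Proposition \ref{prop:cf1}(c) makes $k\mapsto s+kp^{(2)}_a$ a bijection of $\mathbb{Z}/p_a\mathbb{Z}$, so the $p_a$ distinct subscripts are matched bijectively against the $p_a$ forward-mutation times inside one period, leaving no ambiguity.
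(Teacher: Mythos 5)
Your proof is correct and takes essentially the same route as the paper: the paper's own proof simply asserts that the claim is equivalent to the congruence $p_{a-1}p^{(2)}_{a}\equiv (-1)^a \pmod{p_a}$ and deduces it from \eqref{eq:pp0} (checking $a=2$ directly), which is exactly the reduction you carry out, with the bookkeeping via $S(u+2)=\nu(S(u))$ spelled out more explicitly.
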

\begin{proof}
The claim is equivalent
to the equality
\begin{align}
\label{eq:pp2}
p_{a-1}p^{(2)}_{a}
\equiv (-1)^a
\mod p_a,
\quad
a=2,\dots,F.
\end{align}
For $a=2$, this is trivially true.
For $a=3,\dots, F$, this is an immediate corollary of \eqref{eq:pp0}.
 \end{proof}

\begin{ex}
In the example $(6,4,3)$ in
Section \ref{sec:643},
where $p_1=1$ and $p_2=6$,
the cases $a=2$ and $3$ in Proposition \ref{prop:pp3}
have been observed
in the snapshots in Figure \ref{fig:31gon-snap} and
\ref{fig:106gon-snap}.
\end{ex}

Now we will prove 
Theorem \ref{thm:RSGY}.
For the $Y$-variables occurring
in the  mutation sequence \eqref{eq:RSGmseq},
we will check the relations 
\eqref{eq:RSG1}, \eqref{eq:RSG5}, \eqref{eq:RSG4},
according to the generation $a$ in their left hand sides. 
To do that, we use 
the snapshot method introduced in Section \ref{sec:643}.

For \eqref{eq:RSG1} with $a$=1 and for \eqref{eq:RSG5},
this can be 
easily done  
with Figure \ref{fig:firstgen}.
The case  $n_1$  even
 was already treated
in Figure \ref{fig:31gon-snap};
the other cases can be done in a similar manner.
So, we omit repeating them,
and we concentrate on 
\eqref{eq:RSG1} for $a\geq 2$ and
\eqref{eq:RSG4}.

Having Propositions
\ref{prop:pp1}, \ref{prop:pp2},
\ref{prop:pp3}
and Lemmas \ref{lem:change1},
\ref{lem:int}, we are able to obtain all   necessary snapshots.
In the proof, we can assume that {\em $a$ is odd},
without loss of generality.
Indeed,
by
Lemma \ref{lem:parity} and the factor $(-1)^a$
in Proposition \ref{prop:pp3},
the snapshot
in the  even case is given by the
mirror image of the snapshot in the odd case.
This difference is absorbed by
 the signs $\varepsilon_b$
 in \eqref{eq:RSG1} and \eqref{eq:RSG4}.

\begin{prop}
\label{prop:Y3a}
The $Y$-variables occurring
in the  mutation sequence \eqref{eq:RSGmseq}
satisfy the relation \eqref{eq:RSG1}
for $a\geq 2$.
\end{prop}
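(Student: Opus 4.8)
The plan is to verify \eqref{eq:RSG1} for $a\ge 2$ (hence $m\ge 2$, since the cases $(a,1)$ are governed by the exceptional relations \eqref{eq:RSG5} and \eqref{eq:RSG4}) by the snapshot method, exactly as illustrated for $\mathbb{Y}_{\mathrm{RSG}}(6,4,3)$ in Section \ref{sec:643}. By the mirror-symmetry reduction explained just before the statement (Lemma \ref{lem:parity} together with the sign $(-1)^a$ in Proposition \ref{prop:pp3}), I may assume throughout that $a$ is odd, so that $\varepsilon_a=+1$; the even case is obtained by reflecting every snapshot and is absorbed by the signs $\varepsilon_b$.

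First I would fix a single physical diagonal $(a,m)_s$ and locate its forward-mutation times. By Proposition \ref{prop:pp1} this diagonal is a forward mutation point exactly with period $2p_a$; choosing the origin of time so that one occurrence is at $u-p_a$, the next is at $u+p_a$. Since we identify $y$-variables with $Y$-variables only at forward mutation points, regardless of $s$, this produces the left-hand side $Y^{(a)}_m(u-p_a)Y^{(a)}_m(u+p_a)$. Between these two instants the coefficient attached to $(a,m)_s$ is first inverted (the $i=k$ case of \eqref{eq:yrel}) and then multiplied by one factor $(1+y_k)$ or $(1+y_k^{-1})^{-1}$ for each mutation of a diagonal sharing a triangle with it, according to \eqref{eq:yex}. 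Hence the product of the two left-hand values equals precisely the accumulated product of such factors, and the whole problem reduces to reading off, from the snapshot over the window $[u-p_a,u+p_a]$, which neighbors are mutated, at what times, and on which ($i$- or $j$-) side of the flip they sit.

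The core computation is the interior case $2\le m\le n_a-1$. Within the interval $I^{(a)}_s$ the diagonals $(a,1)_s,\dots,(a,n_a)_s$ form the fan of Figure \ref{fig:diagonal1}, so the only triangle-neighbors of $(a,m)_s$ within its generation are $(a,m-1)_s$ and $(a,m+1)_s$. These have parity opposite to $m$, so by Proposition \ref{prop:pp2} both are forward-mutated at the midpoint time $u$, contributing the factors $(1+Y^{(a)}_{m-1}(u))$ and $(1+Y^{(a)}_{m+1}(u))$; checking in Figures \ref{fig:diagonal1} and \ref{fig:fm1} that each sits in the $j$-position of \eqref{eq:yex} yields these plain factors, matching the right-hand side of \eqref{eq:RSG1} for $a$ odd. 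It then remains to treat $m=n_a$: when $a=F$ the fan terminates and only $(F,n_F-1)$ survives, giving a single factor; when $a<F$ the diagonal $(a,n_a)_s$ additionally borders a next-generation diagonal $(a+1,1)_{s'}$, and since $(a,n_a-1)_s$ is forward-mutated at $u$, Proposition \ref{prop:cont1}(a) forces $(a+1,1)$ to be forward-mutated at the same time $u$. Locating $J^{(a+1)}_3$ inside $J^{(a)}_2$ or $J^{(a)}_3$ via Lemma \ref{lem:change1} shows this neighbor sits on the $i$-side, contributing the inverse factor $(1+Y^{(a+1)}_1(u)^{-1})^{-1}$ with the correct sign $\varepsilon_{a+1}=-1$.

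I expect the main obstacle to be the bookkeeping of orientations in the snapshot: although the timing of each neighboring mutation falls out cleanly from Propositions \ref{prop:pp1}--\ref{prop:pp3}, one must verify in every local configuration of Figure \ref{fig:diagonal1} that the mutated neighbor occupies the $i$-position precisely when the exponent should be $-1$ and the $j$-position when it should be $+1$, so that \eqref{eq:yex} reproduces the exact pattern of signs $\varepsilon_b$ on the right-hand side. The cross-generation adjacency at $m=n_a$ is the single most delicate point, since there the flip window must be matched against the next generation's interval structure through Lemma \ref{lem:change1}; once this compatibility is checked, the odd/even reduction supplies the remaining cases at no extra cost.
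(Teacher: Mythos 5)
Your proposal follows the paper's proof essentially verbatim: the paper likewise reduces to odd $a$ (and, say, even $n_a$), invokes Propositions \ref{prop:pp1}, \ref{prop:pp2} and Lemmas \ref{lem:change1}, \ref{lem:int} to assemble the snapshot reduced at $u=0$ (Figure \ref{fig:snap1}), and reads the relation off that figure. The one point your narrative underplays is that for interior $m$ the diagonal $(a,m)_s$ also borders the chords $(a+1,1)_{s'}$ (and possibly $(a+2,1)_{s''}$) spanning the adjacent subintervals, and it is precisely the snapshot that certifies these neighbors contribute no factor during the window $(u-p_a,u+p_a)$ --- the same kind of check you, like the paper, defer to ``inspection''.
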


\begin{figure}
		\includegraphics[scale=.45]{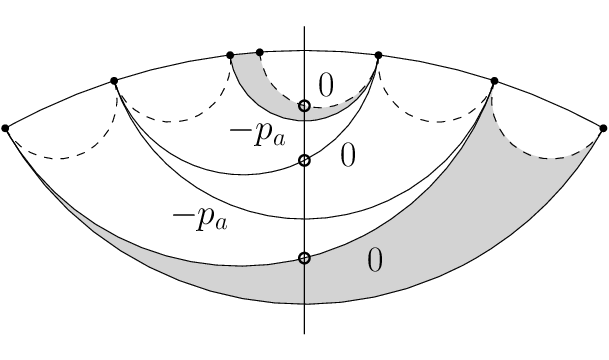}
		\hskip10pt
		\includegraphics[scale=.45]{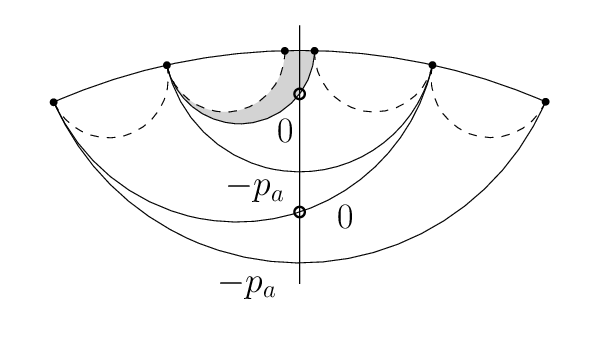}
		\centerline{\hskip3pt (a) \hskip130pt (b)}
\caption{Snapshots  reduced at $u=0$ for the relation \eqref{eq:RSG1}.
(a).  Joint-interval
 of type $(L,N_R)$ of the $a$-th generation.
 (b). Interval
 of type $R$ of the $a$-th generation.
}
\label{fig:snap1}
\end{figure}

\begin{proof}
Assume that $a$ is odd, as mentioned above.
We further assume that $n_{a}$ is even.
The other case is similar.
In the snapshot method, only the {\em differences\/}
of data are relevant.
So, we may think that the mutation occurs
at $u=0$,
even if it actually occurs only at odd $u$.
We call it the {\em snapshot reduced at $u=0$}.
Then, using Propositions \ref{prop:pp1}, \ref{prop:pp2},
and 
Lemmas \ref{lem:change1}, \ref{lem:int}, 
we can write the snapshots reduced at $u=0$ relevant to \eqref{eq:RSG1}
as Figure  \ref{fig:snap1}.
The relation \eqref{eq:RSG1}  is then easily confirmed by
inspecting Figure  \ref{fig:snap1}.
\end{proof}

Finally, we clarify the most mysterious part of the
RSG $Y$-system.

\begin{prop}
\label{prop:Y3b}
The $Y$-variables occurring
in the  mutation sequence \eqref{eq:RSGmseq}
satisfy the relation \eqref{eq:RSG4}.
\end{prop}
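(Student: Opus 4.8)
The plan is to run the snapshot method exactly as in the proof of Proposition \ref{prop:Y3a}, the only new feature being that the snapshot relevant to \eqref{eq:RSG4} straddles the three consecutive generations $a$, $a-1$, $a-2$. As there, Lemma \ref{lem:parity} together with the sign $(-1)^a$ in Proposition \ref{prop:pp3} lets me assume that $a$ is odd, so that $\varepsilon_a=+1$; the even case is the mirror image, and its sign flip is absorbed by the exponents $\varepsilon_b$ in \eqref{eq:RSG4}. The left-hand side $Y^{(a)}_1(u-p_a)Y^{(a)}_1(u+p_a)$ is accounted for by Proposition \ref{prop:pp1}: the label $(a,1)$ is a forward mutation point with period $2p_a$, so its diagonal is flipped at two times $2p_a$ apart; writing these as $u\mp p_a$ and forming the product of the corresponding exchange relations \eqref{eq:yex} reproduces the left-hand side, while the factors picked up in between form the right-hand side.

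For the right-hand side I would identify the diagonals neighboring $(a,1)$ and the times at which they flip inside the window from $u-p_a$ to $u+p_a$. By the construction in Figure \ref{fig:diagonal1} and the subdivision rule of Lemma \ref{lem:change1}, the innermost diagonal $(a,1)$ of the $a$-th generation is flanked by the full chain $(a-1,1),\dots,(a-1,n_{a-1})$ of the previous generation on each side, is capped by a single diagonal of the $(a-2)$-th generation, and has the same-generation neighbor $(a,2)$. Proposition \ref{prop:pp2}, applied within generation $a$, places the flip of $(a,2)$ at the midpoint $u$, giving the factor $(1+Y^{(a)}_2(u))$; Proposition \ref{prop:cont1}(b) identifies the capping factor as $(1+Y^{(a-2)}_{n_{a-2}-2\delta_{a3}}(u))$ at argument $u$, explaining the $\delta_{a3}$ correction coming from the truncated first generation. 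The remaining factors come from the two chains: each $(a-1,m)$ flips once on each side of $u$, and Propositions \ref{prop:pp2} and \ref{prop:pp3} (equivalently the congruence \eqref{eq:pp0}) together with $p_a=n_{a-1}p_{a-1}+p_{a-2}$ from \eqref{eq:xi2} pin the flip time down to $u-p_a+(n_{a-1}+1-m)p_{a-1}$ and its mirror. Using $p_a-(n_{a-1}+1-m)p_{a-1}=p_{a-2}+(m-1)p_{a-1}$ one checks that these arguments run over $u-p_{a-2}$ at $m=1$ down to $u-p_a+p_{a-1}$ at $m=n_{a-1}$, and symmetrically on the other side, matching the two products in \eqref{eq:RSG4}.

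The main obstacle will be the bookkeeping in the second step: proving that the spatial order of the chain diagonals $(a-1,1),\dots,(a-1,n_{a-1})$ along the axis translates into an arithmetic progression of flip times with common difference $p_{a-1}$, with no diagonal double-counted or skipped as one passes around the polygon. This is where Propositions \ref{prop:pp2} and \ref{prop:pp3} must be combined with care, since consecutive chain positions alternate the parity of $m$ (shifting the flip time by $p_{a-1}$ via Proposition \ref{prop:pp2}) while simultaneously advancing the subscript $s$ (shifting it via Proposition \ref{prop:pp3}), and the two effects must be shown to recombine into the single clean shift $p_{a-1}$. Once the snapshot is drawn correctly, spanning three generations rather than the two of Figure \ref{fig:snap1}, reading off \eqref{eq:yex} yields \eqref{eq:RSG4} by direct inspection.
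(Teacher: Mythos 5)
Your proposal is correct and follows essentially the same route as the paper's proof: the paper likewise reduces to odd $a$ via Lemma \ref{lem:parity} and the sign $(-1)^a$ in Proposition \ref{prop:pp3}, and then assembles the snapshot reduced at $u=0$ from Propositions \ref{prop:pp1}, \ref{prop:pp2}, \ref{prop:pp3} and Lemmas \ref{lem:change1}, \ref{lem:int}, reading off \eqref{eq:RSG4} from Figure \ref{fig:snap2}. The bookkeeping you flag (the arithmetic progression of flip times with step $p_{a-1}$ across the chain $(a-1,1),\dots,(a-1,n_{a-1})$, with the argument range $u-p_a+p_{a-1}$ to $u-p_{a-2}$) is exactly what that figure encodes, so your account is a correctly worked-out expansion of the paper's terser argument.
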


\begin{figure}
%		\includegraphics[scale=.78]{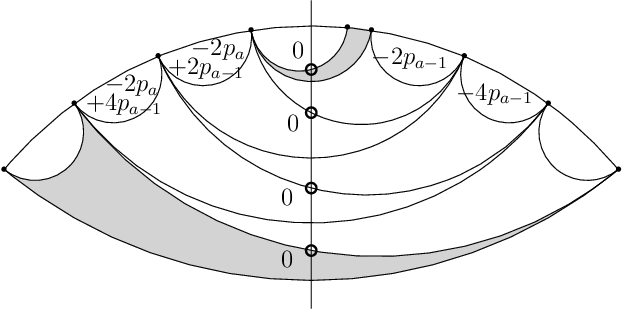}
%		\centerline{(a)}\vskip10pt
%		\includegraphics[scale=.78]{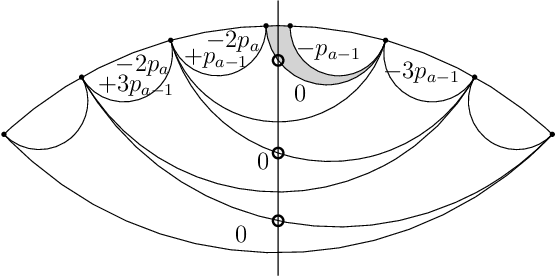}
%		\centerline{(b)}\vskip20pt
%		\includegraphics[scale=.78]{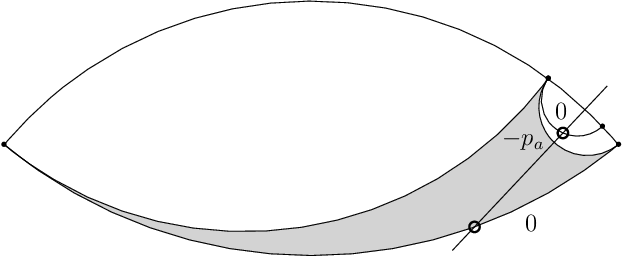}
%		\centerline{(c)}
		\includegraphics[scale=.58]{snapshot-boundary_case-I.eps}
		\hskip20pt
		\includegraphics[scale=.58]{snapshot-boundary_case-II.eps}
		\centerline{\hskip10pt (a)\hskip175pt(b)}
		\vskip0pt
		\includegraphics[scale=.58]{snapshot-boundary_case-III.eps}
		\centerline{(c)}
\caption{Snapshots reduced at $u=0$ for the relation \eqref{eq:RSG4}.
(a).  Joint-interval
 of type $(N_L,R)$ of the $(a-1)$-th generation.
 (b). Interval
 of type $L$ of the $(a-1)$-th generation.
 (c). Interval
 of type $N_R$ of the $(a-1)$-th generation.
}
\label{fig:snap2}
\end{figure}

\begin{proof}
Assume that $a$ is odd, as mentioned above.
We further assume that $n_{a-1}$ is even.
The opposite case is similar.
By Propositions \ref{prop:pp1}, \ref{prop:pp2}, \ref{prop:pp3}
and 
Lemmas \ref{lem:change1}, \ref{lem:int}, 
 we have 
the snapshots reduced at $u=0$ relevant to \eqref{eq:RSG4}
in Figure  \ref{fig:snap2}.
Then, the relation \eqref{eq:RSG4}  is easily confirmed by 
inspecting Figure  \ref{fig:snap2}.
\end{proof}

This completes the proof of 
Theorem \ref{thm:RSGY}.

\subsection{Proof of Theorem \ref{thm:periodRSG}}
\label{subsec:periodRSG}
As a corollary of  Proposition \ref{prop:ref1} and 
Theorem \ref{thm:RSGY},
we obtain the periodicity of Theorem \ref{thm:periodRSG}
by the same argument as in Section \ref{sec:643}.
Furthermore,
thanks to the coprime property of 
$r$ and $r^{(2)}$ in Proposition \ref{prop:cf1} (d),
the period $2r$ is the minimal one.

\subsection{Time-ordered index and solution by Gliozzi and Tateo}
\label{subsec:GT}

Gliozzi and Tateo obtained
a general solution of any RSG $Y$-system
in terms of  cross-ratio of $r$ points  \cite{Gliozzi96}.
They were guided by  considerations on the decompositions of a
certain {\em threefold\/},
but no  systematic derivation of the solution was provided.
Here we derive their solution from our formulation of the Y-system.
%%
%Due to the lack of the space, we omit the
%proofs of Propositions
%\ref{prop:time1}, 
%\ref{prop:center1},
%and
%\ref{prop:quad1}.
%However, the proofs can be found in the preprint version of the present paper
%\cite{Nakanishi12c}.

The key to interpret their solution
is  the introduction of another indexing of
the vertices of $\Gamma(u)$ for 
$\mathbb{Y}_{\mathrm{RSG}}(n_1,\dots,n_F)$.
We put  the integer $t=0$, \dots, $r-1$ at the $tr^{(2)}$-th vertex of 
$\Gamma(u)$
in our standard ordering.
(Here and below ``$m$-th''  means in our standard ordering.)
Since $r$ and $r^{(2)}$ are coprime,
that gives a new index of the vertices of the $r$-gon.
We call it the {\em time-ordered index\/} of $\Gamma(u)$.
See Figure \ref{fig:time} for an example.

\begin{figure}
	\begin{center}
		\includegraphics[scale=.42]{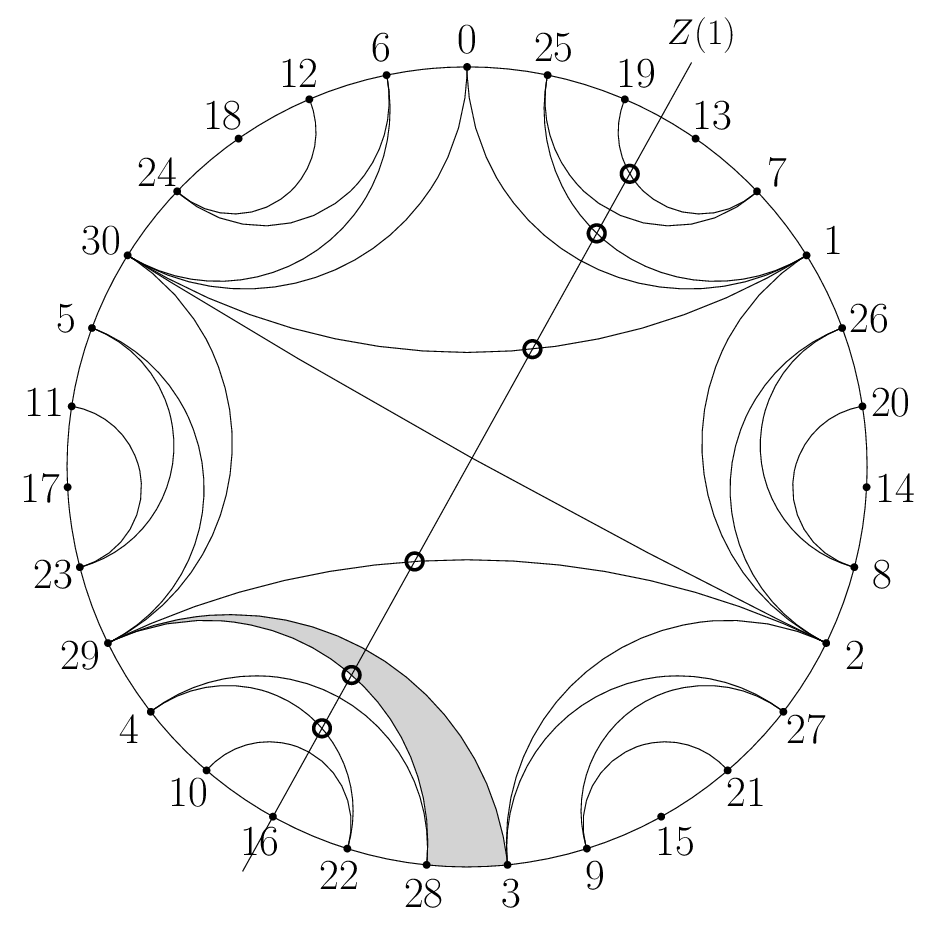}
	\end{center}
\caption{Time-ordered index of $\Gamma(1)$ for
$\mathbb{Y}_{\mathrm{RSG}}(6,4)$.}
\label{fig:time}
\end{figure}

By definition, if the $k$-th vertex has the time-ordered index $t$,
then the $(k+r^{(2)})$-th vertex has the time-ordered index $t+1$.
This can be generalized in the following way. 

\begin{prop}
\label{prop:time1}
For $a=1,\dots, F$, if the $k$-th vertex has the time-ordered index $t$,
then the $(k+r^{(a+1)})$-th vertex has the time-ordered index $t+
(-1)^{a-1} p_{a}$. 
\end{prop}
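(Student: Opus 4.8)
The plan is to convert the statement, which concerns the two indexings of the vertices, into a single congruence modulo $r$, and then to establish that congruence directly from the continued-fraction identities of Section 2.

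First I would unwind the definition of the time-ordered index. By construction the vertex carrying time-ordered index $t$ is the $tr^{(2)}$-th vertex in the standard ordering, so the $k$-th vertex has time-ordered index $t$ exactly when $k \equiv t\,r^{(2)} \pmod r$; this is a bijection of $\{0,\dots,r-1\}$ with itself because $r$ and $r^{(2)}$ are coprime by Proposition \ref{prop:cf1}(d). Denoting by $t'$ the time-ordered index of the $(k+r^{(a+1)})$-th vertex, the two defining congruences subtract to give $(t'-t)\,r^{(2)} \equiv r^{(a+1)} \pmod r$. Since $r^{(2)}$ is invertible modulo $r$, the desired conclusion $t'-t \equiv (-1)^{a-1}p_a \pmod r$ is equivalent to the single congruence
\begin{align}
\label{eq:timecong}
r^{(a+1)} \equiv (-1)^{a-1} p_a\, r^{(2)} \pmod r,
\qquad a=1,\dots,F.
\end{align}

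Next I would prove \eqref{eq:timecong} by producing an exact integer identity whose reduction modulo $r$ is \eqref{eq:timecong}. For $a\geq 2$ I would apply Proposition \ref{prop:cf1}(b) twice, with $k=1$ and $k=2$, obtaining $r=q_a r^{(a+1)}+p_a r^{(a+2)}$ and $r^{(2)}=q^{(2)}_a r^{(a+1)}+p^{(2)}_a r^{(a+2)}$. Eliminating $r^{(a+2)}$ between these two relations leaves
\begin{align}
\label{eq:exactid}
p^{(2)}_a\, r - p_a\, r^{(2)}
= \bigl(q_a p^{(2)}_a - q^{(2)}_a p_a\bigr)\, r^{(a+1)}
= (-1)^a\, r^{(a+1)},
\end{align}
where the coefficient is evaluated by Proposition \ref{prop:cf0}(b) with $k=1$. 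Reducing \eqref{eq:exactid} modulo $r$ and multiplying by $(-1)^a$ gives \eqref{eq:timecong}. The case $a=1$ is immediate from $p_1=1$.

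I do not anticipate a genuine obstacle once the translation of the first paragraph is in place; the remainder is a short elimination. The only points needing care are the sign bookkeeping (checking that $(-1)^a(-p_a r^{(2)})$ matches $(-1)^{a-1}p_a r^{(2)}$) and the validity ranges of the cited identities: both Proposition \ref{prop:cf0}(b) and the $k=2$ instance of Proposition \ref{prop:cf1}(b) require $a\geq 2$, which is exactly why $a=1$ is treated separately, while the convention $r^{(F+1)}=r^{(F+2)}=1$ keeps \eqref{eq:exactid} meaningful up to $a=F$. It is worth recording that \eqref{eq:exactid} is an exact identity, not merely a congruence; this is what pins down the precise integer $(-1)^{a-1}p_a$ appearing in the statement.
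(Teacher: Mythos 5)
Your argument is correct, and the first step coincides exactly with the paper's: both reduce the statement to the single congruence $p_a r^{(2)} \equiv (-1)^{a-1} r^{(a+1)} \bmod r$ (using that $r^{(2)}$ is invertible modulo $r$ by Proposition \ref{prop:cf1}(d)). Where you diverge is in how that congruence is verified. The paper runs an induction on $a$: the base cases $a=1,2$ are checked directly, and the inductive step combines the recursion $p_{a+1}=n_ap_a+p_{a-1}$ from \eqref{eq:xi2} with $r^{(a)}=n_ar^{(a+1)}+r^{(a+2)}$ from \eqref{eq:rr4}. You instead bypass induction by eliminating $r^{(a+2)}$ between the $k=1$ and $k=2$ instances of \eqref{eq:rF3} and evaluating the resulting determinant $q_ap^{(2)}_a-q^{(2)}_ap_a=(-1)^a$ via \eqref{eq:pq-qp}, arriving at the exact integer identity $p^{(2)}_a\,r-p_a\,r^{(2)}=(-1)^a r^{(a+1)}$ (which I verified on the running example $(6,4,3)$: e.g.\ $4\cdot 106-25\cdot 17=-1=(-1)^3r^{(4)}$). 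Your route is slightly stronger in that it produces an identity over $\mathbb{Z}$ rather than only a congruence, and it makes the appearance of $p^{(2)}_a$ (the shift governing the relabeling $\nu$) visible; the cost is that it leans on Propositions \ref{prop:cf0}(b) and \ref{prop:cf1}(b), which are themselves proved by induction, so the inductive work is merely relocated. Your handling of the validity ranges and of the separate case $a=1$ is accurate.
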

\begin{proof}
This is equivalent to the equality
\begin{align}
p_ar^{(2)} \equiv (-1)^{a-1}r^{(a+1)}
\mod r.
\end{align}
The equality is true for $a=1$, since $p_1=1$.
It is also true for $a=2$, since
$p_2r^{(2)} \equiv n_1 r^{(2)}
\equiv  -r^{(3)}$,
where we used \eqref{eq:rr4} with $k=1$.
Now suppose that the claim is true up to $a$.
Then,
\begin{align*}
\begin{split}
p_{a+1}r^{(2)}
& \equiv (n_{a}p_{a} +p_{a-1})r^{(2)}
\equiv  (-1)^{a-1}n_ar^{(a+1)} +  (-1)^{a}r^{(a)}
\equiv  (-1)^{a}r^{(a+2)},
\end{split}
\end{align*}
where we used \eqref{eq:rr4} with $k=a$ in the last equality.
\end{proof}

In particular, setting $a=F$, we see that
the difference of the time-ordered indices of  two adjacent
vertices is $p_F$, as we observe in Figure \ref{fig:time}.

The next proposition justifies the name of the time-ordered index.
Roughly speaking,
the  time-ordered index $t$ of each of the two 
ends of the ``hour hand'' $Z(u)$
is always {\em a half of the standard time $u$} mod $r$.

\begin{prop} 
\label{prop:center1}
Let $Z(u)$ be the axis in \eqref{eq:Zu1}.
 Let $P$ be any of the two points where $Z(u)$ intersects  the boundary
 of the $r$-gon. ($P$ is either a vertex, or a midpoint of two
 adjacent vertices of the $r$-gon.)
  Then, we have the following.
 \par
 (a). In  case $P$ is a vertex of the $r$-gon, 
let $t$  the time-ordered index
 of $P$. Then, $u\equiv 2t$ mod $r$.
 \par
 (b). In  case  $P$ is the midpoint of the two adjacent vertices
 $Q_1$ and $Q_2$,
 let $t$ be the average of
 the time-ordered indices of $Q$ and $Q'$.
 Then, $u\equiv 2t$ mod $r$.
 \end{prop}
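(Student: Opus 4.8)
The plan is to strip the statement down to a single linear congruence by exploiting that $r^{(2)}$ is a unit modulo $r$ (Proposition \ref{prop:cf1}~(d)). Write $\pi(P)$ for the position of $P$ in the standard clockwise ordering, so that $\pi(P)=k$ when $P$ is the vertex at the $k$-th position and $\pi(P)=k+\tfrac12$ when $P$ is the midpoint of the adjacent vertices at positions $k$ and $k+1$. Since $r^{(2)}$ is invertible mod $r$, the target $u\equiv 2t\pmod r$ is equivalent to $u\,r^{(2)}\equiv 2t\,r^{(2)}\pmod r$, and I would prove it in the unified form
\[
2\,\pi(P)\equiv u\,r^{(2)}\pmod{r}.
\]

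To see that this unified identity implies both cases, I would use the defining property of the time-ordered index: a vertex at the $k$-th standard position carries index $t$ exactly when $k\equiv t\,r^{(2)}\pmod r$. In case (a), $P$ is the vertex at position $k=\pi(P)$ and index $t$, so $2t\,r^{(2)}\equiv 2k=2\pi(P)$, and cancelling $r^{(2)}$ gives $u\equiv 2t$. In case (b), $2t$ is read as the honest integer $t_{Q_1}+t_{Q_2}$ (the sum of the two indices, twice the average); since reduction modulo $r$ commutes with addition, $(t_{Q_1}+t_{Q_2})\,r^{(2)}\equiv k+(k+1)=2k+1=2\pi(P)\pmod r$, and again cancelling $r^{(2)}$ yields $u\equiv 2t$.

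It then remains to establish the displayed identity, which is the computational core. First I would locate the two boundary points of $Z(u)$ in standard coordinates, using \eqref{eq:Zu1} and the convention that $\Sigma$ advances the standard position by one unit. For even $u$ the axis $Z(u)=\Sigma^{(u/2)r^{(2)}}(Z(0))$ meets the boundary at $\pi(P)=(u/2)r^{(2)}$ and $\pi(P)=r/2+(u/2)r^{(2)}$, while for odd $u$ the axis $Z(u)=\Sigma^{((u+1)/2)r^{(2)}}(Z(-1))$ meets it at $\pi(P)=-r^{(2)}/2+((u+1)/2)r^{(2)}$ and $\pi(P)=-(r+r^{(2)})/2+((u+1)/2)r^{(2)}$. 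Substituting each of these four values into $2\,\pi(P)$ gives $u\,r^{(2)}$ exactly at the two ``$P$''-endpoints, and $u\,r^{(2)}\pm r$ at the two ``$Q$''-endpoints; in all cases the discrepancy is a multiple of $r$, so the identity holds mod $r$.

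The main obstacle is the bookkeeping in case (b): one must take care that the ``average of time-ordered indices'' is handled through the integer $t_{Q_1}+t_{Q_2}$ rather than a half-integer residue, and that the separate modular reductions defining $t_{Q_1}$ and $t_{Q_2}$ do not disturb the identity $2\,\pi(P)\equiv u\,r^{(2)}$. Because $r$ and $r^{(2)}$ are coprime they are never both even, so for each $u$ exactly one endpoint of $Z(u)$ is a vertex and the other a midpoint; the uniform argument above covers both endpoints simultaneously, so no separate analysis on the parities of $r$ and $r^{(2)}$ is needed.
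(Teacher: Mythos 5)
Your proof is correct, but it takes a different route from the paper's. The paper first reduces to $u=0,1$ via the rotation property of the time-ordered index, and then invokes the remark after Proposition \ref{prop:time1} (adjacent vertices differ by $p_F$ in time-ordered index) to conclude that the \emph{sum} of the time-ordered indices of any pair of vertices symmetric with respect to $Z(u)$ is constant; evaluating that sum on one convenient pair (the $0$-th and $r^{(2)}$-th vertices, with indices $0$ and $1$) finishes the argument in three lines. You instead bypass Proposition \ref{prop:time1} entirely: you work in standard coordinates, use only the definition of the time-ordered index (position $k$ carries index $t$ iff $k\equiv t\,r^{(2)}\bmod r$) together with the invertibility of $r^{(2)}$ modulo $r$ from Proposition \ref{prop:cf1}~(d), and verify the unified congruence $2\pi(P)\equiv u\,r^{(2)}\pmod r$ by substituting the explicit endpoint positions from \eqref{eq:Zu1}. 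Your computation is right (for even $u$ the endpoints give $2\pi(P)=ur^{(2)}$ and $ur^{(2)}+r$; for odd $u$ they give $ur^{(2)}$ and $ur^{(2)}-r$), and your handling of the half-integer average in case (b) via the integer $t_{Q_1}+t_{Q_2}$ is careful and sound. What the paper's symmetry argument buys is brevity and conceptual transparency (it explains \emph{why} the axis behaves like an hour hand); what your argument buys is uniformity in $u$ (no reduction to base cases) and independence from Proposition \ref{prop:time1}, at the cost of slightly more coordinate bookkeeping.
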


\begin{proof}
By the rotation property of the time-ordered index,
it is enough to show the claim for $u=0$ and $1$.
Thanks to the remark after
Proposition \ref{prop:time1},
the sum of the time-ordered indices of any pair of vertices
in symmetric position with respect to $Z(u)$ is the same.
The sum is 0 for $u=0$, and $1$ for $u=1$,
since the  time-ordered index of the $0$-th and the $r^{(2)}$-th
vertices are $0$ and $1$, respectively.
This proves the claim.
\end{proof}

We recall a general formula by Fock and Goncharov
 \cite[Section 4.1]{Fock05} (see also \cite{Fock98}) 
expressing the $y$-variables in terms of  cross-ratios,
which is applicable here.
We use the following definition of  cross-ratio,
which is suitable for our purpose:
\begin{align}
(\alpha,\beta,\gamma,\delta):=\frac{(\alpha-\delta)
(\beta-\gamma)}{(\alpha-\beta)(\gamma-\delta)}.
\end{align}
Note that
\begin{align}
\label{eq:cr1}
(\alpha,\delta,\gamma,\beta)
=(\alpha,\beta,\gamma,\delta)^{-1}.
\end{align}
In general, suppose
that, in a given triangulation of a polygon,
a diagonal  with label $i$ is surrounded by 
a quadrilateral in the following way.
$$
\begin{xy}
(10,13)*{\alpha},
(-4,0)*{\beta},
(10,-14)*{\gamma},
(23,0)*{\delta},
(10,-2)*{i},
\ar@{-} (0,0);(20,0)
\ar@{-} (20,0);(10,10)
\ar@{-} (10,10);(0,0)
\ar@{-} (10,-10);(20,0)
\ar@{-} (10,-10);(0,0)
\end{xy}
$$
Then, the corresponding $y$-variable $y_i$ can be 
represented by a cross-ratio as
\begin{align}
\label{eq:cr2}
y_i=(z(\alpha),z(\beta),z(\gamma),z(\delta)),
\end{align}
where $z(\alpha)$, \dots, $z(\delta)$ are formal variables
associated to the vertices $\alpha$, \dots, $\delta$.

The following formula gives a
complete and explicit description
of the diagonals for the forward mutation points at any $u$,
together with their surrounding quadrilaterals,
in terms of the time-ordered index.

\begin{figure}
	\begin{center}
		\includegraphics[scale=.31]{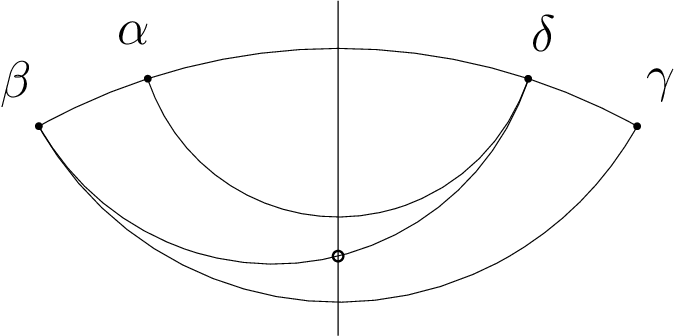}
		\hskip30pt
		\includegraphics[scale=.31]{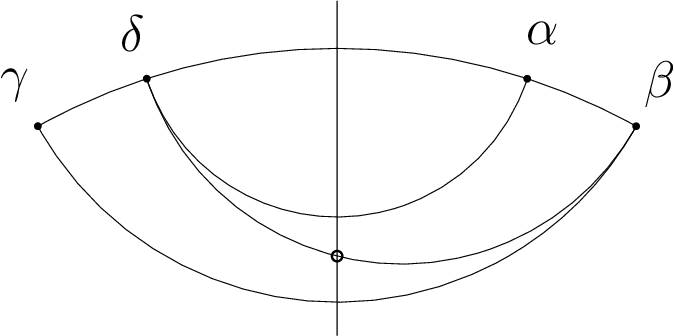}
	\end{center}
\centerline{(a) odd $a$ \hskip88pt (b) even $a$}
\caption{Positions of the vertices
$\alpha$, $\beta$, $\gamma$, $\delta$
in Proposition \ref{prop:quad1}.
}
\label{fig:quad1}
\end{figure}

\begin{prop}
\label{prop:quad1}
Let $Y^{(a)}_m(u)\in \mathcal{Y}_+$.
Let $Q$ be the unique quadrilateral in $\Gamma(u)$ surrounding
the diagonal
whose label is identified with $Y^{(a)}_m(u)$ at time $u$.
Let $\alpha$, $\beta$, $\gamma$, $\delta$
be the vertices of $Q$ whose positions
are specified by Figure
\ref{fig:quad1},
depending on the parity of $a$.
Then the time-ordered indices
$\alpha^{(a)}_m(u)$,
$\beta^{(a)}_m(u)$,
$\gamma^{(a)}_m(u)$,
$\delta^{(a)}_m(u)$ of
the vertices
 $\alpha$, $\beta$, $\gamma$, $\delta$
are given by the following formula
(they are integers since $Y^{(a)}_m(u)\in \mathcal{Y}_+$):
\begin{align}
\label{eq:alpha1}
\begin{split}
\alpha^{(1)}_m(u)&=\frac{1}{2}
(u+m+2),\quad
\beta^{(1)}_m(u)=\frac{1}{2}
(u+m),\\
\gamma^{(1)}_m(u)&=\frac{1}{2}
(u-m),\quad
\delta^{(1)}_m(u)=\frac{1}{2}
(u-m-2).
\end{split}
\end{align}
and, for $a=2,\dots,F$,
\begin{align}
\label{eq:alpha2}
\begin{split}
\alpha^{(a)}_m(u)&=\frac{1}{2}
(u+p_{a+1}-(n_a-m)p_a),\quad
\beta^{(a)}_m(u)=\frac{1}{2}
(u+p_{a+1}-(n_a+2-m)p_a),\\
\gamma^{(a)}_m(u)&=\frac{1}{2}
(u-p_{a+1}+(n_a+2-m)p_a),\quad
\delta^{(a)}_m(u)=\frac{1}{2}
(u-p_{a+1}+(n_a-m)p_a).
\end{split}
\end{align}
\end{prop}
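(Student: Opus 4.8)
The plan is to pass everything through the time-ordered index, where the geometry linearizes, and then to reduce to two base values of $u$ handled by induction on the generation $a$. Each of the four expressions in \eqref{eq:alpha1} and \eqref{eq:alpha2} satisfies $f(u+2)=f(u)+1$. On the other hand, by Proposition \ref{prop:ref1} (c) the quadrilateral surrounding the diagonal identified with $Y^{(a)}_m(u+2)$ is the image under $\Sigma^{r^{(2)}}$, followed by the relabeling $\nu$ (which moves no diagonal), of the one surrounding $Y^{(a)}_m(u)$; and $\Sigma^{r^{(2)}}$ raises the time-ordered index of every vertex by $1$. Hence it suffices to verify the formulas at $u=0$ and $u=-1$. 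As in the proofs of Propositions \ref{prop:Y3a} and \ref{prop:Y3b}, I may further assume $a$ odd, the even case following from the mirror image, which accounts for the sign differences.

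Two tools drive the computation. First, a \emph{sum rule}: as established inside the proof of Proposition \ref{prop:center1}, any two vertices of the $r$-gon in symmetric position with respect to $Z(u)$ have time-ordered indices summing to $u$ modulo $r$. Second, a \emph{difference rule}: Proposition \ref{prop:time1} says a shift of $r^{(a+1)}$ in the standard ordering changes the time-ordered index by $(-1)^{a-1}p_a$. Now by the quasi-symmetry of Theorem \ref{thm:qs}, the diagonal $d$ identified with $Y^{(a)}_m(u)$ is the unique non-symmetric diagonal of $\Gamma(u)$ crossing $Z(u)$ inside the surrounding quadrilateral $Q$; consequently the vertex set of $Q$ is globally $Z(u)$-symmetric, and the reflection in $Z(u)$, which is exactly the forward mutation flipping $d=\beta\delta$ into $\alpha\gamma$, pairs $\beta\leftrightarrow\gamma$ and $\alpha\leftrightarrow\delta$. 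The sum rule then gives $\beta+\gamma\equiv u$ and $\alpha+\delta\equiv u$. Reading off Figure \ref{fig:diagonal1} that neighbouring corners of $Q$ differ by one interval of the $(a{+}1)$-st generation, hence by $r^{(a+1)}$ in the standard ordering, and applying the difference rule with the direction of the shift dictated by the local geometry, yields $\alpha-\beta=\gamma-\delta=p_a$.

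These three independent relations leave a single degree of freedom, fixed by the \emph{absolute} position of $d$ relative to $Z(u)$, equivalently by the value of $\alpha+\beta=u+p_{a+1}-(n_a+1-m)p_a$. For the base generation $a=1$ this is read directly from the zigzag of Figure \ref{fig:firstgen}, anchored at the marked vertices $0,\pm r^{(2)},\dots$ whose time-ordered indices are known. For the inductive step I would locate the quadrilateral of an $(a{+}1)$-st generation diagonal inside the $a$-th generation intervals using the subdivision rule, the type-change rule (Lemma \ref{lem:change1}), and the forward-mutation pattern analysis of Propositions \ref{prop:pp1}--\ref{prop:pp3}; the interval boundaries shift by $r^{(a+2)}$, and together with Proposition \ref{prop:time1} and the recursion $r^{(a)}=n_ar^{(a+1)}+r^{(a+2)}$ of \eqref{eq:rr4} this converts the anchor for generation $a$ into the one for generation $a+1$.

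The genuine work, and the step most prone to error, is this anchoring: pinning down $\alpha+\beta$ requires knowing precisely how far the $m$-th diagonal of the $a$-th generation sits from the axis, which splits according to the trichotomy of pattern types I, II, III of Section \ref{subsec:patterns}, equivalently the parities of $p_a$ and $p^{(2)}_a$ via Lemma \ref{lem:pp1}, together with the parity of $n_a$. Everything else is forced by the two clean tools above, and once the anchor is in place the four values in \eqref{eq:alpha2} are obtained by solving the linear system given by $\alpha+\delta=u$, $\beta+\gamma=u$, and $\alpha-\beta=\gamma-\delta=p_a$.
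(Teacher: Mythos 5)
Your proposal is correct in outline and shares the paper's skeleton: both reduce to $u\in\{-1,0\}$ via the rotation property, both use Proposition \ref{prop:time1} to get $\alpha^{(a)}_m(u)=\beta^{(a)}_m(u)+p_a$ and $\delta^{(a)}_m(u)=\gamma^{(a)}_m(u)-p_a$, and both finish by induction on the generation with the base cases $a=1,2$ read off from Figure \ref{fig:firstgen}. Your genuinely new ingredient is the reflection argument: since the forward mutation at $u$ is the reflection in $Z(u)$ and the quadrilateral $Q$ is carried to itself, the sum rule extracted from the proof of Proposition \ref{prop:center1} gives $\beta^{(a)}_m(u)+\gamma^{(a)}_m(u)\equiv\alpha^{(a)}_m(u)+\delta^{(a)}_m(u)\equiv u$ mod $r$ at once (one should still check that the axis crosses the sides $\alpha\delta$ and $\beta\gamma$ rather than $\alpha\beta$ and $\gamma\delta$, so that the pairing really is $\beta\leftrightarrow\gamma$, $\alpha\leftrightarrow\delta$; this is consistent with the target formulas and with Figure \ref{fig:quad1}, but you assert it rather than derive it). This is an attractive shortcut the paper does not use, and it correctly reduces everything to the single anchor $\alpha+\beta$. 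However, that anchor is exactly where you stop: you propose to compute it by a positional case analysis over the pattern types I, II, III and the parities of $p_a$, $p^{(2)}_a$, $n_a$, which is a substantial amount of unexecuted casework, and your three linear relations do not link different values of $m$, so each $m$ would have to be anchored separately. The paper avoids this in two steps: the additional shift relations $\alpha^{(a)}_{m+2}(u)=\alpha^{(a)}_m(u)+p_a$ (and likewise for $\beta,\gamma,\delta$) in \eqref{eq:shift1} reduce the claim to $m=1,2$; and the inductive step is then the observation, in the spirit of Proposition \ref{prop:cont1}, that the quadrilateral of $Y^{(a+1)}_1(u)$ shares vertices with that of $Y^{(a)}_{n_a-1}(u)$ (namely $\beta^{(a+1)}_1(u)=\delta^{(a)}_{n_a-1}(u)$, $\gamma^{(a+1)}_1(u)=\alpha^{(a)}_{n_a-1}(u)$) and the quadrilateral of $Y^{(a+1)}_2(u)$ with that of $Y^{(a-1)}_{n_{a-1}-2\delta_{a-1,1}}(u)$ --- which transfers the anchor between generations with no case analysis on pattern types at all. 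So your route would work, but the step you defer is the real content of the proposition, and the paper's vertex-identification device is the cleaner way to supply it.
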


\begin{proof}
By Proposition \ref{prop:time1}
and
the  familiar structure of
the forward mutation points in Figure \ref{fig:snap1},
 we have
the equalities
\begin{align}
\label{eq:shift1}
\begin{split}
\alpha^{(a)}_{m}(u)&=\beta^{(a)}_{m}(u)+p_a,
\quad
\delta^{(a)}_{m}(u)=\gamma^{(a)}_{m}(u)-p_a,\\
\alpha^{(a)}_{m+2}(u)&=\alpha^{(a)}_{m}(u)+p_a,
\quad
\beta^{(a)}_{m+2}(u)=\beta^{(a)}_{m}(u)+p_a,\\
\gamma^{(a)}_{m+2}(u)&=\gamma^{(a)}_{m}(u)-p_a,
\quad
\delta^{(a)}_{m+2}(u)=\delta^{(a)}_{m}(u)-p_a.
\end{split}
\end{align}
Meanwhile, the right hand sides of 
\eqref{eq:alpha1} and \eqref{eq:alpha2}
also satisfy the same equalities.
Therefore, for each $a$ it is enough to prove the claim
only for $m=1,2$.

We prove it by induction on $a$.
We first prove it for $a=1,2$.
By the rotation property, it is enough to prove it for $u=-1$ or $0$.
Then, the claim  is directly verified using
Figure \ref{fig:firstgen}.
Next suppose that the claim is true up to $a$,
and let us prove it for $a+1$.
Consider the case $a+1$ is odd. (The other case is done by 
considering the mirror image.)
Repeating the same analysis of Proposition \ref{prop:cont1}
we can verify
\begin{align}
\beta^{(a+1)}_1(u)&=\delta^{(a)}_{n_a-1}(u),
\quad
\gamma^{(a+1)}_1(u)=\alpha^{(a)}_{n_a-1}(u),\\
\beta^{(a+1)}_2(u)&=\alpha^{(a-1)}_{n_{a-1}-2\delta_{a-1,1}}(u),
\quad
\gamma^{(a+1)}_2(u)=\delta^{(a-1)}_{n_{a-1}-2\delta_{a-1,1}}(u).
\end{align}
The right hand sides of 
\eqref{eq:alpha1} and \eqref{eq:alpha2}
also satisfy the same equalities;
therefore, the claim  is true for $a+1$.
\end{proof}

Now we are able to recover the solution of \cite{Gliozzi96}
in our convention.
Let $z(0)$, \dots, $z(r-1)$ be 
formal variables such that $z(i+r)=z(i)$ ($i\in \mathbb{Z}$).
\begin{thm}[\cite{Gliozzi96}]
The $Y$-system $\mathbb{Y}_{\mathrm{RSG}}(n_1,\dots,n_F)$
for $\mathcal{Y}_+$
has a following general solution.
\begin{align}
\label{eq:ycross1}
Y^{(a)}_m(u)&=(z(\alpha^{(a)}_m(u)),
z(\beta^{(a)}_m(u)),
z(\gamma^{(a)}_m(u)),
z(\delta^{(a)}_m(u)))^{\varepsilon_a}.
\end{align}
\end{thm}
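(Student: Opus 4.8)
The plan is to combine the cluster-algebraic realization of the $Y$-system (Theorem~\ref{thm:RSGY}) with the Fock--Goncharov cross-ratio expression for $y$-variables (\eqref{eq:cr2}) and the explicit quadrilateral data of Proposition~\ref{prop:quad1}. The crucial observation is that we need not verify any $Y$-system relation directly: once the cross-ratio assignment is known to obey the cluster mutation rule, the relations of $\mathbb{Y}_{\mathrm{RSG}}(n_1,\dots,n_F)$ follow automatically from the realization.

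First I would fix, once and for all, a formal variable $z(t)$ attached to the vertex of the $r$-gon whose time-ordered index is $t$, with $z(t+r)=z(t)$. Because only the diagonals flip along the mutation sequence \eqref{eq:RSGmseq} while the vertices of the $r$-gon stay fixed, these formal variables remain attached to the same points throughout. By the general formula of Fock and Goncharov \cite{Fock05}, defining the $y$-variable of each diagonal as the cross-ratio \eqref{eq:cr2} of its surrounding quadrilateral produces an assignment compatible with the exchange relation \eqref{eq:yex} under every flip. Hence the cross-ratios furnish a solution of the $y$-variable dynamics of the cluster algebra underlying \eqref{eq:RSGmseq}.

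Next, since the mutation sequence realizes $\mathbb{Y}_{\mathrm{RSG}}(n_1,\dots,n_F)$ for $\mathcal{Y}_+$ (Theorem~\ref{thm:RSGY}), and every variable of $\mathcal{Y}_+$ occurs at a forward mutation point (Proposition~\ref{prop:bisect3}), it suffices to read off the cross-ratio for the diagonal identified with $Y^{(a)}_m(u)$ at time $u$. Its surrounding quadrilateral is described by Proposition~\ref{prop:quad1}: the four vertices $\alpha,\beta,\gamma,\delta$ carry the time-ordered indices $\alpha^{(a)}_m(u),\dots,\delta^{(a)}_m(u)$ of \eqref{eq:alpha1} and \eqref{eq:alpha2}. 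For odd $a$, Figure~\ref{fig:quad1}(a) shows that these vertices occupy exactly the cyclic positions demanded by the convention of \eqref{eq:cr2}, so $Y^{(a)}_m(u)=(z(\alpha^{(a)}_m(u)),z(\beta^{(a)}_m(u)),z(\gamma^{(a)}_m(u)),z(\delta^{(a)}_m(u)))$. For even $a$, Figure~\ref{fig:quad1}(b) shows that the roles of $\beta$ and $\delta$ are interchanged relative to that convention, so by the identity \eqref{eq:cr1} the value is the reciprocal; this is precisely the exponent $\varepsilon_a=-1$. Combining the two parities yields \eqref{eq:ycross1}.

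The main obstacle I anticipate is bookkeeping the orientation conventions so that the sign $\varepsilon_a$ emerges correctly: one must check that the positions recorded in Figure~\ref{fig:quad1} genuinely match (for odd $a$) or swap $\beta$ and $\delta$ (for even $a$) relative to the top/left/bottom/right slots of the Fock--Goncharov quadrilateral in \eqref{eq:cr2}, and that no further permutation of the four arguments slips in. Everything else is automatic once the cross-ratio assignment is known to mutate correctly; and since the formula depends on the $r$ free parameters $z(0),\dots,z(r-1)$, it is the general solution, recovering that of \cite{Gliozzi96}.
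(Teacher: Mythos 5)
Your proposal is correct and follows essentially the same route as the paper: the paper's proof is literally the one-line statement that the theorem is an immediate corollary of Proposition \ref{prop:quad1}, the inversion identity \eqref{eq:cr1}, and the Fock--Goncharov cross-ratio formula \eqref{eq:cr2}, which is exactly the chain of reasoning you spell out (including the origin of the exponent $\varepsilon_a$ from the swap of $\beta$ and $\delta$ for even $a$). No gaps; you have simply made explicit the bookkeeping the paper leaves implicit.
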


\begin{proof}
This is an immediate corollary of
Proposition \ref{prop:quad1},
\eqref{eq:cr1}, and \eqref{eq:cr2}.
\end{proof}

\subsection{Summary of  meaningful numbers in triangulations}
Here, we record the roles of numbers from continued fractions
in the triangulation $\Gamma_{\mathrm{RSG}}(n_1,\dots,n_F)$.

\begin{itemize}
\item $r$
\par\noindent
the number of  vertices
\item $r^{(2)}/2$
\par\noindent
the width between the axes $Z(-1)$ and $Z(0)$
\item $r^{(a)}$ ($a=2,\dots, F$)
\par\noindent
the width of an interval of type $L$ or $R$ of the $a$-th generation
\par\noindent
the width of an interval of type $N_L$ or $N_R$ of the $(a-1)$-th generation
\item $q_a=p_{a+1}$ ($a=1,\dots, F-1$)
\par\noindent
the  number of the intervals of types $L$ and $R$ of the $(a+1)$-th generation
\item $p_a$ ($a=1,\dots, F-1$)
\par\noindent
the  number of the intervals of types $N_L$ and $N_R$ of the $(a+1)$-th generation
\item $q^{(k)}_a=p^{(k)}_{a+1}$ ($k=2,\dots, F-1$; $a=k,\dots, F-1$)
\par\noindent
the  number of the intervals of types $L$ and $R$ of the $(a+1)$-th generation
inside an interval of type $L$ or $R$ of the $k$-th generation
\item  $p^{(k)}_a$ ($k=2,\dots, F-1$; $a=k,\dots, F-1$)
\par\noindent
the  number of the intervals of types $N_L$ and $N_R$ of the $(a+1)$-th generation
inside an interval of type $L$ or $R$ of the $k$-th generation
\end{itemize}

\section{Realization of SG $Y$-systems by  polygons
with a puncture}
\label{sec:SG}

Here we will construct triangulations
of polygons {\em with one puncture\/}
realizing  the SG $Y$-systems, in full generality,
and prove the periodicity of Theorem \ref{thm:periodRSG}.

\subsection{Examples}
\label{subsec:SGex}
It is known \cite{Fomin08} that a cluster algebra
of type $D_n$ can be realized by  {\em tagged\/} triangulations
of an $n$-gon with one puncture.
It turns out that the underlying cluster algebra for the
SG $Y$-system $\mathbb{Y}_{\mathrm{SG}}(n_1,\dots,n_F)$
is of type $D_{r}$, where we continue to use the notation
$r=r_F$. Therefore, it share the same $r$-gon with the
RSG $Y$-system parametrized by  the same data
%$(n_1,\dots,n_F)$,
and the only difference is the puncture
in the center.
We ask the reader to refer to \cite{Fomin08,Fomin08b}
for generalities on  tagged triangulations.
Here we only need a nominal use of them;
namely, we have one {\em notched\/} arc;
otherwise, all the other arcs are {\em plain\/} (i.e., ordinary) arcs.

The construction of the initial (tagged) triangulation
$\Gamma_{\mathrm{SG}}(n_1,\dots,n_F)$
 for
$\mathbb{Y}_{\mathrm{SG}}(n_1,\dots,n_F)$ is easy.
We only need to modify the diagonals of the {\em first\/} generation
of $\Gamma_{\mathrm{RSG}}(n_1,\dots,n_F)$.

We explain the idea by three examples,
 which are
the counterparts of those in Section \ref{sec:643}.

\begin{figure}
	\begin{center}
		\includegraphics[scale=0.37]{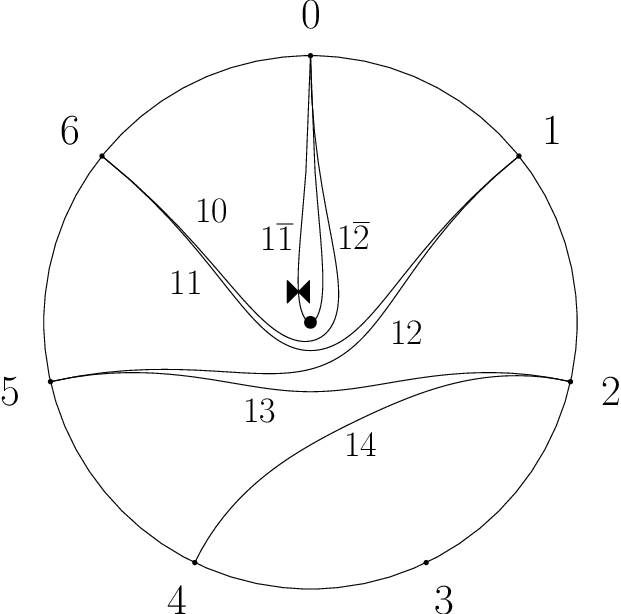}
	\end{center}
\caption{The initial triangulation $\Gamma_{\mathrm{SG}}(6)$ of a 7-gon
with a puncture in the center.
 The label $1\overline{2}$ is for the plain arc ending
at the puncture.}
\label{fig:d7gon}
\end{figure}

\medskip
{\bf Example 1.} $F=1$, $n_1=6$.
In this case, the Y-system 
$\mathbb{Y}_{\mathrm{SG}}(6)$
is nothing but the $Y$-system of type $D_7$,
and it is realized by a $7$-gon with one puncture.
We define the labeled triangulation $\Gamma_{\mathrm{SG}}(6)$ shown
in Figure \ref{fig:d7gon}.
Note that it is a natural extension of
$\Gamma_{\mathrm{RSG}}(6)$ as
in Figure \ref{fig:7gon}.
Besides the diagonals for $\Gamma_{\mathrm{RSG}}(6)$,
we have three new diagonals labeled with
 $(1,0)$, $(1,\overline{1})$,
$(1,\overline{2})$,
 and also the signs $-$, $+$, $+$ are attached to them, respectively.
In particular, $(1,\overline{1})$ and $(1,\overline{2})$
are
the labels for
notched and plain arcs ending at the puncture,
respectively.
The triangulation corresponds to the following alternating quiver
of type $D_7$.
$$
\begin{xy}
(-30,7.5)*\cir<2pt>{},
(-30,-7.5)*\cir<2pt>{},
(-15,0)*\cir<2pt>{},
(0,0)*\cir<2pt>{},
(15,0)*\cir<2pt>{},
(30,0)*\cir<2pt>{},
(45,0)*\cir<2pt>{},
(-37,7.5)*{(1,\overline{1})^+},
(-37,-7.5)*{(1,\overline{2})^+},
(-15,-5)*{(1,0)^-},
(0,-5)*{(1,1)^+},
(15,-5)*{(1,2)^-},
(30,-5)*{(1,3)^+},
(45,-5)*{(1,4)^-},
\ar@{->} (-29,7);(-16,1)
\ar@{->} (-29,-7);(-16,-1)
\ar@{->} (-1,0);(-14,0)
\ar@{->} (1,0);(14,0)
\ar@{<-} (16,0);(29,0)
\ar@{->} (31,0);(44,0)
\end{xy}
$$
Setting
$\Gamma(0)=\Gamma_{\mathrm{SG}}(6)$,
 we apply the sequence of mutations \eqref{eq:7seq}.
The result is given in Figure \ref{fig:d7seq}.
As in the RSG case, one can easily check that
the sequence realizes the $Y$-system
$\mathbb{Y}_{\mathrm{SG}}(6)$ for $\mathcal{Y}_+$.
Continuing the sequence in Figure \ref{fig:d7seq} up to $u=14$, 
one observes that
it comes back to the original triangulation
{\em except that\/} the labels $(1,\overline{1})$ and  $(1,\overline{2})$
are interchanged.
This happens because, they swap every {\em two\/} time units of $u$,
and $r=7$ is odd.
Continuing it up to $u=28$,
we get back the initial triangulation.
Then, applying the same argument as for the RSG case,
we obtain the  periodicity of Theorem \ref{thm:periodSG}
in this  case.
On the contrary, for odd $n_1$, 
$\mathbb{Y}_{\mathrm{SG}}(n_1)$ does not have a half periodicity;
for example, for $n_1=7$,
 $2r=16$ is the full periodicity,
since $r=8$ is even.

\begin{figure}
	\begin{center}
		\includegraphics[scale=.47]{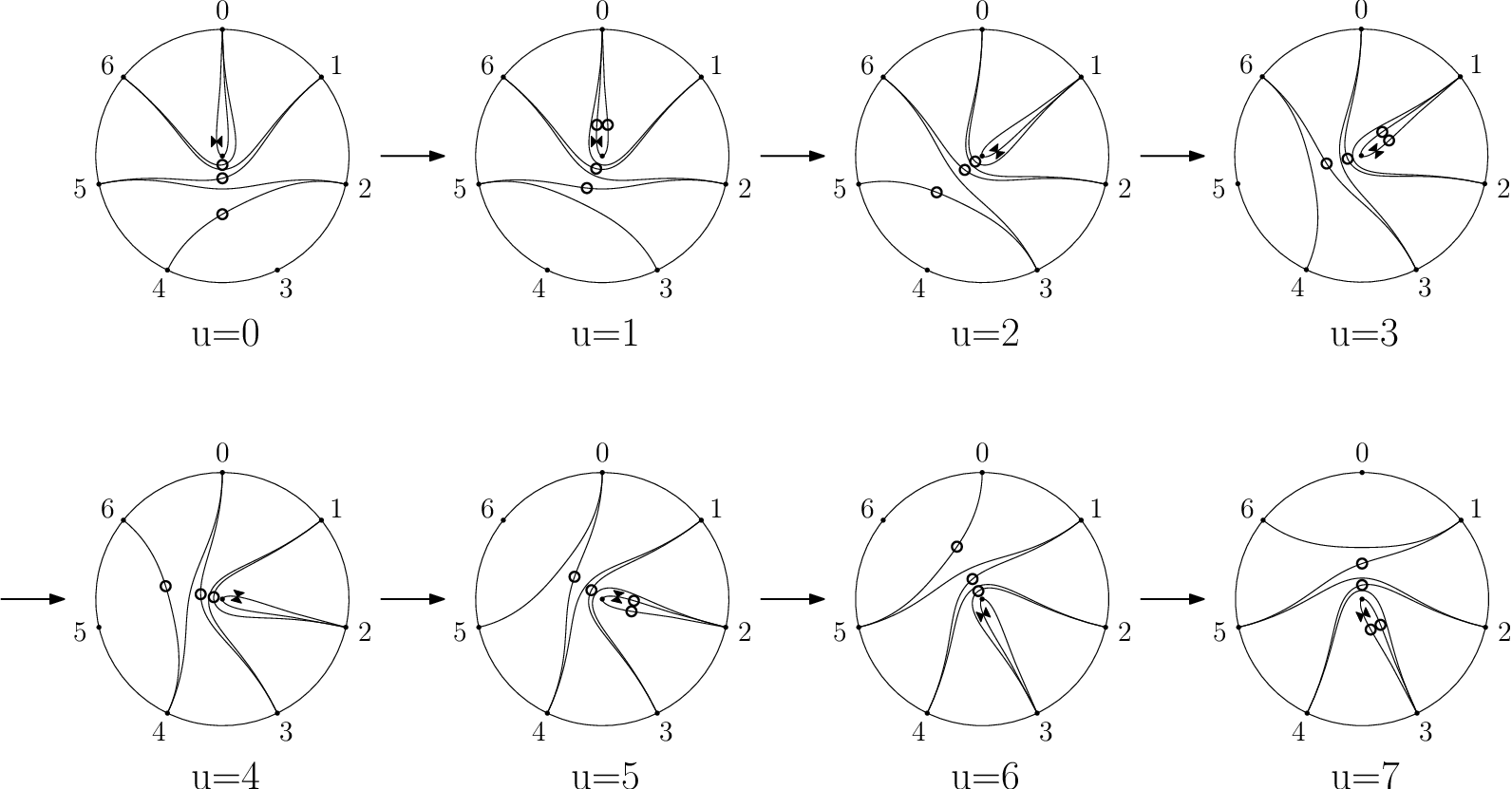}
	\end{center}
\caption{The mutation sequence 
\eqref{eq:7seq} at $u=0,\dots,7$.}
\label{fig:d7seq}
\end{figure}

\medskip
{\bf Example 2.} $F=2$, $(n_1,n_2)=(6,4)$.
 \begin{figure}
	\begin{center}
		\includegraphics[scale=.42]{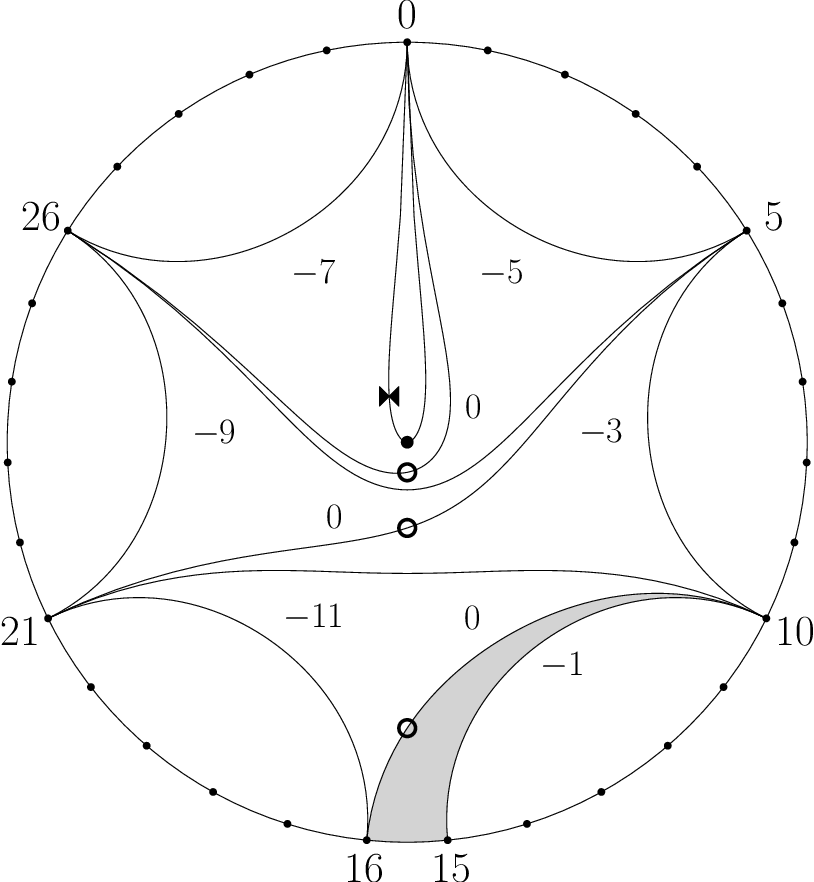}
		\hskip20pt
		\includegraphics[scale=.42]{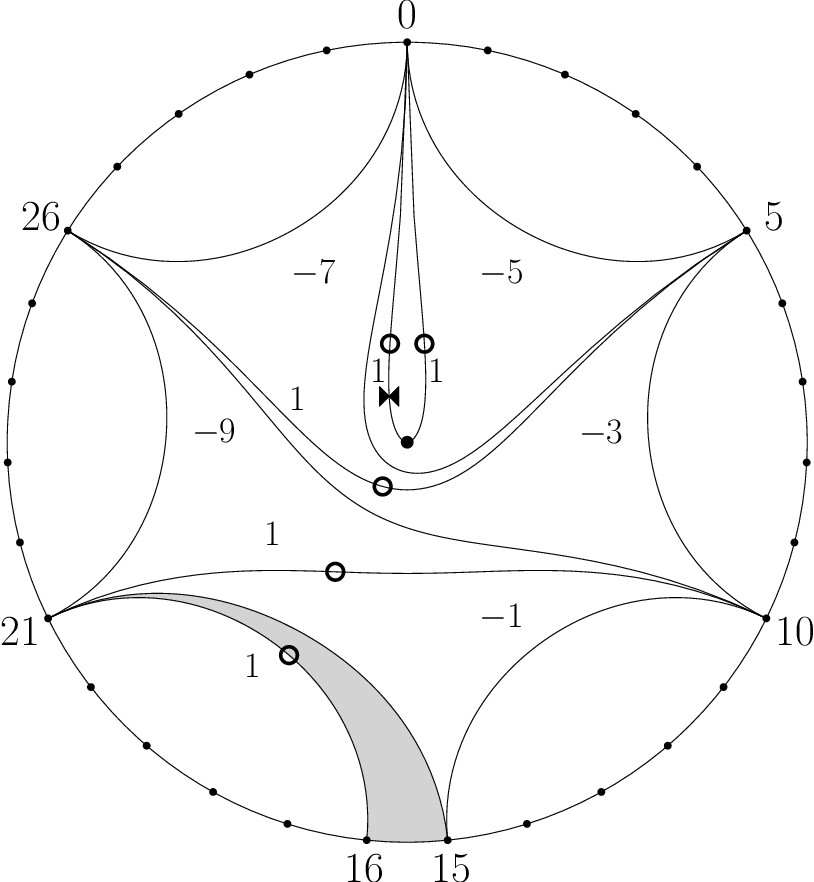}
	\end{center}
$$ u=0 \hskip163pt u=1$$
\caption{Snapshots at $u=0,1$ for the relation
\eqref{eq:Y74SG}.}
\label{fig:31gon-snap2}
\end{figure}
The triangulation $\Gamma_{\mathrm{SG}}(6,4)$
is obtained from $\Gamma_{\mathrm{RSG}}(6,4)$
by replacing the diagonals of the first generation therein
with the diagonals in 
$\Gamma_{\mathrm{SG}}(6)$.
Then, we apply the same mutation sequence  \eqref{eq:7seq}.
We have the same rotation property of triangulations
 \eqref{eq:period7}.
To see the realization of the $Y$-system,
the only new thing to be checked is the following relation
\begin{align}
\label{eq:Y74SG}
\begin{split}
Y^{(2)}_1(u-6)&Y^{(2)}_1(u+6)\\
&=
(1+Y^{(2)}_{2}(u)^{-1})^{-1}
(1+Y^{(1)}_{\overline{1}}(u)^{-1})^{-1}
(1+Y^{(1)}_{\overline{2}}(u)^{-1})^{-1}
\\
&\quad
\times
(1+Y^{(1)}_4(u-5)^{-1})^{-1}
(1+Y^{(1)}_3(u-4)^{-1})^{-1}\\
&\quad
\times
(1+Y^{(1)}_2(u-3)^{-1})^{-1}
(1+Y^{(1)}_1(u-2)^{-1})^{-1}\\
&\quad
\times
(1+Y^{(1)}_0(u-1)^{-1})^{-1}
(1+Y^{(1)}_0(u+1)^{-1})^{-1}\\
&\quad
\times
(1+Y^{(1)}_1(u+2)^{-1})^{-1}
(1+Y^{(1)}_2(u+3)^{-1})^{-1}\\
&\quad
\times
(1+Y^{(1)}_3(u+4)^{-1})^{-1}
(1+Y^{(1)}_4(u+5)^{-1})^{-1},
\end{split}
\end{align}
which replaces \eqref{eq:Y74}.
This can be done by using the snapshots
at $u=0$ and 1
in Figure \ref{fig:31gon-snap2},
which replaces Figure \ref{fig:31gon-snap}.
As for the periodicity property,
since $r=31$ is odd,
after $u=62$ step the diagonals of the first generation
show a half periodicity by the same reason of
Example 1.
Thus, we have a full periodicity of $u=124$,
which proves Theorem \ref{thm:periodSG} in this case.

\medskip
{\bf Example 3.} $F=3$, $(n_1,n_2,n_3)=(6,4,3)$.
We repeat the same procedure. 
The initial triangulation 
$\Gamma_{\mathrm{SG}}(6,4,3)$ of the punctured 106-gon is given in
 Figure  \ref{fig:106gon}.
The diagonals added in the first generation
do not participate to the relation \eqref{eq:Y743},
which, therefore,
remains unchanged.
Since $r=106$ is even, $2r=212$  is the full periodicity,
thus proving
Theorem \ref{thm:periodSG} in this case.

\subsection{Realization of SG $Y$-systems and
proof of Theorem \ref{thm:periodSG}}

\label{subsec:periodSG}

 \begin{figure}
 	\begin{center}
		\includegraphics[scale=.43]{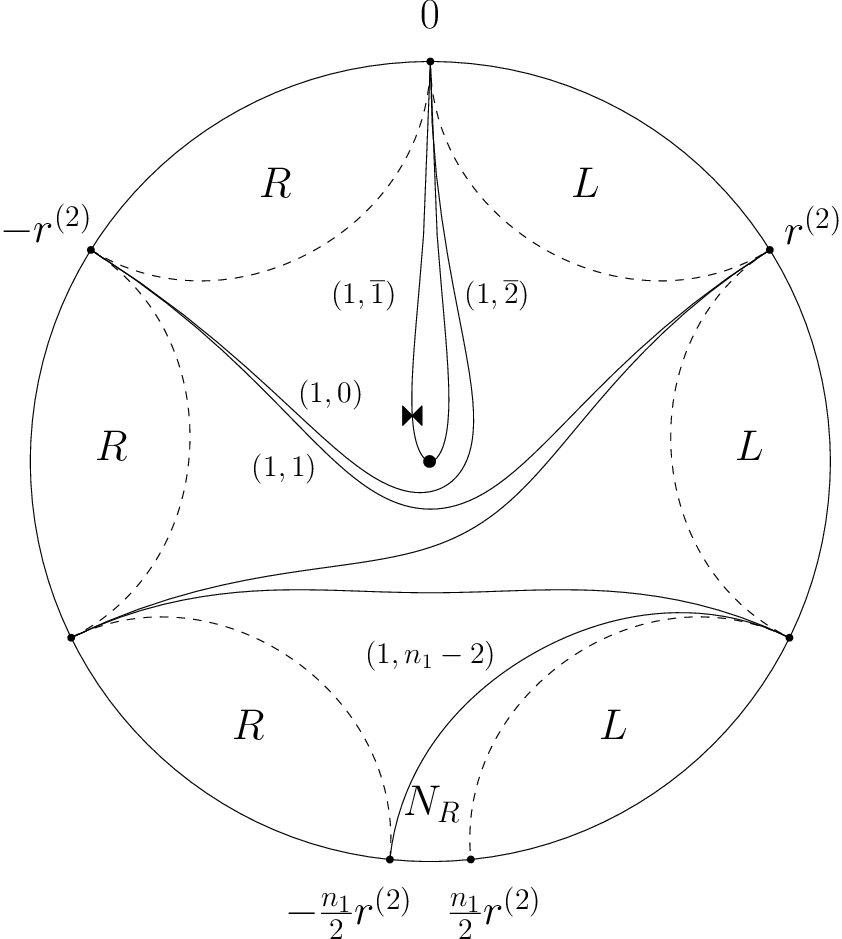}
\hskip15pt
		\includegraphics[scale=.43]{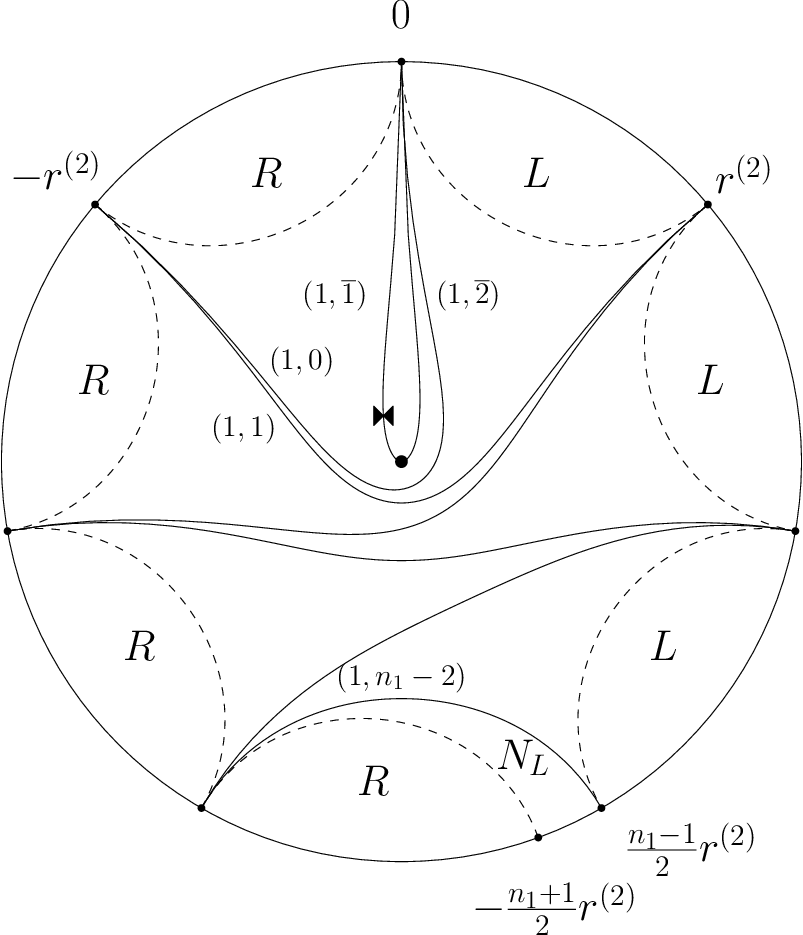}
	\end{center}
\centerline{
(a) even $n_1$
\hskip130pt
(b) odd $n_1$
}
\caption{Diagonals of the first generation
and intervals of the second
generation. The doted lines indicate the outline of   diagonals of the second generation. The label $(1,\overline{2})$ is for the plain arc ending
at the puncture.}
\label{fig:firstgen2}
\end{figure}

The construction of the triangulation
$\Gamma_{\mathrm{SG}}(n_1,\dots,n_F)$
 is done in the same way as in the previous subsection.
We only need to modify two points in
 the construction of
$\Gamma_{\mathrm{RSG}}(n_1,\dots,n_F)$.
The first one is the replacement
of the diagonals of the first generation
using Figure \ref{fig:firstgen2},
instead of Figure
\ref{fig:firstgen}.
The second one is that we
add the labels $(1,\overline{1})$, $(1,\overline{2})$
to the set $S(-1)$ 
and $(1,0)$ to the set $S(0)$.

Then, we need to show that the mutation sequence
\eqref{eq:RSGmseq} realizes  
the SG $Y$-system
$\mathbb{Y}_{\mathrm{SG}}(n_1,\dots,n_F)$.
The only thing we have to check is
the relation \eqref{eq:RSG5}.
But, this can be done 
by the snapshot method
based on Figure \ref{fig:firstgen2},
as in Example 2 of
Section \ref{subsec:SGex}.
As a corollary,
we obtain the proof of
Theorem \ref{thm:periodSG}.

\section{Dilogarithm identities}

As the second main result of our paper,
we prove the dilogarithm identities of the RSG and SG $Y$-systems
conjectured by \cite{Tateo95}.

\subsection{Conjectures on dilogarithm identities by Tateo}
Let $L(x)$ be the Rogers dilogarithm
\begin{align}
L(x)=-\frac{1}{2}
\int_0^x
\left(
\frac{\log (1-y)}{y}+
\frac{\log y}{1-y}
\right)
dy,\quad
(0<x<1).
\end{align}
The following formula holds (Euler's identity).
\begin{align}
\label{eq:Euler}
L(x) + L(1-x) = \frac{\pi^2}{6}
\quad (0\leq x \leq 1).
\end{align}

The following  identities were conjectured by
Tateo \cite{Tateo95}.
Let $\mathcal{I}_+:=\{ (a,m,u) \mid Y^{(a)}_m(u)\in \mathcal{Y}_+\}$
for the RSG/SG $Y$-systems.

\begin{conj}[\cite{Tateo95} Dilogarithm identities for RSG/SG $Y$-systems]
\label{conj:dilog1}
For any real positive  solution of the RSG/SG $Y$-system
$\mathbb{Y}_{\mathrm{RSG/SG}}(n_1,\dots,n_F)$ for $\mathcal{Y}_+$,
the following identities hold:
\begin{align}
\label{eq:dilog1}
\frac{6}{\pi^2}
\sum_{ (a,m,u)\in \mathcal{I}_+
\atop 0\leq u < 2r}
L\left(\frac{1}{1+Y^{(a)}_m(u)}
\right)
&= M_+,
\\
\label{eq:dilog2}
\frac{6}{\pi^2}
\sum_{(a,m,u)\in \mathcal{I}_+
\atop 0\leq u < 2r}
L\left(\frac{Y^{(a)}_m(u)}{1+Y^{(a)}_m(u)}
\right)
&= M_-,
\end{align}
where,
for the RSG case,
\begin{align}
M_+&=
r\left(-6 A_F +\sum_{a: \mathrm{even}} n_a+2\right),
\quad
M_-=
r\left(6 A_F + \sum_{a: \mathrm{odd}} n_a-4\right),\\
\label{eq:AF1}
A_F&=\sum_{a=1}^{F-1}
(-1)^{a+1}\frac{1}{p_aq_a}
+
(-1)^{F+1}
\frac{1}{p_Fr},
\end{align}
and, for the SG case,
\begin{align}
M_+&=
r\left( \sum_{a: \mathrm{even}} n_a +1 \right),
\quad
M_-=r\left( \sum_{a: \mathrm{odd}} n_a \right),
\end{align}
and the summation runs in the range $1\leq a \leq F$.
\end{conj}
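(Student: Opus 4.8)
\emph{Overall strategy.} Both identities assert that a sum of Rogers dilogarithms over one full period $0\le u<2r$ of forward mutation points is a \emph{constant}, independent of the chosen real positive solution. The plan is therefore twofold: first establish this solution-independence from the cluster-algebraic realization, and then evaluate the constant by specializing to the most symmetric solution and performing a continued-fraction computation with the tools of Section 2. I focus on the sum with argument $1/(1+Y^{(a)}_m(u))$, i.e. on $M_+$; by Euler's identity \eqref{eq:Euler} the two summands at each term add to $\pi^2/6$, so
\begin{align*}
M_++M_-=\#\{(a,m,u)\in\mathcal{I}_+ : 0\le u<2r\}.
\end{align*}
Since each index $(a,m)$ contributes exactly $r$ values of $u$ of the correct parity in this range, the count equals $r(\sum_a n_a-2)$ in the RSG case and $r(\sum_a n_a+1)$ in the SG case, which one checks matches $M_++M_-$ for the stated $M_\pm$. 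Hence it suffices to compute $M_+$, and $M_-$ follows.

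\emph{Step 1: constancy.} By Theorem \ref{thm:RSGY} (and its SG analogue) together with the periodicities of Theorems \ref{thm:periodRSG} and \ref{thm:periodSG}, the triples $(a,m,u)\in\mathcal{I}_+$ with $0\le u<2r$ are in bijection with the forward mutation points of one full period of the mutation sequence \eqref{eq:RSGmseq}. Consequently each of \eqref{eq:dilog1}, \eqref{eq:dilog2} is a sum of Rogers dilogarithms over one period of a periodic sequence of $y$-seed mutations. I would invoke the general dilogarithm identity attached to periodicities of cluster algebras (the tropicalization/categorification framework of \cite{Nakanishi10c}), which guarantees that such a periodic sum is independent of the positive real values assigned to the initial $y$-variables. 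Alternatively, constancy follows directly: differentiating the sum along the free parameters of the solution and applying the functional relation of $L$ together with the $Y$-system relations \eqref{eq:RSG1}, \eqref{eq:RSG5}, \eqref{eq:RSG4} makes the total derivative vanish.

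\emph{Step 2: evaluation at the symmetric solution.} Being constant, the sum may be computed at any convenient positive solution. I would use the cross-ratio solution \eqref{eq:ycross1} of Section \ref{subsec:GT} with the equally spaced points $z(t)=\exp(2\pi\sqrt{-1}\,t/r)$. By Proposition \ref{prop:quad1} and \eqref{eq:alpha2} the four time-ordered indices differ only by multiples of $p_a$ and $p_{a+1}$, so this specialization is manifestly $u$-independent (it is precisely the constant $Y$-system solution) and each $Y^{(a)}_m$ becomes an explicit ratio of sines of integer multiples of $\pi/r$,
\begin{align*}
Y^{(a)}_m=\left(\frac{\sin\!\big(\tfrac{\pi}{r}(p_{a+1}-(n_a-m)p_a)\big)\,\sin\!\big(\tfrac{\pi}{r}(p_{a+1}-(n_a+2-m)p_a)\big)}{\sin^2\!\big(\tfrac{\pi}{r}p_a\big)}\right)^{\!\varepsilon_a}.
\end{align*}
Since every $(a,m)$ occurs $r$ times, the sum collapses to $r$ times a finite sum over the nodes of the diagram of Figure \ref{fig:XA} (resp. Figure \ref{fig:XD}). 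This finite dilogarithm sum is then evaluated generation by generation: the recursions \eqref{eq:xi1}, \eqref{eq:xi2}, \eqref{eq:rr4} organize the sine arguments along the continued fraction, and the determinant identities of Proposition \ref{prop:cf0}(b) and \eqref{eq:pp0} supply the telescoping differences from which the fractional terms $(-1)^{a+1}/(p_aq_a)$ of $A_F$ emerge. Carrying this out yields $-6A_F+\sum_{a:\mathrm{even}}n_a+2$ for the RSG case. In the SG case the three additional first-generation arcs $(1,\overline 1),(1,\overline 2),(1,0)$ around the puncture enter the same computation and, consistently with the reduction \eqref{eq:reduction} relating the two systems, exactly cancel the fractional $A_F$ contribution, leaving the integer $\sum_{a:\mathrm{even}}n_a+1$; this is the geometrical counting referred to in the introduction.

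\emph{Main obstacle.} The crux is Step 2: showing that the finite sine-ratio dilogarithm sum telescopes to precisely $A_F$, with correct signs across the even/odd-$a$ parities and the three pattern types I, II, III of Lemma \ref{lem:pp1}, and correctly accounting for the boundary contributions of the exceptional relations \eqref{eq:RSG5}/\eqref{eq:SG5}. A secondary technical point is matching the hypotheses of the general cluster-dilogarithm theorem to our sequence \eqref{eq:RSGmseq} — in particular handling the relabeling $\nu$ and the half-periodicity when $r$ is odd in the SG case — so that the constancy in Step 1 is fully justified.
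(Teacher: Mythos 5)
There is a genuine gap, and it sits exactly where you place your "main obstacle": Step 2 is not a computation you have carried out, and it is not a routine one. The paper never evaluates the dilogarithm sum at a special solution. It invokes the general theorem of \cite{Nakanishi10c} in its \emph{strong} form (Theorem \ref{thm:dilog2}): the constant is not merely solution-independent, it equals $N_{\pm}$, the number of $Y^{(a)}_m(u)$ in one period whose $c$-vector is positive (resp.\ negative). The entire proof then reduces to a finite combinatorial count of tropical signs via laminations (Theorem \ref{thm:Npm1}, proved in Propositions \ref{prop:Na1} and \ref{prop:Na2}), plus the elementary continued-fraction identity $A_F=r^{(2)}/r$ (Proposition \ref{prop:Ar1}). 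Your route instead requires proving that an explicit finite sum of Rogers dilogarithms of sine ratios equals $-6A_F+\sum_{a:\mathrm{even}}n_a+2$. That is itself a nontrivial dilogarithm identity: already for $F=1$ it is the classical level-2 type-$A$ identity, and for general $F$ the target value $(\sum_{a:\mathrm{even}}n_a+2)-6r^{(2)}/r$ is not even an integer. The "telescoping" you invoke has no mechanism behind it: Proposition \ref{prop:cf0}(b) and \eqref{eq:pp0} manipulate the integer \emph{arguments} of the sines, but the Rogers dilogarithm does not telescope under such manipulations — its functional equations are five-term relations, and organizing them to produce $A_F$ is precisely the hard analytic content that the tropical-sign counting is designed to bypass.

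A second, independent gap concerns the SG case. The cross-ratio solution \eqref{eq:ycross1} is established only for the RSG system; the paper provides no closed-form solution on the punctured polygon, and the Fock--Goncharov formula for the tagged arcs $(1,\overline{1})$, $(1,\overline{2})$ at the puncture is not the plain cross-ratio \eqref{eq:cr2}. So there is no "symmetric solution" available to specialize to, and the claim that these arcs "exactly cancel the fractional $A_F$ contribution" is unsupported. Your Step 1 and the reduction of $M_-$ to $M_+$ via Euler's identity are correct and agree with the paper (cf.\ \eqref{eq:M1} and \eqref{eq:N1}), but the proposal as written replaces the paper's finite counting problem with an unproven family of explicit dilogarithm evaluations.
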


Note that we have
\begin{align}
\label{eq:M1}
M:=M_+ +M_-
=\begin{cases}
r(\sum_{a=1}^F n_a -2) & \mbox{for RSG}\\
r(\sum_{a=1}^F n_a +1 )& \mbox{for SG}.\\
\end{cases}
\end{align}
In either case, $M$ is equal to the cardinality of
the set
%\begin{align}
$\{(a,m,u)\in \mathcal{I}_+
\mid
 0\leq u < 2r
\}$
%\end{align}
which is also equal to the total number of the forward mutation points
in the period $0\leq u < 2r$.
This means that, by \eqref{eq:Euler},
the identities \eqref{eq:dilog1} and \eqref{eq:dilog2}
are equivalent to each other.

For $F=1$, the identities reduce to the dilogarithm identities
of type $A$ and $D$,
which were proved by \cite{Frenkel95} for type $A$,
and by \cite{Chapoton05} for type $D$.
The case $F=2$ was proved by \cite{Nakanishi10b}.
We will give a proof of Conjecture \ref{conj:dilog1} in full generality
based on our formulation of the RSG/SG $Y$-systems.

\subsection{Dilogarithm identities in general form}
Formulas such as \eqref{eq:dilog1} and \eqref{eq:dilog2}
were once very formidable to prove,
 but nowadays they are rather well understood from the
point of view of cluster algebras.
According to a general theorem \cite[Theorem 6.1]{Nakanishi10c},
{\em 
a dilogarithm identity is associated
to any period of labeled seeds of a cluster algebra.}
Furthermore, the proof of  \cite[Theorem 6.1]{Nakanishi10c}
works also for any partial period of labels seeds
(= period of unlabeled seed).
As we have shown,
the underlying cluster algebra of our $Y$-system
has exactly such periodicity.
Thus, we automatically obtains the associated dilogarithm identity.

To present these dilogarithm identities explicitly,
we introduce some terminology from cluster algebras.
Let $y_i=y^{(a)}_{m,s}:=y^{(a)}_{m,s}(0)$ be the initial $y$-variables
of the cluster algebra associated to a RSG/SG $Y$-system.
Then, any $y$-variable  $y^{(a)}_{m,s}(u)$
are in the {\em universal semifield
$\mathbb{Q}_+(y)$ of $y$}, that is, the semifield consisting
of the rational functions in the variables $y=(y_i)$.
Let ${\mathrm{Trop}}(y)$ be the {\em tropical semifield generated by
$y=(y_i)$},
consisting of the Laurent monomials in $y$ with coefficient $1$
endowed with the ordinary multiplication and the following tropical addition
\begin{align}
\prod_{i} y_i^{m_i}
\oplus
\prod_{i} y_i^{n_i}
:=
\prod_{i} y_i^{\min(m_i,n_i)}.
\end{align}
There is the canonical semifield homomorphism
(the {\em tropicalization map})
$\pi: \mathbb{Q}_+(y)  \rightarrow 
{\mathrm{Trop}}(y)$
defined by
$
\pi(y_i)= y_i
$
and
$\pi(c)=1$  ($c\in \mathbb{Q}_+$).
Since our $Y$-variables $Y^{(a)}_m(u)=y^{(a)}_{m,s}(u)$ are in $\mathbb{Q}_+(y)$,
we can apply the tropicalization map $\pi$ to them.

\begin{defn}[\cite{Fomin07}] For any $Y$-variable $Y^{(a)}_m(u)$,
the integer vector $c=c(Y^{(a)}_m(u))=
(c_i)$ defined by
\begin{align}
\pi( Y^{(a)}_m(u))=\prod_{i}  y_i{}^{c_i}
\end{align}
is called the {\em $c$-vector\/} of $Y^{(a)}_m(u)$.
\end{defn}

The following fact is well known.
\begin{thm}[\cite{Derksen10}]
Any $c$-vector of a cluster algebras of type $A$ or $D$
 is a nonzero vector, and
its components are either all nonnegative or all nonpositive.
\end{thm}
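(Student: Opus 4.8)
The statement is precisely the \emph{sign-coherence} of $c$-vectors, originally conjectured by Fomin and Zelevinsky, and the plan is to deduce it from the finite-type structure of the cluster algebras at hand. As used throughout and recalled in Section~\ref{sec:SG}, the underlying cluster algebras are of types $A$ and $D$; hence they are of finite cluster type, and the initial seeds produced by our construction are alternating (bipartite), hence acyclic, orientations of the type $A$ and $D$ Dynkin diagrams of Figures~\ref{fig:XA} and~\ref{fig:XD}. The key reduction is that, for such an acyclic finite-type seed, the collection of $c$-vectors occurring in \emph{all} seeds of the cluster algebra with principal coefficients at $\Gamma(0)$ coincides with the full root system $\Phi=\Phi^+\sqcup\Phi^-$ of the corresponding Cartan--Killing type, with each individual $c$-vector equal to a single root.

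Granting this identification, the theorem is immediate. Under the correspondence the initial $y$-variables $y_i$ are matched with the simple roots, so that the $c$-vector $c(Y^{(a)}_m(u))=(c_i)$ is the coordinate vector of a root $\beta\in\Phi$ in the simple-root basis. A root is by definition either positive, in which case all its simple-root coordinates are nonnegative, or negative, in which case they are all nonpositive; and no root is $0$. Hence every $c$-vector is nonzero and has components that are either all nonnegative or all nonpositive, which is exactly the assertion.

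The only substantial input is the identification of $c$-vectors with roots, equivalently sign-coherence itself, and this is where the real work lies. I would obtain it from the additive categorification by quivers with potentials: in the skew-symmetric case---which covers types $A$ and $D$---Derksen, Weyman and Zelevinsky \cite{Derksen10} realize each $c$-vector as a dimension-type invariant of a decorated representation attached to the seed along our mutation sequence, and sign-coherence expresses that this invariant comes from an honest representation (nonnegative) or from a shifted negative one (nonpositive). The main obstacle is precisely the \emph{positivity} underlying this dichotomy: it cannot be read off by elementary manipulation of the tropicalized mutation rule coming from \eqref{eq:yrel}, since that rule does not on its face preserve the sign-coherence of the remaining columns, and it is exactly the content supplied by \cite{Derksen10}. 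Alternatively, for these surface cluster algebras one may interpret the $c$-vectors as shear coordinates of the elementary laminations associated with the initial triangulation \cite{Fock05,Fomin08b}, reducing the claim to a geometric sign-coherence statement for a single lamination; but this route rests on the same nontrivial positivity and offers no genuine shortcut.
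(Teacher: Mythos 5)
The paper offers no proof of this statement at all---it is quoted as a known fact with the citation \cite{Derksen10}---and your argument ultimately rests on exactly the same citation, so in substance you take the same route. I should, however, flag an error in your intermediate ``key reduction.'' The cluster algebras underlying $\mathbb{Y}_{\mathrm{RSG}}$ and $\mathbb{Y}_{\mathrm{SG}}$ are of types $A_{r-3}$ and $D_{r}$, realized by (tagged) triangulations of the $r$-gon; the diagrams of Figures~\ref{fig:XA} and~\ref{fig:XD} merely index the $Y$-variables and have rank $\sum_a n_a-2$, which is far smaller than $r-3$ once $F\geq 2$ (e.g.\ type $A_{28}$ versus an $8$-node diagram for $(n_1,n_2)=(6,4)$). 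Moreover the initial seed is the quiver of the triangulation $\Gamma_{\mathrm{RSG/SG}}(n_1,\dots,n_F)$, which is mutation-equivalent to, but not an acyclic orientation of, the corresponding Dynkin diagram, so the reduction ``acyclic finite-type seed $\Rightarrow$ the $c$-vectors form the full root system'' does not apply to the seed actually used here; your identification is correct only in the case $F=1$, where the initial quiver really is the alternating orientation of $A_{n_1-2}$ or $D_{n_1+1}$. This does not invalidate your conclusion, since the sign-coherence theorem of \cite{Derksen10} for skew-symmetric cluster algebras needs no acyclicity hypothesis (and the shear-coordinate description of \cite{Fomin08b}, which is what the paper actually uses in Section~7 to compute the tropical signs, likewise works for an arbitrary tagged triangulation), but the root-system detour should be deleted or confined to $F=1$.
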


Based on the above theorem, we introduce the following notion.

\begin{defn}
To each $Y^{(a)}_m(u)\in \mathcal{Y}_+$
we attach a sign $\varepsilon=\varepsilon(Y^{(a)}_m(u))$
such that $\varepsilon$ is $+$ (resp. $-$)
if the $c$-vector of $Y^{(a)}_m(u)$ is a positive vector
(resp. negative vector).
We call $\varepsilon$ the {\em tropical sign of $Y^{(a)}_m(u)$}.
\end{defn}

Now we can state the dilogarithm identity 
 associated to the period $2r$
of the RSG/SG $Y$-systems
for Theorems \ref{thm:periodRSG} and 
 \ref{thm:periodSG} in a general form.

\begin{thm}[Dilogarithm identities in general form
{\cite[Theorem 6.1]{Nakanishi10c}}]
\label{thm:dilog2}
For any real positive  solution of the RSG/SG $Y$-system
$\mathbb{Y}_{\mathrm{RSG/SG}}(n_1,\dots,n_F)$ for $\mathcal{Y}_+$,
the following identities hold.
\begin{align}
\label{eq:dilog3}
\frac{6}{\pi^2}
\sum_{(a,m,u)\in \mathcal{I}_+
\atop 0\leq u < 2r}
L\left(\frac{1}{1+Y^{(a)}_m(u)}
\right)
&= N_+,\\
\label{eq:dilog4}
\frac{6}{\pi^2}
\sum_{(a,m,u)\in \mathcal{I}_+
\atop 0\leq u < 2r}
L\left(\frac{Y^{(a)}_m(u)}{1+Y^{(a)}_m(u)}
\right)
&= N_-,
\end{align}
where
\begin{align}
\label{eq:Npm1}
\begin{split}
N_+&=|\{(a,m,u)\in \mathcal{I}_+ \mid 0\leq u < 2r, \varepsilon(
Y^{(a)}_m(u))=+\}|,\\
N_-&=|\{(a,m,u)\in \mathcal{I}_+ \mid 0\leq u < 2r, \varepsilon(
Y^{(a)}_m(u))=-\}|.
\end{split}
\end{align}
\end{thm}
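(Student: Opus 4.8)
The plan is to deduce both identities from the general cluster-algebraic dilogarithm identity \cite[Theorem 6.1]{Nakanishi10c}, which attaches to any period of the $Y$-seeds of a cluster algebra a Rogers dilogarithm identity whose right-hand side is a count of the mutated vertices weighted by their tropical signs. Since that theorem is quoted verbatim in the statement, the work consists entirely in checking that our construction supplies a period of the required kind and that the abstract constants coincide with $N_+$ and $N_-$ in \eqref{eq:Npm1}. Thus the proof is essentially automatic once the periodicity of the underlying cluster algebra has been secured, as we have done in the preceding sections.

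First I would recall that, by Theorem \ref{thm:RSGY} and its SG counterpart established in Section \ref{subsec:periodSG}, the mutation sequence \eqref{eq:RSGmseq} realizes the $Y$-system for $\mathcal{Y}_+$, with each variable $Y^{(a)}_m(u)$ (for $(a,m,u)\in\mathcal{I}_+$) identified with the cluster $y$-variable at a forward mutation point. By Proposition \ref{prop:ref1}(c) together with Theorems \ref{thm:periodRSG} and \ref{thm:periodSG}, this sequence returns after $2r$ steps to its starting point as an \emph{unlabeled} seed, that is, it is a period of the unlabeled seed (a $\nu$-period of the labeled seed). As the text following the theorem records, the proof of \cite[Theorem 6.1]{Nakanishi10c} applies equally to such partial periods; and because our identification of $y$-variables with $Y$-variables ignores the relabeling $\nu$, the relevant collection of dilogarithm arguments over one period is exactly $\{\,Y^{(a)}_m(u)\mid (a,m,u)\in\mathcal{I}_+,\ 0\le u<2r\,\}$.

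Next I would invoke sign-coherence of $c$-vectors \cite{Derksen10}, which guarantees that the tropical sign $\varepsilon(Y^{(a)}_m(u))$ is defined at every forward mutation point and hence that the two sets in \eqref{eq:Npm1} partition the $M$ forward mutation points of one period, so that $N_++N_-=M$ as in \eqref{eq:M1}. Feeding the period of the previous step into \cite[Theorem 6.1]{Nakanishi10c} then produces \eqref{eq:dilog3} with constant equal to the number of forward mutation points of positive tropical sign, namely $N_+$; the companion identity \eqref{eq:dilog4} follows from it by Euler's identity \eqref{eq:Euler} applied termwise, since $1/(1+Y)$ and $Y/(1+Y)$ are complementary and $N_++N_-=M$.

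The step demanding the most care is the bookkeeping of the relabeling $\nu$. I would need to verify that passing from the strict period assumed in the abstract theorem to the $\nu$-period (period of the unlabeled seed) delivered by Proposition \ref{prop:ref1}(c) does not alter the tropical-sign count, and that $\nu$ fixes the $y$-to-$Y$ identification so that the dilogarithm sum over $0\le u<2r$ is genuinely over one full period of the $Y$-system rather than a proper fraction of it. Matching the abstract tropical-sign count with the combinatorial quantities $N_\pm$ is then immediate, so this relabeling bookkeeping, rather than any analytic difficulty with the dilogarithm itself, is the true obstacle.
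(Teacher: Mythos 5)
Your proposal matches the paper's treatment: Theorem \ref{thm:dilog2} is not given an independent proof but is obtained exactly as you describe, by feeding the $2r$-periodicity of the unlabeled seeds (the $\nu$-period from Proposition \ref{prop:ref1} and Theorems \ref{thm:periodRSG}, \ref{thm:periodSG}) into the general dilogarithm identity of \cite[Theorem 6.1]{Nakanishi10c}, whose proof the paper likewise notes extends to partial periods of labeled seeds. Your additional remarks on sign-coherence and on the $\nu$-relabeling being absorbed by the $y$-to-$Y$ identification are precisely the points the paper records in the surrounding text, so the approach is the same.
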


Therefore, Conjecture \ref{conj:dilog1} reduces to a
counting problem on $N_{\pm}$;
i.e., it reduces to show 
the equalities
\begin{align}
\label{eq:NM1}
N_{\pm}= M_{\pm}.
\end{align}
Note that we have
\begin{align}
\label{eq:N1}
N:=N_+ +N_- = M,
\end{align}
where $M$ is the number in \eqref{eq:M1}.
\subsection{Counting formula for $N_\pm$}

To perform the counting of $N_{\pm}$,
we employ the description of $c$-vectors in terms
of  {\em laminations} by \cite{Fomin08b}.
The idea is as follows.
Let $Q$ be a quadrilateral surrounding the diagonal
corresponding to $Y^{(a)}_m(u)$ at $u$.
Then, the tropical sign $\varepsilon=\varepsilon(Y^{(a)}_m(u))$ 
can be determined by the way
in which the elementary laminations
associated with the initial triangulation
cross the quadrilateral $Q$ as follows
(the dotted line is a lamination).
\medskip
\begin{align}
\label{eq:lami1}
\begin{xy}
%
%
%(10,13)*{\alpha},
%(-4,0)*{\beta},
%(10,-14)*{\gamma},
(10,-15)*{\varepsilon=+},
(10,0)*{\cir<2pt>{}},
\ar@{-} (0,0);(20,0)
\ar@{-} (20,0);(10,10)
\ar@{-} (10,10);(0,0)
\ar@{-} (10,-10);(20,0)
\ar@{-} (10,-10);(0,0)
\ar@{--} (0,10);(20,-10)
\end{xy}
\hskip50pt
\begin{xy}
%
%
%(10,13)*{\alpha},
%(-4,0)*{\beta},
%(10,-14)*{\gamma},
(10,-15)*{\varepsilon=-},
(10,0)*{\cir<2pt>{}},
\ar@{-} (0,0);(20,0)
\ar@{-} (20,0);(10,10)
\ar@{-} (10,10);(0,0)
\ar@{-} (10,-10);(20,0)
\ar@{-} (10,-10);(0,0)
\ar@{--} (20,10);(0,-10)
\end{xy}
\end{align}
There is also an extra rule for arcs ending at the puncture
for type $D$.
See \cite[Fig.13]{Nakanishi12} for a summary of this rule.

By carefully studying  the possible configurations of quadrilaterals,
 we can obtain the following formula for $N_{\pm}$,
which is the main result of this section.
\begin{thm}[Counting formula]
\label{thm:Npm1}
For the RSG $Y$-system, we have
\begin{align}
N_+&=
r\left(\sum_{a: \mathrm{even}} n_a+2\right) -6r^{(2)},
\quad
N_-=
r\left( \sum_{a: \mathrm{odd}} n_a-4\right) + 6 r^{(2)}.
\end{align}
For the SG $Y$-system, we have
\begin{align}
N_+&=
r\left(\sum_{a: \mathrm{even}} n_a+1\right) ,\quad
N_-=
r\left( \sum_{a: \mathrm{odd}} n_a\right).
\end{align}
\end{thm}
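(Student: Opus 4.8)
The plan is to compute the tropical signs $\varepsilon(Y^{(a)}_m(u))$ directly from the geometry, using the lamination description of $c$-vectors from \cite{Fomin08b} together with the explicit quadrilaterals of Proposition \ref{prop:quad1}. The crucial simplification is sign-coherence (the cited theorem of \cite{Derksen10}): since every $c$-vector is a nonzero vector whose entries are all of one sign, the tropical sign of $Y^{(a)}_m(u)$ equals the crossing sign, read off from \eqref{eq:lami1}, of \emph{any} single elementary lamination of $\Gamma(0)$ that crosses the surrounding quadrilateral $Q$ nontrivially. Thus I do not need to evaluate the whole shear-coordinate vector, only to exhibit one nontrivially crossing lamination and record its handedness. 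Moreover, because $N_++N_-=M$ is already fixed by \eqref{eq:N1}, it suffices to compute the deviations for one sign, say $N_-$, and obtain the other by subtraction.

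First I would make the elementary laminations of the initial triangulation $\Gamma(0)$ explicit as curves running parallel to the initial diagonals, and re-express the sign rule \eqref{eq:lami1} as an inequality among the time-ordered indices $\alpha^{(a)}_m(u),\beta^{(a)}_m(u),\gamma^{(a)}_m(u),\delta^{(a)}_m(u)$ of $Q$, which Proposition \ref{prop:quad1} supplies in closed form. Since consecutive polygon vertices differ by $p_F$ in the time-ordered index and the laminations sit parallel to known initial diagonals, deciding through which pair of opposite edges of $Q$ a lamination enters and exits reduces to a finite comparison of these four integers modulo $r$.

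Next I would treat the generic diagonals. For a forward-mutation diagonal of generation $a$ lying in the interior of an interval of type $L$ or $R$, away from both axes and from the neutral intervals, the snapshot geometry of Figures \ref{fig:snap1} and \ref{fig:snap2} together with Proposition \ref{prop:quad1} shows that the crossing handedness depends only on the parity of $a$. Since each $(a,m)$ occurs exactly $r$ times in $0\le u<2r$, this yields the main terms $r\sum_{a\ \mathrm{even}}n_a$ in $N_+$ and $r\sum_{a\ \mathrm{odd}}n_a$ in $N_-$, and the remaining task is to account for the diagonals whose sign violates this parity rule.

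The heart of the argument—and the step I expect to be the main obstacle—is locating and counting those exceptional diagonals. These are precisely the ones adjacent to the special (joint-)intervals $J^{(a)}_1,J^{(a)}_2,J^{(a)}_3$ of the trinity of Lemma \ref{lem:mid1}, i.e.\ the diagonals nearest the two axes and the center, together with the outermost diagonals of the first generation. Using the type-change rule of Lemma \ref{lem:change1} to propagate the three distinguished points across generations, and using $Z(u+2)=\Sigma^{r^{(2)}}(Z(u))$ from \eqref{eq:SZ1} to track them around the polygon, I would show that over one period $0\le u<2r$ the three trinity points are responsible for exactly $6r^{(2)}$ forward-mutation points of deviating sign, while the first generation—whose range of $m$ is short by two labels and whose outermost diagonals sit against the special intervals—supplies the remaining multiples of $r$; together these give the corrections $+2r-6r^{(2)}$ and $-4r+6r^{(2)}$ of the RSG formula. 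Finally, for the SG case I would redo only the first-generation contributions, where the construction differs (Figure \ref{fig:firstgen2}) by the plain and notched arcs at the puncture, applying the extra lamination rule for arcs ending at a puncture (\cite[Fig.\ 13]{Nakanishi12}); the puncture arcs contribute signs that cancel the $r^{(2)}$-correction, producing the cleaner counts $N_+=r(\sum_{a\ \mathrm{even}}n_a+1)$ and $N_-=r\sum_{a\ \mathrm{odd}}n_a$. The delicate bookkeeping of the exceptional quadrilaterals near the axes and the correct handling of the puncture rule is where the real work lies; the generic count and the reduction via $N_++N_-=M$ are routine.
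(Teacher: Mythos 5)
Your overall framework (lamination description of $c$-vectors, sign-coherence so that a single transversally crossing elementary lamination determines $\varepsilon$, reduction to one sign via $N_++N_-=M$) matches the paper's setup. But the central counting step, as you describe it, would fail. You claim that for a generic forward-mutation diagonal of generation $a\ge 2$ the tropical sign is determined by the parity of $a$, and that the deviating signs are localized at the diagonals adjacent to the trinity (joint-)intervals $J^{(a)}_1,J^{(a)}_2,J^{(a)}_3$ near the axes, contributing exactly $6r^{(2)}$ over the period. Neither claim is true. The paper's count (Proposition \ref{prop:Na1}) shows that for odd $a\ge 2$ the number of $(m,u)$ with $\varepsilon(Y^{(a)}_m(u))=+$ is
$O_a=(r^{(a)}-r^{(a+1)})p_a+(r^{(a+1)}-r^{(a+2)})p_{a+1}$,
which for $m\ne n_a$ amounts to $r^{(a+1)}p_a$ positive signs out of the $r$ occurrences of $Y^{(a)}_m(\cdot)$ in a period. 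In the running example $(6,4,3)$ with $a=3$ this is $25$ out of $106$ --- a substantial fraction, not a boundary effect of size comparable to $6r^{(2)}$. The mechanism is that the quadrilateral surrounding $Y^{(a)}_m(u)$ sweeps around the polygon with $u$, and it picks up a ``wrong-parity'' sign each time it passes transversally over one of the $p_a$ fixed initial laminations of label $(a,m)_s$ (and, for $m=n_a$, over the $(a+1,1)_s$ laminations); these laminations are distributed over the whole polygon, not concentrated at the trinity points. So the exceptional set is governed by the positions of the initial diagonals of generations $a$ and $a+1$, and its size $O_a$ genuinely depends on $a$.

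What rescues the simple final formula in the paper is not localization but telescoping: writing $s_a=(r^{(a)}-r^{(a+1)})p_a$ one has $O_a=s_a+s_{a+1}$ with $s_{F+1}=0$, so $\sum_{a=2}^{F}(-1)^{a-1}O_a=-s_2=-(r^{(2)}-r^{(3)})p_2$, which cancels against the matching term in the first-generation count $O_1=2r-6r^{(2)}+(r^{(2)}-r^{(3)})p_2$ (RSG) or $O_1=r+(r^{(2)}-r^{(3)})p_2$ (SG). The $6r^{(2)}$ thus comes entirely from the first generation (where your intuition about the long diagonals crossing the axes is closer to the truth), not from trinity points propagated through all generations. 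To repair your argument you would need to replace the ``parity rule plus localized exceptions'' step with an actual per-generation crossing count of the form the paper carries out, distinguishing $m\ne n_a$ from $m=n_a$ (where the two mutually exclusive cases involving $(a+1,1)_s$ and $(a,n_a)_s$ laminations must be separated), and then observe the cancellation across generations.
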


For the SG case, the equalities
\eqref{eq:NM1} immediately follow from
Theorem \ref{thm:Npm1}.
On the other hand, for the RSG case,
the equalities \eqref{eq:NM1} follows from
Theorem \ref{thm:Npm1} and the following equality:

\begin{prop} For any positive integer $F$, we have
\label{prop:Ar1}
\begin{align}
\label{eq:Ar1}
A_F =
\frac{r^{(2)}}{r}.
\end{align}
\end{prop}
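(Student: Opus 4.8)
The plan is to prove \eqref{eq:Ar1} by induction on $F$, after making the $F$-dependence of both sides explicit. For $1\le j\le F$ write $r[j]:=p_j+q_j$ and $r^{(2)}[j]:=p^{(2)}_j+q^{(2)}_j$, so that $r=r[F]$ and $r^{(2)}=r^{(2)}[F]$ by \eqref{eq:rad}; note that the convergent data $p_a,q_a,p^{(2)}_a,q^{(2)}_a$ for $a\le j$ are unchanged when the tail of the sequence is truncated. The assertion to prove, with $A_F$ as in \eqref{eq:AF1}, is $A_F=r^{(2)}[F]/r[F]$ for all $F$. I would settle the cases $F=1,2$ by direct substitution (using $p_1=1$, $q_1=n_1$, $r[1]=1+n_1$, $r^{(2)}[1]=1$), and run the inductive step for $F\ge 3$, a range in which every auxiliary quantity below is genuinely defined.

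For the inductive step I would isolate the boundary term of $A_F$. Splitting off the $a=F-1$ summand and recognizing the remainder as $A_{F-1}$ minus its own boundary term gives
\begin{align}
A_F=A_{F-1}+(-1)^{F}\Bigl(\frac{1}{p_{F-1}q_{F-1}}-\frac{1}{p_{F-1}\,r[F-1]}\Bigr)-(-1)^{F}\frac{1}{p_F\,r[F]}.
\end{align}
Since $r[F-1]-q_{F-1}=p_{F-1}$ the first parenthesis collapses to $1/(q_{F-1}r[F-1])$, and because $p_F=q_{F-1}$ by \eqref{eq:p-q1} the two remaining reciprocals share the factor $q_{F-1}$. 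Combining with $r[F]-r[F-1]=n_Fq_{F-1}$, which follows from $r[F]=(n_F+1)q_{F-1}+q_{F-2}$ and $r[F-1]=q_{F-2}+q_{F-1}$ via \eqref{eq:xi1} and \eqref{eq:p-q1}, this simplifies to
\begin{align}
A_F=A_{F-1}+(-1)^{F}\,\frac{n_F}{r[F-1]\,r[F]}.
\end{align}
By the induction hypothesis $A_{F-1}=r^{(2)}[F-1]/r[F-1]$, so everything reduces to the single cross-determinant identity $r^{(2)}[F]\,r[F-1]-r^{(2)}[F-1]\,r[F]=(-1)^{F}n_F$.

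The heart of the argument — and the step I expect to be the main obstacle — is this cross-determinant identity for the $r$-sequences, which is not among the quoted continued-fraction formulas and must be built from them. I would prove it by expanding $r^{(2)}[j]=q^{(2)}_{j-1}+q^{(2)}_j$ and $r[j]=q_{j-1}+q_j$ (using $p_a=q_{a-1}$, $p^{(2)}_a=q^{(2)}_{a-1}$), cancelling the common $q^{(2)}_{F-1}q_{F-1}$ term, and regrouping the left side into three two-by-two determinants
\begin{align}
\bigl(q^{(2)}_Fq_{F-1}-q^{(2)}_{F-1}q_F\bigr)+\bigl(q^{(2)}_{F-1}q_{F-2}-q^{(2)}_{F-2}q_{F-1}\bigr)+\bigl(q^{(2)}_Fq_{F-2}-q^{(2)}_{F-2}q_F\bigr).
\end{align}
Rewriting Proposition \ref{prop:cf0}(b) with $k=1$ as $q^{(2)}_aq_{a-1}-q^{(2)}_{a-1}q_a=(-1)^{a+1}$ (from \eqref{eq:pq-qp} via $p_a=q_{a-1}$, $p^{(2)}_a=q^{(2)}_{a-1}$), the first two brackets are $(-1)^{F+1}$ and $(-1)^{F}$. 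For the third, ``two-step'' bracket I would substitute the recursions $q^{(2)}_F=n_Fq^{(2)}_{F-1}+q^{(2)}_{F-2}$ and $q_F=n_Fq_{F-1}+q_{F-2}$ from \eqref{eq:xid1} and \eqref{eq:xi1}, after which it reduces to $n_F\bigl(q^{(2)}_{F-1}q_{F-2}-q^{(2)}_{F-2}q_{F-1}\bigr)=(-1)^Fn_F$. Summing the three contributions yields $(-1)^{F+1}+(-1)^F+(-1)^Fn_F=(-1)^Fn_F$, the required identity, which closes the induction. The only delicacy to watch is the low-index bookkeeping ($q^{(2)}_{F-2}$, $q_{F-2}$ and the value $r^{(2)}[F-1]$ when $F-1=1$), which is exactly why I would treat $F=1,2$ as explicit base cases and invoke the determinant manipulation only for $F\ge 3$.
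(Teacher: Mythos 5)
Your proof is correct, but it takes a genuinely different route from the paper's. The paper does not induct on $F$: it divides \eqref{eq:pq-qp} (with $k=1$) by $p_aq_a$ to rewrite each interior summand of $A_F$ as the telescoping difference $\frac{p_{a+1}^{(2)}}{p_{a+1}}-\frac{p_a^{(2)}}{p_a}$, collapses the whole sum to $\frac{p_F^{(2)}}{p_F}+\frac{(-1)^{F+1}}{p_F r}$ after the cancellation $\frac{1}{p_1q_1}=\frac{p_2^{(2)}}{p_2}$, and then applies \eqref{eq:pq-qp} once more at $a=F$ to produce the factor $q_F^{(2)}+p_F^{(2)}=r^{(2)}$. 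You instead truncate the continued fraction, observe that the convergent data are stable under truncation, and reduce the inductive step to the cross-determinant identity $r^{(2)}[F]\,r[F-1]-r^{(2)}[F-1]\,r[F]=(-1)^F n_F$, which you then prove by splitting into three two-by-two determinants and invoking \eqref{eq:pq-qp} together with the recursions \eqref{eq:xi1} and \eqref{eq:xid1}. Both arguments ultimately rest on the same determinant formula \eqref{eq:pq-qp} at $k=1$; the paper's telescoping is shorter and avoids any base-case or low-index bookkeeping, while your induction isolates a clean standalone identity for the sequence $r^{(k)}$ (the analogue for $r$-numbers of the classical determinant identity for convergents) that is not recorded elsewhere in the paper and could be of independent use. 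Your handling of the delicate points — treating $F=1,2$ separately so that $q^{(2)}_{F-2}$ is always defined, and using $r^{(2)}[1]=1$ consistently with the convention $r^{(F+1)}=1$ — is sound.
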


Therefore, we proved Conjecture \ref{conj:dilog1}.

\begin{cor} Conjecture \ref{conj:dilog1} is true.
\end{cor}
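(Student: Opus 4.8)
The plan is to obtain the corollary by combining the three results that immediately precede it, with no new analytic input. The key point is that Theorem~\ref{thm:dilog2} already establishes dilogarithm identities whose left-hand sides \eqref{eq:dilog3}--\eqref{eq:dilog4} are \emph{identical} to those of Conjecture~\ref{conj:dilog1}, \eqref{eq:dilog1}--\eqref{eq:dilog2}. Consequently the conjecture is equivalent to the assertion that the two normalizing constants coincide, that is, to the equalities \eqref{eq:NM1}, $N_\pm=M_\pm$. So I would reduce everything to matching the counting constants $N_\pm$ evaluated in Theorem~\ref{thm:Npm1} against the conjectural constants $M_\pm$ recorded in Conjecture~\ref{conj:dilog1}.

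For the SG case I expect this match to be immediate: Theorem~\ref{thm:Npm1} returns $N_+=r(\sum_{a:\,\mathrm{even}}n_a+1)$ and $N_-=r(\sum_{a:\,\mathrm{odd}}n_a)$, which are term-for-term the SG values of $M_+$ and $M_-$ listed in Conjecture~\ref{conj:dilog1}. Hence \eqref{eq:NM1} holds verbatim and both SG identities follow at once.

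For the RSG case one substitution is required. Here Theorem~\ref{thm:Npm1} gives $N_+=r(\sum_{a:\,\mathrm{even}}n_a+2)-6r^{(2)}$ and $N_-=r(\sum_{a:\,\mathrm{odd}}n_a-4)+6r^{(2)}$, while the conjectural constants carry the quantity $A_F$ of \eqref{eq:AF1}, with $M_+=r(-6A_F+\sum_{a:\,\mathrm{even}}n_a+2)$ and $M_-=r(6A_F+\sum_{a:\,\mathrm{odd}}n_a-4)$. Comparing termwise, both equalities $N_\pm=M_\pm$ collapse to the single relation $6rA_F=6r^{(2)}$, i.e. $A_F=r^{(2)}/r$. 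This is precisely Proposition~\ref{prop:Ar1}, \eqref{eq:Ar1}, which I may invoke; substituting it turns $M_\pm$ into the RSG expressions of Theorem~\ref{thm:Npm1}, so \eqref{eq:NM1} holds and both RSG identities follow.

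Thus the corollary itself needs no fresh argument: it is exactly the conjunction of Theorem~\ref{thm:dilog2} (the identities in the $N_\pm$ normalization), Theorem~\ref{thm:Npm1} (the evaluation of $N_\pm$), and Proposition~\ref{prop:Ar1} (which reconciles the RSG evaluation with Tateo's form). All the substance sits in those three statements; the only care needed here is the RSG sign bookkeeping, where one must check that the $\pm 6r^{(2)}$ and $\mp 6rA_F$ terms appear with compatible signs in $N_\pm$ and $M_\pm$ so that they cancel under the single identity $A_F=r^{(2)}/r$. I expect no genuine obstacle at this final step.
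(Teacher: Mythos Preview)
Your proposal is correct and matches the paper's own argument essentially verbatim: the paper likewise reduces Conjecture~\ref{conj:dilog1} to the equalities $N_\pm=M_\pm$ via Theorem~\ref{thm:dilog2}, observes that for the SG case these equalities are immediate from Theorem~\ref{thm:Npm1}, and for the RSG case invokes Proposition~\ref{prop:Ar1} to convert the $6r^{(2)}$ terms into $6rA_F$.
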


The rest of this section will be devoted to present the proofs of
Theorem \ref{thm:Npm1} and Proposition \ref{prop:Ar1}.

\subsection{Proof of Theorem \ref{thm:Npm1}}

We consider the RSG and SG cases together.

We first decompose the numbers $N_{\pm}$, $N$ in 
\eqref{eq:Npm1}  and \eqref{eq:N1} into the
contribution from each generation $a$ as follows:
\begin{align}
N_{\pm}=\sum_{a=1}^F N_{\pm,a},
\quad
N=\sum_{a=1}^F N_a,
\end{align}
where
\begin{align}
\label{eq:Npm2}
N_{\pm,a}=&\, |\{(m,u) \mid (a,m,u) \in N_{\pm} \}|,\\
\label{eq:Npm3}
N_a:=&\,
N_{+,a} + N_{-,a} = 
\begin{cases}
r(n_{1}-2) & a=1\ \mbox{for RSG}\\
r(n_{1}+1) & a=1\ \mbox{for SG}\\
rn_a & a=2,\dots, F.
\end{cases}
\end{align}

We first count $N_{\pm,a}$ for $a\geq 2$,
which is common for both RSG and SG cases.
 Let $O_a$ be the numbers defined by
\begin{align}
\label{eq:O1}
O_a=
 (r^{(a)}-r^{(a+1)}) p_{a}
+ (r^{(a+1)}-r^{(a+2)}) p_{a+1}.
\end{align}

\begin{prop}
\label{prop:Na1} For $a=2,\dots, F$, the following holds.
\par
(a). For odd $a$, we have $N_{+,a}= 
O_a$,
$N_{-,a}= 
N_a-O_a$.
\par
(b). For even $a$, we have 
$N_{+,a}= 
N_a-O_a$, $N_{-,a}= 
O_a$.
\end{prop}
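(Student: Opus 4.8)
The plan is to determine the tropical sign $\varepsilon(Y^{(a)}_m(u))$ of each forward-mutated diagonal geometrically, and then to reduce Proposition \ref{prop:Na1} to a tally over rotational positions of the $r$-gon. Since the statement concerns only generations $a\geq 2$, the central puncture plays no role and the RSG and SG cases may be treated simultaneously. By the lamination description of $c$-vectors of Fomin and Thurston \cite{Fomin08b}, the sign is read off from the way the elementary laminations of the initial triangulation $\Gamma(0)$ cross the quadrilateral $Q$ surrounding the mutated diagonal, according to the rule \eqref{eq:lami1}; and by sign-coherence \cite{Derksen10} all nonzero crossings agree, so it suffices to locate a single crossing. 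The explicit positions of the four corners of $Q$, in the time-ordered index, are supplied by Proposition \ref{prop:quad1}, while the location of the relevant laminations is dictated by the generational structure of $\Gamma(0)$ built in Section \ref{subsec:RSG}.

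First I would record the reduction that makes the count tractable. For fixed $(a,m)$, as $u$ runs over the values in $0\leq u<2r$ for which $Y^{(a)}_m(u)\in\mathcal{Y}_+$, the rotation property \eqref{eq:SZ1}--\eqref{eq:periodRSG1} shows that the underlying diagonal (hence the whole quadrilateral $Q$, as an unlabeled object of fixed shape) is carried by $\Sigma^{r^{(2)}}$ each time $u$ advances by $2$. Because $r$ and $r^{(2)}$ are coprime (Proposition \ref{prop:cf1}(d)), these $r$ occurrences place the fixed-shape quadrilateral at \emph{every} rotational position of the $r$-gon exactly once. Thus $N_{+,a}=\sum_{m=1}^{n_a}\#\{\text{positions giving }\varepsilon=+\}$, where the laminations of $\Gamma(0)$ stay fixed while the quadrilateral rotates.

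Next I would prove the key sign lemma: as the quadrilateral of shape $(a,m)$ sweeps the polygon, the crossing direction changes exactly at the positions where the mutated diagonal becomes parallel to an initial diagonal of generation $a$ or $a+1$, so that for each position the sign is fixed by the interval type of Section \ref{subsec:patterns} into which the relevant endpoint falls. Here the bookkeeping is governed by Lemma \ref{lem:change1} and the interval counts of Propositions \ref{prop:int1} and \ref{prop:int2}: there are $p_a$ intervals of type $L/R$ of generation $a$ (width $r^{(a)}$) and $p_{a+1}$ of generation $a+1$ (width $r^{(a+1)}$), and the positions contributing to one fixed sign group into arcs whose total length is $(r^{(a)}-r^{(a+1)})$ per generation-$a$ interval and $(r^{(a+1)}-r^{(a+2)})$ per generation-$(a+1)$ interval. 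Summing gives exactly $O_a$ of \eqref{eq:O1} for that sign and $N_a-O_a$ for the other, with $N_a=rn_a$ from \eqref{eq:Npm3}. The identification of \emph{which} sign equals $O_a$ follows from the parity of $a$: every variable forward-mutated at $u=0$ has a standard-basis $c$-vector and hence sign $+$, which fixes the orientation, and the mirror-image relation between odd and even generations---encoded by Lemma \ref{lem:parity} and the factor $(-1)^a$ in Proposition \ref{prop:pp3}---interchanges the two sign groups, yielding $N_{+,a}=O_a$ for odd $a$ and $N_{+,a}=N_a-O_a$ for even $a$.

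The main obstacle will be the exhaustive verification of the sign lemma. One must work out the lamination crossing direction for every local configuration---the interval types $L$, $R$, $N_L$, $N_R$, the joint-intervals, and especially the generation-transition quadrilaterals that straddle two generations---and confirm that the crossing is constant on each arc and flips precisely at the predicted boundaries. Collapsing the resulting tally to the closed form $O_a$ will require the fundamental recurrence $r^{(a)}=n_a r^{(a+1)}+r^{(a+2)}$ together with the coprimality and determinant identities of Propositions \ref{prop:cf0} and \ref{prop:cf1}. Once this lemma is in place, Proposition \ref{prop:Na1} is immediate, and it feeds directly into the generation-wise summation proving Theorem \ref{thm:Npm1}.
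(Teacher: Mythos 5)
Your plan follows essentially the same route as the paper's proof: both determine the tropical sign via the Fomin--Thurston lamination rule \eqref{eq:lami1} applied to the surrounding quadrilateral of Proposition \ref{prop:quad1}, use the rotation property together with the coprimality of $r$ and $r^{(2)}$ to sweep the quadrilateral over all positions, and group the transversal crossings so that each of the $p_a$ generation-$a$ laminations contributes $r^{(a)}-r^{(a+1)}$ and each of the $p_{a+1}$ generation-$(a+1)$ laminations contributes $r^{(a+1)}-r^{(a+2)}$, with the mirror-image symmetry between odd and even $a$ swapping the two signs. The ``key sign lemma'' you defer is exactly the case analysis the paper carries out in Figure \ref{fig:quad2} (splitting $m\neq n_a$ from $m=n_a$ and the two exclusive crossing cases for the latter), so the proposal is correct and matches the paper's argument.
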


\begin{figure}
	\begin{center}
		\includegraphics[scale=.45]{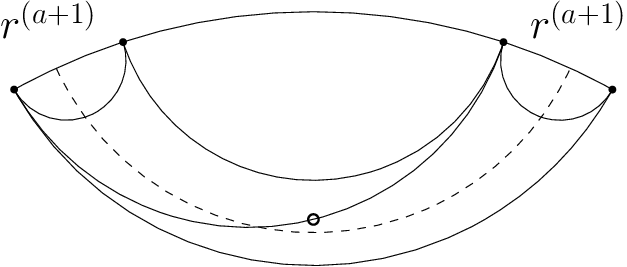}
		\hskip30pt
		\includegraphics[scale=.45]{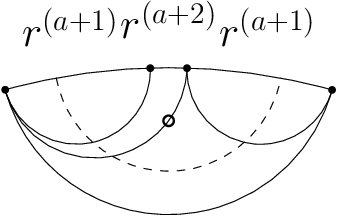}
	\end{center}
	\centerline{\hskip35pt(a) $m\neq n_a$ \hskip85pt
	(b) $m=n_a$}
\caption{Surrounding quadrilateral of forward mutation point.
The dotted line represents a lamination yielding the tropical sign
$\varepsilon=+$.}
\label{fig:quad2}
\end{figure}

\begin{proof}
(a). Let $a\geq 3$ be odd.
For each $m=1,\dots,n_a$,
 we count the total number of  $Y^{(a)}_m(u)\in \mathcal{Y}_+$ ($0\leq u < 2r$)
such that $\varepsilon(Y^{(a)}_m(u))=+$ for each $m$.
Then, summing up over $m$, we obtain $N_{+,a}$.

First, consider the case $m\neq n_a$.
The surrounding quadrilateral of the forward mutation point
for $Y^{(a)}_m(u)$ is given in
Figure \ref{fig:quad2} (a).
By \eqref{eq:lami1},
 $\varepsilon(Y^{(a)}_m(u))=+$ if and only if
 at least one of the initial laminations  
 crosses the surrounding quadrilateral
 ``transversally''
 as in Figure \ref{fig:quad2}.
Then, it is easy to see that such a lamination should have the label
$(a,m)_s$ ($s=1,\dots,p_a$) of the same type as $Y^{(a)}_m(u)$.
Furthermore, from Figure \ref{fig:quad2}, 
each lamination of label $(a,m)_s$ 
 crosses transversally
 the surrounding quadrilateral
 $r^{(a+1)}$ times during the period $0\leq u < 2r$.
Since there are $p_a$ such laminations,
the total number of such $Y^{(a)}_m(u)$ is $r^{(a+1)} p_a$.

Next, consider the case $m= n_a$,
which is a little more complicated.
The surrounding quadrilateral of the forward mutation point
for $Y^{(a)}_{n_a}(u)$ is given in
Figure \ref{fig:quad2} (b).
There are two cases 
giving $\varepsilon(Y^{(a)}_m(u))=+$,
which occur exclusively of each other.
\begin{itemize}
\item[Case 1.] A lamination of label $(a+1,1)_s$ 
crosses transversally  the surrounding quadrilateral.
\item[Case 2.] A lamination of label $(a,n_a)_s$ 
crosses transversally  the surrounding quadrilateral,
but no lamination of label $(a+1,1)_{s'}$ 
crosses transversally  the surrounding quadrilateral.
\end{itemize}
By a similar counting as before, 
the total number of such $Y^{(a)}_m(u)$ is 
$(r^{(a+1)}-r^{(a+2)})p_{a+1}$ for Case 1,
and $r^{(a+2)}p_a$ for Case 2.

Summing up everything, we have
\begin{align}
\begin{split}
N_{+,a}&= (n_a-1)r^{(a+1)} p_a
+(r^{(a+1)}-r^{(a+2)})p_{a+1}
+r^{(a+2)}p_a\\
&=
 (r^{(a)}-r^{(a+1)}) p_{a}
+ (r^{(a+1)}-r^{(a+2)}) p_{a+1}
=O_a
\end{split}
\end{align}
as desired, where we used \eqref{eq:rr4} in the second equality.
\par
(b). Since Figure \ref{fig:quad2} becomes its mirror image,
the same counting as above gives the number $N_{-,a}$.
\end{proof}

The counting of $N_{\pm,1}$ is similar, but a little more tricky
for the RSG case,
and this is where  the RSG and SG cases differ.
Let $O_1$ be the number defined by
\begin{align}
\label{eq:O2}
O_1=
\begin{cases}
2r-6r^{(2)}+ (r^{(2)}-r^{(3)})p_2 & \mbox{for RSG}\\
r+ (r^{(2)}-r^{(3)})p_2 & \mbox{for SG}.
\end{cases}
\end{align}

\begin{prop}
\label{prop:Na2}
We have $N_{+,1}= 
O_1$,
$N_{-,1}= 
N_1-O_1$.
\end{prop}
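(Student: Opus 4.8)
The plan is to compute $N_{+,1}$ directly and then read off $N_{-,1} = N_1 - O_1$ for free, since $N_{+,1} + N_{-,1} = N_1$ by \eqref{eq:Npm3}. Thus the whole proposition reduces to the single equality $N_{+,1} = O_1$. To establish it I would follow exactly the lamination-crossing method used in the proof of Proposition \ref{prop:Na1}: for each admissible first-generation index $m$, count the forward mutation points $Y^{(1)}_m(u) \in \mathcal{Y}_+$ with $0 \le u < 2r$ whose surrounding quadrilateral—located by Proposition \ref{prop:quad1} together with Figure \ref{fig:firstgen} (RSG) resp. Figure \ref{fig:firstgen2} (SG)—is crossed transversally with the positive orientation of \eqref{eq:lami1} by at least one elementary lamination of $\Gamma(0)$. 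Summing over $m$ yields $N_{+,1}$.

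For the interior indices $m$, away from the two ends of the first-generation zigzag and away from the link $m = n_1 - 2$ to the second generation, the analysis runs parallel to the generic case of Proposition \ref{prop:Na1}: only laminations whose label shares the type of $Y^{(1)}_m(u)$ cross transversally, and since $p_1 = 1$ each such lamination contributes over a block of $u$ whose length is controlled by $r^{(2)}$ and $r^{(3)}$. The boundary index $m = n_1 - 2$ is treated by the same Case~1/Case~2 dichotomy as in Proposition \ref{prop:Na1}, producing precisely the term $(r^{(2)} - r^{(3)}) p_2$ together with a residual $r^{(3)}$-contribution from the arcs reaching the second generation. The arithmetic is then closed up using the fundamental recurrence $r = n_1 r^{(2)} + r^{(3)}$ from \eqref{eq:rr4}.

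The genuinely new feature, which I expect to be the main obstacle, is the behaviour at the two ends of the zigzag, near the top vertex $0$ and near the far marked vertex. Here the first-generation diagonals are \emph{long} and open, so—unlike in any later generation—additional laminations sweep across the surrounding quadrilaterals as $u$ ranges over the period, and tracking their orientations precisely is what, after combining with the interior count, yields the remaining combination $2r - 6r^{(2)}$ of $O_1$ in the RSG case. This is also exactly where the RSG and SG constructions diverge: in the SG case the three arcs $(1,0)$, $(1,\overline 1)$, $(1,\overline 2)$ of Figure \ref{fig:firstgen2} replace the apex of the zigzag, and the laminations ending at the puncture must be handled with the special tagged-arc crossing rule (see \cite[Fig.13]{Nakanishi12}). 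Substituting that rule for the RSG end-analysis replaces $2r - 6r^{(2)}$ by $r$, giving the SG value of $O_1$. I therefore anticipate that the careful verification of these end and puncture configurations, rather than the routine interior bookkeeping, will be the crux of the argument.
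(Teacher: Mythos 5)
Your reduction of the proposition to the single equality $N_{+,1}=O_1$ (via $N_{+,1}+N_{-,1}=N_1$) and your choice of method — the same lamination-crossing count as in Proposition \ref{prop:Na1} — both match the paper. However, there is a genuine gap in where you locate the new difficulty. You assert that for interior indices $m$ the count ``runs parallel to the generic case'': only the lamination of the same label $(1,m)$ crosses the surrounding quadrilateral transversally, contributing $r^{(2)}$ per $m$ (since $p_1=1$), and you then attribute the whole correction $2r-6r^{(2)}$ to special behaviour at the two ends of the zigzag. This is not what happens. In the RSG case \emph{every} first-generation diagonal crosses both axes $Z(-1)$ and $Z(0)$, so for \emph{every} $m\neq n_1-2$ there are three mutually exclusive laminations producing $\varepsilon=+$, namely those of labels $(1,m)$, $(1,n_1-2-m)$ and $(1,n_1-1-m)$, contributing $r^{(2)}$, $r^{(2)}-r^{(3)}$ and $r^{(3)}$ respectively, i.e.\ $2r^{(2)}$ per interior $m$ rather than $r^{(2)}$. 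Summing $2(n_1-3)r^{(2)}$ over the interior indices and adding the $2r^{(3)}$ coming from Cases 2 and 3 at $m=n_1-2$ is exactly what produces $2r-6r^{(2)}$ via $r=n_1r^{(2)}+r^{(3)}$; it is a global doubling phenomenon spread over all interior $m$, not an end effect, and an end-only correction cannot make up the shortfall of roughly $(n_1-3)\,r^{(2)}$ that your interior count leaves.

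Relatedly, your explanation of the RSG/SG discrepancy is off. In the SG case each of the $n_1$ indices $\overline{1},\overline{2},0,1,\dots,n_1-3$ contributes exactly $r^{(2)}$ from the single lamination of its own label (the tagged-arc rule of \cite[Fig.13]{Nakanishi12} is needed only to justify this for $m=\overline{1},\overline{2}$), so it is the SG case, not the RSG case, that behaves ``generically''; the difference between $2r-6r^{(2)}$ and $r$ comes from losing the triple-lamination count, not from substituting a puncture rule at the apex. The treatment of $m=n_1-2$ (your Case 1/Case 2 dichotomy giving $(r^{(2)}-r^{(3)})p_2$ plus residual $r^{(3)}$ terms) is essentially correct, except that in the RSG case there is a third case there as well, coming from the lamination of label $(1,1)$, contributing another $r^{(3)}$.
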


\begin{proof}
{\em The RSG case.}
For the surrounding quadrilateral of the forward mutation point for 
$Y^{(1)}_m(u)$, see Figure \ref{fig:firstgen}.

First, consider the case $m\neq n_1-2$.
There are three mutually exclusive cases giving
 $\varepsilon(Y^{(1)}_m(u))=+$.
\begin{itemize}
\item[Case 1.] A lamination of label $(1,m)$ 
crosses transversally  the surrounding quadrilateral.
\item[Case 2.] A lamination of label $(1,n_1-2-m)$ 
crosses transversally  the surrounding quadrilateral.
\item[Case 3.] A lamination of label $(1,n_1-1-m)$ 
crosses transversally  the surrounding quadrilateral.
\end{itemize}
The total number of such $Y^{(1)}_m(u)$ is 
$r^{(2)}$ for  Case 1,
$r^{(2)}-r^{(3)}$ for  Case 2,
and 
$r^{(3)}$ for  Case 3.

Next, consider the case $m= n_1-2$.
There are three mutually exclusive cases 
giving $\varepsilon(Y^{(1)}_m(u))=+$.
\begin{itemize}
\item[Case 1.] A lamination of label $(2,1)_s$ 
crosses transversally  the surrounding quadrilateral.
\item[Case 2.] A lamination of label $(1,n_1-2)$ 
crosses transversally  the surrounding quadrilateral,
but no lamination of label $(2,1)_s$   crosses transversally
 the surrounding quadrilateral.
\item[Case 3.] A lamination of label $(1,1)$ 
crosses transversally  the surrounding quadrilateral.
\end{itemize}
The total number of such $Y^{(1)}_m(u)$ is 
$(r^{(2)}-r^{(3)})p_2$ for  Case 1,
and $r^{(3)}$ for  Cases 2 and 3.
Summing up everything, we have
\begin{align}
\begin{split}
N_{+,1}&= 2(n_1-3)r^{(2)} 
+(r^{(2)}-r^{(3)})p_{2}
+2r^{(3)}\\
&=
2r-6r^{(2)}
+ (r^{(2)}-r^{(3)}) p_{2}
=
O_1.
\end{split}
\end{align}

\begin{figure}
	\begin{center}
		\includegraphics[scale=.4]{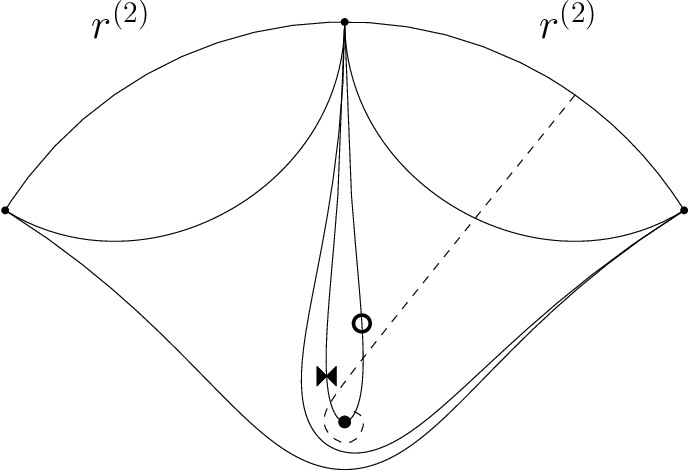}
		\hskip15pt
		\includegraphics[scale=.4]{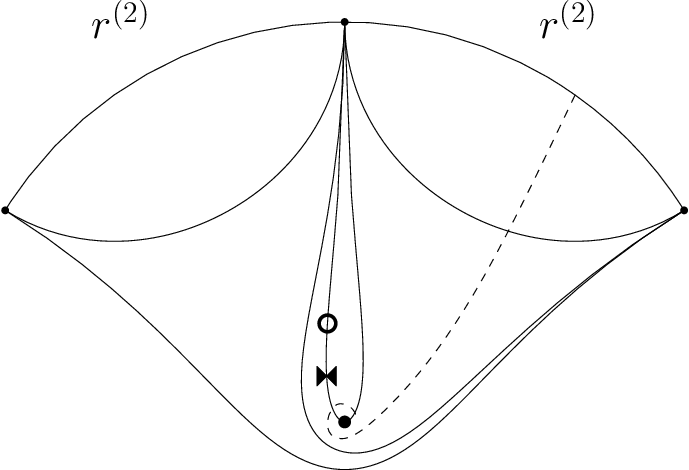}
	\end{center}
	\centerline{(a) $m=\overline{2}$ \hskip105pt
	(b) $m=\overline{1}$\hskip5pt}
\caption{Surrounding ``quadrilateral''
of forward mutation point $(1,\overline{2})$ or $(1,\overline{1})$.
The dotted line represents a lamination yielding the tropical sign
$\varepsilon=+$.
}
\label{fig:quad3}
\end{figure}

{\em The SG case.}
First, consider the case $m=\overline{2},\overline{1}$,
which are the arcs ending at the puncture.
Then,
$\varepsilon(Y^{(1)}_m(u))=+$
if and only if
there is a lamination of label 
$(1,m)$
which
 crosses the surrounding ``quadrilateral''
 as in Figure \ref{fig:quad3}.
The total number of such $Y^{(1)}_m(u)$ is 
$r^{(2)}$.

Next, consider the case $0\leq m\leq n_1-3$.
Then,
$\varepsilon(Y^{(1)}_m(u))=+$
if and only if
there is a lamination of label
$(1,m)$
which crosses
transversally  the surrounding quadrilateral.
The total number of such $Y^{(1)}_m(u)$ is 
again $r^{(2)}$.
Finally, consider the case $m= n_1-2$.
There are two cases 
giving $\varepsilon(Y^{(1)}_m(u))=+$,
which occur exclusively of each other.
\begin{itemize}
\item[Case 1.] A lamination of label $(2,1)_s$ 
crosses transversally  the surrounding quadrilateral.
\item[Case 2.] A lamination of label $(1,n_1-2)$ 
crosses transversally  the surrounding quadrilateral,
but no lamination of label $(2,1)_s$  crosses transversally
 the surrounding quadrilateral.
\end{itemize}
The total number of such $Y^{(1)}_m(u)$ is 
$(r^{(2)}-r^{(3)})p_2$ for  Case 1,
and $r^{(3)}$ for  Case 2.
Summing up everything, we have
\begin{align}
\begin{split}
N_{+,1}&= n_1r^{(2)} 
+(r^{(2)}-r^{(3)})p_{2}
+r^{(3)}
=
 r
+ (r^{(2)}-r^{(3)}) p_{2}
=
O_1.
\end{split}
\end{align}

\end{proof}

Now we are ready to prove 
Theorem \ref{thm:Npm1}.
By Propositions \ref{prop:Na1} and \ref{prop:Na2}
and $r^{(F+1)}=r^{(F+2)}=1$,
we have
\begin{align}
\begin{split}
N_+&=
\sum_{a: \mathrm{even}}
N_{a}
+O_1 + \sum_{a=2}^{F-1} (-1)^{a-1} O_a
=
\sum_{a: \mathrm{even}}
N_{a}
+O_1- (r^{(2)}-r^{(3)})p_2\\
&=
\begin{cases}
\sum_{a: \mathrm{even}}
N_{a}
+2r-6r^{(2)}
& \mbox{for RSG}\\
\sum_{a: \mathrm{even}}
N_{a}
+r
& \mbox{for SG},
\end{cases}
\end{split}
\end{align}
and
\begin{align}
\begin{split}
N_- &=
\sum_{a: \mathrm{odd}}
N_{a}
-O_1 - \sum_{a=2}^{F-1} (-1)^{a-1} O_a
=
\sum_{a: \mathrm{odd}}
N_{a}
-O_1+ (r^{(2)}-r^{(3)})p_2\\
&=
\begin{cases}
\sum_{a: \mathrm{odd}}
N_{a}
-2r+6r^{(2)}
& \mbox{for RSG}\\
\sum_{a: \mathrm{odd}}
N_{a}
-r
& \mbox{for SG}.
\end{cases}
\end{split}
\end{align}
Taking into account   \eqref{eq:Npm3},
 they agree with the formulas in Theorem \ref{thm:Npm1}.

This complete the proof of Theorem \ref{thm:Npm1}.

\subsection{Proof of Proposition \ref{prop:Ar1}}
%The equality \eqref{eq:Ar1} follows from (\ref{eq:pq-qp}) and some
%algebraic manipulations. 
%
%To have some cancellations in the right hand side
%of \eqref{eq:AF1},
% it is useful to express its generic summand 
% as a difference of two terms; we impose therefore the following condition:
%\begin{align}
%	(-1)^{a+1}\frac{1}{p_aq_a}=\frac{B_a}{p_a}+\frac{C_a}{q_a}.
%	\label{eq:A_F-lin}
%\end{align}
%A solution to (\ref{eq:A_F-lin}) is given by (\ref{eq:pq-qp}) for $k=1$,
%$a>1$; namely,
%\begin{align}
%B_a=-p_a^{(2)}
%\qquad
%C_a=q_a^{(2)},
%\end{align}
%yielding
%\begin{align}
%A_F=
%\frac{1}{p_1q_1}+
%\sum_{a=2}^{F-1}\left(\frac{q_a^{(2)}}{q_a}-\frac{p_a^{(2)}}{p_a} \right)
%+(-1)^{F+1} \frac{1}{p_Fr}.
%\end{align}

Let us prove the equality \eqref{eq:Ar1}.
Dividing (\ref{eq:pq-qp}) for $k=1$ and $a>1$
by $p_aq_a$, 
then using  the relations $q_{a}=p_{a+1}$ and $q_a^{(2)}=p_{a+1}^{(2)}$,
we have
\begin{align}
	\label{eq:A_F-lin}
	(-1)^{a+1}\frac{1}{p_aq_a}=
\frac{q_a^{(2)}}{q_a}	-	\frac{p_a^{(2)}}{p_a}
	=
	\frac{p_{a+1}^{(2)}}{p_{a+1}}
-	\frac{p_a^{(2)}}{p_a}.
\end{align}
Putting it into
 the right hand side
of \eqref{eq:AF1},
we obtain
\begin{align}
\begin{split}
A_F&=
\frac{1}{p_1q_1}+
\sum_{a=2}^{F-1}\left(\frac{p_{a+1}^{(2)}}{p_{a+1}}- \frac{p_{a}^{(2)}}{p_{a}}\right)
+\frac{(-1)^{F+1}}{p_Fr}\\
&=
%\left(
\frac{1}{p_1q_1}-
\frac{p_{2}^{(2)}}{p_{2}}+\frac{p_F^{(2)}}{p_F}
%\right)
+
\frac{(-1)^{F+1}}{p_Fr}.
\end{split}
\end{align}
The first two summands cancel since $p_1=p_2^{(2)}=1$ and $q_1=p_2$,
 and we get
\begin{align}
\begin{split}
A_F&
%=\frac{p_F^{(2)}}{p_F}+(-1)^{F+1}\frac{1}{p_Fr}
=\frac{p_F^{(2)}r+(-1)^{F+1}}{p_Fr}
=
\frac{p_F^{(2)}q_F+p_F^{(2)}p_F+(-1)^{F+1}}{p_Fr}.
\end{split}
\end{align}
Finally, using (\ref{eq:pq-qp}) with $k=1$ and $a=F$, we obtain
\begin{align}
A_F=
\frac{p_F(q_F^{(2)}+p_F^{(2)})}{p_Fr}=
\frac{r^{(2)}}{r}.
\end{align}

\section{RSG and SG $T$-systems}

Here we present the RSG and SG $T$-systems,
which are the companions of the RSG and SG $Y$-systems.
These $T$-systems share exactly the same periodicities with 
the corresponding $Y$-systems.
They are new in the literature,
and they might have a representation theoretical
interpretation, possibly by
a certain variation of the Hecke algebras or the quantum groups
at roots of unity.
See \cite{Nakanishi10c} for $T$-systems
and $Y$-systems in a more general setting.

\subsection{RSG $T$-systems}

To the mutation sequence
\eqref{eq:RSGmseq},
one can attach, not only $Y$-variables, but also  {\em $T$-variables\/}
by identifying the cluster variables ($x$-variables)
$x^{(a)}_{m,s}(u)$ with $T^{(a)}_m(u-p_a)$ only at  forward mutation
points at time $u$.
The $T$-systems are a family of algebraic relations
satisfied by these $T$-variables.
One can directly derive these
 $T$-systems by applying the snapshot method
 to $T$-variables.
Alternatively,
using the {\em duality\/} between $T$-systems
and $Y$-systems \cite[Prop.5.6]{Nakanishi10c},
one can also translate the RSG $Y$-systems into
the corresponding $T$-systems.
Skipping the derivation, here we present them
as the definition of the RSG $T$-systems.

Consider the case $n_1\neq 2$.
For a  sequence $(n_1,\dots,n_F)$ 
we introduce the $T$-variables
$ T^{(a)}_m(u)$,
where $u\in \mathbb{Z}$, $a=1,\dots, F$,
and  $m$ runs over the set
 specified by \eqref{eq:RSGam}.

\begin{defn}
For $n_1\neq 2$,
the {\em reduced sine-Gordon (RSG) $T$-system\/}
$\mathbb{T}_{\mathrm{RSG}}(n_1,\dots,n_F)$ is the following system of
relations:
For $(a,m)=(1,1)$,
\begin{align}
\label{eq:RSGT1}
\begin{split}
T^{(1)}_1(u-p_1)T^{(1)}_1(u+p_1)
&= T^{(1)}_2(u)T^{(2)}_1(u) + T^{(2)}_1(u-2)T^{(2)}_1(u+2),
\end{split}
\end{align}
for $(a,m)=(1,m)$ with $m\neq 1, n_1-2$,
\begin{align}
\label{eq:RSGT2}
\begin{split}
T^{(1)}_m(u-p_1)T^{(1)}_m(u+p_1)
&=\prod_{(b,k)\sim (a,m)}T^{(b)}_k(u) \\
&\quad + T^{(2)}_{1}(u-1-m)T^{(2)}_1(u+1+m),
\end{split}
\end{align}
for $(a,m)=(1,n_1-2)$,
\begin{align}
\label{eq:RSGT3}
\begin{split}
T^{(1)}_{n_1-2}(u-p_1)T^{(1)}_{n_1-2}(u+p_1)
&= T^{(1)}_{n_1-3}(u)T^{(3)}_1(u)\\
&\quad + T^{(2)}_1(u-p_2+1)T^{(2)}_1(u+p_2-1),
\end{split}
\end{align}
for $(a,m)$ with $a\geq 2$, $m\neq n_a$,
\begin{align}
\label{eq:RSGT4}
\begin{split}
&T^{(a)}_m(u-p_a)T^{(a)}_m(u+p_a)
=\prod_{(b,k)\sim (a,m)}T^{(b)}_k(u) \\
&+
 T^{(a+1)}_1(u-p_{a+1}+(n_a+1-m)p_a)
T^{(a+1)}_{1}(u+p_{a+1}-(n_a+1-m)p_a),
\end{split}
\end{align}
for $(a,m)=(a,n_a)$ with $a\geq 2$,
\begin{align}
\label{eq:RSGT5}
\begin{split}
T^{(a)}_{n_a}(u-p_a)T^{(a)}_{n_a}(u+p_a)
&=T^{(a)}_{n_a-1}(u)T^{(a+2)}_1(u)\\
&\quad +
 T^{(a+1)}_1(u-p_{a+1}+p_a)T^{(a+1)}_1(u+p_{a+1}-p_a),
\end{split}
\end{align}
where $(b,k)\sim (a,m)$ means $(b,k)$ is adjacent to
$(a,m)$ in the diagram $X_{\mathrm{RSG}}(n_1,\dots,n_F)$
in Figure \ref{fig:XA}.
\end{defn}

When $n_1=2$ with $F\geq 2$,
we reset the $T$-variables
$ T^{(a)}_m(u)
 $,
  where $u\in \mathbb{Z}$, $a=2,\dots, F$,
and  $m=1,\dots,n_a$.

\begin{defn}
For $F\geq 2$,
the {\em reduced sine-Gordon (RSG) $T$-system\/}
$\mathbb{T}_{\mathrm{RSG}}(2,n_2,\dots,n_F)$ is the following system of
relations:
For $(a,m)$ with $m\neq n_a$,
\begin{align}
\label{eq:RSGT6}
\begin{split}
&T^{(a)}_m(u-p_a)T^{(a)}_m(u+p_a)
=\prod_{(b,k)\sim (a,m)}T^{(b)}_k(u) \\
&+
 T^{(a+1)}_1(u-p_{a+1}+(n_a+1-m)p_a)
T^{(a+1)}_{1}(u+p_{a+1}-(n_a+1-m)p_a),
\end{split}
\end{align}
for $(a,m)=(a,n_a)$,
\begin{align}
\label{eq:RSGT7}
\begin{split}
T^{(a)}_{n_a}(u-p_a)&T^{(a)}_{n_a}(u+p_a)
=T^{(a)}_{n_a-1}(u)T^{(a+2)}_1(u)\\
& +
 T^{(a+1)}_1(u-p_{a+1}+p_a)T^{(a+1)}_1(u+p_{a+1}-p_a),
\end{split}
\end{align}
where $(b,k)\sim (a,m)$ means $(b,k)$ is adjacent to
$(a,m)$ in the diagram $X_{\mathrm{RSG}}(2,n_2,\dots,n_F)$
in Figure \ref{fig:XA2}.
\end{defn}

\subsection{SG $T$-systems}

%The SG case is similar.
%Again, we present the relations
%as the definition of the SG $T$-systems.

For a sequence $(n_1,\dots,n_F)$,
we introduce the  $T$-variables
$ T^{(a)}_m(u) $,
where $u\in \mathbb{Z}$, $a=1,\dots, F$,
and  $m$ runs over the set
 specified by \eqref{eq:SGam}.

\begin{defn}
The {\em sine-Gordon (SG) $T$-system\/}
$\mathbb{T}_{\mathrm{SG}}(n_1,\dots,n_F)$ is the following system of
relations:
\begin{itemize}
\item[(i).] the relation for $(a,m)=(1,\overline{2}), (1,\overline{1})$,
\begin{align}
\label{eq:SGT1}
\begin{split}
T^{(1)}_{\overline{2}}(u-p_1)T^{(1)}_{\overline{2}}(u+p_1)
&=T^{(1)}_0(u)+T^{(2)}_1(u),\\
T^{(1)}_{\overline{1}}(u-p_1)T^{(1)}_{\overline{1}}(u+p_1)
&=T^{(1)}_0(u)+T^{(2)}_1(u),
\end{split}
\end{align}
\item[(ii).] 
 the relation \eqref{eq:RSGT2} for 
$(a,m)=(1,m)$ with $m\neq \overline{2},\overline{1}, n_1-2$,
\item[(iii).] 
 the relation \eqref{eq:RSGT3} for 
$(a,m)=(1,n_1-2)$ ,
\item[(iv).] 
 the relation \eqref{eq:RSGT4} for 
$(a,m)$ with  $a\geq 2$,  $m\neq  n_a$,
\item[(v).] 
the relation \eqref{eq:RSGT5} for 
$(a,m)=(a,n_a)$ with $a\geq 2$,
\end{itemize}
\par\noindent
where the adjacency diagram in \eqref{eq:RSGT2} 
and \eqref{eq:RSGT4} is  replaced with
 $X_{\mathrm{SG}}(n_1,\dots,n_F)$
in Figure \ref{fig:XD}.
\end{defn}

\subsection{Periodicity}

The following theorem is simultaneously proved
with Theorems \ref{thm:periodRSG} and 
 \ref{thm:periodSG},
because it follows from the periodicity of  seeds,
in particular, the periodicity of  $x$-variables.
 For $F=2$, it was proved by \cite{Nakanishi10b}.
 
\begin{thm}
\label{thm:periodRSGT}
The RSG $T$-systems
share the same periodicity with
the corresponding RSG $Y$-systems in Theorems \ref{thm:periodRSG}.
The SG $T$-systems
shares the same periodicity with
the corresponding SG $Y$-systems in Theorems \ref{thm:periodSG}.
 \end{thm}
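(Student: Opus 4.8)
The plan is to deduce the periodicity of the $T$-systems from the very same seed periodicity that yielded Theorems \ref{thm:periodRSG} and \ref{thm:periodSG}, without re-running the snapshot analysis. Recall that the proof of those theorems rested on the rotation identity of Proposition \ref{prop:ref1}(c), namely $\Gamma(u+2)=\Sigma^{r^{(2)}}(\nu(\Gamma(u)))$, together with the fact that $\Gamma(u)$ is a triangulation of an $r$-gon. Because a labeled (tagged) triangulation determines an entire labeled seed of the cluster algebra---not merely its coefficient tuple---this identity expresses a $\nu$-periodicity of the \emph{full} seed, and in particular of the cluster of $x$-variables. The $Y$-periodicity exploited only the coefficient ($y$-variable) part of this seed periodicity; the cluster ($x$-variable) part is periodic by exactly the same reasoning.

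First I would record that the assignment of $x^{(a)}_{m,s}(u)$ to $T^{(a)}_m(u-p_a)$ at the forward mutation points is well defined, for the identical reason it was for the $Y$-variables: in the mutation sequence \eqref{eq:RSGmseq} no two labels $(a,m)_s$ and $(a,m)_{s'}$ with $s\neq s'$ are mutated at the same time (Lemma \ref{lem:mset}(c) and Proposition \ref{prop:pp1}), so $T^{(a)}_m$ is unambiguous and is insensitive to the subscript $s$ that $\nu$ permutes. Consequently the relabeling $\nu$ acts trivially on the $T$-variables, just as it does on the $Y$-variables, and no separate verification of the relations \eqref{eq:RSGT1}--\eqref{eq:SGT1} is required for the periodicity statement.

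Next I would push the seed periodicity through this identification. Iterating Proposition \ref{prop:ref1}(c), using that $\Sigma$ and $\nu$ commute and that $\Sigma^{r}=\mathrm{id}$, gives $\Gamma(u+2r)=\Sigma^{r\,r^{(2)}}(\nu^{r}(\Gamma(u)))=\nu^{r}(\Gamma(u))$, exactly as in the examples of Section \ref{sec:643} where $\Gamma(2r)=\nu(\Gamma(0))$. Thus $2r$ is a period of the \emph{unlabeled} seed. Since the $T$-variables ignore $\nu$, the periodicity of the $x$-variables descends to a genuine period $2r$ of the $T$-variables in the RSG case, matching Theorem \ref{thm:periodRSG}. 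For the SG case I would transfer the discussion of Section \ref{subsec:SGex} verbatim: when $r$ is odd the first-generation tagged arcs $(1,\overline{1})$ and $(1,\overline{2})$ are interchanged after $2r$ steps, producing the half-periodicity of Theorem \ref{thm:periodSG}(ii) and full period $4r$, while for even $r$ the period is $2r$.

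The main obstacle I anticipate is purely bookkeeping rather than conceptual: confirming that the swap of the two puncture arcs in the SG case affects the $x$-variables in precisely the way it affects the $Y$-variables, so that the half-periodicity transfers without change to the $T$-system. This is settled by the explicit placement of the notched and plain arcs in Figure \ref{fig:firstgen2} together with the fact (\cite{Fomin08,Fomin08b}) that tagged triangulations parametrize labeled seeds of the type-$D$ cluster algebra; once this is recorded, periodicity of the $T$-variables is a property of the seed sequence itself, and Theorem \ref{thm:periodRSGT} follows at once from Theorems \ref{thm:periodRSG} and \ref{thm:periodSG}.
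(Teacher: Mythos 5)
Your proposal is correct and follows essentially the same route as the paper: the paper's proof consists precisely of the observation that the theorem "follows from the periodicity of seeds, in particular, the periodicity of $x$-variables," which is what you establish (in more detail) via Proposition \ref{prop:ref1}(c), the $\nu$-insensitivity of the identification $x^{(a)}_{m,s}(u)\leftrightarrow T^{(a)}_m(u-p_a)$, and the transfer of the puncture-arc swap in the SG case.
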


%\bibliography{biblist.bib}
\bibliography{../../../biblist/biblist.bib}

\end{document}